\def\bsquareforqed{\rule{0.6em}{0.6em}}
\def\bqed{\ifmmode\bsquareforqed\else{\unskip\nobreak\hfil
\penalty50\hskip1em\null\nobreak\hfil\bsquareforqed
\parfillskip=0pt\finalhyphendemerits=0\endgraf}\fi}
\spnewtheorem{observation}{Observation}{\bfseries}{\itshape}
\spnewtheorem{condition}{Condition}{\bfseries}{\itshape}
\spnewtheorem{clm}{Claim}{\bfseries}{\rmfamily}
\newcommand{\conf}[1]{} 
\newcommand{\confExt}[1]{#1} 
\newcommand{\journ}[1]{#1} 
\newcommand{\ds}{\displaystyle}  
\newcommand{\comp}[1]{\overline{#1}}  
\newcommand{\boxi}{\mbox{\textnormal{box}}}
\newcommand{\tw}{\hspace{0.5mm}\mbox{\textnormal{tree-width}\hspace{.5mm}}}
\newcommand{\floor}[1]{\left\lfloor #1 \right\rfloor}
\newcommand{\mA}{\mathcal{A}}
\newcommand{\mB}{\mathcal{B}}
\newcommand{\mC}{\mathcal{C}}
\newcommand{\mN}{\mathcal{N}}
\newcommand{\mP}{\mathcal{P}}
\newcommand{\mR}{\mathcal{R}}
\newcommand{\mS}{\mathcal{S}}
\renewcommand\subsubsection{\@startsection{subsubsection}{3}{\z@}%
                       {-18\p@ \@plus -4\p@ \@minus -4\p@}%
                       {8\p@ \@plus 4\p@ \@minus 4\p@}%
                       {\normalfont\normalsize\bfseries\boldmath
                        \rightskip=\z@ \@plus 8em\pretolerance=10000 }}
\begin{document}

\title{Representing a cubic graph as the intersection graph of
axis-parallel boxes in three dimensions}
\author{Abhijin Adiga\inst{1}, L. Sunil Chandran\inst{1}}
\institute{Department of Computer Science and Automation, Indian Institute
of Science, Bangalore--560012, India. \\email: \{abhijin,sunil\}@csa.iisc.ernet.in}
\date{}
\maketitle
\begin{abstract}
We show that every graph of maximum degree $3$ can be represented as the
intersection graph of axis parallel boxes in three dimensions, that is,
every vertex can be mapped to an axis parallel box such that two boxes
intersect if and only if their corresponding vertices are adjacent. In
fact, we construct a representation in which any two intersecting boxes
just touch at their boundaries. \confExt{Further, this construction can be
realized in linear time.}

\keywordname{ cubic graphs, intersection graphs, axis parallel boxes,
boxicity}
\end{abstract}
\section{Introduction}
We will be considering only simple, undirected and finite graphs.
Let ${\cal F}=\{S_1, S_2, \ldots, S_n\}$ be a family of sets. An
intersection graph associated with ${\cal F}$ has ${\cal F}$ as the
vertex set and two vertices $S_i$ and $S_j$ are adjacent if and only if
$i\neq j$ and $S_i\cap S_j\neq \emptyset$. It is interesting to study
intersection graphs of sets with some restriction, for example,
sets which correspond to geometric objects such as intervals, spheres,
boxes, axis-parallel lines, etc. Many important graph classes arise out
of such restrictions: interval graphs, circular arc graphs, unit-disk
graphs and grid-intersection graphs, to name a few. In this paper, we are
concerned with intersection graphs of $3$-dimensional boxes. A {\it
$3$-dimensional axis parallel box} ($3$-box in short) is a Cartesian
product of $3$ closed intervals on the real line. A graph is said to
have a {\it $3$-box representation} if it can be represented as the
intersection graph of $3$-boxes.

In the literature there are several results on representing a planar
graph as the intersection graph of various geometric objects. Among
these, the most noted result is the circle packing theorem (also known
as the Koebe-Andreev-Thurston theorem) from which it follows that
planar graphs are exactly the intersection graphs of closed disks in
the plane such that the intersections happen only at the boundaries.
In \cite{intervalRepPlanarGraphsThomassen}, Thomassen gave a similar
representation for planar graphs with $3$-boxes. He showed that every
planar graph has a {\it strict $3$-box representation}, that is,
intersections occur only in the boundaries of the boxes and two boxes
which intersect have precisely a $2$-box (a rectangle) in common.  Very
recently, Felsner and Francis \cite{contactRepPlanarGraphsFelsnerFrancis}
strengthened this result by showing that there exists a strict $3$-box
representation for a planar graph such that each box is an isothetic cube. In
\cite{phdThesisScheinerman,intervalRepPlanarGraphsThomassen},
it was shown that every planar graph has a strict representation
using at most two rectangles per vertex. Scheinerman and West
\cite{intervalNumberPlanarScheinermanWest} showed that every planar
graph is an intersection graph of intervals such that each vertex is
represented by at most three intervals on the real line.

We consider the question of whether a graph of maximum degree $3$
has a $3$-box representation. We note that there exist graphs
with maximum degree greater than $3$ which do have a $3$-box
representation. For example, it is easy to show that a $K_8$
minus a perfect matching does not have a $3$-box representation
\cite{recentProgressesInCombRoberts}. Considering the effort that has
gone into discovering geometric representation theorems for planar graphs,
it is surprising that no such results are known up to now in the case of
cubic graphs. It may be because of the fact that intuitively cubic graphs
are farther away from ``geometry" compared to planar graphs. In this paper
we present the first such theorem (as far as we know) for cubic graphs:
\begin{theorem}\label{thm:mainTheorem}
Every graph of maximum degree $3$ has a $3$-box representation with the
restriction that two boxes can intersect only at their boundaries.
\end{theorem}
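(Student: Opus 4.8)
The plan is to exhibit, for a graph $G$ of maximum degree $3$, three interval graphs $I_1,I_2,I_3$ on the vertex set $V(G)$, each containing $G$ as a subgraph, such that $E(I_1)\cap E(I_2)\cap E(I_3)=E(G)$; assigning to each vertex $v$ the box $J_1(v)\times J_2(v)\times J_3(v)$, where $J_k(v)$ is the interval of $v$ in $I_k$, then yields a $3$-box representation. In this language the requirement splits into two parts: every edge of $G$ must become an overlap of intervals in all three coordinates, while every non-edge must be \emph{separated} (disjoint intervals) in at least one coordinate. The boundary-touching refinement can be deferred: once a representation is in hand, I would perturb the finitely many interval endpoints to generic (say half-integer) values so that the overlaps of adjacent boxes can be shrunk to shared boundary facets without creating or destroying any intersection.

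First I would reduce to a convenient structured instance. Since boxicity is monotone under taking induced subgraphs, it suffices to treat $G$ as an induced subgraph of a connected cubic graph, so I may assume $G$ is $3$-regular (degree-deficient vertices are absorbed by attaching a bounded gadget, and distinct components are handled independently by placing their representations in disjoint regions of one coordinate). The key structural handle is the decomposition of a cubic graph into a perfect matching $M$ and a $2$-factor $H=G-M$, the latter being a disjoint union of cycles; bridged and non-matchable cases would be dispatched separately. The point of this split is that a disjoint union of cycles has boxicity $2$, so $H$ already admits a two-coordinate representation in which each cycle sits in its own band and distinct cycles, as well as all cycle non-edges, are cleanly separated.

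The heart of the argument is to upgrade this to a representation of $G=H\cup M$ by using the third coordinate to reinstate the matching edges. Concretely, the two cycle-coordinates are designed so that the two endpoints of each matching edge are co-located (nearly identical intervals) there, and the third coordinate is used both to create the overlaps forcing the matching pairs adjacent and to separate whatever non-edges the cycle-coordinates leave unseparated. \textbf{The main obstacle} is exactly this simultaneous demand: a matching edge must overlap in \emph{all three} coordinates, which forbids ever using a coordinate to separate its endpoints, yet those same endpoints lie on cycles whose non-neighbours must still be separated somewhere. Controlling this interaction amounts to understanding separation in the auxiliary graph obtained by identifying each matched pair---a graph of maximum degree $4$ on $|V|/2$ vertices---and is where the low degree of $G$ must be exploited most carefully. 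I expect the final construction to be explicit and local enough to run in linear time, with the verification reducing to checking, edge by edge and coordinate by coordinate, the two invariants that adjacent intervals overlap and that each non-edge is witnessed by a separating coordinate.
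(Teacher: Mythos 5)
Your proposal is a plan, not a proof, and the part you yourself flag as ``the main obstacle'' is precisely the part that is missing. Concretely, the two demands you place on the cycle-coordinates are mutually inconsistent: you want each cycle of the $2$-factor $H=G-M$ to ``sit in its own band'' (so that cross-cycle non-edges are separated there), and simultaneously you want the two endpoints of every matching edge to be ``co-located (nearly identical intervals)'' in those same coordinates. But a matching edge typically joins two \emph{different} cycles, so its endpoints cannot both lie in separate bands and be co-located. Once you repair this by letting bands overlap wherever a matching edge crosses, the separation burden collapses onto the single third coordinate: for every pair of matched pairs whose quotient vertices collide in coordinates $1$--$2$, all four cross pairs must be non-adjacent in $I_3$, while $I_3$ must still be an interval \emph{supergraph} of all of $G$ (cycle edges and matching edges alike). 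Whether one interval graph can do all of this is exactly the question of whether the degree-$4$ quotient $G/M$ admits a compatible structure, and no argument is given; nothing about $\Delta=3$ is actually ``exploited'' in the text. Two further steps would also fail as stated: not every cubic graph has a perfect matching (Petersen's theorem needs bridgelessness, and the smallest bridged counterexample has $16$ vertices), so ``bridged and non-matchable cases would be dispatched separately'' is an unproved case that cannot be waved away; and the boundary-touching refinement is not a generic-perturbation fact --- shrinking an overlap of positive length down to a shared facet in one coordinate is a global operation that can destroy other adjacencies, which is why the paper builds the touching property into the construction edge by edge rather than post-processing.

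For comparison, the paper takes a genuinely different structural route that sidesteps the matching issue entirely. Instead of a $2$-factor plus perfect matching, it greedily extracts \emph{special} induced cycles and paths (induced subgraphs that cannot be extended or exchanged for longer ones), and uses them to partition $V(G)$ into nine classes ($\mC,\mP_e,\mP_{2e},\mP_{2i},\mN,\mR,\mB_1,\mB_2,\mA$) with strong non-adjacency and unique-neighbor properties (Tables \ref{tab:nonAdj} and \ref{tab:uniqNeighbor}). The three interval graphs are then defined by explicit formulas keyed to this partition, and both invariants --- every edge overlaps in all three coordinates with an exact endpoint touch in at least one, and every non-edge is separated somewhere --- are verified case by case against the partition. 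The lesson is that the decomposition has to be chosen so that the ``hard'' pairs (your colliding matched pairs) never arise, rather than hoping a third coordinate can absorb them; your decomposition produces exactly the interaction that the paper's decomposition is engineered to avoid.
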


\subsection{$k$-box representations and boxicity}
The concept of $3$-box representation can be extended to higher
dimensions. A $k$-box is a Cartesian product of closed intervals
$[a_1,b_1]\times [a_2,b_2]\times\cdots\times [a_k,b_k]$. A graph $G$
has a {\it $k$-box representation} if it is the intersection graph of
a family of $k$-boxes in the $k$-dimensional Euclidean space. The
\emph{boxicity} of $G$ denoted by $\boxi(G)$, is the minimum
integer $k$ such that $G$ has a $k$-box representation. Clearly,
Theorem \ref{thm:mainTheorem} can be rephrased as: Every graph with
maximum degree $3$ has boxicity at most $3$.  The best known upper
bound for the boxicity of cubic graphs is $10$; it follows from
the bound $\boxi(G)\le 2\floor{\frac{\Delta^2}{2}}+2$ by Esperet
\cite{boxicityGraphsBoundedDegreeEsperet}, where $\Delta$ is the maximum
degree of the graph. In \cite{boxicityMaxDegreeSunilNaveenFrancis}, it
was conjectured that boxicity of a graph is $O(\Delta)$. However, this
was disproved in \cite{boxicityPosetDimensionAbhijinSunilDiptendu} by
showing the existence of graphs with boxicity $\Omega(\Delta\log\Delta)$.
Theorem \ref{thm:mainTheorem} implies that the conjecture is true for
$\Delta=3$.

Our result also implies that any problem which is hard for cubic graphs
is also hard for graphs with a $3$-box representation. We list a few
of such problems: crossing number, minimum vertex cover, Hamiltonian cycle,
maximum independent set, minimum dominating set and maximum cut.

We give a brief literature survey on boxicity. It was introduced
by Roberts in 1969 \cite{recentProgressesInCombRoberts}. Cozzens
\cite{phdThesisCozzens} showed that computing the boxicity of
a graph is NP-hard. This was later strengthened by Yannakakis
\cite{complexityPartialOrderDimnYannakakis} and finally by
Kratochv\`{\i}l \cite{specialPlanarSatisfiabilityProbNPKratochvil}
who showed that determining whether boxicity of a graph
is at most two itself is NP-complete. Adiga, Bhowmick and
Chandran \cite{boxicityPosetDimensionAbhijinSunilDiptendu}
showed that it is hard to approximate the boxicity of even
a bipartite graph within $\sqrt{n}$ factor, where $n$ is the
order of the graph. In \cite{computingBoxicityCozzensRoberts},
Cozzens and Roberts studied the boxicity of split graphs.
Chandran and Sivadasan \cite{boxicityTreewidthSunilNaveen}
showed that  $\boxi(G)\le\tw(G)+2$. Chandran, Francis and
Sivadasan~\cite{boxicityMaxDegreeSunilNaveenFrancis} proved that
$\boxi(G)\le2\chi(G^2)$, where $\chi$ is the chromatic number and $G^2$
is the square of the graph. 

Boxicity is a direct generalization of the concept of \emph{interval
graphs}. A graph is an interval graph if and only if it can be expressed
as the intersection of a family of intervals on the real line.
Since a $1$-box is an interval, it follows that interval graphs are
precisely the class of graphs with boxicity at most $1$ \footnote{The
only graph with boxicity $0$ is the complete graph.}. Now we present an
alternate characterization of $k$-box representation in terms of interval
graphs. This is used more frequently than its geometric definition.
\begin{lemma}\label{lem:intbox} 
A graph $G$ has a $k$-box representation if and only if there exist
$k$ interval graphs $I_1,I_2,\ldots,I_k$ such that $V(I_i)=V(G)$,
$i=1,2,\ldots,k$ and $E(I_1)\cap E(I_2)\cap\cdots\cap E(I_k)=E(G)$.
\end{lemma}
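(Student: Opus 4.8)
The plan is to prove both directions by isolating a single elementary fact: two axis-parallel boxes intersect if and only if their projections onto each of the $k$ coordinate axes intersect. Since the projection of a $k$-box onto one axis is a closed interval, this reduces box intersection to a conjunction of $k$ interval intersections, which is exactly the combinatorial content captured by the intersection of edge sets. I would state this coordinate-wise characterization explicitly and use it as the pivot for both implications.

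For the forward direction, I would start from a $k$-box representation $v \mapsto B_v = [a_1^v,b_1^v]\times\cdots\times[a_k^v,b_k^v]$ with $B_u\cap B_v\neq\emptyset$ iff $uv\in E(G)$. For each coordinate $i$, I would define a graph $I_i$ on $V(G)$ by assigning vertex $v$ the interval $[a_i^v,b_i^v]$; by the definition of interval graphs as intersection graphs of intervals, each $I_i$ is a legitimate interval graph with $V(I_i)=V(G)$. The key step is then to observe that $B_u\cap B_v\neq\emptyset$ precisely when $[a_i^u,b_i^u]\cap[a_i^v,b_i^v]\neq\emptyset$ for every $i$, i.e.\ when $uv\in E(I_i)$ for all $i$. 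This yields $uv\in E(G)$ iff $uv\in\bigcap_i E(I_i)$, so $E(G)=E(I_1)\cap\cdots\cap E(I_k)$ as required.

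For the converse, I would begin with interval graphs $I_1,\ldots,I_k$ on the common vertex set $V(G)$ satisfying $E(I_1)\cap\cdots\cap E(I_k)=E(G)$. Each $I_i$ admits an interval representation $v\mapsto J_i^v$ on the real line. Assembling these coordinate-wise, I would assign to each $v$ the box $B_v=J_1^v\times\cdots\times J_k^v$. Applying the same coordinate-wise intersection fact in reverse, $B_u\cap B_v\neq\emptyset$ iff $J_i^u\cap J_i^v\neq\emptyset$ for all $i$, iff $uv\in E(I_i)$ for all $i$, iff $uv\in E(G)$. Hence $\{B_v\}$ is a $k$-box representation of $G$.

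I do not expect any genuine obstacle here: the only real content is the coordinate-wise characterization of box intersection, and once that is established both directions are immediate translations between the geometric formulation and the edge-intersection formulation. The argument is essentially a bookkeeping exercise in matching the $k$ coordinate axes with the $k$ interval graphs.
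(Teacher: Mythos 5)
Your proof is correct, and it is the standard argument: the paper itself states Lemma~\ref{lem:intbox} without proof, treating it as a known characterization (it goes back to Roberts' original work on boxicity), and the expected proof is precisely your coordinate-wise one. The single fact you pivot on---that $\bigl(\prod_i A_i\bigr)\cap\bigl(\prod_i B_i\bigr)=\prod_i (A_i\cap B_i)$, so two boxes intersect if and only if their interval projections intersect in every coordinate---is exactly what makes both directions immediate, and your handling of it is sound.
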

Our proof of Theorem \ref{thm:mainTheorem} uses Lemma \ref{lem:intbox}; we
construct $3$ interval graphs such that the given cubic graph is the
intersection of these interval graphs.

We observe that there exist graphs with maximum degree $3$ (and hence
cubic graphs) with boxicity strictly greater than $2$. For example,
Let $G$ be a non-planar cubic graph and $G_s$ be the graph obtained by
subdividing each edge once. Then, $\boxi(G_s)>2$. It is an easy exercise
to prove this. One way is to show that if $G_s$ does have a $2$-box
representation, then a planar embedding for $G$ can be derived from
this box representation, contrary to the initial assumption that $G$ is
a non-planar graph. This means that these graphs do not have a $2$-box
representation, that is, they cannot be expressed as the intersection
graphs of rectangles on the plane. The rest of the paper is devoted to
the proof of Theorem \ref{thm:mainTheorem}.

\subsection{Notation}
Let $G$ be a graph. The notation $(x,y)\in E(G)$ ($(x,y)\notin
E(G)$) means that $x$ is (not) adjacent to $y$ in $G$. For $U\subseteq
V(G)$, $G[U]$ denotes the graph induced by $U$ in $G$. The \emph{open
neighborhood} of $U$, denoted by $N(U,G)$ is the set $\left\{x\in
V(G)\setminus U|\exists y\in U \mbox{ such that }(x,y)\in E(G)\right\}$.
The length of a path is the number of edges in the path. We consider an
isolated vertex as a path of length $0$. Suppose $G$ and $H$ are graphs
defined on the same vertex set. $G\cap H$ denotes the graph with $V(G\cap
H)=V(G)$ ($=V(H)$) and $E(G\cap H)=E(G)\cap E(H)$.

Consider a non-empty set $X$ and let $\Pi$ be an ordering of the elements
of $X$. $\comp{\Pi}$ denotes the reverse of $\Pi$, that is, for any $x,y\in
X$, $\comp{\Pi}(x)<\comp{\Pi}(y)$ if and only if $\Pi(x)>\Pi(y)$. Let
$A$ and $B$ be disjoint subsets of $X$. The notation $\Pi(A)<\Pi(B)$
implies the following: $\forall a\in A,\ b\in B$, $\Pi(a)<\Pi(b)$.

\begin{lemma}\label{lem:cubicEnough}
If every cubic graph has a $3$-box representation, then, every graph of
maximum degree $3$ also has a $3$-box representation. The statement holds
even when the intersections are restricted to the boundaries of the boxes.
\end{lemma}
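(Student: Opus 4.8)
The plan is to reduce a graph $G$ of maximum degree $3$ to a cubic (3-regular) graph $G'$ by adding vertices and edges so that every vertex of $G$ attains degree exactly $3$, while ensuring that a strict $3$-box representation of $G'$ restricts to one of $G$. The natural way to achieve this is to take a suitable number of disjoint copies of $G$ (or auxiliary gadgets) and connect up the deficient vertices among the copies so that no new adjacencies are created \emph{within} a single copy. Concretely, I would add a set of padding edges and possibly dummy vertices whose only role is to raise degrees to $3$; the key constraint is that these padding edges must connect vertices that are \emph{not} in the same copy of $G$, so that $G$ appears as an induced subgraph of $G'$.

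The first step is to make this padding construction precise. A clean approach is to take two copies $G_1, G_2$ of $G$ and, for each vertex $v$ of degree $d_v < 3$, add $3 - d_v$ edges between the copy of $v$ in $G_1$ and the copy of $v$ in $G_2$ (together with a few extra dummy vertices or a bipartite gadget to absorb any parity problem, since the total degree deficiency must be made even and the matching between copies must be realizable). The outcome is a cubic graph $G'$ containing $G$ (say $G_1$) as an induced subgraph on the vertex set $V(G)$. By hypothesis $G'$ has a strict $3$-box representation.

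The second step is the restriction argument. Given a strict $3$-box representation of $G'$, assign to each vertex $v \in V(G)$ the very same box that was assigned to its copy in $G_1$. Because $G = G_1$ is an \emph{induced} subgraph of $G'$, two such boxes intersect if and only if the corresponding vertices are adjacent in $G'$, which (since both endpoints lie in $V(G_1)$) happens if and only if they are adjacent in $G$. This yields a valid $3$-box representation of $G$, and since the representation of $G'$ was strict, the restricted one is strict as well. Invoking Lemma~\ref{lem:intbox} is an alternative route: one could instead restrict each of the three interval graphs $I_1, I_2, I_3$ of $G'$ to the vertex set $V(G)$ and check that their intersection is exactly $E(G)$, which again follows from $G$ being induced.

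The main obstacle I anticipate is \emph{not} the restriction argument (which is essentially immediate once $G$ is induced) but getting the padding construction right, in particular handling the parity of the total degree deficiency and guaranteeing that the added edges never accidentally join two vertices of the same copy or create a multi-edge. Using several copies of $G$ and routing the deficiency edges across copies in a carefully balanced (e.g.\ cyclic) fashion should resolve the parity and multiplicity issues; the number of copies and dummy vertices needed is small and the whole construction is clearly computable in linear time, consistent with the paper's linear-time claim.
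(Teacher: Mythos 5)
Your proposal follows the same two-step strategy as the paper: embed $G$ as an \emph{induced} subgraph of a cubic graph $G'$, then restrict a (strict) $3$-box representation of $G'$ to the boxes of $V(G)$; the restriction step, which you correctly identify as immediate, is word-for-word the paper's argument. The difference is entirely in the padding gadget, and here the paper's choice is cleaner than yours. Your two-copy construction, taken literally, adds $3-d_v$ edges between the two clones of $v$, which is a multigraph whenever $d_v \le 1$; you flag this yourself and gesture at ``several copies routed cyclically,'' but you never pin the construction down, so as written the proof of existence of $G'$ is incomplete (a concrete repair: take four copies $G_1,\dots,G_4$ and, for each vertex $v$ of deficiency $k=3-d_v$, place a $k$-regular graph on the four clones of $v$ --- a perfect matching for $k=1$, a $4$-cycle for $k=2$, a $K_4$ for $k=3$ --- which raises every clone's degree by exactly $k$ with no multi-edges and no edges inside a copy). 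The paper avoids all of this bookkeeping with a single auxiliary gadget: it sets $D=3|V(H)|-\sum_{v}d(v)$, takes a disjoint cycle $C_D$, and joins each deficient vertex $v$ to $3-d(v)$ \emph{unique} vertices of $C_D$; then every cycle vertex has degree $2+1=3$, every vertex of $H$ has degree $3$, no edge is added inside $V(H)$ (so $H$ stays induced), and no parity or multiplicity issue ever arises. So your approach is correct in substance and salvageable, but the gadget needs to be made precise before the argument is complete; the paper's cycle trick is the economical way to do it.
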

\journ{
\begin{proof}
Let $H$ be a non-cubic graph with maximum degree $3$. We will show
that there exists a cubic graph $H'$ such that $H$ is an induced
subgraph of $H'$. Here is one way of constructing $H'$ from $H$. Let
$D=3|V(H)|-\sum_{v\in V(H)}d(v)$, where $d(v)$ is the degree of $v$. Let
$C_D$ be a $D$-length cycle such that $V(C_D)\cap V(H)=\varnothing$. We
construct $H'$ as follows: Let $V(H')=V(H)\cup V(C_D)$. $H'$ contains
all the edges contained in $H$ and $C_D$ and in addition, each vertex
$v\in V(H)$ is made adjacent to $3-d(v)$ unique vertices from $V(C_D)$,
where $d(v)$ is the degree of $v$. Clearly, $H'$ is cubic and by
Theorem \ref{thm:mainTheorem}, has a $3$-box representation. Since $H$
is an induced subgraph of $H'$, any box representation of $H'$ can be
converted to a box representation of $H$ by simply retaining only the
boxes of vertices belonging to $H$. \qed
\end{proof}
}
In view of Lemma \ref{lem:cubicEnough}, we note that it is enough to prove
that a cubic graph has a $3$-box representation. Therefore, in our proof of
Theorem \ref{thm:mainTheorem}, we will assume that the graph is cubic.

\section{Structural prerequisites}\label{sec:structPre}
\subsection{Special cycles and paths}
\begin{definition}{\bf Special cycle:}\label{def:specialCycle}
An induced cycle $C$ is a special cycle if for all $x\in C$,
$C\setminus\{x\}$ is not a subgraph of an induced cycle or path of
size $\ge|C|+1$.
\end{definition}
\begin{definition}{\bf Special path:}\label{def:specialPath}
An induced path $P$ is a special path if
\begin{enumerate}
\item it is maximal in the sense that it is not a subgraph of an induced
cycle or a longer induced path, and
\item for any end point of $P$, say $x$, $P\setminus\{x\}$ is not a
subgraph of an induced cycle of size $\ge|P|$ or an induced path of length
$\ge|P|+1$.
\end{enumerate}
\end{definition}

\begin{observation}\label{obs:nonSpecial}
Any connected graph with at least $3$ vertices contains a special cycle
or path.
\end{observation}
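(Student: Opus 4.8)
The plan is to show that any connected graph $G$ with $|V(G)| \ge 3$ contains either a special cycle or a special path. I would proceed by extracting a maximal-length induced structure and then arguing that if it fails to be special, we can always find a strictly longer (or differently shaped) induced structure, eventually terminating because the graph is finite.

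First I would consider a longest induced path $P$ in $G$. Since $G$ is connected with at least $3$ vertices, there is at least one edge, and in fact (unless $G$ is complete or has very small structure) we can find an induced path of length $\ge 1$. The key is to take a longest induced path $P$ overall; by maximality of its length, $P$ is automatically maximal in the sense of condition~1 of Definition~\ref{def:specialPath} — it cannot be extended into a longer induced path, and if it were a proper subgraph of an induced cycle, that cycle would (after deleting one vertex) yield an induced path of length $|P|+1$, contradicting maximality. So the only thing that can go wrong is condition~2: at some endpoint $x$, the path $P \setminus \{x\}$ extends into a large induced cycle or longer induced path.

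Next I would turn that failure into progress. If $P$ is not special, then for one of its endpoints $x$, the shorter induced path $P' = P \setminus \{x\}$ sits inside either (a)~an induced path of length $\ge |P|+1$, which contradicts the choice of $P$ as a \emph{longest} induced path, or (b)~an induced cycle $C$ of size $\ge |P|$. Case~(a) is immediately impossible, so we land in case~(b) with a sizable induced cycle $C$ in hand. Now I would run the analogous argument on cycles: choose $C$ to be a \emph{shortest} induced cycle among all induced cycles of size $\ge |P|$ (or more cleanly, a shortest induced cycle in $G$ subject to being large enough). If this $C$ is not special, then for some $x \in C$ the path $C \setminus \{x\}$ embeds in an induced cycle or path of size $\ge |C|+1$; a larger induced cycle would contradict a suitable minimality/maximality choice, and a longer induced path would again contradict maximality of $P$ — so I expect to derive a contradiction in every branch, forcing $C$ to be special.

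**The main obstacle** will be setting up the right extremal quantity so that every way a structure can fail to be special produces a \emph{strictly} better object in the same well-ordered measure, closing the loop without circularity between the path case and the cycle case. The delicate point is that ``special'' mixes comparisons against both cycles and paths, so a naive ``take the longest path'' argument only handles one direction; I would likely need a combined lexicographic measure (for instance, maximize induced-path length first, then among the resulting cycles minimize or maximize cycle size as appropriate) to ensure termination. Once the correct extremal choice is fixed, each verification of the defining conditions of Definitions~\ref{def:specialCycle} and~\ref{def:specialPath} should be a short contradiction, and the finiteness of $G$ guarantees the process halts, yielding the claimed special cycle or path. $\qed$
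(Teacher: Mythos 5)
Your overall strategy (make an extremal choice among induced paths and cycles, then derive short contradictions) is the right one, but two concrete steps are wrong, and the second one leaves the argument genuinely unclosed. First, the claim that a longest induced path $P$ \emph{automatically} satisfies condition~1 of Definition~\ref{def:specialPath} is false. If $P$ is a subgraph of an induced cycle $C$, then $|V(C)|\ge |V(P)|+1$ (because $G[V(P)]=P$ is a path, not a cycle), but deleting one vertex of $C$ yields an induced path on $|V(C)|-1\ge |V(P)|$ vertices --- not $|V(P)|+1$ as you assert. So there is no contradiction when $|V(C)|=|V(P)|+1$, and this tight case really occurs: in $G=C_n$ the longest induced path has $n-1$ vertices and sits inside the cycle. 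Second, your extremal choice in the cycle stage --- a \emph{shortest} induced cycle among those with at least $|V(P)|$ vertices --- does not close the remaining branch. Concretely, take a $5$-cycle $v_1v_2v_3v_4v_5$ and add a vertex $u$ adjacent to $v_1$ and $v_3$. Here the longest induced path has $4$ vertices, and your rule selects the $4$-cycle $uv_1v_2v_3$; it is not special, since deleting $v_2$ leaves the path $v_1uv_3$ inside the induced $5$-cycle $uv_3v_4v_5v_1$, and that $5$-cycle contradicts neither the minimality of your chosen cycle nor the maximality of $P$ (its vertex-deleted subpaths have only $4$ vertices). You yourself flag this as ``the main obstacle'' and leave the choice open (``minimize or maximize cycle size as appropriate''), so the proposal as written does not terminate every branch.

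The paper resolves both issues with a single extremal choice: among \emph{all} vertex sets inducing a cycle or a path, take one of maximum cardinality. If some maximum set induces a cycle, that cycle is special outright, since no induced cycle or path with more vertices exists. Otherwise every maximum set induces a path, and each such path is special: there is no longer induced path, no induced cycle with more vertices (by maximality), and no induced cycle with equally many vertices (else a maximum set would induce a cycle). Your two-stage version can be repaired to match this --- choose the \emph{largest} induced cycle, and use the fact that every induced cycle has at most $|V(P)|+1$ vertices because deleting a vertex from it leaves an induced path --- but as submitted, the two errors above are real gaps.
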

\journ{
This is easy to see. Among all sets of vertices which induce cycles or
paths in the graph, consider the largest sets. If one of these sets
induces a cycle, then, clearly this is a special cycle since there is
no larger induced path or cycle in the graph. If none of them induces a cycle,
then, each of these sets induces a special path since there is no induced
longer path or an induced cycle of the same size in the graph.
}
\subsection{Partitioning the vertex set of a cubic graph}
Let $G$ be a cubic graph and let $V=V(G)$. We partition $V$ in
two stages. In Algorithm \ref{alg:primPart}, we obtain the primary
partition: $V=\mS\uplus\mN_1\uplus\mA_1$. This is followed by a finer
partitioning in Algorithm \ref{alg:finePart}: $\mN_1=\mR\uplus\mN$
and $\mA_1=\mB\uplus\mA$.

\begin{algorithm}[h]
\caption{\label{alg:primPart}}
\SetKwInOut{Input}{input}
\SetKwInOut{Output}{output}
\Input{Cubic graph $G$}
\Output{$\mS,\mN_1,\mA_1$ such that $V=\mS\uplus\mN_1\uplus\mA_1$.}
Let $V'=V$ and $\mS=\varnothing$\; 
\While{there is a connected component in $G[V']$ with at least $3$ vertices}{\nllabel{lin:specialContra}
   Let $T\subseteq V'$ be a set which induces a special cycle or path; \tcp{which exists by Observation \ref{obs:nonSpecial}}
   $\mS\longleftarrow \mS\cup T$\;
   $V'\longleftarrow V'\setminus\{T\cup N(T,G[V'])\}$\nllabel{lin:removeNeighbors}\;
}
$\mA_1=V'$\;
$\mN_1=N(\mS,G)$;
\end{algorithm}

\begin{observation}\label{obs:primPart}
We have some easy observations from Algorithm \ref{alg:primPart}:
\begin{enumerate}
\item $\mS$ induces a collection of cycles and paths in $G$.\label{primPart:inducedCyclesPaths}
\item Every vertex in $\mS$ has at least one neighbor in
$\mN_1$.\label{primPart:SN1} Therefore, every vertex in $\mN_1$ is
adjacent to at most two vertices in $\mA_1$.
\item For any $u\in\mS$ and $v\in\mA_1$, $u$ and $v$ are not
adjacent. \label{primPart:ADisjointS}
\item $\mA_1$ induces a collection of isolated vertices and edges
in $G$. \journ{This observation follows from the fact that $G[\mA_1]$ does not
contain any special cycle or path and therefore, from Observation
\ref{obs:nonSpecial}, does not contain any component with three or more
vertices.} \label{primPart:isolatedA}
\end{enumerate}
\end{observation}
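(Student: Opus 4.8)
The plan is to isolate a single structural invariant of Algorithm~\ref{alg:primPart} and then read off all four items from it together with the fact that $G$ is cubic. The engine of the whole argument is the neighbour-deletion step on line~\ref{lin:removeNeighbors}: in the iteration that selects a set $T$, we delete $T\cup N(T,G[V'])$ from $V'$, and since $V'$ only ever shrinks, any vertex surviving to a later stage was already present at every earlier stage. I would first establish the invariant: \emph{once $T$ has been processed, the current (and hence every subsequent) value of $V'$ contains no vertex of $T$ and no vertex adjacent in $G$ to any vertex of $T$.} The proof is a one-line monotonicity argument: if $w$ lies in $V'$ after the deletion and $w$ is adjacent to some $u\in T$, then $w$ was in $V'$ when $T$ was chosen and $w\neq u$, so $w\in N(T,G[V'])$ and would have been deleted — a contradiction. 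The only place where care is needed is precisely this temporal point, that ``$w$ survives now'' forces ``$w$ was present then''; everything else is bookkeeping.

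Items~\ref{primPart:inducedCyclesPaths} and~\ref{primPart:ADisjointS} fall out immediately. For item~\ref{primPart:ADisjointS}, a vertex $v\in\mA_1$ is by definition a member of the final $V'$, so it survives the processing of every selected $T$; the invariant then forbids $v$ from being adjacent to any $u\in\mS$, giving the absence of edges between $\mS$ and $\mA_1$. For item~\ref{primPart:inducedCyclesPaths}, I would apply the invariant to two distinct selected sets $T_i$ (earlier) and $T_j$ (later): since $T_j$ lies in the post-$T_i$ value of $V'$, there can be no edge of $G$ between $T_i$ and $T_j$, so distinct selected sets are pairwise non-adjacent. Each $T_i$ induces a special cycle or path, hence an induced cycle or path, in $G[V']$ and therefore in $G$; combined with pairwise non-adjacency, this makes $G[\mS]$ the disjoint union of these induced cycles and paths, which is exactly the claimed collection.

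Items~\ref{primPart:SN1} and~\ref{primPart:isolatedA} then follow by degree counting. For item~\ref{primPart:SN1}, item~\ref{primPart:inducedCyclesPaths} tells me every $u\in\mS$ has at most two neighbours inside $\mS$ (its at most two neighbours along its cycle or path); as $G$ is cubic, $u$ therefore has at least one neighbour in $V\setminus\mS$, and any such neighbour lies in $\mN_1=N(\mS,G)$. The stated consequence is a complementary count: by the definition of $\mN_1$, every $v\in\mN_1$ already has at least one neighbour in $\mS$, leaving at most two of its three neighbours available for $\mA_1$. For item~\ref{primPart:isolatedA}, I would invoke the loop's exit condition: on termination $G[\mA_1]=G[V']$ has no connected component with three or more vertices, for otherwise Observation~\ref{obs:nonSpecial} would produce a special cycle or path inside it and the loop would not have stopped. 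Hence every component of $G[\mA_1]$ has at most two vertices, i.e.\ is an isolated vertex or a single edge, as required.

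I expect no genuine obstacle here, since the observation is elementary; the one spot that must be stated precisely is the monotonicity of $V'$ used to prove the invariant, from which items~\ref{primPart:inducedCyclesPaths} and~\ref{primPart:ADisjointS} are derived. The counting arguments for items~\ref{primPart:SN1} and~\ref{primPart:isolatedA} are then routine consequences of cubicity and the termination condition.
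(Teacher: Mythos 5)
Your proof is correct and follows essentially the same route as the paper, which treats items 1--3 as immediate consequences of the neighbour-deletion step on line~\ref{lin:removeNeighbors} (your monotonicity invariant is just the explicit formalization of this) and justifies item 4 exactly as you do, via the loop's exit condition together with Observation~\ref{obs:nonSpecial}. No gaps; your write-up merely makes precise what the paper leaves implicit.
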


\begin{algorithm}[h]
\caption{\label{alg:finePart}}
\SetKwInOut{Input}{input}
\SetKwInOut{Output}{output}
\Input{Cubic graph $G$, $\mN_1$, $\mA_1$}
\Output{$\mN,\mA,\mR,\mB$ such that $\mN_1=\mR\uplus\mN$ and $\mA_1=\mB\uplus\mA$.}
Let $\mR=\mB=\varnothing$\;
\ForEach{$v\in\mN_1$}{
   \tcp{Recall that $v$ is adjacent to at least one vertex in $\mS$ and therefore to at most $2$ vertices in $\mA_1$.}
   \If {$v$ is adjacent to two vertices $\mA_1\setminus\mB$,}{\nllabel{lin:N2A}
      Let $X_1(v)=N(v,G[\mA_1\setminus\mB])$; \tcp{the two neighbors of $v$
      in $\mA_1\setminus\mB$}\nllabel{lin:X1}
      Let $X_2(v)=N(X_1(v),G[\mA_1\setminus\mB])$; \tcp{neighbors of
      neighbors of $v$ in $\mA_1\setminus\mB$}\nllabel{lin:X2} 
      $\mR\longleftarrow\mR\cup\{v\}$\;\nllabel{lin:vR}
      $\mB\longleftarrow\mB\cup(X_1(v)\cup X_2(v))$\;\nllabel{lin:BX1X2}
   }
}
$\mN=\mN_1\setminus\mR$\;
$\mA=\mA_1\setminus\mB$;
\end{algorithm}

\begin{observation}\label{obs:finePart}
Some observations from Algorithm \ref{alg:finePart}.
\begin{enumerate}
\item Every vertex in $\mN$ is adjacent to at most one vertex in
$\mA$.\label{finePart:N1A}
\item Every vertex in $\mR$ is adjacent to one vertex in $\mS$ and
two vertices in $\mB$. This immediately implies that (a) $\mR$ is an
independent set and (b) for any $u\in\mR$ and $v\in\mN\cup\mA$, $u$
and $v$ are not adjacent.\label{finePart:RDisjointNA}
\item Since $\mB\subseteq\mA_1$, for any $u\in\mB$ and $v\in\mS$, $u$
and $v$ are not adjacent, by Observation
\ref{obs:primPart}.\ref{primPart:ADisjointS}.\label{finePart:BDisjointS}
\item For any $u\in\mB$ and $v\in\mA$, $u$ and $v$ are not adjacent.
\journ{The
proof is as follows. Since $u\in\mB$, it follows that there exists a
$w\in\mR$ such that in Algorithm \ref{alg:finePart}, $u\in X_1(w)\cup
X_2(w)$. From Observation \ref{obs:primPart}.\ref{primPart:isolatedA},
$u$ is adjacent to at most one vertex in $\mA_1$. If it does have a
neighbor in $\mA_1$, it must belong to $X_1(w)\cup X_2(w)$ which is a
subset of $\mB$. Since $\mA=\mA_1\setminus\mB$, $u$ is not adjacent to
any vertex in $\mA$.} \label{finePart:BDisjointA}
\end{enumerate}
\end{observation}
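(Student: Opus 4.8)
The plan is to derive all four items directly from the mechanics of Algorithm~\ref{alg:finePart}, using two structural facts established earlier: that $G[\mA_1]$ induces only isolated vertices and edges (Observation~\ref{obs:primPart}.\ref{primPart:isolatedA}), so every vertex of $\mA_1$ has at most one neighbor inside $\mA_1$; and that every vertex of $\mN_1$ has at least one neighbor in $\mS$ (Observation~\ref{obs:primPart}.\ref{primPart:SN1}), hence at most two neighbors in $\mA_1$. The single observation that ties everything together is that $\mB$ only grows during the \textbf{foreach} loop, so a vertex placed in $\mB$ at any moment stays in $\mB$, and the set $\mA=\mA_1\setminus\mB$ computed at the end is contained in $\mA_1\setminus\mB$ as it stood at any earlier moment.

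For item~2 I would argue by a degree count. If $v\in\mR$, then $v$ was added to $\mR$ on line~\ref{lin:vR} precisely because it had two neighbors $X_1(v)$ in the current $\mA_1\setminus\mB$, and these two vertices were immediately placed in $\mB$ on line~\ref{lin:BX1X2}; thus $v$ has two neighbors in $\mB$. Since $v\in\mN_1$ it also has at least one neighbor in $\mS$. As $G$ is cubic, $v$ has exactly three neighbors, so these account for all of them: exactly one in $\mS$ and exactly two in $\mB$, and none elsewhere. Both corollaries are then immediate, since every neighbor of an $\mR$-vertex lies in $\mS\cup\mB$, a set disjoint from $\mR\cup\mN\cup\mA$; this gives (a) independence of $\mR$ and (b) non-adjacency of $\mR$ to $\mN\cup\mA$. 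Item~3 is a one-line consequence of $\mB\subseteq\mA_1$ together with Observation~\ref{obs:primPart}.\ref{primPart:ADisjointS}.

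For item~1, take $v\in\mN=\mN_1\setminus\mR$. The key point is \emph{why} $v$ was not added to $\mR$: when the loop examined $v$ (line~\ref{lin:N2A}), it found $v$ adjacent to at most one vertex of the then-current $\mA_1\setminus\mB$. By monotonicity of $\mB$, the final $\mA$ is a subset of that set, so $v$ has at most one neighbor in $\mA$. For item~4, take $u\in\mB$, so $u\in X_1(w)\cup X_2(w)$ for some $w\in\mR$. Because $u\in\mA_1$ it has at most one neighbor in $\mA_1$, and I would check that this neighbor, if it exists, is forced into $\mB$. If $u\in X_1(w)$, its unique $\mA_1$-neighbor is exactly what line~\ref{lin:X2} sweeps into $X_2(w)\subseteq\mB$ (or it was already in $\mB$ from an earlier iteration); if $u\in X_2(w)$, then $u$ is adjacent to a vertex of $X_1(w)\subseteq\mB$, which must be its unique $\mA_1$-neighbor. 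Either way $u$'s only possible neighbor inside $\mA_1$ lies in $\mB$, so $u$ has no neighbor in $\mA=\mA_1\setminus\mB$; and since $\mA\subseteq\mA_1$, $u$ is not adjacent to any vertex of $\mA$.

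The step I expect to require the most care is the timing/monotonicity argument underlying items~1 and~4. One must consistently distinguish ``$\mB$ at the moment $v$ (resp.\ $w$) is processed'' from the final $\mB$, and verify that the $X_2$ mechanism genuinely captures the at-most-one $\mA_1$-neighbor of each $X_1$-vertex even when that neighbor had already been absorbed into $\mB$ by an earlier iteration. Everything else is bookkeeping on top of the degree-$3$ constraint and the prior observations.
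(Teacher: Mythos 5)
Your overall strategy is exactly the paper's: bound the number of $\mA_1$-neighbors of each relevant vertex via Observation \ref{obs:primPart}.\ref{primPart:isolatedA}, combine the degree-$3$ constraint with Observation \ref{obs:primPart}.\ref{primPart:SN1} for item 2, and exploit the monotone growth of $\mB$ for items 1 and 4. Items 1, 2 and 3 are correct as written; the paper leaves them essentially implicit, and your monotonicity argument for item 1 is the right way to make them explicit.

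There is, however, one hole in your case analysis for item 4. When $u\in X_1(w)$, you assert that $u$'s unique $\mA_1$-neighbor is ``exactly what line \ref{lin:X2} sweeps into $X_2(w)$, or it was already in $\mB$ from an earlier iteration.'' This misses the configuration in which the two vertices of $X_1(w)$ are adjacent to each other (graph (a) of Figure \ref{fig:componentsY}): there, $u$'s unique $\mA_1$-neighbor is the \emph{other} vertex of $X_1(w)$, which is never placed in $X_2(w)$ --- the paper's $N(\cdot,\cdot)$ is an open neighborhood, so $N(X_1(w),G[\mA_1\setminus\mB])$ excludes $X_1(w)$ itself, and indeed $X_2(w)=\varnothing$ in this case (Observation \ref{obs:X1X2}) --- and it is not in $\mB$ from an earlier iteration either, since it lies in $\mA_1\setminus\mB$ at the moment $w$ is processed. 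Both branches of your disjunction fail on a realizable case. The conclusion survives because that neighbor enters $\mB$ in the \emph{same} iteration via line \ref{lin:BX1X2}, which adds all of $X_1(w)\cup X_2(w)$ to $\mB$; this is exactly why the paper phrases the step as ``the neighbor must belong to $X_1(w)\cup X_2(w)$, which is a subset of $\mB$,'' covering all three possibilities at once. (Incidentally, your fallback branch is vacuous: had the neighbor entered $\mB$ in an earlier iteration, $u$ itself would have been swept into $X_2$ of that iteration and hence into $\mB$ at that time, contradicting $u\in X_1(w)\subseteq\mA_1\setminus\mB$.) So: same approach as the paper, items 1--3 sound, item 4 needs this one-line repair.
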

\journ{
\begin{observation}\label{obs:X1X2}
We have some observations regarding $X_1(\cdot)$ and $X_2(\cdot)$ which
are defined in Algorithm \ref{alg:finePart}. Let $v\in\mR$.
\begin{enumerate}
\item $|X_1(v)|=2$.
\item Since $X_1(v)\cup X_2(v)\subseteq\mA_1$, from Observation
\ref{obs:primPart}.\ref{primPart:isolatedA} it follows that every vertex
in $X_1(v)\cup X_2(v)$ has at most one neighbor in $\mA_1$ and this
neighbor is in $X_1(v)\cup X_2(v)$.\label{X1X2:closed}
\item If the two vertices in $X_1(v)$ are adjacent, then, $X_2(v)$ is empty.
\item If $X_2(v)$ is not empty, then, again from Observation
\ref{obs:primPart}.\ref{primPart:isolatedA}, every vertex in $X_2(v)$
is adjacent to exactly one vertex in $X_1(v)$ and $|X_2(v)|\le2$.
\end{enumerate}
\end{observation}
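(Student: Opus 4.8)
The plan is to reduce all four items to the single structural fact recorded in Observation \ref{obs:primPart}.\ref{primPart:isolatedA}, namely that $G[\mA_1]$ is a disjoint union of isolated vertices and edges; equivalently, every vertex of $\mA_1$ has at most one neighbour inside $\mA_1$. With this matching structure in hand, three of the four items are immediate and only item (2) requires genuine work.

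First I would dispose of item (1). A vertex $v$ enters $\mR$ precisely when the test on line \ref{lin:N2A} succeeds, i.e.\ when $v$ has two neighbours in $\mA_1\setminus\mB$ at the moment it is processed, and $X_1(v)$ is defined to be exactly this neighbourhood; hence $|X_1(v)|=2$. (That $v$ cannot have \emph{more} than two neighbours in $\mA_1\supseteq\mA_1\setminus\mB$ is guaranteed by Observation \ref{obs:primPart}.\ref{primPart:SN1}, so ``two'' means exactly two.) Items (3) and (4) then follow from the matching structure together with the fact that $X_2(v)=N(X_1(v),G[\mA_1\setminus\mB])$ is an \emph{open} neighbourhood, so it excludes $X_1(v)$ itself. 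For (3): if the two vertices of $X_1(v)$ are adjacent they constitute a whole edge-component of $G[\mA_1]$, so neither has any further $\mA_1$-neighbour and $X_2(v)=\varnothing$. For (4): each $x\in X_2(v)$ lies in $\mA_1$ and therefore has a unique $\mA_1$-neighbour, which by definition of $X_2(v)$ is a vertex of $X_1(v)$; and since each of the two vertices of $X_1(v)$ in turn has at most one $\mA_1$-neighbour, at most two vertices can appear in $X_2(v)$.

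The only point needing real work is item (2), specifically the assertion that the (at most one) $\mA_1$-neighbour of a vertex of $X_1(v)\cup X_2(v)$ again lies in $X_1(v)\cup X_2(v)$. I would prove the slightly stronger invariant that, throughout Algorithm \ref{alg:finePart}, the set $\mB$ is always a union of \emph{entire} connected components of $G[\mA_1]$, arguing by induction on the order in which the vertices of $\mR$ are processed. The base case $\mB=\varnothing$ is trivial. For the inductive step, when $v$ is processed the two vertices of $X_1(v)$ lie in $\mA_1\setminus\mB$, so by the invariant their whole components lie in $\mA_1\setminus\mB$; the definition of $X_2(v)$ then sweeps in exactly the remaining component-mates of these two vertices, so $X_1(v)\cup X_2(v)$ equals the union of the connected components of $G[\mA_1]$ containing $X_1(v)$, and $\mB$ remains a union of components.

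This invariant yields (2) at once: a vertex and its unique $\mA_1$-neighbour lie in a common component of $G[\mA_1]$, which is entirely contained in $X_1(v)\cup X_2(v)$. I expect this induction --- and in particular verifying that a vertex of $X_1(v)$ cannot already have its component-partner sitting in $\mB$ while it itself remains outside --- to be the main (and essentially the only) obstacle; everything else is bookkeeping on the edge/isolated-vertex structure of $G[\mA_1]$.
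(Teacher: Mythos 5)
Your proposal is correct, and its skeleton matches the paper's: all four items are reduced to the matching structure of $G[\mA_1]$ recorded in Observation \ref{obs:primPart}.\ref{primPart:isolatedA}, and your treatments of items (1), (3) and (4) coincide with the paper's inline reasoning. The difference is in item (2): the paper simply asserts the closure claim --- that the unique $\mA_1$-neighbour of a vertex of $X_1(v)\cup X_2(v)$ again lies in $X_1(v)\cup X_2(v)$ --- as an immediate consequence of the matching structure, whereas you correctly flag the hidden issue (that neighbour could a priori already sit in $\mB$, inside some earlier $\Gamma(u)$) and close it with the inductive invariant that $\mB$ is at every stage a union of \emph{entire} components of $G[\mA_1]$. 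That invariant is a genuine addition: it is slightly stronger than anything the paper states, it is correct (a component of a vertex of $X_1(v)$ is disjoint from $\mB$ by the invariant, and $X_2(v)$ sweeps in exactly its remaining component-mates), and as a bonus it makes statements (1) and (2) of Lemma \ref{lem:componentsY} nearly automatic, since each $\Gamma(u)\setminus\{u\}$ is exposed as a union of components of $G[\mA_1]$. An alternative, more local way to close the same gap --- closer in spirit to how the paper argues disjointness inside its proof of Lemma \ref{lem:componentsY} --- is a direct case analysis: if the neighbour $y$ of $x\in X_1(v)$ were in $\mB$, then either $y\in X_1(u)$ for an earlier $u$, forcing $x\in X_2(u)\subseteq\mB$, or $y\in X_2(u)$, forcing $x\in X_1(u)\subseteq\mB$; both contradict $x\in\mA_1\setminus\mB$ when $v$ is processed. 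Either route is fine; yours buys a reusable structural fact, the case analysis buys brevity.
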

}
\subsubsection{Partitioning $\mS$:}\label{sec:partS}
We partition $\mS$ into $\mC$, the set of vertices which induce special
cycles and $\mP$, the set of vertices which induce special paths. $\mP$
is further partitioned into $\mP_e$, the set of end points and $\mP_i$,
the set of interior points of all the paths.
\begin{definition}{\bf Second end points}\label{def:secondEndPoint}
of a path of length at least $2$ are the interior vertices of the path
which are adjacent to at least one of its end points.
\end{definition}
The set of second end points of the paths in $\mP$ is denoted by
$\mP_{2e}$ and $\mP_{2i}=\mP_i\setminus\mP_{2e}$.

\begin{lemma}\label{lem:NCPi}
Every vertex in $\mN_1$ is adjacent to at least one vertex in $\mC\cup\mP_i$.
\end{lemma}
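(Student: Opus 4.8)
The plan is to exploit the maximality built into the definition of a special path. Since $\mN_1=N(\mS,G)$, every $v\in\mN_1$ has a neighbour in $\mS=\mC\cup\mP_e\cup\mP_i$, so it suffices to rule out the possibility that all of $v$'s neighbours in $\mS$ lie in $\mP_e$, i.e.\ that $v$ is adjacent only to endpoints of special paths. Assuming this, I would pick one such neighbour $x_0$, an endpoint of a special path $P=x_0x_1\cdots x_m$ with endpoints $x_0,x_m$. If $v$ is adjacent to some interior vertex $x_i$ ($1\le i\le m-1$), then $x_i\in\mP_i$ and we are already done; so I may assume $v$ meets $P$ only at its endpoints.

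The core of the argument is then a short case analysis on the far endpoint $x_m$. If $v$ is \emph{not} adjacent to $x_m$, then $v,x_0,x_1,\dots,x_m$ induces a path strictly longer than $P$, since the only candidate chords are edges from $v$ to $x_1,\dots,x_m$, all of which are excluded (the interior ones by assumption, and $x_m$ by the case we are in). This contradicts clause~1 of Definition~\ref{def:specialPath}, which forbids $P$ from being a subgraph of a longer induced path. If instead $v$ \emph{is} adjacent to $x_m$ (itself an endpoint, hence consistent with our assumption), then $v,x_0,x_1,\dots,x_m$ induces a cycle containing $P$, again contradicting clause~1. Either way we reach a contradiction, so $v$ must after all have a neighbour in $\mC\cup\mP_i$.

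The subtle point, and the one I expect to be the main obstacle, is that a special path $P$ is maximal only inside the graph $G[V']$ from which it was extracted in Algorithm~\ref{alg:primPart}, not inside all of $G$; the path/cycle extensions above are legitimate only if $v$ together with $P$ still lives in that smaller graph. I would handle this by running the argument at the right moment. Let $t$ be the (unique) iteration of the while loop at which $v$ is deleted from $V'$; such an iteration exists because $v\in\mN_1$ is disjoint from both $\mS$ and the final set $\mA_1=V'$, so $v$ is removed as a neighbour rather than selected into $\mS$. At the start of iteration $t$ we have $v\in V'$, and $v$ is adjacent to some vertex $w$ of the special set $T$ chosen there. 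If $T$ is a special cycle then $w\in\mC$ and we are done; if $T$ is a special path and $v$ has any interior neighbour in it, that neighbour lies in $\mP_i$ and we are done. In the only remaining case $v$ meets $T$ solely at endpoints, and since $v,x_0,\dots,x_m\in V'$ the longer-path and cycle constructions above take place entirely within $G[V']$, contradicting the maximality of $T$ guaranteed by Observation~\ref{obs:nonSpecial} at that stage. This pins the whole argument inside the graph where special-ness holds and completes the proof.
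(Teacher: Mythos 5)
Your proof is correct and follows essentially the same route as the paper's: both pin the argument to the iteration of Algorithm~\ref{alg:primPart} at which $v$ is still in $V'$ and adjacent to the chosen special component $T$ (your ``iteration where $v$ is deleted'' is exactly the paper's ``first component extracted with $v$ as a neighbour''), and both then contradict the maximality clause of the special path by extending $T$ inside $G[V']$ to a longer induced path (one endpoint adjacent) or an induced cycle (both endpoints adjacent). The only blemish is the final citation: the maximality you invoke is clause~1 of Definition~\ref{def:specialPath}, not Observation~\ref{obs:nonSpecial}, which merely guarantees that a special cycle or path exists.
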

\journ{
\begin{proof}
Let $v\in\mN_1$. Since $\mN_1=N(\mS,G)$, $v$ is adjacent to at least one
vertex in $\mS$. If $v$ is not adjacent to any vertex in $\mC\cup\mP_i$,
it implies that all its neighbors in $\mS$ belong to $\mP_e$. Let $P$ be
the first path in $\mP$ to be extracted in Algorithm \ref{alg:primPart}
with $v$ as its neighbor. Since $v$ is not adjacent to any vertex in
$\mP_i$, it follows that it is adjacent to at least one end point of $P$
and none of its interior vertices. Since $P$ is the first component
of $\mS$ to be extracted with $v$ as the neighbor, it means that $v$
is still present in $V'$ when $P$ is chosen as the special path. If $v$
is adjacent to both the end points of $P$, then $P\cup\{v\}$ induces
a cycle in $G[V']$ and if $v$ is adjacent to only one end point, then
$P\cup\{v\}$ induces a path in $G[V']$ at that stage. In either case we
have a contradiction to the fact that $P$ is a special path of $G[V']$.
\qed
\end{proof}
}

\subsection{The graph induced by $\mS\cup\mR\cup\mB$}\label{sec:SRB}
\begin{lemma}\label{lem:componentsY}
For each $u\in\mR$, let $\Gamma(u)=\{u\}\cup X_1(u)\cup X_2(u)$, where
$X_1(\cdot)$ and $X_2(\cdot)$ are as defined in Algorithm~\ref{alg:finePart}. Then,
\begin{enumerate}
\item $\mR\cup\mB=\ds\biguplus_{u\in\mR}\Gamma(u)$,
\item $\Gamma(u)$ is a component in the graph induced by $\mR\cup\mB$, and
\item $\Gamma(u)$ is isomorphic to one of the graphs illustrated in
Figure~\ref{fig:componentsY}.
\end{enumerate}
\end{lemma}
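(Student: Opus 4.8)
The plan is to prove the three assertions in order, exploiting the fact that Algorithm~\ref{alg:finePart} processes the vertices of $\mN_1$ one at a time and that a vertex, once placed in $\mB$, is permanently removed from $\mA_1\setminus\mB$ and is never re-selected. For the first assertion I would argue covering and disjointness separately. Covering is immediate: each $u\in\mR$ lies in its own $\Gamma(u)$, and each $w\in\mB$ entered $\mB$ exactly when some $u\in\mR$ was processed, so $w\in X_1(u)\cup X_2(u)\subseteq\Gamma(u)$. Disjointness splits into three parts: the singletons $\{u\}$ are pairwise disjoint since the $u$'s are distinct; they are disjoint from every $X_1(u')\cup X_2(u')$ because $\mR\subseteq\mN_1$ while $X_1(\cdot),X_2(\cdot)\subseteq\mA_1$; and the sets $X_1(u)\cup X_2(u)$ are pairwise disjoint because when $u'$ is processed after $u$ its selection is made from $\mA_1\setminus\mB$, which already excludes $X_1(u)\cup X_2(u)$.

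The heart of the lemma is the second assertion. Connectedness of $\Gamma(u)$ follows from the explicit adjacencies: $u$ is joined to both vertices of $X_1(u)$ by Observation~\ref{obs:finePart}.\ref{finePart:RDisjointNA}, and each vertex of $X_2(u)$ is joined to a vertex of $X_1(u)$ by Observation~\ref{obs:X1X2}. The real work is to show that no vertex of $\Gamma(u)$ has a neighbour in $(\mR\cup\mB)\setminus\Gamma(u)$, which I would settle vertex by vertex. The vertex $u$ is handled directly: by Observation~\ref{obs:finePart}.\ref{finePart:RDisjointNA} its only neighbours in $\mR\cup\mB$ are the two vertices of $X_1(u)$, both in $\Gamma(u)$. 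For a vertex $a\in X_1(u)\cup X_2(u)$ (so $a\in\mA_1$) I would classify its three incident edges by the part they enter: it has no neighbour in $\mS$ (Observation~\ref{obs:primPart}.\ref{primPart:ADisjointS}); its at most one neighbour in $\mA_1$ already lies in $X_1(u)\cup X_2(u)\subseteq\Gamma(u)$ by Observation~\ref{obs:X1X2}.\ref{X1X2:closed}, which disposes of any $\mB$-neighbour since $\mB\subseteq\mA_1$; so the only remaining danger is a neighbour lying in $\mR$.

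Ruling out a neighbour of $a$ in $\mR\setminus\{u\}$ is the main obstacle, and this is exactly where the sequential nature of the algorithm is indispensable. Suppose $a\sim u''$ with $u''\in\mR$ and $u''\neq u$. The vertex $a$ enters $\mB$ precisely when $u$ is processed, so before that moment $a\in\mA_1\setminus\mB$. If $u''$ is processed before $u$, then at that earlier moment $a$ is still in $\mA_1\setminus\mB$ and is a neighbour of $u''$ there; since $u''\in\mN_1$ has at most two neighbours in $\mA_1$ (Observation~\ref{obs:primPart}.\ref{primPart:SN1}) and must have exactly two in $\mA_1\setminus\mB$ to pass the test in line~\ref{lin:N2A}, the vertex $a$ is one of them, i.e.\ $a\in X_1(u'')\subseteq\mB$, contradicting $a\in\mA_1\setminus\mB$ at $u$'s later turn. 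If instead $u''$ is processed after $u$, then $a\in\mB$ when $u''$ is examined, so $a$ cannot count among $u''$'s two required neighbours in $\mA_1\setminus\mB$; as $a$ is one of $u''$'s at most two neighbours in $\mA_1$, this leaves $u''$ with at most one neighbour in $\mA_1\setminus\mB$, so $u''$ fails line~\ref{lin:N2A} and never enters $\mR$, again a contradiction. Hence $a$ has no neighbour in $\mR\setminus\{u\}$, and $\Gamma(u)$ is a full component of $G[\mR\cup\mB]$.

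The third assertion is then a short case analysis driven by Observation~\ref{obs:X1X2}. Writing $X_1(u)=\{a_1,a_2\}$: if $a_1\sim a_2$ then $X_2(u)=\varnothing$ and $\Gamma(u)$ is the triangle on $\{u,a_1,a_2\}$; otherwise $X_2(u)$ consists of the $\mA_1$-neighbours of $a_1$ and $a_2$, which are distinct because $\mA_1$ induces a matching (Observation~\ref{obs:primPart}.\ref{primPart:isolatedA}), and no further edges can occur among these vertices for the same reason together with $u$ having exactly the two $\mB$-neighbours $a_1,a_2$. Depending on whether $0$, $1$, or $2$ of these neighbours exist, $\Gamma(u)$ is respectively the path $a_1\,u\,a_2$, the path $b_1\,a_1\,u\,a_2$, or the path $b_1\,a_1\,u\,a_2\,b_2$. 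These four graphs are exactly those shown in Figure~\ref{fig:componentsY}, which establishes the isomorphism.
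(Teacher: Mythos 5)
Your proposal is correct and follows essentially the same route as the paper's proof: disjointness via the fact that later selections are made from $\mA_1\setminus\mB$, the component property via the processing order of Algorithm~\ref{alg:finePart} (a vertex already in $\mB$ would make a later $\mR$-candidate fail the test in Line~\ref{lin:N2A}), and the shape of $\Gamma(u)$ via Observation~\ref{obs:X1X2}; the only organizational difference is that the paper states the non-adjacency claim one-directionally (for $v$ added after $u$) and gets your ``processed before'' case by symmetry. One trivial remark: you count four graphs up to isomorphism, while Figure~\ref{fig:componentsY} shows five labeled configurations (its cases (c) and (d) are isomorphic paths), which does not affect the conclusion.
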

\journ{
\begin{proof}
From Algorithm \ref{alg:finePart}, it is clear that
$\mR\cup\mB=\bigcup_{u\in\mR}\Gamma(u)$. Therefore, to prove the first
statement we need to only show that for two distinct vertices $u,v\in\mR$,
$\Gamma(u)\cap\Gamma(v)=\varnothing$. Let us assume that $u$ was added
to $\mR$ before $v$ in the algorithm. Since any $x\in X_1(u)\cup X_2(u)$
is present in $\mB$ when $v$ is being added to $\mR$, it implies that
$x\notin X_1(v)\cup X_2(v)$. Hence proved.

Note that $\Gamma(u)$ is connected. We will show that no vertex in
$\Gamma(u)$ is adjacent to any vertex in $\Gamma(v)$, for any $v$ which
was added to $\mR$ after $u$ in Algorithm \ref{alg:finePart}. Clearly,
this will imply that $\Gamma(u)$ is a component in the graph induced
by $\mR\cup\mB$. First, let us consider $u$. Since $u$ is adjacent to
one vertex in $\mS$, it has only two neighbors in $\mA_1$ and these two
neighbors are in $X_1(u)$. This implies that it is not adjacent to any
vertex in $\Gamma(v)$ since $\Gamma(u)\cap\Gamma(v)=\varnothing$.

Consider any vertex $x\in X_1(u)\cup X_2(u)$. From
Observation \ref{obs:X1X2}.\ref{X1X2:closed} and the fact that
$\Gamma(u)\cap\Gamma(v)=\varnothing$, we can infer that $x$ is not
adjacent to any vertex in $X_1(v)\cup X_2(v)$. Now, suppose $x$ is
adjacent to $v$. Since $x\in\mB$ when $v$ is being added to $\mR$, we infer
that $v$ is adjacent to at most one vertex in $\mA_1\setminus\mB$
at that stage. This implies that $v$ does not satisfy the condition in
Line \ref{lin:N2A} in the algorithm, a contradiction since $v$ belongs
to $\mR$. Hence proved.

From Observation \ref{obs:X1X2}, it is easy to infer that each component
$\Gamma(u)$ is isomorphic to one of the five graphs shown in Figure
\ref{fig:componentsY}. Let $X_1(u)=\{x_1,x_1'\}$. If $x_1$
or $x_1'$ has a neighbor in $X_2(u)$, then, it will be denoted by $x_2$
or $x_2'$ respectively. We have the following five graphs: In each graph,
$u$ is adjacent to $x_1$ and $x_1'$.
\begin{enumerate}[(a)]
\item $x_1$ is adjacent to $x_1'$, and therefore, $X_2(u)=\varnothing$.
\item $x_1$ is not adjacent to $x_1'$ and $X_2(u)=\varnothing$.
\item $x_1$ is not adjacent to $x_1'$ and $X_2(u)=\{x_2\}$.
\item $x_1$ is not adjacent to $x_1'$ and $X_2(u)=\{x_2'\}$.
\item $x_1$ is not adjacent to $x_1'$ and $X_2(u)=\{x_2,x_2'\}$.
\qed
\end{enumerate}
\end{proof}
}

\begin{figure}[tb]
\begin{center}
\epsfig{file=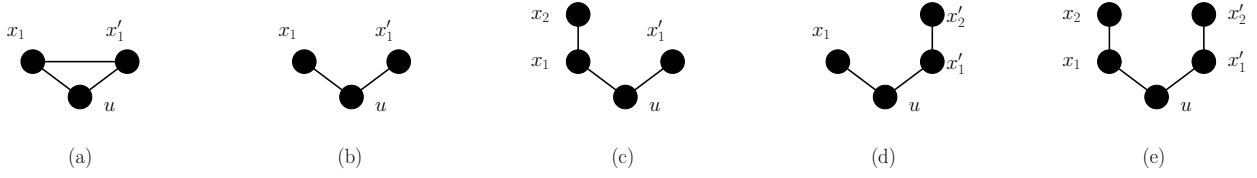,width=16.5cm}
\end{center}
\caption{In the graph induced by $\mR\cup\mB$, each component $\Gamma(u)$,
$u\in\mR$ is isomorphic to one of the graphs illustrated in the figure.
Here, $u$ has exactly two neighbors in $\mB$, $x_1$ and $x_1'$. These
neighbors if not adjacent can each have at most one neighbor in $\mB$
which are denoted by $x_2$ and $x_2'$ respectively.
\label{fig:componentsY}}
\end{figure}

\journ{
From Lemma \ref{lem:componentsY} and Observations
\ref{obs:finePart}.\ref{finePart:BDisjointS} and
\ref{obs:finePart}.\ref{finePart:BDisjointA}, it follows that every
vertex in $\mB$ is adjacent to either one or two vertices in $\mR\cup\mB$
and no vertex in $\mS\cup\mA$. This implies that every vertex in $\mB$
is adjacent to either one or two vertices in $\mN$. Based on this,
we partition $\mB$ into two parts:
}
\begin{definition}{\bf $\mB_1$ and $\mB_2$:}\label{def:B1B2} $\mB_1$
is the set of vertices of $\mB$ which have one neighbor in $\mN$ and
$\mB_2$ is the set of vertices of $\mB$ which have two neighbors in $\mN$.
\end{definition}
\journ{
Recall that from Observation
\ref{obs:finePart}.\ref{finePart:RDisjointNA}, each vertex of $\mR$
has a unique neighbor in $\mS$. In fact, we can infer more:
}
\begin{lemma}\label{lem:RP2i}
Let $u\in\mR$. The unique vertex of $\mS$ to which $u$ is adjacent to
belongs to $\mP_{2i}$.
\end{lemma}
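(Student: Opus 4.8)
The plan is to begin from what is already known about $u$ and then rule out, one at a time, the ways its unique $\mathcal{S}$-neighbour could fail to lie in $\mathcal{P}_{2i}$. By Observation~\ref{obs:finePart}.\ref{finePart:RDisjointNA}, $u$ has exactly one neighbour $s$ in $\mathcal{S}$, and its two remaining neighbours are the vertices of $X_1(u)$, say $x_1,x_1'\in\mathcal{A}_1$. Since $\mathcal{R}\subseteq\mathcal{N}_1$, Lemma~\ref{lem:NCPi} gives $u$ a neighbour in $\mathcal{C}\cup\mathcal{P}_i$; as $s$ is the \emph{only} neighbour of $u$ inside $\mathcal{S}\supseteq\mathcal{C}\cup\mathcal{P}_i$, we conclude $s\in\mathcal{C}\cup\mathcal{P}_i$. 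Because $\mathcal{P}_i=\mathcal{P}_{2e}\uplus\mathcal{P}_{2i}$, it remains only to show $s\notin\mathcal{C}$ and $s\notin\mathcal{P}_{2e}$. In both cases the idea is identical: the pendant structure $x_1\!-\!u\!-\!s$ lets us ``reroute'' the special cycle or path through $u$ and $x_1$, producing an induced path that is too long and thereby contradicting the choice made by Algorithm~\ref{alg:primPart}.

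Two facts make each such rerouting yield an \emph{induced} path, and I would record them up front. First, $x_1\in\mathcal{A}_1$, so by Observation~\ref{obs:primPart}.\ref{primPart:ADisjointS} it is adjacent to no vertex of $\mathcal{S}$. Second, $u$ is cubic with neighbourhood exactly $\{s,x_1,x_1'\}$, only one of which, namely $s$, lies in $\mathcal{S}$. Hence, when we append $\cdots s\,u\,x_1$ to a subpath of the special cycle or path, neither $u$ nor $x_1$ can create a chord back to its $\mathcal{S}$-vertices. I would also verify that $s,u,x_1$ all coexist in $G[V']$ at the moment the component through $s$ is extracted: $x_1\in\mathcal{A}_1$ survives the whole loop, and since $s$ is the only $\mathcal{S}$-neighbour of $u$, the vertex $u$ is deleted precisely when that component is removed. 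This is what makes the contradiction a contradiction with specialness \emph{as tested by the algorithm}.

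Suppose first $s\in\mathcal{C}$, and write the special cycle as $C=s\,a_1\,a_2\cdots a_k$, with $s$ adjacent to $a_1$ and $a_k$. Deleting $a_1$ leaves the induced path $a_2\cdots a_k\,s$ having $s$ as an endpoint, and appending $u$ and $x_1$ produces the induced path $a_2\cdots a_k\,s\,u\,x_1$ on $|C|+1$ vertices, which contains $C\setminus\{a_1\}$; this contradicts Definition~\ref{def:specialCycle} applied to the vertex $a_1$. Suppose next $s\in\mathcal{P}_{2e}$; then $s$ is a second endpoint of a special path $P=p_0\,p_1\cdots p_m$ (with $m\ge2$), say $s=p_1$ adjacent to the endpoint $p_0$. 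Deleting $p_0$ leaves the induced path $p_1\cdots p_m$ with $s=p_1$ as an endpoint, and appending $u$ and $x_1$ gives the induced path $x_1\,u\,p_1\cdots p_m$, which is strictly longer than $P$ and contains $P\setminus\{p_0\}$; this contradicts the second requirement of Definition~\ref{def:specialPath} applied to the endpoint $p_0$. Eliminating both possibilities forces $s\in\mathcal{P}_{2i}$, as claimed.

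The length bookkeeping in the previous paragraph is routine; the step deserving real care — and the one I would want to pin down precisely — is the \emph{inducedness} of the rerouted paths together with their presence in $V'$. Both hinge on the fine structure of the partition rather than on any counting: the chord-freeness is exactly Observation~\ref{obs:primPart}.\ref{primPart:ADisjointS} combined with $u$ having a single neighbour in $\mathcal{S}$, and the coexistence in $G[V']$ is what guarantees the contradiction is with a genuinely special cycle/path at the relevant stage of Algorithm~\ref{alg:primPart}. If either of these failed, the rerouted walk could acquire a chord or simply not exist when the component through $s$ is chosen, and the argument would collapse.
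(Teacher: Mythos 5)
Your proof is correct and takes essentially the same route as the paper: establish that $u$, its two $\mA_1$-neighbours and the special component $T$ containing $s$ all coexist in $G[V']$ when $T$ is chosen, note that the $\mA_1$-neighbours have no edges into $\mS$, and then contradict specialness of $T$ by rerouting it through $u$ and $x_1$ to get a longer induced path. The only minor difference is that you eliminate the case $s\in\mP_e$ by invoking Lemma~\ref{lem:NCPi} (legitimately, since $\mR\subseteq\mN_1$ and that lemma precedes this one), whereas the paper handles that case directly by observing that $T\cup\{u\}$ would be a longer induced path; your two remaining cases match the paper's arguments for $s\in\mC$ and $s\in\mP_{2e}$.
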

\journ{
\begin{proof}
Let $x$ be the unique neighbor of $u$ in $\mS$. Let $a$ and $b$ be the
remaining neighbors of $u$. From Observation
\ref{obs:finePart}.\ref{finePart:RDisjointNA}, $a,b\in\mB$. We need
to show that $x\in\mP_{2i}$. We will prove by contradiction. Let $T$
be the special cycle or path in $\mS$ which contains $x$. Since $T$
is the only component in $\mS$ with $v$ as a neighbor, it implies that
in Algorithm \ref{alg:primPart}, $v$ is in $V'$ when $T$ is being chosen
as the special path or cycle. Since $a,b\in\mB$, it implies that they
belong to $\mA_1$ and therefore, they too are present in $V'$ when $T$ is
being chosen. Moreover, $a$ and $b$ are not adjacent to any vertex in $T$
(Observation \ref{obs:finePart}.\ref{finePart:BDisjointS}). Now, we have
the following cases to consider:
\begin{description}
\item[$x\in\mC$:] This implies that $T$ is a special cycle. Let $x'\in T$
be a vertex adjacent to $x$. Clearly, $(T\setminus\{x'\})\cup\{v,a\}$
induces a path of length $|T|+1$ in $G[V']$ contradicting the fact that
$T$ is a special cycle.
\item[$x\in\mP_e$:] This implies that $T$ is a special path. Since $v$
is not adjacent to any other vertex in the path, $T\cup\{v\}$ induces
a path of length $|T|+1$ in $G[V']$, contradicting its maximality and thus
$T$ cannot be a special path.
\item[$x\in\mP_{2e}$:] Again, this implies that $T$ is a special
path. Let $x_e$ be an end point of $T$ to which $x$ is adjacent
to. Clearly, $(T\setminus\{x_e\})\cup\{v,a\}$ induces a path of length
$|T|+1$ in $G[V']$, contradicting the fact that $T$ is a special path.
\end{description}
Therefore, $x\in\mP_{2i}$.  \qed
\end{proof}
}
\begin{observation}\label{obs:RP2i}
We have the following observations due to Lemma \ref{lem:RP2i}.
\begin{enumerate}
\item Each vertex in $\mC\cup\mP_{2e}$ is adjacent to exactly one
vertex in $\mN$. \journ{The proof is as follows: Note that each
vertex in $\mC\cup\mP_{2e}$ either belongs to a special cycle or is an
interior vertex of a special path in $G[\mC\cup\mP]$. Therefore, it
has only one neighbor in $V\setminus(\mC\cup\mP)$ and by Observation
\ref{obs:primPart}.\ref{primPart:SN1}, it must belong to $\mN_1$. By
Lemma \ref{lem:RP2i}, it does not belong to $\mR$. Hence, it belongs
to $\mN$.} \label{RP2i:CP2eDisjointR}
\item Every vertex in $\mP_e$ is adjacent to exactly two vertices in $\mN$.
\label{RP2i:Pe2N}
\item Every vertex in $\mP_{2i}$ is adjacent to exactly one vertex in
$\mR\cup\mN=\mN_1$. \label{RP2i:P2iRN}
\end{enumerate}
\end{observation}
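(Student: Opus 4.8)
The plan is to derive all three parts from one counting principle, applied to the three different roles a vertex of $\mS$ can play. The starting observation is that $\mS=\mC\uplus\mP$ induces a disjoint union of cycles and paths in $G$ (Observation \ref{obs:primPart}.\ref{primPart:inducedCyclesPaths}), so for any $x\in\mS$ its degree inside $G[\mS]$ is pinned down by its role: a vertex on a special cycle, or an interior vertex of a special path, has exactly two neighbors in $\mS$, whereas an end point of a special path has exactly one. Since $G$ is cubic, the number of neighbors of $x$ lying \emph{outside} $\mS$ is $3$ minus this internal degree, namely one neighbor for cycle and interior vertices and two neighbors for end points. By Observation \ref{obs:primPart}.\ref{primPart:ADisjointS} no vertex of $\mS$ is adjacent to any vertex of $\mA_1$, so each of these outside neighbors in fact lies in $\mN_1=\mR\uplus\mN$.

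The second ingredient is an immediate consequence of Lemma \ref{lem:RP2i}: since each $u\in\mR$ has a single $\mS$-neighbor (Observation \ref{obs:finePart}.\ref{finePart:RDisjointNA}) and that neighbor lies in $\mP_{2i}$, no vertex of $\mS\setminus\mP_{2i}$ can be adjacent to any vertex of $\mR$. Hence whenever $x\in\mS$ is \emph{not} in $\mP_{2i}$, every outside neighbor of $x$ that lies in $\mN_1$ must actually lie in $\mN=\mN_1\setminus\mR$.

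Combining the two ingredients yields the three statements simultaneously. For $x\in\mC\cup\mP_{2e}$ (a cycle vertex, or a second end point, which is an interior vertex of a path) the internal degree is $2$, so there is exactly one outside neighbor, it lies in $\mN_1$, and because $x\notin\mP_{2i}$ the second ingredient forces it into $\mN$; this is Part $1$. For $x\in\mP_e$ the internal degree is $1$, so there are exactly two outside neighbors, both in $\mN_1$, and again $x\notin\mP_{2i}$ pushes both into $\mN$; this is Part $2$. For $x\in\mP_{2i}$ the internal degree is once more $2$, giving exactly one outside neighbor in $\mN_1=\mR\cup\mN$; here Lemma \ref{lem:RP2i} imposes no further restriction (such a neighbor may genuinely be in $\mR$), so the conclusion is only membership in $\mN_1$, which is Part $3$.

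The argument is essentially bookkeeping, so I expect the only point that needs real care to be the degree count in $G[\mS]$ for the boundary cases: one must check that a second end point of a path of length $\ge 2$ is truly an interior vertex (degree $2$ within the path), and that an end point of even the shortest admissible special path still has exactly one neighbor in $\mS$, so the cubic split comes out as $1+2$ rather than $0+3$. Since special paths are maximal and every vertex of a component with at least three vertices has a neighbor, no length-$0$ path occurs and these counts are forced. Once the role-based internal degrees are fixed, each of the three conclusions follows at once from the two ingredients above.
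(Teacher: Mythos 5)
Your proposal is correct and follows essentially the same route as the paper: pin down the degree of each vertex inside $G[\mS]$ according to its role (cycle vertex, interior vertex, or end point), conclude the remaining neighbors lie in $\mN_1$, and invoke Lemma \ref{lem:RP2i} to exclude adjacency to $\mR$ for vertices outside $\mP_{2i}$. The only cosmetic difference is that you place the outside neighbors in $\mN_1$ via Observation \ref{obs:primPart}.\ref{primPart:ADisjointS} (non-adjacency of $\mS$ and $\mA_1$) rather than via Observation \ref{obs:primPart}.\ref{primPart:SN1} as the paper does for Part 1, which in fact handles the two-neighbor case of $\mP_e$ slightly more cleanly.
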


\subsection{The graph induced by $\mB_2\cup\mP_e\cup\mN$}\label{sec:B2PeN}
\begin{lemma}\label{lem:B2PeInd}
$\mB_2\cup\mP_e$ is an independent set in $G$.
\end{lemma}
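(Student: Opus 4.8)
The plan is to prove that $\mB_2 \cup \mP_e$ spans no edge of $G$. Since $\mB_2 \subseteq \mB \subseteq \mA_1$ while $\mP_e \subseteq \mP \subseteq \mS$, and $\mS$ and $\mA_1$ are distinct blocks of the partition $V=\mS\uplus\mN_1\uplus\mA_1$, the two sets are disjoint, and any edge with both endpoints in $\mB_2\cup\mP_e$ must fall into exactly one of three cases: (i) one endpoint in $\mB_2$ and one in $\mP_e$; (ii) both endpoints in $\mB_2$; (iii) both endpoints in $\mP_e$. I would rule out the three cases separately, saving the one subtle point for last.

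Case (i) is immediate, since a vertex of $\mB \subseteq \mA_1$ is never adjacent to a vertex of $\mS$ by Observation \ref{obs:primPart}.\ref{primPart:ADisjointS} (equivalently Observation \ref{obs:finePart}.\ref{finePart:BDisjointS}), and $\mP_e\subseteq\mS$. For case (ii), suppose $x,y\in\mB_2$ were adjacent. Recall from the discussion preceding Definition \ref{def:B1B2} that every vertex of $\mB$ has one or two neighbors in $\mR\cup\mB$ and correspondingly two or one neighbors in $\mN$; hence a vertex of $\mB_2$ has \emph{exactly one} neighbor in $\mR\cup\mB$. The edge $xy$ would then force that single neighbor to be the other vertex for each of $x$ and $y$, so $\{x,y\}$ would be a connected component of the graph induced by $\mR\cup\mB$ containing no vertex of $\mR$. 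This contradicts Lemma \ref{lem:componentsY}, by which every component of $G[\mR\cup\mB]$ equals some $\Gamma(u)$ and therefore contains the vertex $u\in\mR$. So case (ii) cannot occur.

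Case (iii) is where the real work lies. By Observation \ref{obs:primPart}.\ref{primPart:inducedCyclesPaths}, $G[\mS]$ is a disjoint union of the extracted special cycles and paths. Thus if two distinct $x,y\in\mP_e$ were adjacent, the edge $xy$ would place them in the same component of $G[\mS]$; as both are path endpoints, that component is a single special path $P$ and $\{x,y\}$ is precisely its pair of endpoints. The argument then reduces to the claim that \emph{the two endpoints of a special path are never adjacent in $G$}, which I expect to be the main obstacle: it fails for a length-$1$ special path (a bare edge), so one must show every special path produced by Algorithm \ref{alg:primPart} has length at least $2$. To establish this I would use how such paths are extracted: the while-loop runs only while $G[V']$ has a connected component on at least three vertices, and $T$ is taken as a largest induced cycle or path of $G[V']$ (the construction behind Observation \ref{obs:nonSpecial}). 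Any connected graph on at least three vertices has a vertex of degree $\ge 2$, which together with two of its neighbors induces a $3$-vertex path or a triangle; hence the largest induced cycle or path, and in particular $T$, has at least three vertices. So every special path has length $\ge 2$, its endpoints lie at distance $\ge 2$ along an induced path and are therefore non-adjacent, ruling out case (iii) and completing the proof.
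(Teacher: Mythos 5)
Your proof is correct and takes essentially the same route as the paper's: no $\mB_2$--$\mP_e$ edges by Observation \ref{obs:finePart}.\ref{finePart:BDisjointS}, no $\mB_2$--$\mB_2$ edges because such a pair would form a two-vertex component of $G[\mR\cup\mB]$ contradicting Lemma \ref{lem:componentsY}, and no $\mP_e$--$\mP_e$ edges because special paths are induced, distinct paths are in distinct components of $G[\mS]$, and each special path has at least three vertices. The only difference is that you explicitly justify the ``at least three vertices'' claim (via the component-size condition in Algorithm \ref{alg:primPart} and the construction behind Observation \ref{obs:nonSpecial}), a point the paper asserts without proof; this is added rigor rather than a different approach.
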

\journ{
\begin{proof} 
Let $x,y\in\mP_e$. If $x$ and $y$ are the end points of two different
paths in $\mP$, clearly, they are not adjacent. Since each path is
special, it has at least $3$ vertices which implies that if $x$ and $y$
are end points of the same path, then they are not adjacent. Hence,
$\mP_e$ induces an independent set in $G$. Let $x,y\in\mB_2$. By
definition, they have two neighbors each in $\mN$. Therefore, if $x$
and $y$ are adjacent, they induce a component in $\mR\cup\mB$, which
contradicts Statement $3$ of Lemma \ref{lem:componentsY}. Noting that
$\mP_e\subseteq\mS$ and $\mB_2\subseteq\mB$, from Observation
\ref{obs:finePart}.\ref{finePart:BDisjointS}, it follows that no vertex
in $\mB_2$ is adjacent to any vertex in $\mP_e$. Hence proved. \qed
\end{proof}
}
\begin{observation}\label{obs:B2PeN}
\journ{Consider a vertex $v\in\mB_2\cup\mP_e$. By the definition of $\mB_2$ and
Observation \ref{obs:RP2i}.\ref{RP2i:Pe2N}, it follows that $v$ is adjacent
to exactly two vertices in $\mN$. By Lemma \ref{lem:B2PeInd}, $v$ is not
adjacent to any vertex in $\mB_2\cup\mP_e$. Therefore, in the graph induced
by $\mB_2\cup\mP_e\cup\mN$, its degree is $2$. Lemma~\ref{lem:NCPi} implies
that every vertex in $\mN$ is adjacent to at most two vertices in
$\mB_2\cup\mP_e\cup\mN$. From these two observations, we can infer the
following about the graph induced by $\mB_2\cup\mP_e\cup\mN$.}
\conf{We have the following observations about the graph induced by
$\mB_2\cup\mP_e\cup\mN$.}
\begin{enumerate}
\item Its maximum degree is $2$ and thus is a collection of paths and
cycles.\label{B2PeN:B2PeNDelta2}
\item All the end points of paths (which also includes isolated vertices)
belong to $\mN$.\label{B2PeN:NEndIsolated}
\item A vertex in $\mN$ is adjacent to a vertex in $\mA$ only if it
is an end point of a path. \journ{The proof is as
follows: Let $v\in\mN$. It has at least one neighbor in $S$ (Observation
\ref{obs:primPart}.\ref{primPart:SN1}). If it has a neighbor in $\mA$,
then it can have at most one neighbor in $\mB_2\cup\mP_e\cup\mN$. Hence,
proved.}\label{B2PeN:NEndIsolatedA}
\end{enumerate}
\end{observation}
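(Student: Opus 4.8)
The plan is to analyze, for each of the two kinds of vertices occurring in the induced subgraph — those in $\mB_2\cup\mP_e$ and those in $\mN$ — exactly how many of their neighbors lie inside $\mB_2\cup\mP_e\cup\mN$, and then read off all three statements from these degree counts. First I would dispose of the vertices of $\mB_2\cup\mP_e$. A vertex $v\in\mB_2$ has exactly two neighbors in $\mN$ by Definition~\ref{def:B1B2}, and a vertex $v\in\mP_e$ has exactly two neighbors in $\mN$ by Observation~\ref{obs:RP2i}.\ref{RP2i:Pe2N}; moreover, by Lemma~\ref{lem:B2PeInd} no such $v$ has a neighbor inside $\mB_2\cup\mP_e$. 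Hence every vertex of $\mB_2\cup\mP_e$ has degree exactly $2$ in the induced subgraph.

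Next I would bound the degree of a vertex $w\in\mN$. Since $G$ is cubic, $w$ has exactly three neighbors, and by Lemma~\ref{lem:NCPi} at least one of them lies in $\mC\cup\mP_i$. The set $\mC\cup\mP_i$ is contained in $\mS$ and is therefore disjoint from $\mN\subseteq\mN_1=N(\mS,G)$, from $\mP_e\subseteq\mP$ (since $\mP_e$ and $\mP_i$ partition $\mP$, and $\mC$ is disjoint from $\mP$), and from $\mB_2\subseteq\mA_1$. Consequently this guaranteed neighbor lies outside the induced subgraph, so $w$ has at most two neighbors inside $\mB_2\cup\mP_e\cup\mN$. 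Combining the two degree bounds shows the maximum degree is $2$, so the induced subgraph is a disjoint union of paths and cycles, giving statement~1; and since every vertex of $\mB_2\cup\mP_e$ has degree exactly $2$, none of them can be an endpoint or an isolated vertex, which forces all endpoints and isolated vertices into $\mN$, giving statement~2.

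For statement~3 I would sharpen the count for a vertex $w\in\mN$ that is in addition adjacent to some vertex of $\mA$. By Observation~\ref{obs:finePart}.\ref{finePart:N1A} there is at most one such $\mA$-neighbor, and $\mA=\mA_1\setminus\mB$ is disjoint both from $\mB_2\cup\mP_e\cup\mN$ and from $\mC\cup\mP_i$. Thus $w$ commits one of its three edges to a vertex of $\mA$ and another, distinct, edge to its guaranteed neighbor in $\mC\cup\mP_i$, leaving at most one edge into $\mB_2\cup\mP_e\cup\mN$. So $w$ has degree at most $1$ in the induced subgraph and is therefore an endpoint (or isolated vertex) of a path, as claimed.

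The degree counts themselves are routine; the only point demanding care is the bookkeeping of disjointness among $\mA$, $\mC\cup\mP_i$, and $\mB_2\cup\mP_e\cup\mN$, because the whole argument rests on the neighbor supplied by Lemma~\ref{lem:NCPi} — and, for statement~3, the $\mA$-neighbor — genuinely lying outside the induced subgraph and being distinct from one another. Each of these disjointness facts follows from the primary partition $V=\mS\uplus\mN_1\uplus\mA_1$ together with the refinements produced by Algorithm~\ref{alg:finePart}, so I expect this to be the main (if modest) obstacle rather than the counting.
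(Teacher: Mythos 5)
Your proof is correct and follows essentially the same route as the paper: degree exactly $2$ for vertices of $\mB_2\cup\mP_e$ via Definition~\ref{def:B1B2}, Observation~\ref{obs:RP2i}.\ref{RP2i:Pe2N} and Lemma~\ref{lem:B2PeInd}, the degree bound on $\mN$ via the guaranteed outside neighbor from Lemma~\ref{lem:NCPi}, and the sharpened count for statement~3. The only cosmetic difference is that for statement~3 the paper invokes the weaker fact that a vertex of $\mN$ has a neighbor in $\mS$ (Observation~\ref{obs:primPart}.\ref{primPart:SN1}), whereas you reuse Lemma~\ref{lem:NCPi}; both give a neighbor outside $\mA\cup\mB_2\cup\mP_e\cup\mN$, so the counting is identical.
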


\begin{definition}{\bf\boldmath $\mN_e$ and $\mN_{int}$:}
\label{def:NeNint} $\mN$ is partitioned into $\mN_e$, the set of end
points of paths (which includes isolated vertices) and $\mN_{int}$, the
set of interior points of cycles and paths in $G[\mB_2\cup\mP_e\cup\mN]$.
\end{definition}
\journ{
In view of Observation \ref{obs:B2PeN}.\ref{B2PeN:NEndIsolatedA}, a vertex
in $\mN$ is adjacent to a vertex in $\mA$ only if it belongs to $\mN_e$.
}
\begin{definition}{\bf Type 1 and Type 2 cycles:}\label{def:type12}
Recall that by Observation \ref{obs:B2PeN}.\ref{B2PeN:B2PeNDelta2},
$\mB_2\cup\mP_e\cup\mN$ induces a collection of
cycles and paths. We classify the cycles in the following manner:
\begin{description}
\item[Type 1:] Cycles whose vertices alternate between $\mN$ and
$\mB_2\cup\mP_e$.
\item[Type 2:] Cycles which are not Type 1. From Lemma \ref{lem:B2PeInd},
it is easy to infer that such a cycle has at least one pair of adjacent
vertices which belong to $\mN$.
\end{description}
\end{definition}

\journ{
\begin{lemma}\label{lem:NAdjB2PeDisjointCP2e}
If a vertex $v\in\mN$ is adjacent to $2$ vertices in $\mB_2\cup\mP_e$,
then its remaining neighbor belongs to $\mP_{2i}$. In other words, $v$
has no neighbor in $\mC\cup\mP_{2e}$.
\end{lemma}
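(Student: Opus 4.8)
The plan is to first pin down where the third neighbor of $v$ must lie, and then exclude the two forbidden locations by extending the special structure that would contain such a neighbor, contradicting its specialness. Let $a,b\in\mB_2\cup\mP_e$ be the two neighbors of $v$ guaranteed by hypothesis and let $w$ be the third neighbor (the graph is cubic). I would first show $w\in\mC\cup\mP_i$. Since $v\in\mN\subseteq\mN_1$, Lemma \ref{lem:NCPi} gives a neighbor of $v$ in $\mC\cup\mP_i$; but $\mC\cup\mP_i$ is disjoint from $\mB_2\cup\mP_e$, because the former lies in $\mS$ while $\mB_2\subseteq\mA_1$ is disjoint from $\mS$, and $\mP_e$ is disjoint from both $\mP_i$ (endpoints versus interior) and $\mC$ (paths versus cycles). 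Hence this neighbor is neither $a$ nor $b$, so $w\in\mC\cup\mP_i=\mC\cup\mP_{2e}\cup\mP_{2i}$. It remains to rule out $w\in\mC\cup\mP_{2e}$; I assume this for contradiction and let $T$ be the special cycle or path containing $w$, extracted at some stage of Algorithm \ref{alg:primPart}. I would also record three standing facts: distinct special structures have no edges between them (their neighbors are deleted in line \ref{lin:removeNeighbors} when a structure is extracted); every vertex of $\mB_2$ is in $V'$ at every stage, since $\mB_2\subseteq\mA_1$, the final $V'$; and if $a$ or $b$ lies in $T$ then it is an endpoint of $T$ (a $\mP_e$-vertex is an endpoint of its own path, and $\mB_2\cap\mS=\varnothing$).

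The argument then splits on whether $v\in V'$ when $T$ is extracted. If $v\notin V'$, then $v$ was deleted earlier as a $V'$-neighbor of some previously extracted structure $T'\ne T$; since $w\in T$ and distinct structures are vertex-disjoint, the relevant neighbor of $v$ must be $a$ or $b$ lying in $\mP_e$, hence an endpoint of $T'$, with $v\in V'$ at $T'$'s stage. Then $T'\cup\{v\}$ is an induced path (or, if $v$ also meets the other endpoint of $T'$, an induced cycle) strictly extending $T'$, contradicting the maximality in Definition \ref{def:specialPath}. If instead $v\in V'$ when $T$ is extracted, I extend $T$ itself. For $w\in\mC$, I delete a cycle-neighbor $w'$ of $w$, attach $v$ to $w$ and then an external neighbor $c\in\{a,b\}$ (one exists, as $a,b\notin C$) to $v$; since $c$ is not adjacent to $\mS$ (if $c\in\mB_2$) or lies in a different special structure (if $c\in\mP_e$), the set $(C\setminus\{w'\})\cup\{v,c\}$ is an induced path of size $|C|+1$ containing $C\setminus\{w'\}$, contradicting Definition \ref{def:specialCycle}. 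For $w\in\mP_{2e}$, letting $p_0$ be the endpoint of $T$ adjacent to $w$, I work with the subpath $Q=T\setminus\{p_0\}$, of which $w$ is an endpoint and $v$ a neighbor: if $v$ is also adjacent to the far endpoint of $Q$, then $Q\cup\{v\}$ is an induced cycle of size $\ge|T|$ containing $T\setminus\{p_0\}$; otherwise some neighbor $c$ of $v$ lies outside $T$ and $Q\cup\{v,c\}$ is an induced path longer than $T$. Both contradict the specialness of $T$ in Definition \ref{def:specialPath}.

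The two points needing care, and the main obstacle, are \emph{availability} and \emph{chord control} of the extending vertex. For availability, whenever I attach an external neighbor $c\in\{a,b\}\setminus T$ with $c\in\mP_e$, I must know $c\in V'$ at the stage $T$ is extracted; this follows because if $c$'s path were extracted earlier, then $v$, being adjacent to $c$ and present in $V'$ up to $T$'s stage, would have been deleted before $T$, contradicting $v\in V'$ at that stage (for $c\in\mB_2$ availability is automatic). For chord control, I must ensure $v$ has no unexpected neighbor inside the structure, which is precisely why the $\mP_{2e}$ case is organized around whether $v$ hits the far endpoint of $Q$: attaching along an edge to that endpoint would create a chord, so that configuration is instead closed into an induced cycle rather than extended into a path. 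Once these checks are in place, every branch produces a structure violating the defining property of the special cycle or path, which establishes $w\in\mP_{2i}$ and in particular that $v$ has no neighbor in $\mC\cup\mP_{2e}$.
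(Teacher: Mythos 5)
Your proof is correct and follows essentially the same route as the paper's: locate the third neighbor $w$ in $\mC\cup\mP_i$ via Lemma \ref{lem:NCPi}, then contradict Definitions \ref{def:specialCycle} and \ref{def:specialPath} by extending a special cycle or path inside $G[V']$, using the same availability-in-$V'$ and non-adjacency facts. The only difference is organizational: the paper anchors on the \emph{first} special structure extracted with $v$ as a neighbor (which yields $v$'s availability for free), whereas you anchor on the structure containing $w$ and treat $v$'s possible earlier deletion as a separate case; the resulting branches correspond one-to-one to the paper's case analysis.
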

\begin{proof}
Let $v\in\mN$ be such that it is adjacent to two vertices
in $\mB_2\cup\mP_e$. Let $x_1$ and $x_2$ be these neighbors.
From Lemma \ref{lem:NCPi}, it follows that $v$ is adjacent to one
vertex in $\mC\cup\mP_i$. Let this vertex be $y$. We need to show that
$y\in\mP_{2i}$.

Suppose $T$ is the first special path or cycle chosen in Algorithm
\ref{alg:primPart} with $v$ as a neighbor. We will first show that $v$,
$x_1$ and $x_2$ are present in $V'$ when $T$ is being chosen. Since $T$
is the first component of $\mS$ with $v$ as its neighbor, it implies that
$v\in V'$ at that time. If any of $x_1$ and $x_2$ belongs to $\mB_2$, then,
it is in $V'$ because
$\mB_2\subseteq\mA_1\subseteq V'$ at any time in the algorithm. If any of
$x_1$ and $x_2$, say $x$ is in $\mP_e$, then again,
since $T$ is the first component of $\mS$ with $v$ as the neighbor,
it follows that $x$ is an end point of either $T$ or a special path
chosen after $T$. In either case, $x$ belongs to $V'$ when $T$ is
being chosen. The following observation is crucial for the proof:

\emph{Consider any $t\in\{y,x_1,x_2\}$. If $t\notin T$, it implies
that it is not adjacent to any vertex in $T$, since otherwise it would
be present in $\mN_1$ and this is not possible since by assumption
$y\in\mS$ and $x_1,x_2\in\mB_2\cup\mP_e$. }

We will now show that if $y\in\mC\cup\mP_{2e}$, it contradicts the
assumption that $T$ is a special cycle or path.

Let us suppose that $T$ is a special cycle. Since $x_1,x_2\notin\mC$,
it follows that $x_1,x_2\notin T$ and therefore, $y\in T$. Let $y'\in T$
be adjacent to $y$. Since $x_1\notin T$, it is not adjacent to any vertex
of $T$ and therefore, $(T\setminus\{y'\})\cup\{v,x_1\}$ induces a path with
$|T|+1$ vertices in $G[V']$ contradicting the fact that $T$ is a special
cycle. Therefore, from now on we will assume that $T$ is a special path. 

If $y\notin T$, it implies that at least one of $\{x_1,x_2\}$ belongs to
$T$. If only one of them, say $x_1\in T$, then, since $x_1$ has to be an
end point of $T$, $T\cup\{v\}$ induces a longer path in $G[V']$ and if
both $x_1,x_2\in T$, then, $T\cup\{v\}$ induces a cycle. In either case,
we have a contradiction to the fact that $T$ is a special path.

Suppose $y\in T$. Since we have assumed that $y\in\mC\cup\mP_{2e}$,
this means $y$ is a second end point of $T$. Let $p_e'$ be an end
point of $T$ adjacent to $y$ and let $p_e$ be the other end
point. If $p_e\in\{x_1,x_2\}$, then, $(T\setminus\{p_e'\})\cup\{v\}$
induces a cycle contradicting the fact that $T$ is a special path. If
$p_e\notin\{x_1,x_2\}$, it implies that at most one vertex from
$\{x_1,x_2\}$ can be an end point of $T$. Without loss of generality,
we assume that $x_1$ is not an end point of $T$. This implies that
$x_1\notin T$ and therefore, is not adjacent to any vertex in $T$.
Hence, $(T\setminus\{p_e'\})\cup\{v,x_1\}$ induces a path with $|T|+1$
vertices, again contradicting the fact that $T$ is a special path.
Therefore, $y\in\mP_{2i}$.  \qed
\end{proof}
}

\journ{
\begin{observation}\label{obs:B2PeNIsnMinus1}
We can assume that $|\mB_2\cup\mP_e\cup\mN|\le n-2$. This is because,
since $G$ is cubic, it has a cycle and therefore, we can always extract a
special cycle with at least $3$ vertices or a special path with at least
$4$ vertices from $V$. Thus, we can ensure that $|\mS\setminus\mP_e|\ge
2$. This implies that, $|\mB_2\cup\mP_e\cup\mN|\le n-2$.
\end{observation}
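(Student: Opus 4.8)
The plan is to bound the complement rather than the set itself. Recall that $V=\mS\uplus\mN_1\uplus\mA_1$, that $\mS=\mC\uplus\mP_e\uplus\mP_i$ (as $\mP=\mP_e\uplus\mP_i$), that $\mN\subseteq\mN_1$, and that $\mB_2\subseteq\mB\subseteq\mA_1$. Consequently $\mC\cup\mP_i$ and $\mB_2\cup\mP_e\cup\mN$ are made up of \emph{distinct} blocks of this partition, so they are disjoint subsets of $V$. Hence $|\mB_2\cup\mP_e\cup\mN|\le n-|\mC\cup\mP_i|=n-|\mS\setminus\mP_e|$, and it suffices to exhibit two vertices that are forced into $\mS\setminus\mP_e=\mC\cup\mP_i$, i.e.\ to prove $|\mS\setminus\mP_e|\ge 2$.

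To do this I would look at the very first special structure $T$ extracted in Algorithm~\ref{alg:primPart}. At that moment $V'=V$ and $G[V']=G$ is cubic; having minimum degree $3$, it contains a cycle, and every component has at least four vertices, so the while loop does execute. Following the proof of Observation~\ref{obs:nonSpecial}, I may take $T$ to be a \emph{largest} induced cycle or path of $G$, with the preference that if some largest set induces a cycle then $T$ is chosen to be that special cycle. Whatever $T$ is, its vertices enter $\mS$ permanently with the classification dictated by $T$: if $T$ is a cycle they land in $\mC$, and if $T$ is a path its interior vertices land in $\mP_i$.

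The size estimate then splits into two cases. If $T$ is a cycle, then $|T|\ge 3$ and $T\subseteq\mC$, so $|\mS\setminus\mP_e|\ge|\mC|\ge 3$. If $T$ is a path, then by the selection preference no largest set induces a cycle, so $G$ has no induced cycle on exactly $|T|$ vertices; but the shortest cycle of $G$ is chordless and hence an induced cycle on $g\ge 3$ vertices, and since $T$ is a largest induced structure we have $g\le|T|$ together with $g\ne|T|$, forcing $|T|\ge g+1\ge 4$. A path on at least four vertices has at least two interior vertices, all lying in $\mP_i$, so again $|\mS\setminus\mP_e|\ge|\mP_i|\ge 2$. In either case $|\mS\setminus\mP_e|\ge 2$, which yields $|\mB_2\cup\mP_e\cup\mN|\le n-2$.

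The only delicate point is the path case, and it is exactly what produces the value $2$ (rather than $3$) in the bound. A priori a special path might have just three vertices, contributing a single interior vertex to $\mP_i$ and giving only $n-1$; the resolution is the cycle-first preference inside Observation~\ref{obs:nonSpecial}. A three-vertex largest induced structure can occur only when the girth is $3$, in which case a triangle is an equally large induced \emph{cycle}, so we extract a special cycle instead and never settle for a three-vertex special path at the first step. This is where I expect the main care to be needed; the remaining steps are bookkeeping on the partition.
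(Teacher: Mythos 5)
Your proof is correct and takes essentially the same route as the paper: the paper's own (terse) justification is exactly your argument, namely that the first extraction can be forced to be a special cycle with at least $3$ vertices or a special path with at least $4$ vertices (using the cycle-first preference from Observation~\ref{obs:nonSpecial} plus the fact that a cubic graph has an induced cycle), which gives $|\mS\setminus\mP_e|\ge 2$, and then disjointness of $\mC\cup\mP_i$ from $\mB_2\cup\mP_e\cup\mN$ in the partition yields $|\mB_2\cup\mP_e\cup\mN|\le n-2$. Your write-up merely fills in the details (girth argument, stability of the classification into $\mC$ and $\mP_i$) that the paper leaves implicit.
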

}

\subsection{A summary}
In this section, we partitioned the vertex set $V$ as follows:
\journ{$V=\mS\uplus\mN_1\uplus\mA_1$, where $\mS=\mC\uplus\mP$,
$\mN_1=\mR\uplus\mN$ and $\mA_1=\mB\uplus\mA$. Further,
$\mP=\mP_e\uplus\mP_{2e}\uplus\mP_{2i}$ and
$\mB=\mB_1\uplus\mB_2$. Therefore,} $V=\mC\uplus\mP_e\uplus\mP_{2e}
\uplus\mP_{2i}\uplus\mN \uplus\mR\uplus\mB_1\uplus\mB_2 \uplus\mA$. This
partitioning of $V$ is illustrated in Figure \ref{fig:cubicPartTree}.
Some of the observations and lemmas which we developed will be frequently
referred to in the sections to come. For the convenience of the reader,
we have tabulated them as follows. \journ{In Table \ref{tab:nonAdj}, we have
listed the pairs of sets $X,Y\subseteq V$ which satisfy the property that
in $G$ there is no edge between $X$ and $Y$. The relevant observation
or lemma is listed in the third column.} In Table \ref{tab:uniqNeighbor},
we list pairs of sets $X,Y\subseteq V$ such that in $G$, every vertex of
$X$ has at most one neighbor in $Y$. The corresponding observation or
lemma can be found in the third column. In the fourth column, we give
information on whether the vertex in $X$ has at most one neighbor or
exactly one neighbor in $Y$.
\begin{figure}[tb]
\begin{center}
\epsfig{file=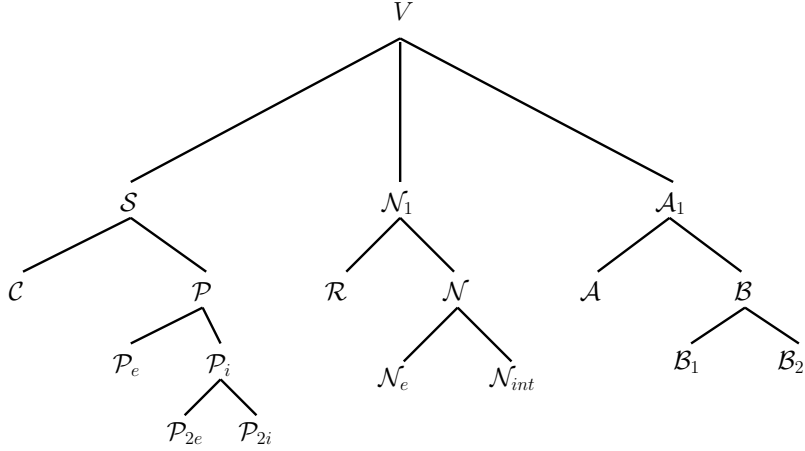,height=6cm}
\end{center}
\caption{The partition of the vertex set $V$ of the cubic graph.\journ{
The first and second level partitions are due to Algorithms
\ref{alg:primPart} and \ref{alg:finePart} respectively. The partitioning
of $\mP$ is covered in the beginning of Section \ref{sec:partS}. The
partitioning of $\mN$ and $\mB$ are due to Definitions \ref{def:NeNint}
and \ref{def:B1B2} respectively.}
\label{fig:cubicPartTree}}
\end{figure}
\journ{
\begin{table}[tb]
\begin{center}
\begin{tabular}{|r|c|c|l|}
\hline
1&$\mA$ & $\mC\cup\mP$ & using Observation \ref{obs:primPart}.\ref{primPart:ADisjointS} and the fact that $\mA\subseteq\mA_1$\\\hline
2&$\mA\cup\mN$ & $\mR$ & Observation \ref{obs:finePart}.\ref{finePart:RDisjointNA}(b)\\\hline
3&$\mA$ & $\mB$ & Observation \ref{obs:finePart}.\ref{finePart:BDisjointA}\\\hline
4&$\mC$ & $\mP$ & follows directly from the definitions of $\mC$ and $\mP$.\\\hline
5&$\mC\cup\mP_e\cup\mP_{2e}$ & $\mR$ & Lemma \ref{lem:RP2i}\\\hline
6&$\mC\cup\mP$ & $\mB$ & Observation \ref{obs:finePart}.\ref{finePart:BDisjointS}\\\hline
7&$\mP_e$ & $\mP_e$ & Lemma \ref{lem:B2PeInd}\\\hline
8&$\mP_e$ & $\mP_{2i}$ & follows directly from the definitions of $\mP_e$ and $\mP_{2i}$.\\\hline
9&$\mN_{int}$ & $\mA$ & Observation \ref{obs:B2PeN}.\ref{B2PeN:NEndIsolatedA} and Definition \ref{def:NeNint}\\\hline
10&$\mR$ & $\mR$ & Observation \ref{obs:finePart}.\ref{finePart:RDisjointNA}(a)\\\hline
\end{tabular}
\caption{{\bf Non-adjacency table:} In every row, there is no edge
between the set in the 1st column and the set in the 2nd column in
$G$.\label{tab:nonAdj}}
\end{center}
\end{table}
}
\begin{table}[tb]
\begin{center}
\begin{tabular}{|r|c|c|l|l|}
\hline
1&$\mN_e$ & $\mA$ & Observation \ref{obs:B2PeN}.\ref{B2PeN:NEndIsolatedA} and Definition \ref{def:NeNint}& at most one
neighbor\\\hline
2&$\mB_1$ & $\mN$ & Definition \ref{def:B1B2} & exactly one neighbor \\\hline
3&$\mC\cup\mP_{2e}$ & $\mN$ & Observation~\ref{obs:RP2i}.\ref{RP2i:CP2eDisjointR} & exactly one neighbor \\\hline
4&$\mP_{2i}$ & $\mR\cup\mN$ & Observation \ref{obs:RP2i}.\ref{RP2i:P2iRN} & exactly one neighbor\\\hline
5&$\mR$ & $\mP_{2i}$ & Lemma \ref{lem:RP2i} & exactly one neighbor\\\hline
\end{tabular}
\caption{{\bf Unique neighbor table:} In every row, each vertex
belonging to the set in the 1st column has either (a) at most one
neighbor OR (b) exactly one neighbor in the set given in the 2nd
column.\label{tab:uniqNeighbor}}
\end{center}
\end{table}

\section{Construction of a $3$-box representation of $G$}\label{sec:3box}
In order to give a $3$-box representation of $G$, we define three interval
graphs $I_1$, $I_2$ and $I_3$\journ{ and verify that $E(G)=E(I_1)\cap
E(I_2)\cap E(I_3)$}. \conf{The verification of $E(G)=E(I_1)\cap
E(I_2)\cap E(I_3)$ and the condition that two intersecting boxes touch
only at their boundaries is in the appendix.} Let $n=|V|$, the number
of vertices in $G$. For any $v\in V$, let $f(v,I_j)$, $j=1,2,3$ denote
the closed interval assigned to $v$ in the interval representation
of $I_j$. Further, let $l(v,I_j)$ and $r(v,I_j)$ denote the left and
right end points of $f(v,I_j)$ respectively. In each interval graph,
the interval assigned to a vertex is based on the set it belongs to in
the partition of $V$ (illustrated in Figure \ref{fig:cubicPartTree}).
\journ{We will also show that for every pair of adjacent vertices $x$
and $y$, in at least one interval graph $I_j$, $j\in\{1,2,3\}$, either
$l(x,I_j)=r(y,I_j)$ or $l(y,I_j)=r(x,I_j)$. This is sufficient to prove
that their corresponding boxes intersect only at their boundaries.}

\subsection{Construction of $I_1$}\label{sec:I1}
\subsubsection{Vertices of $\mA$} \label{sec:AI1}
We recall from Observation \ref{obs:primPart}.\ref{primPart:isolatedA}
that since $\mA\subseteq\mA_1$, it induces a collection of isolated
vertices and edges in $G$. 
\begin{definition}\label{def:PiA}
Let $\Pi_A$ be an ordering of $\mA$ which
satisfies the condition that the two end points of every (isolated)
edge are consecutively ordered. 
\end{definition}
\journ{
The intervals assigned to the vertices
of $\mA$ are as follows:
}
\paragraph{\bf An isolated vertex} $u$ is given a point interval as follows:
\begin{equation}\label{eqn:AIsolatedI1}
f(u,I_1)=[2n+\Pi_A(u),2n+\Pi_A(u)].
\end{equation}
\paragraph{\bf End points of an isolated edge} $(u,v)$: Without loss of
generality, let $\Pi_A(v)=\Pi_A(u)+1$. We assign the intervals to $u$ and
$v$ as follows:
\begin{eqnarray}
f(u,I_1)&=&[2n+\Pi_A(u),2n+\Pi_A(u)+0.5],\label{eqn:AEdgexI1} \\
f(v,I_1)&=&[2n+\Pi_A(v)-0.5,2n+\Pi_A(v)].\label{eqn:AEdgeyI1}
\end{eqnarray}
\journ{
\begin{observation}\label{obs:AATouch}
Let $x,y\in\mA$ such that $\Pi_A(x)<\Pi_A(y)$. If they are adjacent in
$G$, then, $r(x,I_1)=l(y,I_1)$. This follows from (\ref{eqn:AEdgexI1})
and (\ref{eqn:AEdgeyI1}).
\end{observation}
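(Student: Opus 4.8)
The plan is to read off the claimed equality directly from the interval formulas, after first pinning down the arithmetic relationship between $\Pi_A(x)$ and $\Pi_A(y)$. The key preliminary step is to argue that two adjacent vertices $x,y\in\mA$ must be precisely the two endpoints of an isolated edge in $G[\mA]$. This follows immediately from Observation \ref{obs:primPart}.\ref{primPart:isolatedA}: since $\mA\subseteq\mA_1$ and $\mA_1$ induces only isolated vertices and (isolated) edges, the edge joining $x$ and $y$ cannot be embedded in any larger structure, so $\{x,y\}$ is exactly one of these isolated edges.

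Once this is established, I would invoke the defining property of the ordering $\Pi_A$ from Definition \ref{def:PiA}, namely that the two endpoints of every isolated edge are ordered consecutively. Combined with the hypothesis $\Pi_A(x)<\Pi_A(y)$, this forces $\Pi_A(y)=\Pi_A(x)+1$. In other words, $x$ plays the role of $u$ and $y$ the role of $v$ in the edge-assignment equations (\ref{eqn:AEdgexI1}) and (\ref{eqn:AEdgeyI1}).

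Finally, I would substitute directly. From (\ref{eqn:AEdgexI1}) with $u=x$, the right endpoint is $r(x,I_1)=2n+\Pi_A(x)+0.5$. From (\ref{eqn:AEdgeyI1}) with $v=y$, the left endpoint is $l(y,I_1)=2n+\Pi_A(y)-0.5=2n+(\Pi_A(x)+1)-0.5=2n+\Pi_A(x)+0.5$. The two expressions coincide, giving $r(x,I_1)=l(y,I_1)$, as required.

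There is essentially no genuine obstacle here; the statement reduces to a one-line computation. If anything, the only point requiring care is the very first step --- verifying that adjacency \emph{inside} $\mA$ really does force the pair to be an isolated edge (rather than, say, two vertices lying in a longer path or cycle), which is exactly what the structural Observation \ref{obs:primPart}.\ref{primPart:isolatedA} rules out. After that, one sees that the constants $0.5$ in the two formulas were chosen precisely so that consecutive indices produce a shared boundary point, and the arithmetic closes the argument.
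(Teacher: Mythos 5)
Your proof is correct and takes essentially the same approach as the paper: the paper's entire justification is that the equality follows from (\ref{eqn:AEdgexI1}) and (\ref{eqn:AEdgeyI1}), and you have simply made explicit the routine supporting details (that adjacency inside $\mA$ forces an isolated edge of $G[\mA]$ by Observation \ref{obs:primPart}.\ref{primPart:isolatedA}, that Definition \ref{def:PiA} then gives $\Pi_A(y)=\Pi_A(x)+1$, and the final substitution).
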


\begin{lemma}\label{lem:AGI1}
The graph induced by $\mA$ in $I_1$ and $G$ are identical, that is,
$I_1[\mA]=G[\mA]$.
\end{lemma}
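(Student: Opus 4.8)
The plan is to verify both directions of edge-equivalence between $G[\mA]$ and $I_1[\mA]$, using the fact (Observation~\ref{obs:primPart}.\ref{primPart:isolatedA}) that $G[\mA]$ consists only of isolated vertices and isolated edges, so that adjacency in $G[\mA]$ means precisely being the two endpoints of one such edge. The forward direction is essentially free: if $(x,y)\in E(G)$ with $x,y\in\mA$, then by Observation~\ref{obs:AATouch} (a direct consequence of (\ref{eqn:AEdgexI1}) and (\ref{eqn:AEdgeyI1})) their intervals share the endpoint $r(x,I_1)=l(y,I_1)$, hence intersect, giving $(x,y)\in E(I_1)$.

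The substance lies in the converse: non-adjacent $x,y\in\mA$ must receive disjoint intervals. First I would record from (\ref{eqn:AIsolatedI1})--(\ref{eqn:AEdgeyI1}) the exact form of the endpoints as a function of the ordering value $p=\Pi_A(\cdot)$: an isolated vertex occupies the single point $2n+p$; the lower endpoint of an edge occupies $[2n+p,\,2n+p+0.5]$; and the higher endpoint occupies $[2n+p-0.5,\,2n+p]$. In particular $r(\cdot,I_1)=2n+p+0.5$ holds \emph{only} for a lower edge-endpoint, and $l(\cdot,I_1)=2n+p-0.5$ holds \emph{only} for a higher edge-endpoint; in every other case the interval is confined to the point $2n+p$ on the relevant side.

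Now take $x,y\in\mA$ with $(x,y)\notin E(G)$ and, without loss of generality, $\Pi_A(x)<\Pi_A(y)$, so $\Pi_A(y)\ge\Pi_A(x)+1$. Chaining the endpoint bounds gives $r(x,I_1)\le 2n+\Pi_A(x)+0.5\le 2n+\Pi_A(y)-0.5\le l(y,I_1)$, so the two intervals can meet only if all three inequalities are tight. Tightness forces $\Pi_A(y)=\Pi_A(x)+1$, that $x$ is the lower endpoint of an edge, and that $y$ is the higher endpoint of an edge. But then, by the consecutiveness convention of Definition~\ref{def:PiA}, the partner of $x$ along its edge has ordering value $\Pi_A(x)+1=\Pi_A(y)$, and since $\Pi_A$ is a bijection this partner is $y$ itself; thus $(x,y)\in E(G)$, contradicting non-adjacency. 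Hence the intervals are disjoint, i.e.\ $(x,y)\notin E(I_1)$, and together with the forward direction this yields $I_1[\mA]=G[\mA]$.

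The only delicate point — which I would treat as the main obstacle — is the endpoint bookkeeping in the tightness analysis: one must check that the two boundary values $2n+p+0.5$ (as a right endpoint) and $2n+p-0.5$ (as a left endpoint) are each attainable by exactly one interval type, and then invoke injectivity of $\Pi_A$ together with the consecutiveness guaranteed by Definition~\ref{def:PiA} to identify the forced partner as $y$. Everything else is routine.
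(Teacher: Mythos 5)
Your proposal is correct and takes essentially the same route as the paper: both arguments do endpoint bookkeeping on the assignments (\ref{eqn:AIsolatedI1})--(\ref{eqn:AEdgeyI1}), showing that two intervals of vertices in $\mA$ can meet only when the $\Pi_A$-values are consecutive and $x$, $y$ received the assignments (\ref{eqn:AEdgexI1}) and (\ref{eqn:AEdgeyI1}) respectively, which happens precisely when they are the two endpoints of an isolated edge of $G[\mA]$. Your write-up is only slightly more explicit than the paper's in spelling out the tightness of the inequality chain and the use of injectivity of $\Pi_A$ to identify the edge partner as $y$.
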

\begin{proof}
From the interval assignments (\ref{eqn:AIsolatedI1}),
(\ref{eqn:AEdgexI1}) and (\ref{eqn:AEdgeyI1}), we observe the following:
for any $x\in\mA$, (a) the point $2n+\Pi_A(x)$ is an end point of $f(x,I_1)$
and (b) either $f(x,I_1)$ is a point interval or its length is $0.5$.

Suppose $x,y\in\mA$ such that $\Pi_A(x)<\Pi_A(y)$. From the above
observations it follows that $x$ and $y$ are adjacent in $I_1$ if and only
if $\Pi_A(y)=\Pi_A(x)+1$ and $f(x,I_1)=[2n+\Pi_A(x),2n+\Pi_A(x)+0.5]$
and $f(y,I_1)=[2n+\Pi_A(y)-0.5,2n+\Pi_A(y)]$. This implies that
$x$ was assigned an interval by (\ref{eqn:AEdgexI1}) and $y$, by
(\ref{eqn:AEdgeyI1}). This happens only if $x$ and $y$ are the end points
of an isolated edge in $G[\mA]$. Hence proved.  \qed
\end{proof}
}
\subsubsection{Vertices of ${\mB_2\cup\mP_e\cup\mN}$}\label{sec:B2PeNI1} 
We define an ordering on this set.
\begin{definition}\label{def:Pi1}
Let $\Pi_1$ be an ordering of $\mB_2\cup\mP_e\cup\mN$ which
satisfies the following properties. Let $S$ be a component induced by
$\mB_2\cup\mP_e\cup\mN$. We recall from Section \ref{sec:B2PeN} that $S$
is either a path or a cycle.
\begin{enumerate}
\item Let $S$ be a path with at least two vertices. Then, for one of
the natural orderings of the vertices of $S$, say $p_1p_2\ldots p_t$,
we have $\Pi_1(p_i)=\Pi_1(p_{i-1})+1$, $2\le i\le t$.
\item Suppose $S$ is a cycle. We recall that $S$ is either a Type 1 or a
Type 2 cycle (Definition \ref{def:type12}). Consider one of the natural
orderings of the vertices of $S$, say $c_1c_2\ldots c_tc_1$ such that
if $S$ is a Type 1 cycle, then $c_1\in\mN$ and if it is a Type 2 cycle,
then $c_1,c_t\in\mN$. Then, we have, $\Pi_1(c_i)=\Pi_1(c_{i-1})+1$,
$2\le i\le t$.
\item Let
$\mB_2\cup\mP_e\cup\mN= \Lambda_{\textrm{Type
1}}\uplus\Lambda_{\textrm{Type 2}}\uplus\Lambda_{\textrm{Paths}}$ where,
$\Lambda_{\textrm{Type 1}}$, $\Lambda_{\textrm{Type 2}}$ and $\Lambda_{\textrm{Paths}}$ are the sets of vertices
belonging to Type 1 cycles, Type 2 cycles and paths respectively. Then, we
have $\Pi_1({\Lambda_{\textrm{Type 1}}})<\Pi_1({\Lambda_{\textrm{Type
2}}})<\Pi_1({\Lambda_{\textrm{Paths}}})$.
\end{enumerate}
\end{definition}
\journ{
It is easy to verify that such an ordering exists. We also
infer that if $S_1$ and $S_2$ are two different components of
$G[\mB_2\cup\mP_e\cup\mN]$, then, either $\Pi_1(S_1)<\Pi_1(S_2)$ or
$\Pi_1(S_2)<\Pi_1(S_1)$.
\begin{observation}\label{obs:Pi1nMinus1}
$\forall z\in\mB_2\cup\mP_e\cup\mN$, $\Pi_1(z)<n-1$. This follows from the
fact that $|\mB_2\cup\mP_e\cup\mN|<n-1$ (Observation
\ref{obs:B2PeNIsnMinus1}).
\end{observation}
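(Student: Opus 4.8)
The plan is to read off the claim directly from the defining property of an ordering together with the cardinality bound already in hand. Recall that $\Pi_1$ is, by Definition \ref{def:Pi1}, an ordering of the set $\mB_2\cup\mP_e\cup\mN$; that is, it is a bijection from this set onto $\{1,2,\ldots,m\}$, where $m=|\mB_2\cup\mP_e\cup\mN|$. Consequently every value it assigns satisfies $\Pi_1(z)\le m$, and in particular the largest rank it hands out is exactly $m$.

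First I would invoke Observation \ref{obs:B2PeNIsnMinus1}, which gives $m=|\mB_2\cup\mP_e\cup\mN|\le n-2$, hence $m<n-1$. The content of that bound is that the set $\mS\setminus\mP_e$ contains at least two vertices (because $G$, being cubic, has a cycle and therefore yields either a special cycle of length at least $3$ or a special path with at least $4$ vertices), and these vertices lie outside $\mB_2\cup\mP_e\cup\mN$ and so are never assigned a value by $\Pi_1$.

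Combining the two facts, for every $z\in\mB_2\cup\mP_e\cup\mN$ we obtain $\Pi_1(z)\le m\le n-2<n-1$, which is precisely the asserted inequality $\Pi_1(z)<n-1$. There is essentially no obstacle to this argument; the only point worth stating explicitly is the convention that an ordering of an $m$-element set takes its values in $\{1,\ldots,m\}$, so that $m$ is an upper bound for every $\Pi_1(z)$. Granting that, the observation is an immediate transitivity step applied to the size estimate $|\mB_2\cup\mP_e\cup\mN|\le n-2$ from Observation \ref{obs:B2PeNIsnMinus1}.
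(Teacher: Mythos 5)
Your proof is correct and follows essentially the same route as the paper: the paper's own justification is precisely the citation of Observation \ref{obs:B2PeNIsnMinus1} ($|\mB_2\cup\mP_e\cup\mN|\le n-2$), combined with the implicit convention that an ordering of an $m$-element set takes values in $\{1,\ldots,m\}$. You have merely made that implicit step explicit, which is fine.
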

}
The interval assignments for the vertices of $\mB_2\cup\mP_e\cup\mN$
are as follows:
\paragraph{\bf For a vertex in a Type 1 cycle:} Let $S=c_1c_2\ldots
c_tc_1$ be a Type 1 cycle such that $\Pi_1(c_{i+1})=\Pi_1(c_i)+1$,
$1\le i<t$.
\begin{eqnarray}
&&f(c_1,I_1)=\left[\Pi_1(c_1),\Pi_1(c_t)\right];\label{eqn:B2PeNType1c1I1}\\
&&f(c_i,I_1)=\left[\Pi_1(c_i),\Pi_1(c_i)+1\right], 1<i<t;\label{eqn:B2PeNType1ciI1}\\
&&f(c_t,I_1)=\left[\Pi_1(c_t),\Pi_1(c_t)+0.5\right].\label{eqn:B2PeNType1ctI1}
\end{eqnarray}
\journ{
\begin{observation}\label{obs:Type1I1}
Suppose $S=c_1c_2\ldots c_tc_1$ is a Type 1 cycle such that
$\Pi_1(c_{i+1})=\Pi_1(c_i)+1$, $1\le i<t$.
\begin{enumerate}
\item For $1<i<t$, $r(c_i,I_1)=l(c_{i+1},I_1)$ and therefore,
$I_1[S\setminus c_1]=G[S\setminus c_1]$. This is because, from
(\ref{eqn:B2PeNType1ciI1}) and (\ref{eqn:B2PeNType1ctI1}),
$r(c_i,I_1)=\Pi_1(c_i)+1=\Pi_1(c_{i+1})=l(c_{i+1},I_1)$.\label{Type1I1:SMinusc1Touch}
\item $I_1[S]$ is a supergraph of $G[S]$. The proof is as follows:
$c_1$ is adjacent to all the other vertices of $S$. This follows
from (\ref{eqn:B2PeNType1c1I1}): $l(c_i,I_1)=\Pi_1(c_i)\in f(c_1,I_1)$.
From Point 1, $c_i$ is adjacent to $c_{i+1}$, $2\le i<t$. Hence, proved.
\label{Type1I1:Type1SuperI1}
\item Let $x\in S_x$ and $y\in S_y$, where $S_x$ and $S_y$ induce
different Type 1 cycles. Then, $(x,y)\notin E(I_1)$. The proof is as
follows: Without loss of generality, let $\Pi_1(S_x)<\Pi_1(S_y)$. From
(\ref{eqn:B2PeNType1c1I1}--\ref{eqn:B2PeNType1ctI1}), it follows that
$\ds r(x,I_1)\le \max_{a\in S_x}\Pi_1(a)+0.5<\min_{b\in S_y}\Pi_1(b)\le
l(y,I_1)$. Therefore, $(x,y)\notin E(I_1)$. \label{Type1I1:nonAdjType1}
\end{enumerate}
\end{observation}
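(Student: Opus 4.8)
The statement to prove is Observation~\ref{obs:Type1I1}, which has three parts concerning a Type~1 cycle $S=c_1c_2\ldots c_tc_1$ with $\Pi_1(c_{i+1})=\Pi_1(c_i)+1$. The plan is to verify each part directly from the interval assignments in equations (\ref{eqn:B2PeNType1c1I1})--(\ref{eqn:B2PeNType1ctI1}). This is a routine unwinding of definitions rather than a conceptual argument, so the work is purely computational: I read off the left and right endpoints and compare them.

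**Part 1 (the chain touches).** First I would show that for $1<i<t$ the intervals of consecutive vertices $c_i$ and $c_{i+1}$ touch at exactly one point. From (\ref{eqn:B2PeNType1ciI1}) the right endpoint is $r(c_i,I_1)=\Pi_1(c_i)+1$, and since the ordering is consecutive this equals $\Pi_1(c_{i+1})=l(c_{i+1},I_1)$ by (\ref{eqn:B2PeNType1ciI1}) (or by (\ref{eqn:B2PeNType1ctI1}) when $i+1=t$). Hence consecutive intervals in $c_2,\ldots,c_t$ meet at a boundary point and are adjacent in $I_1$. To conclude $I_1[S\setminus c_1]=G[S\setminus c_1]$ I must also check that \emph{non}-consecutive vertices among $c_2,\ldots,c_t$ are non-adjacent in $I_1$: this follows because each such interval has length exactly $1$ (or $0.5$ for $c_t$) and starts at $\Pi_1(c_i)$, so two intervals $c_i,c_j$ with $|\,i-j\,|\ge 2$ are separated. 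Since $S\setminus c_1$ is a path in $G$, the two graphs coincide.

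**Part 2 (supergraph).** Next I would verify that $c_1$ is $I_1$-adjacent to every other $c_i$. The interval $f(c_1,I_1)=[\Pi_1(c_1),\Pi_1(c_t)]$ spans the whole range, so for each $i$ the point $l(c_i,I_1)=\Pi_1(c_i)$ lies in $[\Pi_1(c_1),\Pi_1(c_t)]$, giving the intersection. Combined with Part~1, every edge of $G[S]$ (which is the cycle $c_1c_2\cdots c_tc_1$) is present in $I_1[S]$, so $I_1[S]$ is a supergraph of $G[S]$. I should note that $I_1$ may contain extra edges (e.g.\ $c_1$ to all $c_i$, not just its two cycle-neighbors); that is acceptable since only the supergraph claim is asserted here, and the spurious edges will be killed by $I_2$ or $I_3$ elsewhere in the construction.

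**Part 3 (distinct Type~1 cycles are separated) and the main obstacle.** Finally, for $x\in S_x$, $y\in S_y$ lying in different Type~1 cycles with $\Pi_1(S_x)<\Pi_1(S_y)$, I would bound $r(x,I_1)\le \max_{a\in S_x}\Pi_1(a)+0.5$ using the fact that every interval of a Type~1 vertex extends at most $0.5$ (for $c_t$), or at most $1$ (for interior vertices), beyond its own left endpoint $\Pi_1(\cdot)$; the critical estimate is that the right endpoint never exceeds $\max_{a\in S_x}\Pi_1(a)+1$, and in fact for the $c_1$-type interval it reaches exactly $\max_{a}\Pi_1(a)$. The genuinely delicate point, and the one place I would be careful, is the strict inequality $\max_{a\in S_x}\Pi_1(a)+0.5 < \min_{b\in S_y}\Pi_1(b)$: this requires that the $\Pi_1$-values of distinct components are separated by at least an integer gap, which holds because $\Pi_1$ is an ordering by consecutive integers within a component and different components occupy disjoint integer ranges (the remark following Definition~\ref{def:Pi1} guarantees $\Pi_1(S_x)<\Pi_1(S_y)$ as whole blocks). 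Hence $r(x,I_1)<l(y,I_1)$ and $(x,y)\notin E(I_1)$, completing the proof.
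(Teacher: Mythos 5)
Your proof is correct and follows essentially the same route as the paper: direct verification from the interval assignments (\ref{eqn:B2PeNType1c1I1})--(\ref{eqn:B2PeNType1ctI1}), with the identical chain of inequalities $r(x,I_1)\le \max_{a\in S_x}\Pi_1(a)+0.5<\min_{b\in S_y}\Pi_1(b)\le l(y,I_1)$ in Part 3. The only difference is that you make explicit the separation of non-consecutive intervals among $c_2,\ldots,c_t$ (needed for the equality $I_1[S\setminus c_1]=G[S\setminus c_1]$ in Part 1), which the paper leaves implicit under its ``therefore'' --- a welcome addition, not a departure.
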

}
\paragraph{\bf For a vertex in a Type 2 cycle:} Let $S=c_1c_2\ldots
c_tc_1$ be a Type 2 cycle such that $\Pi_1(c_{i+1})=\Pi_1(c_i)+1$, $1\le
i<t$. We recall from the definition of $\Pi_1$ that $c_1,c_t\in\mN$.
They are assigned intervals as follows:
\begin{eqnarray}
f(c_1,I_1)&=&\left[n+\Pi_1(c_1),n+\Pi_1(c_t)\right], \label{eqn:B2PeNType2c1I1} \\
f(c_t,I_1)&=&\left[n+\Pi_1(c_t),n+\Pi_1(c_t)+0.5\right]. \label{eqn:B2PeNType2ctI1}
\end{eqnarray}
The remaining vertices are assigned intervals as follows. For $1<i<t$,
\begin{eqnarray}
&&\textrm{if $c_i\in\mN$, then, }f(c_i,I_1)= \left[n+\Pi_1(c_i),n+\Pi_1(c_i)+1\right],\label{eqn:B2PeNType2othersNI1}\\
&&\textrm{if $c_i\in\mB_2\cup\mP_e$, then, }f(c_i,I_1)= \left[n,n+\Pi_1(c_i)+1\right].\label{eqn:B2PeNType2othersB2PeI1}
\end{eqnarray}
\journ{
\begin{observation}\label{obs:Type2I1}
Let $S=c_1c_2\ldots c_tc_1$ be a Type 2 cycle such
that $\Pi_1(c_{i+1})=\Pi_1(c_i)+1$, $1\le i<t$. 
\begin{enumerate}
\item $I_1[S]$ is a supergraph of $G[S]$. The proof is as follows: From
(\ref{eqn:B2PeNType2c1I1}--\ref{eqn:B2PeNType2othersB2PeI1}), $\forall
c\in S$, $n+\Pi_1(c)\in f(c,I_1)$. From (\ref{eqn:B2PeNType2c1I1}),
we note that $\forall c\in S$, $n+\Pi_1(c)\in f(c_1,I_1)$ and
therefore, $c_1$ is adjacent to all the other vertices of $S$. From
(\ref{eqn:B2PeNType2othersNI1}) and (\ref{eqn:B2PeNType2othersB2PeI1}),
for $1<i<t$, $r(c_{i},I_1)=n+\Pi_1(c_{i})+1=n+\Pi_1(c_{i+1})\in
f(c_{i+1},I_1)$. Therefore, $c_i$ is adjacent to $c_{i+1}$,
$1<i<t$.\label{Type2I1:Type2SuperI1}
\item Let $x,y\in S$ be two adjacent vertices in $G$ such that
neither $x$ nor $y$ is $c_1$. If $\Pi_1(x)>\Pi_1(y)$ and
$x\in\mN$, then, $l(x,I_1)=r(y,I_1)$.  This follows by noting
that $\Pi_1(x)=\Pi_1(y)+1$ and subsequently applying it in
(\ref{eqn:B2PeNType2ctI1}--\ref{eqn:B2PeNType2othersB2PeI1}).\label{Type2I1:touch}
\end{enumerate}
\end{observation}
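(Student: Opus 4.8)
The plan is to verify both claims directly from the interval assignments (\ref{eqn:B2PeNType2c1I1})--(\ref{eqn:B2PeNType2othersB2PeI1}), exploiting the fact that the vertices of $S$ receive consecutive integer values under $\Pi_1$. First I would isolate the single preliminary fact that carries the whole argument: the point $n+\Pi_1(c)$ lies in $f(c,I_1)$ for \emph{every} $c\in S$. Indeed, for $c_1$ and $c_t$ this point is the left endpoint by (\ref{eqn:B2PeNType2c1I1}) and (\ref{eqn:B2PeNType2ctI1}); for an interior vertex $c_i\in\mN$ it is the left endpoint by (\ref{eqn:B2PeNType2othersNI1}); and for an interior $c_i\in\mB_2\cup\mP_e$ the interval is $[n,n+\Pi_1(c_i)+1]$ by (\ref{eqn:B2PeNType2othersB2PeI1}), which plainly contains $n+\Pi_1(c_i)$ since $\Pi_1(c_i)\ge 0$.

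For the first claim, that $I_1[S]$ is a supergraph of $G[S]$, I would show that each edge of the cycle $c_1c_2\cdots c_tc_1$ survives in $I_1$. The two edges incident to $c_1$, namely $(c_1,c_2)$ and $(c_t,c_1)$, are dispatched simultaneously: since $f(c_1,I_1)=[n+\Pi_1(c_1),n+\Pi_1(c_t)]$ and $\Pi_1(c_1)\le\Pi_1(c)\le\Pi_1(c_t)$ for all $c\in S$, the point $n+\Pi_1(c)$ lies in $f(c_1,I_1)$, and by the preliminary fact it also lies in $f(c,I_1)$; hence $c_1$ meets every other box. For a consecutive interior pair $(c_i,c_{i+1})$ with $1<i<t$ I would compute the right endpoint of $f(c_i,I_1)$: regardless of whether $c_i\in\mN$ or $c_i\in\mB_2\cup\mP_e$, equations (\ref{eqn:B2PeNType2othersNI1}) and (\ref{eqn:B2PeNType2othersB2PeI1}) give $r(c_i,I_1)=n+\Pi_1(c_i)+1=n+\Pi_1(c_{i+1})$, which lies in $f(c_{i+1},I_1)$ by the preliminary fact, so these intervals meet as well. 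Together these cover all edges of $G[S]$.

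For the second claim I would first pin down the combinatorial position of $x$ and $y$. Since they are adjacent in the cycle $S$ and neither equals $c_1$, they cannot be the wrap-around edge $(c_t,c_1)$, so they are consecutive in index with $\Pi_1(x)=\Pi_1(y)+1$; writing $x=c_{i+1}$ and $y=c_i$, the exclusion of $c_1$ together with the existence of $c_{i+1}$ forces $1<i<t$, so $y$ is interior and $x$ is either interior or equal to $c_t$. Because $x\in\mN$, its left endpoint is $l(x,I_1)=n+\Pi_1(x)$ by (\ref{eqn:B2PeNType2othersNI1}) (or by (\ref{eqn:B2PeNType2ctI1}) when $x=c_t$). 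Computing $r(y,I_1)$ exactly as in the first claim yields $r(y,I_1)=n+\Pi_1(y)+1=n+\Pi_1(x)$, and the two coincide.

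The arguments are essentially bookkeeping, so there is no deep obstacle; the only points demanding care are the case split between $c_i\in\mN$ and $c_i\in\mB_2\cup\mP_e$ when computing right endpoints, and the step in the second claim of ruling out both the wrap-around edge and the edges at $c_1$. The hypothesis $x,y\neq c_1$ is genuinely necessary: the long interval $f(c_1,I_1)$ can properly contain the box of its cycle-neighbor $c_2$, so the touching identity $l(x,I_1)=r(y,I_1)$ need not hold for edges incident to $c_1$, which is precisely why those edges are excluded from the statement.
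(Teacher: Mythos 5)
Your proposal is correct and follows essentially the same route as the paper's own argument: both establish that $n+\Pi_1(c)\in f(c,I_1)$ for every $c\in S$, use the long interval $f(c_1,I_1)$ to cover the edges at $c_1$, and compute $r(c_i,I_1)=n+\Pi_1(c_i)+1=n+\Pi_1(c_{i+1})$ for interior vertices to settle the remaining edges and the touching identity in the second claim. Your additional bookkeeping (pinning down $1<i<t$ in the second claim, and the closing remark on why excluding $c_1$ is necessary) is sound and merely makes explicit what the paper leaves implicit.
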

}
\paragraph{\bf For a vertex in a path:} Let $S=p_1p_2\ldots p_t$ be a
path such that $\Pi_1(c_{i+1})=\Pi_1(c_i)+1$, $1\le i<t$. By Definition
\ref{def:NeNint}, $p_1,p_t\in\mN_e$. From Table \ref{tab:uniqNeighbor}
(row 1), they can be adjacent to at most one vertex in $\mA$. Taking
this into consideration, the interval assignments are as follows: Let
$p\in\{p_1,p_t\}\subseteq\mN_e$:
\begin{eqnarray}
&&\textrm{if $p$ is not adjacent to any vertex in $\mA$, then, } f(p,I_1)=\left[n+\Pi_1(p),2n\right], \label{eqn:B2PeNp1ptnoneI1}\\
&&\textrm{if $p$ is adjacent to a vertex $a$ in $\mA$, then, } f(p,I_1)=\left[n+\Pi_1(p),l(a,I_1)\right].\label{eqn:B2PeNp1ptxI1}
\end{eqnarray}
\journ{
Note that $f(a,I_1)$ is already defined in
(\ref{eqn:AIsolatedI1}--\ref{eqn:AEdgeyI1}). Moreover, $l(a,I_1)>
2n\ge n+\Pi_1(p)$. Therefore, $f(p,I_1)$ is well-defined in
(\ref{eqn:B2PeNp1ptxI1}).} If $p$ is an interior point of the path,
its interval assignment is as follows:
\begin{eqnarray}
&&\textrm{if $p\in\mN_{int}$, then, }f(p,I_1)=\left[n+\Pi_1(p),n+\Pi_1(p)+1\right],\label{eqn:B2PeNpInteriorNI1}\\
&&\textrm{if $p\in\mB_2\cup\mP_e$, then, }f(p,I_1)=\left[n,n+\Pi_1(p)+1\right].\label{eqn:B2PeNpInteriorB2PeI1} 
\end{eqnarray}
\journ{
\begin{observation}\label{obs:PathI1}
Let $S=p_1p_2\ldots p_t$ be a path such that
$\Pi_1(c_{i+1})=\Pi_1(c_i)+1$, $1\le i<t$. 
\begin{enumerate}
\item $I_1[S]$ is a supergraph of $G[S]$. The proof is as follows:
For all $p\in S$, $n+\Pi_1(p),n+\Pi_1(p)+1\in f(p,I_1)$. This is easy to infer from
(\ref{eqn:B2PeNp1ptnoneI1}--\ref{eqn:B2PeNpInteriorB2PeI1}) and the fact
that $\Pi_1(p)<n-1$ (Observation \ref{obs:Pi1nMinus1}). This implies
that for $1\le i<t$, $p_i$ is adjacent to $p_{i+1}$. Hence,
proved.\label{PathI1:PathSuperI1}
\item Let $x,y\in S$ be two adjacent vertices in $G$. If
$\Pi_1(x)>\Pi_1(y)$, $x\in\mN$ and $y\in\mB_2\cup\mP_e$,
then, $l(x,I_1)=r(y,I_1)$. This follows by noting that
$\Pi_1(x)=\Pi_1(y)+1$ and subsequently applying it in
(\ref{eqn:B2PeNp1ptnoneI1}--\ref{eqn:B2PeNpInteriorB2PeI1}).\label{PathI1:touch}
\end{enumerate}
\end{observation}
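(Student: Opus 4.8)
The plan is to isolate one uniform fact about the intervals of path vertices and then read both statements off of it. The key claim I would prove first is that for \emph{every} $p\in S$, both points $n+\Pi_1(p)$ and $n+\Pi_1(p)+1$ lie in $f(p,I_1)$. I would verify this by running through the four assignment rules (\ref{eqn:B2PeNp1ptnoneI1})--(\ref{eqn:B2PeNpInteriorB2PeI1}) one at a time. For an interior vertex the containment is immediate: if $p\in\mN_{int}$ the interval is exactly $[n+\Pi_1(p),n+\Pi_1(p)+1]$, and if $p\in\mB_2\cup\mP_e$ it is $[n,n+\Pi_1(p)+1]$, which also contains $n+\Pi_1(p)$ since $\Pi_1(p)\ge 0$. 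The two endpoint cases are where a little care is needed: there the right end is $2n$ or $l(a,I_1)$, so to guarantee that $n+\Pi_1(p)+1$ still lies inside I would invoke $\Pi_1(p)<n-1$ (Observation \ref{obs:Pi1nMinus1}), which gives $n+\Pi_1(p)+1<2n$, together with $l(a,I_1)>2n$ read off from the assignments (\ref{eqn:AIsolatedI1})--(\ref{eqn:AEdgeyI1}) of the vertices of $\mA$.

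With this fact in hand, the first statement follows quickly. Since $G[\mB_2\cup\mP_e\cup\mN]$ has maximum degree $2$ (Observation \ref{obs:B2PeN}.\ref{B2PeN:B2PeNDelta2}) and $S$ is one of its path components, the edge set of $G[S]$ is exactly $\{(p_i,p_{i+1}):1\le i<t\}$. For each such edge the defining property of $\Pi_1$ gives $\Pi_1(p_{i+1})=\Pi_1(p_i)+1$, so the common point $n+\Pi_1(p_i)+1=n+\Pi_1(p_{i+1})$ belongs to both $f(p_i,I_1)$ and $f(p_{i+1},I_1)$ by the uniform fact above; hence $(p_i,p_{i+1})\in E(I_1)$ and $I_1[S]$ is a supergraph of $G[S]$.

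For the second statement I would first pin down where $y$ sits on the path. Since the two endpoints $p_1,p_t$ lie in $\mN_e\subseteq\mN$ (Definition \ref{def:NeNint}), the hypothesis $y\in\mB_2\cup\mP_e$ forces $y$ to be an interior vertex, so its interval comes from (\ref{eqn:B2PeNpInteriorB2PeI1}) and $r(y,I_1)=n+\Pi_1(y)+1$. As $x$ and $y$ are adjacent on the path with $\Pi_1(x)>\Pi_1(y)$, consecutiveness of $\Pi_1$ gives $\Pi_1(x)=\Pi_1(y)+1$. Finally $x\in\mN$ has left endpoint $l(x,I_1)=n+\Pi_1(x)$ under every applicable rule---whether $x$ is an endpoint, governed by (\ref{eqn:B2PeNp1ptnoneI1}) or (\ref{eqn:B2PeNp1ptxI1}), or an interior vertex of $\mN_{int}$, governed by (\ref{eqn:B2PeNpInteriorNI1}). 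Combining, $l(x,I_1)=n+\Pi_1(x)=n+\Pi_1(y)+1=r(y,I_1)$, as required.

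I do not expect any genuine obstacle: the whole argument is substitution using the consecutiveness of $\Pi_1$ along the path. The only spot that demands attention is the endpoint-interval containment in the first step, and that is precisely why the global bound $\Pi_1(p)<n-1$ (ultimately from $|\mB_2\cup\mP_e\cup\mN|\le n-2$) was arranged in advance; without it the right end $2n$ of an endpoint interval could fail to cover $n+\Pi_1(p)+1$.
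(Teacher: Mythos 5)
Your proof is correct and takes essentially the same route as the paper: the same uniform fact that both $n+\Pi_1(p)$ and $n+\Pi_1(p)+1$ lie in $f(p,I_1)$ for every $p\in S$ (with $\Pi_1(p)<n-1$ handling the endpoint intervals), and the same substitution $\Pi_1(x)=\Pi_1(y)+1$ read off the assignments (\ref{eqn:B2PeNp1ptnoneI1})--(\ref{eqn:B2PeNpInteriorB2PeI1}) for the touching statement. The details you make explicit---that $y$ must be an interior vertex of $S$, and that $l(a,I_1)>2n$ for $a\in\mA$---are precisely what the paper's terse proof leaves implicit.
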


\begin{observation}\label{obs:BunchI1}
We have some observations regarding the intervals assigned to vertices
of $\mB_2\cup\mP_e\cup\mN$. We repeatedly make use of Observation
\ref{obs:Pi1nMinus1}.
\begin{enumerate}
\item If $z$ belongs to a Type 1 cycle, then, (a) $l(z,I_1)=\Pi_1(z)$
and (b) $1\le l(z,I_1)<r(z,I_1)<n$. The proof is as follows:
Let $z$ belong to the Type 1 cycle $S_z=c_1c_2\ldots c_tc_1$
such that $\Pi_1(c_{i+1})=\Pi_1(c_i)+1$, $1\le i<t$. From
(\ref{eqn:B2PeNType1c1I1}--\ref{eqn:B2PeNType1ctI1}), it immediately
follows that $l(z,I_1)=\Pi_1(z)$, $l(z,I_1)<r(z,I_1)$ and
$r(z,I_1)\le\Pi_1(c_t)+0.5<(n-1)+0.5<n$. \label{BunchI1:Type1}

\item If $z\in\mN$ belongs to a Type 2 cycle or a path then,
$l(z,I_1)=n+\Pi_1(z)$ and therefore, $n<l(z,I_1)<2n-1$. This follows from
(\ref{eqn:B2PeNType2c1I1}--\ref{eqn:B2PeNType2othersNI1}) for a Type
2 cycle and (\ref{eqn:B2PeNp1ptnoneI1}--\ref{eqn:B2PeNpInteriorNI1})
for a path.\label{BunchI1:NType2Path}

\item If $z\in\mB_2\cup\mP_e$ belongs to a Type 2 cycle or a path
then, $l(z,I_1)=n$ and $r(z,I_1)=n+\Pi_1(z)+1<2n$. This follows
from (\ref{eqn:B2PeNType2othersB2PeI1}) for a Type 2 cycle and
(\ref{eqn:B2PeNpInteriorB2PeI1}) for a path.\label{BunchI1:B2PeType2Path}.

\item If $x,y\in\mN$ such that $\Pi_1(x)<\Pi_1(y)$, then,
$l(x,I_1)+1\le l(y,I_1)$. The proof is as follows: If $x$ and
$y$ belong to Type 1 cycles, then by Point 1 in this observation,
$l(x,I_1)+1=\Pi_1(x)+1\le\Pi_1(y)=l(y,I_1)$. If $y$ belongs to a
Type 1 cycle, then by Definition \ref{def:Pi1} (Point 3), $x$ also belongs
to a Type 1 cycle. Therefore, it is not possible that $y$ is in a Type 1
cycle and $x$ is not. If $x$ is in a Type 1 cycle and $y$ is not, then,
$l(x,I_1)+1=\Pi_1(x)+1<n+\Pi_1(y)=l(y,I_1)$. Finally, if both
belong to a Type 2 cycle or a path, then, $l(x,I_1)+1=n+\Pi_1(x)+1\le
n+\Pi_1(y)=l(y,I_1)$ (from Point \ref{BunchI1:NType2Path} in this
observation).\label{BunchI1:xLessy}

\item If $x$ belongs to a Type 1 cycle and $y$ belongs
to either a Type 2 cycle or a path, then, $x$ and $y$ are
not adjacent in $I_1$. The proof is as follows: From Point
\ref{BunchI1:Type1} in this observation, $r(x,I_1)<n$ and from Points
\ref{BunchI1:NType2Path} and \ref{BunchI1:B2PeType2Path}, $l(y,I_1)\ge
n$.\label{BunchI1:Type1SepType2PathI1}

\item If $x\in\mN$ is adjacent to $a\in\mA$, then,
$r(x,I_1)=l(a,I_1)$. The proof is as follows: By Table \ref{tab:nonAdj}
(row 9), $x\in\mN_e$ and by Table \ref{tab:uniqNeighbor} (row 1),
$a$ should be the only neighbor of $x$ in $\mA$. The interval
assignment for $x$ is given in (\ref{eqn:B2PeNp1ptxI1}) where,
$r(x,I_1)=l(a,I_1)$.\label{BunchI1:ANTouch}
\end{enumerate}
\end{observation}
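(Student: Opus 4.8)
The plan is to verify all six points by directly reading off the left and right endpoints from the defining equations of Section \ref{sec:B2PeNI1}, using throughout the single quantitative fact that $1\le\Pi_1(z)<n-1$ for every $z\in\mB_2\cup\mP_e\cup\mN$ (Observation \ref{obs:Pi1nMinus1}). Since each such vertex is assigned its interval according to (i) which component type it lies in (Type 1 cycle, Type 2 cycle, or path) and (ii) its role inside that component (the distinguished vertex $c_1$, the last vertex $c_t$ or $p_t$, an interior $\mN$-vertex, or a $\mB_2\cup\mP_e$-vertex), the bulk of the argument is a bookkeeping exercise of matching each vertex to the correct equation and extracting its endpoints. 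I would therefore organize the proof by first establishing the endpoint formulas (Points 1--3), then deriving the ordering and separation consequences (Points 4--5), and finally the touching property (Point 6).

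First I would dispose of Point 1: for a vertex $z$ of a Type 1 cycle, equations (\ref{eqn:B2PeNType1c1I1})--(\ref{eqn:B2PeNType1ctI1}) all have left endpoint $\Pi_1(z)$ and right endpoint at most $\Pi_1(c_t)+0.5$; combining with $\Pi_1(c_t)<n-1$ gives $r(z,I_1)<n$, each interval has positive length so $l<r$, and $\Pi_1(z)\ge 1$ supplies the lower bound. Points 2 and 3 are identical in spirit: a vertex $z\in\mN$ in a Type 2 cycle or a path always has left endpoint $n+\Pi_1(z)$ by (\ref{eqn:B2PeNType2c1I1})--(\ref{eqn:B2PeNType2othersNI1}) and (\ref{eqn:B2PeNp1ptnoneI1})--(\ref{eqn:B2PeNpInteriorNI1}), while a vertex in $\mB_2\cup\mP_e$ is necessarily interior to its component and so receives left endpoint $n$ and right endpoint $n+\Pi_1(z)+1$ from (\ref{eqn:B2PeNType2othersB2PeI1}) and (\ref{eqn:B2PeNpInteriorB2PeI1}); the stated numeric bounds then follow from $\Pi_1(z)<n-1$.

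For Point 4 I would split on the component types of $x$ and $y$, the key structural input being clause 3 of Definition \ref{def:Pi1}, which forces every Type 1 vertex to precede every Type 2 and path vertex in $\Pi_1$. This rules out the case ``$y$ Type 1, $x$ not'' and leaves the three cases both Type 1, both non-Type 1, and $x$ Type 1 with $y$ non-Type 1; in each the integrality of $\Pi_1$ (so $\Pi_1(x)<\Pi_1(y)$ gives $\Pi_1(x)+1\le\Pi_1(y)$) together with the left-endpoint formulas from Points 1--2 yields $l(x,I_1)+1\le l(y,I_1)$. Point 5 is then an immediate corollary of Points 1--3: when $x$ is Type 1 and $y$ is not, $r(x,I_1)<n\le l(y,I_1)$, so the intervals are disjoint. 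Finally, Point 6 draws on the summary tables: row 9 of Table \ref{tab:nonAdj} forces $x\in\mN_e$ once $x\in\mN$ has a neighbor $a\in\mA$, and row 1 of Table \ref{tab:uniqNeighbor} makes $a$ the unique such neighbor, so $x$ is assigned its interval by (\ref{eqn:B2PeNp1ptxI1}) and hence $r(x,I_1)=l(a,I_1)$.

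I do not expect a genuine obstacle. The only points needing care are (a) confirming that every $\mB_2\cup\mP_e$-vertex really is interior to its component, so that the ``interior'' equations apply --- this rests on $c_1,c_t\in\mN$ for Type 2 cycles and on $p_1,p_t\in\mN_e$ for paths (Definition \ref{def:NeNint}) --- and (b) invoking clause 3 of Definition \ref{def:Pi1} in Point 4 to exclude the mixed ordering. Everything else is a direct substitution into the defining equations followed by an application of the bound $\Pi_1(z)<n-1$.
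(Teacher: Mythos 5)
Your proposal is correct and follows essentially the same route as the paper: reading off endpoints from the defining equations of Section \ref{sec:I1}, bounding them via Observation \ref{obs:Pi1nMinus1}, excluding the mixed ordering in Point 4 via clause 3 of Definition \ref{def:Pi1}, and settling Point 6 via row 9 of Table \ref{tab:nonAdj} and row 1 of Table \ref{tab:uniqNeighbor}. The two points you flag as needing care (that $\mB_2\cup\mP_e$ vertices are interior to their components, and the exclusion of the ``$y$ Type 1, $x$ not'' case) are exactly the implicit steps in the paper's own argument, so nothing is missing.
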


\begin{lemma}\label{lem:AB2PeNSuperI1}
$I_1[\mA\cup\mB_2\cup\mP_e\cup\mN]$ is a supergraph of
$G[\mA\cup\mB_2\cup\mP_e\cup\mN]$.
\end{lemma}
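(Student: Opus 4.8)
Lemma \ref{lem:AB2PeNSuperI1} asserts that $I_1[\mA\cup\mB_2\cup\mP_e\cup\mN]$ contains every edge of $G[\mA\cup\mB_2\cup\mP_e\cup\mN]$. The plan is to go edge by edge over $G[\mA\cup\mB_2\cup\mP_e\cup\mN]$ and, for each such edge, point to the place in the construction of $I_1$ that forces the corresponding two intervals to overlap. The key structural fact to exploit is that we already know, from Observation \ref{obs:B2PeN}.\ref{B2PeN:B2PeNDelta2}, that $G[\mB_2\cup\mP_e\cup\mN]$ is a disjoint union of paths and cycles (the cycles being Type~1 or Type~2), and that $\mA$ attaches to this part only through $\mN_e$, each such vertex of $\mN_e$ having at most one neighbour in $\mA$ (Table~\ref{tab:uniqNeighbor}, row~1). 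So every edge of $G[\mA\cup\mB_2\cup\mP_e\cup\mN]$ is of exactly one of three kinds: an edge inside a single path or cycle component $S$ of $G[\mB_2\cup\mP_e\cup\mN]$; an edge between $\mN_e$ and $\mA$; or an edge inside $G[\mA]$.

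\emph{First I would dispose of the edges internal to the $\mB_2\cup\mP_e\cup\mN$ part componentwise.} For a Type~1 cycle this is exactly Observation \ref{obs:Type1I1}.\ref{Type1I1:Type1SuperI1}, which states $I_1[S]\supseteq G[S]$; for a Type~2 cycle it is Observation \ref{obs:Type2I1}.\ref{Type2I1:Type2SuperI1}; and for a path it is Observation \ref{obs:PathI1}.\ref{PathI1:PathSuperI1}. These three observations are precisely the ``$I_1[S]$ is a supergraph of $G[S]$'' claims, each proved by checking that the point $n+\Pi_1(c)$ (or $\Pi_1(c)$ in the Type~1 case) lies in the intervals of consecutive vertices so that they share a common point. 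Since every edge of $G$ internal to $\mB_2\cup\mP_e\cup\mN$ lies within one component $S$, and $I_1$ restricted to $S$ already dominates $G[S]$, all such edges are present in $I_1$.

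\emph{Next I would handle the cross edges between $\mN$ and $\mA$.} Let $(x,a)\in E(G)$ with $x\in\mN$ and $a\in\mA$. By Observation \ref{obs:BunchI1}.\ref{BunchI1:ANTouch} we have $r(x,I_1)=l(a,I_1)$, so the intervals $f(x,I_1)$ and $f(a,I_1)$ share the single point $l(a,I_1)$ and hence $(x,a)\in E(I_1)$. (This is the case where $x\in\mN_e$ is an endpoint of its path and its interval is assigned by \eqref{eqn:B2PeNp1ptxI1}.) Finally, for edges inside $G[\mA]$, Lemma \ref{lem:AGI1} already gives the stronger statement $I_1[\mA]=G[\mA]$, so in particular every edge of $G[\mA]$ is present. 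Collecting the three cases shows every edge of $G[\mA\cup\mB_2\cup\mP_e\cup\mN]$ appears in $I_1$, which is the claim.

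\emph{Where the only real work hides.} The non-obvious point is that this really does cover \emph{all} edges of the induced subgraph and that no edge is ``split'' across the case analysis—i.e.\ that there is no edge of $G$ between two distinct components of $G[\mB_2\cup\mP_e\cup\mN]$, and none between $\mA$ and $\mB_2\cup\mP_e$ or between $\mA$ and the non-endpoint vertices of $\mN$. The first holds because each component is an induced connected piece of a maximum-degree-$2$ graph; the second and third hold because, by Table~\ref{tab:uniqNeighbor} row~1 (via Table~\ref{tab:nonAdj} row~9 and Definition \ref{def:NeNint}), a vertex of $\mN$ can touch $\mA$ only when it is a path endpoint, and $\mA$ is nonadjacent to $\mB$ (Table~\ref{tab:nonAdj} row~3) and to $\mC\cup\mP$ (row~1). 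I expect this bookkeeping—verifying the partition of edges is exhaustive and disjoint—to be the main (though routine) obstacle, after which each case is an immediate appeal to a previously established observation or lemma.
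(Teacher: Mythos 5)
Your proposal is correct and follows essentially the same route as the paper: split edges into the three cases (inside $\mA$, inside a component of $G[\mB_2\cup\mP_e\cup\mN]$, and between $\mA$ and $\mN$), then invoke Lemma \ref{lem:AGI1}, Observations \ref{obs:Type1I1}.\ref{Type1I1:Type1SuperI1}, \ref{obs:Type2I1}.\ref{Type2I1:Type2SuperI1}, \ref{obs:PathI1}.\ref{PathI1:PathSuperI1}, and Observation \ref{obs:BunchI1}.\ref{BunchI1:ANTouch}, together with the non-adjacency facts (Table \ref{tab:nonAdj}, rows 1 and 3) to rule out $\mA$--$(\mB_2\cup\mP_e)$ edges. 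The exhaustiveness bookkeeping you flag as the ``real work'' is handled in the paper by exactly the same table lookups.
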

\begin{proof}
Let $x,y\in\mA\cup\mB_2\cup\mP_e\cup\mN$ be two adjacent vertices
in $G$. If $x,y\in\mA$, then, by Lemma \ref{lem:AGI1} they are
adjacent in $I_1$. Let $x,y\in\mB_2\cup\mP_e\cup\mN$. They have
to belong to the same component in $G[\mB_2\cup\mP_e\cup\mN]$,
which is either a Type 1 cycle, Type 2 cycle or a path. By
Observations \ref{obs:Type1I1}.\ref{Type1I1:Type1SuperI1},
\ref{obs:Type2I1}.\ref{Type2I1:Type2SuperI1} and
\ref{obs:PathI1}.\ref{PathI1:PathSuperI1},
$x$ and $y$ are adjacent in $I_1$.

Now it remains to be shown that if $x\in\mA$ and
$y\in\mB_2\cup\mP_e\cup\mN$, then they are adjacent in $I_1$. Noting that
$\mB_2\subseteq\mB$ and $\mP_e\subseteq\mP$, $y\notin\mB_2\cup\mP_e$
(see Table \ref{tab:nonAdj}, rows 1 and 3). Therefore,
$y\in\mN$. By Observation \ref{obs:BunchI1}.\ref{BunchI1:ANTouch},
$r(y,I_1)=l(x,I_1)$. Therefore, $x$ is adjacent to $y$ in $I_1$. Hence
proved. \qed
\end{proof}
}
\subsubsection{Vertices of $\mB_1\cup\mP_{2e}$} 
Let $v\in\mB_1\cup\mP_{2e}$. From Table \ref{tab:uniqNeighbor} (rows 2 and
3), we note that $v$ has a unique neighbor in $\mN$, say $v'$. $f(v',I_1)$
is already defined in Section \ref{sec:B2PeNI1}.
\begin{eqnarray} 
&&\textrm{if $v\in\mB_1$, then, } f(v,I_1)=\left[0,l(v',I_1)\right]\label{eqn:B1I1},\\
&&\textrm{if $v\in\mP_{2e}$, then, } f(v,I_1)=\left[-1,l(v',I_1)\right]\label{eqn:P2eI1}.
\end{eqnarray}
\journ{
\begin{lemma}\label{lem:B1P2eI1}
For any $x\in\mB_1\cup\mP_{2e}$, $r(x,I_1)>n$ and therefore, $[0,n]\subset
f(x,I_1)$.
\end{lemma}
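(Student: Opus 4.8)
The plan is to reduce the statement to a single question about which component of $G[\mB_2\cup\mP_e\cup\mN]$ contains $x$'s unique neighbor in $\mN$. First I would fix $x\in\mB_1\cup\mP_{2e}$ and recall from Table~\ref{tab:uniqNeighbor} (rows 2 and 3) that $x$ has exactly one neighbor $v'$ in $\mN$, so the interval assignments (\ref{eqn:B1I1}) and (\ref{eqn:P2eI1}) both give $r(x,I_1)=l(v',I_1)$, independently of whether $x\in\mB_1$ or $x\in\mP_{2e}$. Since the same two equations give $l(x,I_1)\le 0$, the whole lemma reduces to proving the single inequality $l(v',I_1)>n$; once that is in hand, $[0,n]\subset f(x,I_1)$ is immediate.

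Next I would split on the component type of $v'$ in $G[\mB_2\cup\mP_e\cup\mN]$. By Observation~\ref{obs:BunchI1}.\ref{BunchI1:NType2Path}, if $v'$ lies in a Type~2 cycle or a path then $l(v',I_1)=n+\Pi_1(v')>n$ and we are done. By Observation~\ref{obs:BunchI1}.\ref{BunchI1:Type1}, the only way the bound can fail is if $v'$ lies in a Type~1 cycle, where $l(v',I_1)=\Pi_1(v')<n$. So the entire content of the lemma is the structural claim that \emph{$v'$ cannot lie in a Type~1 cycle}, and this is the step I expect to be the main obstacle.

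To settle the claim I would argue by contradiction. If $v'$ were in a Type~1 cycle, then by Definition~\ref{def:type12} its two neighbors inside that cycle both lie in $\mB_2\cup\mP_e$, so $v'$ has two neighbors in $\mB_2\cup\mP_e$. Lemma~\ref{lem:NAdjB2PeDisjointCP2e} then forces the third (and, since $G$ is cubic, last) neighbor of $v'$ into $\mP_{2i}$, and in particular asserts that $v'$ has no neighbor in $\mC\cup\mP_{2e}$. But $x$ is adjacent to $v'$, while $x\in\mB_1\cup\mP_{2e}$ lies in neither $\mB_2\cup\mP_e$ nor $\mP_{2i}$ (these are disjoint blocks of the partition of $V$). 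For $x\in\mP_{2e}$ this is already a direct contradiction, since $x$ would be a neighbor of $v'$ in $\mC\cup\mP_{2e}$; for $x\in\mB_1$ it shows $x$ is not among the three neighbors of $v'$ at all, again a contradiction. Hence $v'$ lies in a Type~2 cycle or a path, so $r(x,I_1)=l(v',I_1)>n$, which yields $[0,n]\subset f(x,I_1)$. Beyond this one structural claim the proof is only the routine disjointness bookkeeping read off from the partition summary.
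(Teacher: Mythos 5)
Your proof is correct and follows the same overall route as the paper's: reduce the lemma to showing that the unique neighbour $v'\in\mN$ of $x$ does not lie in a Type~1 cycle of $G[\mB_2\cup\mP_e\cup\mN]$, and then invoke Observation~\ref{obs:BunchI1}.\ref{BunchI1:NType2Path} to get $l(v',I_1)>n$. The only divergence is in how the $\mB_1$ sub-case of that structural claim is settled: the paper uses Lemma~\ref{lem:NCPi} (since $v'$ has a neighbour in $\mC\cup\mP_i$ and is also adjacent to $x\in\mB_1$, it cannot have two neighbours in $\mB_2\cup\mP_e$), whereas you handle both sub-cases uniformly by contradiction through Lemma~\ref{lem:NAdjB2PeDisjointCP2e}, exploiting its full conclusion that the third neighbour of such a vertex must lie in $\mP_{2i}$ so that $x\in\mB_1$ could not be a neighbour of $v'$ at all. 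Your unification is marginally cleaner (one lemma covers both cases), at the cost of leaning on the heavier Lemma~\ref{lem:NAdjB2PeDisjointCP2e} even where the paper gets by with the more elementary Lemma~\ref{lem:NCPi}; both arguments are sound, since both lemmas are established in the paper before this point.
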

\begin{proof}
From Table \ref{tab:uniqNeighbor} (rows 2 and 3), $x$ has a unique
neighbor in $\mN$, say $x'$. From (\ref{eqn:B1I1}) and (\ref{eqn:P2eI1}),
$r(x,I_1)=l(x',I_1)$. We will now show that $x'$ does not belong to a
Type~1 cycle in the graph induced by $\mB_2\cup\mP_e\cup\mN$.  Suppose
$x\in\mB_1$. Since $\mN\subseteq\mN_1$, by Lemma~\ref{lem:NCPi}, $x'$ is
adjacent to at least one vertex in $\mC\cup\mP_i$. Since $x\in\mB_1$, $x'$
cannot be adjacent to two vertices in $\mB_2\cup\mP_e$ and hence cannot
belong to a Type~1 cycle. Now suppose $x\in\mP_{2e}$. If $x'$ belongs
to a Type 1 cycle, then it has two neighbors in $\mB_2\cup\mP_e$. By
Lemma~\ref{lem:NAdjB2PeDisjointCP2e}, the remaining neighbor of $x'$,
that is $x$, does not belong to $\mP_{2e}$, which is a contradiction.

Thus, $x'$ belongs to either a Type~2 cycle or
a path in $\mB_2\cup\mP_e\cup\mN$. From Observation
\ref{obs:BunchI1}.\ref{BunchI1:NType2Path}, $l(x',I_1)>n$
and therefore, $r(x,I_1)>n$. From the interval assignments for $x$ in
(\ref{eqn:B1I1}) and (\ref{eqn:P2eI1}), it immediately follows that
$[0,n]\subset f(x,I_1)$.\qed
\end{proof}
}
\subsubsection{Vertices of ${\mR}$}\label{sec:RI1} 
\conf{$ \forall v\in\mR, f(v,I_1)=\left[-1,n\right]$.}
\journ{\begin{equation}\label{eqn:RI1}
\forall v\in\mR, f(v,I_1)=\left[-1,n\right].
\end{equation}}
\journ{
Consider the set of vertices which have been assigned intervals
until now: $\mA\cup(\mB_2\cup\mP_e\cup\mN)\cup(\mB_1\cup\mP_{2e})\cup\mR=
V\setminus(\mP_{2i}\cup\mC)$.
\begin{lemma}\label{lem:VminusCP2iSuperI1}
$I_1[V\setminus(\mP_{2i}\cup\mC)]$
is a supergraph of $G[V\setminus(\mP_{2i}\cup\mC)]$.
\end{lemma}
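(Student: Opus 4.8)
The plan is to show that every edge of $G$ lying inside $W:=V\setminus(\mP_{2i}\cup\mC)$ is also an edge of $I_1$. The set $W$ partitions into $\mA$, $\mN$, $\mP_e$, $\mB_2$, $\mP_{2e}$, $\mB_1$ and $\mR$, and every edge both of whose endpoints lie in $\mA\cup\mB_2\cup\mP_e\cup\mN$ is already preserved by Lemma~\ref{lem:AB2PeNSuperI1}. So I would only have to handle $G$-edges with at least one endpoint in $\mR\cup\mB_1\cup\mP_{2e}$. The crucial first step is to use the non-adjacency table (Table~\ref{tab:nonAdj}) to enumerate \emph{all} such edges that can actually occur. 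From rows $2$, $5$ and $10$ a vertex of $\mR$ has no neighbour in $\mA\cup\mN\cup\mP_e\cup\mP_{2e}\cup\mR$; since by Observation~\ref{obs:finePart}.\ref{finePart:RDisjointNA} its two non-$\mS$ neighbours lie in $\mB$, its only $W$-neighbours are in $\mB_1\cup\mB_2$. Likewise, rows $1$, $3$, $6$ confine a vertex of $\mP_{2e}$ to neighbours in $\mN\cup\mP_e\cup\mP_{2e}$ (one in $\mN$, the others along its path), and a vertex of $\mB_1$ to neighbours in $\mN\cup\mR\cup\mB_1\cup\mB_2$ (one in $\mN$, two in $\mR\cup\mB$). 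This collapses the task to a short, explicit list of edge types.

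Next I would assemble the interval facts that furnish the required overlaps. By (\ref{eqn:RI1}) every $\mR$-vertex is $[-1,n]$. By Lemma~\ref{lem:B1P2eI1}, every $\mB_1$- and $\mP_{2e}$-vertex satisfies $[0,n]\subset f(\cdot,I_1)$ with right endpoint exceeding $n$, and by (\ref{eqn:B1I1}),~(\ref{eqn:P2eI1}) that right endpoint equals $l(v',I_1)$, where $v'$ is its unique $\mN$-neighbour. Finally, Observation~\ref{obs:BunchI1} records that a vertex of $\mB_2\cup\mP_e$ either lives in $[1,n)$ (Type~$1$ cycle, Point~\ref{BunchI1:Type1}) or has left endpoint exactly $n$ (Type~$2$ cycle or path, Point~\ref{BunchI1:B2PeType2Path}).

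The resulting case analysis is then routine. An $\mR$--$\mB_1$ edge is fine since $[-1,n]$ and an interval containing $[0,n]$ share $[0,n]$; an $\mR$--$\mB_2$ edge is fine since $[-1,n]$ either contains the $\mB_2$-interval (Type~$1$, lying in $[1,n)$) or meets it at the single point $n$ (Type~$2$/path). A $\mB_1$--$\mN$ edge and a $\mP_{2e}$--$\mN$ edge hold by construction, the $\mB_1$/$\mP_{2e}$ interval having been defined to end exactly at $l(v',I_1)$. Edges $\mB_1$--$\mB_1$, $\mB_1$--$\mB_2$, $\mP_{2e}$--$\mP_e$ and $\mP_{2e}$--$\mP_{2e}$ all survive because each $\mB_1$/$\mP_{2e}$ interval contains $[0,n]$ and extends past $n$, so it meets any interval that lies in $[0,n]$ or starts at $n$.

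I expect the main obstacle to be completeness rather than any individual computation: one must verify that the non-adjacency table genuinely excludes every other edge type (e.g.\ that $\mR$ never touches $\mN$, $\mA$, $\mP_e$ or $\mP_{2e}$, and that $\mP_{2e}$ and $\mB_1$ touch neither each other nor $\mA$), so that the listed cases are exhaustive. The only delicate geometric point is the boundary case $\mR$--$\mB_2$ with $\mB_2$ in a Type~$2$ cycle or path, where the intervals meet only at $n$; there I would invoke $r(\cdot,I_1)=n=l(\cdot,I_1)$ explicitly from (\ref{eqn:RI1}) and Observation~\ref{obs:BunchI1}.\ref{BunchI1:B2PeType2Path}.
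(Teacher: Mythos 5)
Your proposal is correct and takes essentially the same approach as the paper's proof: edges inside $\mA\cup\mB_2\cup\mP_e\cup\mN$ are delegated to Lemma~\ref{lem:AB2PeNSuperI1}, and the remaining edges are verified from the same interval facts (the $\mR$-interval $[-1,n]$ from (\ref{eqn:RI1}), the containment $[0,n]\subset f(\cdot,I_1)$ with right endpoint $l(v',I_1)$ from Lemma~\ref{lem:B1P2eI1} and (\ref{eqn:B1I1})--(\ref{eqn:P2eI1}), and Observation~\ref{obs:BunchI1} for $\mB_2\cup\mP_e$). The only difference is organizational: you enumerate the admissible edge types via Table~\ref{tab:nonAdj} up front, whereas the paper dismisses the impossible pairs inline within its case analysis.
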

\begin{proof}
Let $x,y\in V\setminus(\mP_{2i}\cup\mC)$ be two adjacent vertices
in $G$. If $x,y\in\mA\cup\mB_2\cup\mP_e\cup\mN$, then by Lemma
\ref{lem:AB2PeNSuperI1}, $x$ is adjacent to $y$ in $I_1$. Let
$x\in\mB_1\cup\mP_{2e}\cup\mR$. By Lemma \ref{lem:B1P2eI1}, $[0,n]\subset
f(x,I_1)$.

If $y\in\mB_1\cup\mP_{2e}$, again by Lemma \ref{lem:B1P2eI1},
$[0,n]\subset f(y,I_1)$ and if $y\in\mR$, then by (\ref{eqn:RI1}),
$[0,n]\subset f(y,I_1)$ and therefore, $x$ is adjacent to $y$. If
$y\in\mA$, then, since $\mB_1\subseteq\mB$ and $\mP_{2e}\subseteq\mP$,
by Table \ref{tab:nonAdj} (rows 1--3) $x$ cannot be adjacent to $y$
in $G$. Suppose $y\in\mB_2\cup\mP_e$. If $y$ belongs to a Type 1
cycle, then by Observation \ref{obs:BunchI1}.\ref{BunchI1:Type1}(b),
$f(y,I_1)\subset[1,n]$. Otherwise, from Observation
\ref{obs:BunchI1}.\ref{BunchI1:B2PeType2Path}, $l(y,I_1)=n$. In either
case, $x$ is adjacent to $y$ in $I_1$.  Finally, suppose $y\in\mN$. By
Table \ref{tab:nonAdj} (row 2), $x\notin\mR$, which implies that
$x\in\mB_1\cup\mP_{2e}$. From Table \ref{tab:uniqNeighbor} (rows 2 and
3), $y$ is the unique neighbor of $x$ in $\mN$. By (\ref{eqn:B1I1})
and (\ref{eqn:P2eI1}), $r(x,I_1)=l(y,I_1)$ and therefore, $x$ and $y$
are adjacent in $I_1$. Hence proved. \qed
\end{proof}
}
\subsubsection{Vertices of ${\mP_{2i}}$} 
Suppose $v\in\mP_{2i}$. Let $v'$ be its unique neighbor in $\mR\cup\mN$
(see Table \ref{tab:uniqNeighbor} row 4). Note that $f(v',I_1)$
is already defined Sections \ref{sec:B2PeNI1} and \ref{sec:RI1}.
\begin{eqnarray}
&&\textrm{if $v'\in\mR$, then, } f(v,I_1)=\left[-1,-1\right], \label{eqn:P2iRI1}\\
&&\textrm{if $v'\in\mN$, then, } f(v,I_1)=\left[-1,l(v',I_1)\right].\label{eqn:P2iNI1}
\end{eqnarray}
\journ{
\begin{lemma}\label{lem:VminusCSuperI1}
$I_1[V\setminus\mC]$ is a supergraph of $G[V\setminus\mC]$.
\end{lemma}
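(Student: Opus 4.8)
The plan is to bootstrap from Lemma~\ref{lem:VminusCP2iSuperI1}, which already shows that $I_1$ contains every $G$-edge inside $V\setminus(\mP_{2i}\cup\mC)$. Since $V\setminus\mC=(V\setminus(\mP_{2i}\cup\mC))\uplus\mP_{2i}$, the only edges of $G[V\setminus\mC]$ still to be accounted for are those incident to a vertex of $\mP_{2i}$. So I would take two adjacent vertices $x,y\in V\setminus\mC$ of $G$; if neither lies in $\mP_{2i}$, Lemma~\ref{lem:VminusCP2iSuperI1} finishes the case, and otherwise I may assume $x\in\mP_{2i}$.

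The next step is to determine which sets the other endpoint $y$ can lie in. Because $x\in\mP\subseteq\mS$, the non-adjacency table rules out several possibilities: $y\notin\mA$ (Table~\ref{tab:nonAdj}, row~1), $y\notin\mB_1\cup\mB_2=\mB$ (row~6), $y\notin\mP_e$ (row~8), and of course $y\notin\mC$ since $y\in V\setminus\mC$. As the sets $\mC,\mP_e,\mP_{2e},\mP_{2i},\mN,\mR,\mB_1,\mB_2,\mA$ partition $V$, this forces $y\in\mP_{2e}\cup\mP_{2i}\cup\mR\cup\mN$. Moreover, by Table~\ref{tab:uniqNeighbor} (row~4), $x$ has a unique neighbour in $\mR\cup\mN$, so if $y\in\mR\cup\mN$ then $y$ is exactly the vertex $v'$ used to define $f(x,I_1)$ in (\ref{eqn:P2iRI1})--(\ref{eqn:P2iNI1}).

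The interval bookkeeping is then immediate because almost every relevant interval shares the common left endpoint $-1$. By (\ref{eqn:P2iRI1})--(\ref{eqn:P2iNI1}), $f(x,I_1)$ always has left endpoint $-1$; the same holds for any $y\in\mP_{2i}$, for any $y\in\mP_{2e}$ by (\ref{eqn:P2eI1}), and any $y\in\mR$ has interval $[-1,n]$ by (\ref{eqn:RI1}). Hence whenever $y\in\mP_{2e}\cup\mP_{2i}\cup\mR$ both intervals contain the point $-1$ and so intersect. The single case needing a different argument is $y\in\mN$: here $y=v'$, so (\ref{eqn:P2iNI1}) gives $f(x,I_1)=[-1,l(y,I_1)]$, whence $r(x,I_1)=l(y,I_1)\in f(y,I_1)$ and the two intervals touch (the interval is well defined since $l(y,I_1)\ge 1>-1$). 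In every case $x$ and $y$ are adjacent in $I_1$, which is what the lemma asserts. The only genuinely delicate point is the bookkeeping of the second paragraph—reading off from the partition exactly which sets a neighbour of a $\mP_{2i}$ vertex can occupy, so that no incident edge is overlooked—after which the $-1$-sharing observation makes the verification routine.
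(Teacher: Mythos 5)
Your proof is correct and follows essentially the same route as the paper's: bootstrap from Lemma~\ref{lem:VminusCP2iSuperI1}, reduce to edges with an endpoint $x\in\mP_{2i}$, show the other endpoint $y$ must lie in $\mP_{2e}\cup\mP_{2i}\cup\mR\cup\mN$, and finish by observing that in all cases except $y\in\mN$ both intervals contain the point $-1$, while for $y\in\mN$ the uniqueness of $x$'s neighbour in $\mR\cup\mN$ (Table~\ref{tab:uniqNeighbor}, row 4) gives $r(x,I_1)=l(y,I_1)$ via (\ref{eqn:P2iNI1}). The only (immaterial) difference is that you derive the classification of $y$ by ruling out sets with the non-adjacency table, whereas the paper deduces it structurally from $x$ being an interior vertex of a special path not adjacent to its end points; both yield the identical case analysis.
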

\begin{proof}
Let $x,y\in V\setminus\mC$ be two adjacent vertices in $G$. By Lemma
\ref{lem:VminusCP2iSuperI1}, if $x,y\in V\setminus(\mC\cup\mP_{2i})$,
then, they are adjacent in $I_1$.  Suppose $x\in\mP_{2i}$. By
definition, $x$ is an interior vertex of a special path in $\mP$ and
therefore, it is adjacent to two vertices from the path. Moreover,
it is not adjacent to any end point of this path since otherwise it
would be present in $\mP_{2e}$. Therefore, two of the neighbors of
$x$ are in $\mP_{2i}\cup\mP_{2e}$. By Table \ref{tab:uniqNeighbor}
(row 4), the third neighbor of $x$ has to be in $\mR\cup\mN$. From
this we infer that $y\in\mP_{2i}\cup\mP_{2e}\cup\mR\cup\mN$. Note
that $l(x,I_1)=-1$ by (\ref{eqn:P2iRI1}) and (\ref{eqn:P2iNI1}).
If $y\in\mP_{2i}\cup\mP_{2e}\cup\mR$, then by the interval assignments
(\ref{eqn:P2eI1}--\ref{eqn:P2iNI1}), it follows that $l(y,I_1)=-1$. If
$y\in\mN$, then, by (\ref{eqn:P2iNI1}), $r(x,I_1)=l(y,I_1)$. In either
case, $x$ is adjacent to $y$ in $I_1$. Hence proved. \qed
\end{proof}

\begin{observation}\label{obs:P2iRTouch}
If $x\in\mP_{2i}$ and $y\in\mR$ are adjacent in $G$, then,
$r(x,I_1)=l(y,I_1)$. From Table \ref{tab:uniqNeighbor} (row 4), $y$ is the
only neighbor of $x$ in $\mR\cup\mN$. The interval assignment for $x$ is
given by (\ref{eqn:P2iRI1}) and for $y$ by (\ref{eqn:RI1}), from which it
follows that $r(x,I_1)=l(y,I_1)=-1$.
\end{observation}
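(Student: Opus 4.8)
The plan is to reduce the claim to a direct inspection of the interval assignments, using only the fact that $x\in\mP_{2i}$ has a \emph{unique} neighbor in $\mR\cup\mN$. The starting point is row~4 of Table~\ref{tab:uniqNeighbor}, which records that every vertex of $\mP_{2i}$ has exactly one neighbor $v'$ in $\mR\cup\mN$. Since by hypothesis $x$ is adjacent to $y$ and $y\in\mR\subseteq\mR\cup\mN$, this forces $v'=y$; in particular we land in the case $v'\in\mR$ rather than the case $v'\in\mN$. This observation is what selects which of the two defining formulas for $f(x,I_1)$ applies.

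First I would invoke the definition of $f(x,I_1)$ for $x\in\mP_{2i}$. Because its unique neighbor in $\mR\cup\mN$ is $y\in\mR$, the relevant rule is~(\ref{eqn:P2iRI1}), giving $f(x,I_1)=[-1,-1]$, and hence $r(x,I_1)=-1$. Next I would recall the interval assigned to vertices of $\mR$: by~(\ref{eqn:RI1}) we have $f(y,I_1)=[-1,n]$ for every $y\in\mR$, so that $l(y,I_1)=-1$. Comparing the two endpoints then yields $r(x,I_1)=l(y,I_1)=-1$, which is exactly the claimed boundary contact.

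There is essentially no analytic difficulty here; the content is bookkeeping across the partition. The only point that needs a moment's care is ensuring we are in the correct branch of the definition of $f(x,I_1)$: one must rule out that $x$'s distinguished neighbor $v'$ lies in $\mN$ (which would invoke~(\ref{eqn:P2iNI1}) instead and give $r(x,I_1)=l(v',I_1)$, not $-1$). This is precisely where the uniqueness in row~4 of Table~\ref{tab:uniqNeighbor} is doing the real work: since $x$ cannot have a second neighbor in $\mR\cup\mN$ beyond $y$, the neighbor $y\in\mR$ \emph{is} the vertex $v'$, so the $v'\in\mR$ branch is the correct one. Once that identification is made, the equality $r(x,I_1)=l(y,I_1)=-1$ is immediate from the two interval definitions and no further argument is required.
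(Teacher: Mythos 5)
Your proof is correct and follows exactly the paper's own argument: row~4 of Table~\ref{tab:uniqNeighbor} identifies $y$ as the unique neighbor of $x$ in $\mR\cup\mN$, which selects the branch~(\ref{eqn:P2iRI1}) giving $r(x,I_1)=-1$, and~(\ref{eqn:RI1}) gives $l(y,I_1)=-1$. Your explicit remark that uniqueness is what rules out the $v'\in\mN$ branch of~(\ref{eqn:P2iNI1}) makes the same point the paper makes implicitly, just more carefully.
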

}
\subsubsection{Vertices of ${\mC}$} 
We recall that $\mC$ induces a collection of cycles in $G$.
\begin{definition}{\bf\boldmath Notation $\eta(\cdot)$ and
special vertex:}\label{def:specialVertex} We recall from Table
\ref{tab:uniqNeighbor} (row 3) that every vertex $x\in\mC$
has a unique neighbor in $\mN$. We denote this neighbor by
$\eta(x)$. We define a vertex $c\in C$ as the special vertex of $C$ if
$l(\eta(c),I_1)=\ds\min_{c'\in C}l(\eta(c'),I_1)$. Note that $\eta(c)$
is already assigned an interval in Section \ref{sec:B2PeNI1}.
\end{definition}
Suppose $C$ is a cycle in $\mC$. Let $C=c_1c_2\ldots c_tc_1$ be a natural
ordering of the vertices of $C$ such
that $c_1$ is the special vertex of $C$. The interval assignments are
as follows:
\begin{equation}\label{eqn:CI1}
\begin{array}{ll}
f(c_1,I_1)=\left[l(\eta(c_1),I_1),l(\eta(c_1),I_1)\right],\\
f(c_i,I_1)=\left\{
   \begin{array}{ll}
      \left[l(\eta(c_1),I_1),l(\eta(c_i),I_1)+0.5\right], & i=2,t,\\
      \left[l(\eta(c_1),I_1)+0.5,l(\eta(c_i),I_1)+0.5\right], & \textrm{otherwise}.\\
   \end{array}\right.
\end{array}
\end{equation}
\journ{
Since $l(\eta(c_1),I_1)<l(\eta(c_i),I_1)$ for $i\ne1$, we observe that
the intervals are well-defined.

\begin{observation}\label{obs:CI1}
Let $C=c_1c_2\ldots c_tc_1$ be a cycle in $\mC$ with $c_1$ being the
special vertex.
\begin{enumerate}
\item $c_1$ is adjacent to only $c_2$ and $c_t$ in $I_1$. Further,
$r(c_1,I_1)=l(c_2,I_1)=l(c_t,I_1)$. This is easy to
infer from (\ref{eqn:CI1}).\label{CI1:special}
\item $C\setminus\{c_1\}$ is a clique in $I_1$. Since $l(\eta(c_1),I_1)\le
l(\eta(c),I_1)$, $c\in C$, from (\ref{eqn:CI1}), it follows that
$\forall c\in C\setminus\{c_1\}$, $l(\eta(c_1)+0.5,I_1)\in
f(c,I_1)$.\label{CI1:nonSpecial}
\item\label{CI1:cEtacI1}
For every vertex $c\in\mC$, $l(\eta(c),I_1)\in f(c,I_1)$ and therefore,
$c$ is adjacent to $\eta(c)$ in $I_1$. The proof follows: Since $c_1$ is
a special vertex, by definition, $l(\eta(c_1),I_1)\le l(\eta(c),I_1)$,
and therefore from the interval assignments in (\ref{eqn:CI1}),
$l(\eta(c),I_1)\in f(c,I_1)$.
\end{enumerate}
\end{observation}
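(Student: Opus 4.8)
My plan is to derive all three claims directly from the interval assignment (\ref{eqn:CI1}), using only two facts about the endpoints $l(\eta(c),I_1)$: that $c_1$ realizes the minimum $\min_{c\in C}l(\eta(c),I_1)$ (Definition \ref{def:specialVertex}), and that this minimum is attained \emph{uniquely} at $c_1$, so that $l(\eta(c_1),I_1)<l(\eta(c_i),I_1)$ for every $i\neq 1$. The non-strict inequality is immediate from the choice of the special vertex; the strict version is what makes the intervals in (\ref{eqn:CI1}) well-defined and is the only place where real work is needed. To obtain it I would note that every $\mN$-vertex has left endpoint $\Pi_1(\cdot)$ or $n+\Pi_1(\cdot)$ in $I_1$ (Observation \ref{obs:BunchI1}, Points \ref{BunchI1:Type1} and \ref{BunchI1:NType2Path}), so distinct $\mN$-vertices receive distinct integer-valued left endpoints (any two thus differing by at least $1$). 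Consequently it suffices to show that the $\mN$-neighbour $\eta(c_1)$ of the special vertex is the neighbour of no vertex of $C$ other than (possibly) its two cycle-neighbours, i.e.\ that $\eta$ restricted to $C$ does not collapse the minimizer. I expect establishing this non-sharing property from the special-cycle property (Definition \ref{def:specialCycle}) to be the main obstacle, since a priori nothing prevents a vertex of $\mN_e$ from being adjacent to two vertices of the same induced cycle.

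For Point \ref{CI1:special} I would read off (\ref{eqn:CI1}) directly. The interval $f(c_1,I_1)$ is the single point $l(\eta(c_1),I_1)$. Among the other cycle vertices, only $c_2$ and $c_t$ have left endpoint equal to $l(\eta(c_1),I_1)$, so their intervals contain this point and indeed $r(c_1,I_1)=l(c_2,I_1)=l(c_t,I_1)=l(\eta(c_1),I_1)$; every remaining $c_i$ has left endpoint $l(\eta(c_1),I_1)+0.5$, which lies strictly to the right of the point and hence is disjoint from $f(c_1,I_1)$. Thus, within $C$, the special vertex is adjacent in $I_1$ to exactly $c_2$ and $c_t$. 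This step uses only the non-strict minimality of $c_1$.

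For Point \ref{CI1:nonSpecial} the idea is to exhibit one point lying in every interval of $C\setminus\{c_1\}$ and invoke the fact that intervals with a common point pairwise intersect. The natural candidate is $l(\eta(c_1),I_1)+0.5$: for $c_2$ and $c_t$ it lies between $l(\eta(c_1),I_1)$ and $l(\eta(c_i),I_1)+0.5$ because $l(\eta(c_i),I_1)\ge l(\eta(c_1),I_1)$, while for each remaining $c_i$ it is exactly the left endpoint of $f(c_i,I_1)$. Hence all $t-1$ intervals of $C\setminus\{c_1\}$ contain this point, so they form a clique in $I_1$. Again only non-strict minimality is used.

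Finally, for Point \ref{CI1:cEtacI1} I would verify $l(\eta(c),I_1)\in f(c,I_1)$ for every $c\in C$; since $l(\eta(c),I_1)$ is by definition the left endpoint of $f(\eta(c),I_1)$, the two intervals then share this point and the edge $c\,\eta(c)$ appears in $I_1$. For $c=c_1$ this is immediate, as $f(c_1,I_1)=[l(\eta(c_1),I_1),l(\eta(c_1),I_1)]$. For $c=c_i$ with $i\neq 1$ the right endpoint of $f(c_i,I_1)$ is $l(\eta(c_i),I_1)+0.5>l(\eta(c_i),I_1)$, so it remains only to check that the left endpoint does not exceed $l(\eta(c_i),I_1)$: for $c_2,c_t$ the left endpoint is $l(\eta(c_1),I_1)\le l(\eta(c_i),I_1)$, and for the remaining $c_i$ it is $l(\eta(c_1),I_1)+0.5$, which is $\le l(\eta(c_i),I_1)$ exactly because $l(\eta(c_i),I_1)\ge l(\eta(c_1),I_1)+1$ by the strict inequality of the first paragraph. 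This is the single point in the whole argument where strictness (and hence the non-sharing property) is indispensable, which is why I regard that property as the crux.
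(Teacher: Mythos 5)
Your treatment of Points \ref{CI1:special} and \ref{CI1:nonSpecial} is correct and coincides with the paper's: both follow from (\ref{eqn:CI1}) using only the non-strict minimality $l(\eta(c_1),I_1)\le l(\eta(c_i),I_1)$. The problem is Point \ref{CI1:cEtacI1}. You correctly isolate what the argument needs, namely $l(\eta(c_i),I_1)\ge l(\eta(c_1),I_1)+0.5$ for $i\ne 1,2,t$, and correctly reduce it (via the fact that distinct $\mN$-vertices get left endpoints differing by at least $1$ in $I_1$) to the claim that $\eta(c_i)\ne\eta(c_1)$ for such $i$. But you never prove this; you only flag it as ``the main obstacle,'' so the proposal is incomplete precisely at its self-declared crux. (For what it is worth, the paper does the same thing: the strict inequality is asserted without proof right after (\ref{eqn:CI1}), and its proof of Point \ref{CI1:cEtacI1} silently relies on it; you have faithfully reproduced, and made visible, that weakness.) One small correction: strictness has nothing to do with well-definedness of (\ref{eqn:CI1}); non-strict minimality already makes every interval there nonempty, possibly degenerate.

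The more serious issue is that your plan for closing the gap cannot succeed, because the non-sharing property is false; your own suspicion about $\mN_e$-vertices is well founded. Take $G=K_{3,3}$ with parts $\{c_1,c_3,w\}$ and $\{c_2,c_4,v\}$. In $K_{3,3}$ every induced cycle has four vertices and every induced path has at most three (so no induced path is maximal), hence Algorithm \ref{alg:primPart} must extract a special $4$-cycle, say $C=c_1c_2c_3c_4c_1$; this gives $\mS=\mC=C$, $\mN=\{v,w\}$ and $\mA_1=\mR=\mB=\varnothing$. Then $\eta(c_1)=\eta(c_3)=v$ and $\eta(c_2)=\eta(c_4)=w$, so whichever minimizer is declared special, say $c_1$, strictness fails at the antipodal vertex $c_3$. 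Concretely, with $n=6$, $\Pi_1(v)=1$, $\Pi_1(w)=2$, one gets $f(v,I_1)=[7,12]$, $f(w,I_1)=[8,12]$, and then (\ref{eqn:CI1}) gives $f(c_3,I_1)=[7.5,7.5]$, which does not contain $l(\eta(c_3),I_1)=7$: even the literal claim of Point \ref{CI1:cEtacI1} fails here. What survives, and what the paper actually uses downstream (Lemma \ref{lem:superI1}), is the adjacency conclusion, and it can be proved with no strictness at all: for $i\ne1,2,t$ the interval $f(c_i,I_1)$ is nonempty with right endpoint $l(\eta(c_i),I_1)+0.5$, and every vertex of $\mN$ receives in $I_1$ an interval of length at least $0.5$ (inspect (\ref{eqn:B2PeNType1c1I1})--(\ref{eqn:B2PeNpInteriorB2PeI1}); the shortest case is (\ref{eqn:B2PeNType2ctI1})), so $l(\eta(c_i),I_1)+0.5\le r(\eta(c_i),I_1)$ and the two intervals meet at that point. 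That is the repair your argument needs; trying to extract the non-sharing property from Definition \ref{def:specialCycle} is a dead end, since specialness of $C$ does not prevent an $\mN$-vertex from being adjacent to two vertices of $C$ at distance two.
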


\noindent Now we will show the following:
\begin{lemma} \label{lem:superI1}
$I_1$ is a supergraph of $G$.
\end{lemma}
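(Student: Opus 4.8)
The plan is to build on Lemma~\ref{lem:VminusCSuperI1}, which already guarantees that every edge of $G[V\setminus\mC]$ is present in $I_1$. Consequently it suffices to check that every edge of $G$ with at least one endpoint in $\mC$ also appears in $I_1$. I would therefore fix an arbitrary $c\in\mC$ and first determine its full neighborhood in $G$. Since $\mC\subseteq\mS$ and $\mS$ induces a collection of cycles and paths (Observation~\ref{obs:primPart}.\ref{primPart:inducedCyclesPaths}), with $\mC$ comprising exactly the cycle vertices and $\mC$--$\mP$ being non-adjacent (Table~\ref{tab:nonAdj}, row~4), the graph $G[\mC]$ is a disjoint union of induced cycles; hence $c$ has exactly two neighbors inside its own cycle of $\mC$. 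The non-adjacency table then rules out any neighbor of $c$ lying in $\mA$ (row~1), $\mP$ (row~4), $\mR$ (row~5), or $\mB$ (row~6), so the only remaining neighbor must lie in $\mN$, and by Table~\ref{tab:uniqNeighbor} (row~3) it is the unique vertex $\eta(c)$. As $c$ is cubic, these three neighbors exhaust its neighborhood, so every $G$-edge incident to $c$ is either an edge of the cycle of $\mC$ containing $c$ or the edge $(c,\eta(c))$.

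Both families are dispatched directly using Observation~\ref{obs:CI1}. For $(c,\eta(c))$, Observation~\ref{obs:CI1}.\ref{CI1:cEtacI1} gives $l(\eta(c),I_1)\in f(c,I_1)$, so the edge is present in $I_1$. For the cycle edges, I would fix the natural ordering $C=c_1c_2\ldots c_tc_1$ with $c_1$ the special vertex: any cycle edge not incident to $c_1$ joins two vertices of $C\setminus\{c_1\}$, which form a clique in $I_1$ by Observation~\ref{obs:CI1}.\ref{CI1:nonSpecial}, while the two edges $(c_1,c_2)$ and $(c_t,c_1)$ incident to the special vertex are exactly the adjacencies guaranteed by Observation~\ref{obs:CI1}.\ref{CI1:special}. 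Thus all edges of every cycle of $\mC$ lie in $I_1$, and combining this with Lemma~\ref{lem:VminusCSuperI1} establishes $E(G)\subseteq E(I_1)$.

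The only genuinely delicate step, and the one I would treat most carefully, is the neighborhood analysis: concluding that $c$ has precisely two neighbors in $\mC$ and a single further neighbor $\eta(c)\in\mN$ requires combining several rows of the non-adjacency and unique-neighbor tables with the fact that $G[\mC]$ is an induced union of cycles, so that the degree count $2+1=3$ is forced to be exactly the neighborhood of the cubic vertex $c$. Once this is settled, no new interval arithmetic is needed: every required endpoint relation was already recorded when the intervals for $\mC$ and $\mN$ were defined, and the argument reduces to a direct appeal to the three parts of Observation~\ref{obs:CI1}.
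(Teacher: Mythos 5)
Your proof is correct and follows essentially the same route as the paper: reduce to edges touching $\mC$ via Lemma~\ref{lem:VminusCSuperI1}, handle intra-cycle edges with Observation~\ref{obs:CI1}.\ref{CI1:special} and \ref{obs:CI1}.\ref{CI1:nonSpecial}, and handle the single remaining neighbor $\eta(c)\in\mN$ with Observation~\ref{obs:CI1}.\ref{CI1:cEtacI1}. The only cosmetic difference is that you organize the case analysis around the neighborhood of a vertex of $\mC$ (ruling out $\mA$, $\mP$, $\mR$, $\mB$ via the non-adjacency table), whereas the paper argues edge by edge and invokes Table~\ref{tab:uniqNeighbor} (row 3) directly; the substance is identical.
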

\begin{proof}
Let $x,y\in V$ be two adjacent vertices in $G$. If $x,y\in V\setminus\mC$,
then, by Lemma \ref{lem:VminusCSuperI1}, $x$ and $y$ are adjacent
in $I_1$. If $x,y\in\mC$, then, they belong to the same component
(which is a special cycle) in $G[\mC]$. From Observation \ref{obs:CI1}
(Points \ref{CI1:special} and \ref{CI1:nonSpecial}), we can infer that
they are adjacent in $I_1$. The only case remaining is the one in which
one vertex is in $\mC$ and the other in $V\setminus\mC$.

Suppose $x\in\mC$ and $y\in V\setminus\mC$. Since $x$ belongs to a special
cycle in $\mC$, it has two neighbors in the cycle. The remaining neighbor
is $y$. By Table \ref{tab:uniqNeighbor} (row 3), $y\in\mN$. Moreover,
by the notation introduced in Definition \ref{def:specialVertex},
$y=\eta(x)$. From Observation \ref{obs:CI1}.\ref{CI1:cEtacI1}, it follows
that $y$ is adjacent to $x$ in $I_1$. Hence proved.\qed
\end{proof}

Recall that we need to show that $E(G)=E(I_1)\cap E(I_2)\cap E(I_3)$. For
this, we will have to prove that every missing edge in $G$ is missing
in at least one of the three interval graphs. Note that there is no
edge between $\mA$ and $V\setminus(\mA\cup\mN)$ (Table \ref{tab:nonAdj}
rows 1--3). Now we will show that all these missing edges of $G$ are
also missing in $I_1$.  \begin{lemma}\label{lem:AANnonAdj} Let $x\in\mA$
and $y\in V\setminus(\mA\cup\mN)$. Then, $(x,y)\notin E(I_1)$.
\end{lemma}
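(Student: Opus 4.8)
The goal is to show that for $x\in\mA$ and $y\in V\setminus(\mA\cup\mN)$, the pair $(x,y)$ is a non-edge of $I_1$. The plan is to exploit the fact established in Lemma \ref{lem:B1P2eI1}, equation (\ref{eqn:RI1}), and the $\mP_{2i}$ assignments (\ref{eqn:P2iRI1})--(\ref{eqn:P2iNI1}), that every vertex in $V\setminus(\mA\cup\mN)$ lives far to the left of $2n$, while every vertex of $\mA$ lives at or to the right of $2n$. Concretely, I would first record that by equations (\ref{eqn:AIsolatedI1}), (\ref{eqn:AEdgexI1}) and (\ref{eqn:AEdgeyI1}), every interval $f(x,I_1)$ for $x\in\mA$ satisfies $l(x,I_1)\ge 2n$. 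The separation will come from showing that $r(y,I_1)\le 2n$ (often strictly) for all the relevant $y$, so the only way $x$ and $y$ can touch is at the single point $2n$, and I must check even that does not create a spurious edge to a vertex of $\mA$ that is not the genuine $\mA$-neighbor.

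The natural way to organize the argument is by a case analysis on which block of the partition $y$ belongs to, since $V\setminus(\mA\cup\mN) = \mC\cup\mP_e\cup\mP_{2e}\cup\mP_{2i}\cup\mR\cup\mB_1\cup\mB_2$. For $y\in\mR$, (\ref{eqn:RI1}) gives $r(y,I_1)=n<2n$. For $y\in\mB_1\cup\mP_{2e}$, equations (\ref{eqn:B1I1})--(\ref{eqn:P2eI1}) give $r(y,I_1)=l(y',I_1)$ where $y'\in\mN$; invoking Observation \ref{obs:BunchI1}.\ref{BunchI1:NType2Path} as in the proof of Lemma \ref{lem:B1P2eI1}, $l(y',I_1)<2n-1<2n$. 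For $y\in\mP_{2i}$, (\ref{eqn:P2iRI1})--(\ref{eqn:P2iNI1}) give $r(y,I_1)$ equal to either $-1$ or $l(v',I_1)$ with $v'\in\mN$, again below $2n$. For $y\in\mB_2\cup\mP_e$, I would use Observation \ref{obs:BunchI1}.\ref{BunchI1:Type1}(b) (if $y$ is in a Type 1 cycle, then $r(y,I_1)<n$) and Observation \ref{obs:BunchI1}.\ref{BunchI1:B2PeType2Path} (otherwise $r(y,I_1)<2n$). In all these cases $r(y,I_1)<2n\le l(x,I_1)$, so $f(x,I_1)\cap f(y,I_1)=\varnothing$ and $(x,y)\notin E(I_1)$.

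The delicate block is $y\in\mC$. Here (\ref{eqn:CI1}) places $f(y,I_1)$ inside $[\,l(\eta(c_1),I_1),\,l(\eta(y),I_1)+0.5\,]$, where $\eta(y)\in\mN$. By Observation \ref{obs:BunchI1} the left endpoints $l(\eta(y),I_1)$ of $\mN$-vertices are bounded by $2n-1$, so $r(y,I_1)\le l(\eta(y),I_1)+0.5<2n$, which again yields the strict separation $r(y,I_1)<2n\le l(x,I_1)$ and hence non-adjacency. I expect this $\mC$ case to be the main obstacle, precisely because the $\mC$-intervals are the only ones in $V\setminus(\mA\cup\mN)$ whose right endpoints are tied to an $\mN$-vertex $\eta(y)$ that could itself be adjacent to a vertex of $\mA$ (via (\ref{eqn:B2PeNp1ptxI1})); I must confirm that the additive $0.5$ in (\ref{eqn:CI1}) combined with the bound $l(\eta(y),I_1)<2n-1$ keeps $r(y,I_1)$ strictly below $2n$, so that no $\mC$-interval ever reaches the point $2n$ where the $\mA$-intervals begin. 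Once that strict inequality is in hand, all cases close uniformly and the lemma follows.
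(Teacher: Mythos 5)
Your proposal is correct and follows essentially the same route as the paper's own proof: establish that every $\mA$-interval begins at or beyond $2n$, then run a case analysis over the blocks of $V\setminus(\mA\cup\mN)$ showing each right endpoint stays strictly below $2n$, using the same equations and the bound $l(z,I_1)<2n-1$ for $z\in\mN$ from Observation \ref{obs:BunchI1}. The only cosmetic difference is that the paper records the slightly sharper $l(x,I_1)>2n$ and treats the $\mC$ case as just one more case rather than a delicate one; the separation argument is identical.
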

\begin{proof}
From (\ref{eqn:AIsolatedI1}--\ref{eqn:AEdgeyI1}), $l(x,I_1)>2n$. Now
we will show that $r(y,I_1)\le2n$, from which it immediately
follows that $(x,y)\notin E(I_1)$. If $y\in\mB_2\cup\mP_e$,
from Observations \ref{obs:BunchI1}.\ref{BunchI1:Type1} and
\ref{obs:BunchI1}.\ref{BunchI1:B2PeType2Path}, $r(y,I_1)\le2n-1$. If
$y\in\mB_1\cup\mP_{2e}$, then by (\ref{eqn:B1I1}) and
(\ref{eqn:P2eI1}), $r(y,I_1)=l(z,I_1)$ for some $z\in\mN$. If
$y\in\mP_{2i}$, then by (\ref{eqn:P2iRI1}) and (\ref{eqn:P2iNI1}),
either $r(y,I_1)=-1$ or $r(y,I_1)=l(z,I_1)$ for some $z\in\mN$. If
$y\in\mC$, then from (\ref{eqn:CI1}), $r(y,I_1)\le l(z,I_1)+0.5$
for some $z\in\mN$.  If $y\in\mR$, then by (\ref{eqn:RI1}),
$r(y,I_1)=n$. From Observations \ref{obs:BunchI1}.\ref{BunchI1:Type1}
and \ref{obs:BunchI1}.\ref{BunchI1:NType2Path}, $\forall z\in\mN$,
$l(z,I_1)<2n-1$ and therefore, it follows that in each case $r(y,I_1)<2n$.
Thus, $(x,y)\notin E(I_1)$. Hence proved. \qed
\end{proof}
}
\subsection{Construction of $I_2$}\label{sec:I2}
\subsubsection{Vertices of ${\mA}$} 
We recall the interval assignment for $\mA$ in $I_1$ (see Section
\ref{sec:AI1}). Let $\overline{\Pi_A}$ be the reverse of $\Pi_A$. The
interval assignments for vertices of $\mA$ in $I_2$ are as follows:
\paragraph{\bf An isolated vertex} $u$ is given a point interval as follows:
\begin{equation}\label{eqn:AIsolatedI2}
f(u,I_2)=[n+\overline{\Pi_A}(u),n+\overline{\Pi_A}(u)].
\end{equation}
\paragraph{\bf End points of an isolated edge} $(u,v)$: Without
loss of generality, let $\Pi_A(v)=\Pi_A(u)+1$. This implies that
$\overline{\Pi_A}(v)=\overline{\Pi_A}(u)-1$. We assign the intervals to
$u$ and $v$ as follows:
\begin{equation}\label{eqn:AEdgeI2}
\begin{array}{ll}
f(u,I_2)=[n+\overline{\Pi_A}(u)-0.5,n+\overline{\Pi_A}(u)],\\
f(v,I_2)=[n+\overline{\Pi_A}(v),n+\overline{\Pi_A}(v)+0.5].
\end{array}
\end{equation}
\journ{
Note that the two intervals intersect at
$n+\overline{\Pi_A}(u)-0.5=n+\overline{\Pi_A}(v)+0.5$.

\begin{observation}\label{obs:AGI2}
The graph induced by $\mA$ in $I_2$ and $G$ are identical, that is,
$I_2[\mA]=G[\mA]$. The proof is similar to that of Lemma \ref{lem:AGI1}.
\end{observation}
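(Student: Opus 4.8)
The plan is to mirror the proof of Lemma \ref{lem:AGI1}, transporting its two structural observations to the $I_2$-intervals of $\mA$. First I would record, directly from (\ref{eqn:AIsolatedI2}) and (\ref{eqn:AEdgeI2}), that for every $x\in\mA$ the point $n+\overline{\Pi_A}(x)$ is an endpoint of $f(x,I_2)$, and that $f(x,I_2)$ is either a single point (when $x$ is an isolated vertex of $G[\mA]$) or an interval of length exactly $0.5$ (when $x$ is an endpoint of an isolated edge). Since $\overline{\Pi_A}$ takes distinct integer values on $\mA$, these base points are distinct integers, and each interval is contained in $[\,n+\overline{\Pi_A}(x)-0.5,\; n+\overline{\Pi_A}(x)+0.5\,]$.

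Next I would use this to pin down exactly which pairs intersect. Because every interval lies within distance $0.5$ of its integer base point, two intervals $f(x,I_2)$ and $f(y,I_2)$ can meet only when $|\overline{\Pi_A}(x)-\overline{\Pi_A}(y)|=1$, and then only at the half-integer lying between the two base points. A point interval contributes no such half-integer, and an edge endpoint extends by $0.5$ only in the direction of its partner. Hence an intersection forces $x$ and $y$ to be the two endpoints of a common isolated edge of $G[\mA]$: by (\ref{eqn:AEdgeI2}), the smaller-base endpoint extends up and the larger-base endpoint extends down, so they touch at the shared point $n+\overline{\Pi_A}(u)-0.5=n+\overline{\Pi_A}(v)+0.5$. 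Conversely, the same computation shows every isolated edge does produce an intersection, so $I_2[\mA]=G[\mA]$.

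The only point needing care is ruling out spurious adjacencies, and this reduces to checking that the only consecutive base-point pairs whose intervals reach toward each other are genuine edges. This is guaranteed by Definition \ref{def:PiA}: the two endpoints of each isolated edge are consecutive under $\Pi_A$, hence consecutive under the reversal $\overline{\Pi_A}$, and no other vertex of $\mA$ is interposed between them. I do not expect a genuine obstacle here; the $I_2$ construction for $\mA$ is the mirror image of the $I_1$ construction (the left/right roles of the two extensions are swapped precisely because $\Pi_A$ is reversed), so the intersection geometry, and therefore the conclusion, is identical to that of Lemma \ref{lem:AGI1}.
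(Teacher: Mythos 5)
Your proposal is correct and follows essentially the same route as the paper: the paper proves this observation by mirroring Lemma \ref{lem:AGI1}, whose argument is exactly what you reproduce --- each interval is anchored at its (distinct, integer) base point $n+\overline{\Pi_A}(x)$, is either a point or has length $0.5$, and hence two intervals can meet only when the bases are consecutive and both intervals extend toward each other, which by (\ref{eqn:AEdgeI2}) and Definition \ref{def:PiA} happens precisely for the two endpoints of an isolated edge of $G[\mA]$. No gaps; the identification of an intersecting pair as edge partners is justified by your remark that an edge endpoint extends only toward its partner, together with the injectivity of $\overline{\Pi_A}$.
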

}
\subsubsection{Vertices of ${\mN}$} 
Let $v\in\mN$. From Table \ref{tab:uniqNeighbor} (row 1), $v$ is adjacent
to at most one vertex in $\mA$.
\begin{eqnarray}
&&\textrm{if $v$ is not adjacent to any vertex in $\mA$, then, }
f(v,I_2)=[0,n], \label{eqn:NnoneI2} \\
&&\textrm{if $v$ is adjacent to vertex $a$ in $\mA$, then, }
f(v,I_2)=\left[0,l(a,I_2)\right].\label{eqn:NAI2}
\end{eqnarray}
\journ{
Note that $l(a,I_2)$ is already defined in (\ref{eqn:AIsolatedI2})
and (\ref{eqn:AEdgeI2}) and satisfies, $l(a,I_2)>n$. Hence, we have the
following observation.
\begin{observation}\label{obs:NCliqueI2}
In $I_2$, $\forall x\in\mN$, $[0,n]\subseteq f(x,I_2)$.
\end{observation}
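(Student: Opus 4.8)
The plan is to argue directly from the two interval-assignment rules for the vertices of $\mN$ in $I_2$, namely (\ref{eqn:NnoneI2}) and (\ref{eqn:NAI2}), by a single case split on whether $x$ has a neighbor in $\mA$. In both cases the left endpoint of $f(x,I_2)$ is exactly $0$, so the entire content of the claim reduces to showing that the right endpoint is at least $n$. The only nontrivial ingredient is the inequality $l(a,I_2) > n$ for every $a \in \mA$, which I would establish first.

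To see that $l(a,I_2) > n$, I would read off the left endpoints from the $\mA$-assignments (\ref{eqn:AIsolatedI2}) and (\ref{eqn:AEdgeI2}). If $a$ is an isolated vertex of $\mA$, then $l(a,I_2) = n + \overline{\Pi_A}(a)$; if $a$ is an endpoint of an isolated edge, then $l(a,I_2)$ equals either $n + \overline{\Pi_A}(a)$ or $n + \overline{\Pi_A}(a) - 0.5$. Since $\overline{\Pi_A}$ is an ordering of $\mA$, its values are at least $1$, so in every case $l(a,I_2) \ge n + 0.5 > n$.

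With this in hand the observation is immediate. Fix $x \in \mN$. If $x$ is not adjacent to any vertex of $\mA$, then $f(x,I_2) = [0,n]$ by (\ref{eqn:NnoneI2}) and the inclusion holds with equality. If $x$ is adjacent to a vertex $a \in \mA$ --- which by Table \ref{tab:uniqNeighbor} (row 1) together with Observation \ref{obs:B2PeN}.\ref{B2PeN:NEndIsolatedA} is its unique such neighbor, so that (\ref{eqn:NAI2}) is well defined --- then $f(x,I_2) = [0, l(a,I_2)]$, and since $l(a,I_2) > n$ we obtain $[0,n] \subset [0, l(a,I_2)] = f(x,I_2)$. There is essentially no obstacle here; the only point requiring care is confirming the strict inequality $l(a,I_2) > n$, which is exactly what guarantees that $[0,n]$ lies inside $f(x,I_2)$ for every vertex of $\mN$ (and hence, as a by-product, that $\mN$ induces a clique containing the common point $n/2$ in $I_2$).
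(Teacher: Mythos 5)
Your proof is correct and follows essentially the same route as the paper: the paper's justification is exactly the remark that $l(a,I_2)>n$ for every $a\in\mA$ (read off from (\ref{eqn:AIsolatedI2}) and (\ref{eqn:AEdgeI2})), so that both cases of the $\mN$-assignment (\ref{eqn:NnoneI2}) and (\ref{eqn:NAI2}) yield $[0,n]\subseteq f(x,I_2)$. Your write-up just makes the endpoint computation and the uniqueness of the $\mA$-neighbor explicit, which the paper leaves implicit.
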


\begin{lemma}\label{lem:ANSuperI2}
$I_2[\mA\cup\mN]$ is a supergraph of $G[\mA\cup\mN]$.
\end{lemma}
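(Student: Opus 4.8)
The plan is to take two vertices $x,y\in\mA\cup\mN$ that are adjacent in $G$ and show that their intervals intersect in $I_2$, splitting into three cases according to which of $\mA$ and $\mN$ each vertex belongs to.

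The two ``pure'' cases follow immediately from facts already in hand. If $x,y\in\mA$, then Observation \ref{obs:AGI2} gives $I_2[\mA]=G[\mA]$, so adjacency in $G$ transfers directly to $I_2$. If $x,y\in\mN$, then Observation \ref{obs:NCliqueI2} tells us that $[0,n]\subseteq f(x,I_2)$ and $[0,n]\subseteq f(y,I_2)$; the two intervals share the segment $[0,n]$ and hence intersect, so $x$ and $y$ are adjacent in $I_2$ (indeed $\mN$ induces a clique there), in particular whenever they are adjacent in $G$.

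The only case needing actual work is the mixed one, say $x\in\mA$ and $y\in\mN$ with $(x,y)\in E(G)$. The key step is to invoke Table \ref{tab:uniqNeighbor} (row 1), which guarantees that $y$ has at most one neighbor in $\mA$; consequently $x$ is precisely that neighbor, and the interval of $y$ is the one assigned by (\ref{eqn:NAI2}) with $a=x$, namely $f(y,I_2)=\left[0,l(x,I_2)\right]$. Since $l(x,I_2)$ is by definition the left endpoint of $f(x,I_2)$, the point $l(x,I_2)$ lies in both $f(x,I_2)$ and $f(y,I_2)$, so the two intervals meet (in fact they touch exactly at $l(x,I_2)$, i.e. $r(y,I_2)=l(x,I_2)$), giving adjacency in $I_2$. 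Verifying $l(x,I_2)\in f(x,I_2)$ is routine from the assignments (\ref{eqn:AIsolatedI2}) and (\ref{eqn:AEdgeI2}), whether $x$ is an isolated vertex or an endpoint of an isolated edge.

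I expect no real obstacle here: the content is a short case analysis, and the only subtlety is remembering to use the unique-neighbor property (Table \ref{tab:uniqNeighbor}, row 1) so that the interval of $y$ is guaranteed to be anchored at $x$'s left endpoint rather than at the left endpoint of some other $\mA$-neighbor. The touching relation $r(y,I_2)=l(x,I_2)$ established in this case is also precisely what will later certify that the boxes of $x$ and $y$ meet only on their boundaries.
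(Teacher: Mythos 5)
Your proof is correct and follows essentially the same route as the paper's: both handle the $\mA$--$\mA$ case via Observation \ref{obs:AGI2}, the $\mN$--$\mN$ case via Observation \ref{obs:NCliqueI2}, and the mixed case via the unique-neighbor property and the assignment (\ref{eqn:NAI2}), which makes the two intervals touch at $r(y,I_2)=l(x,I_2)$. The only cosmetic difference is that the paper additionally cites Table \ref{tab:nonAdj} (row 9) to pin down that the $\mN$-vertex lies in $\mN_e$ before applying Table \ref{tab:uniqNeighbor} (row 1), a step your argument uses implicitly.
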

\begin{proof}
Let $x,y\in\mA\cup\mN$ be two adjacent vertices in $G$. If $x,y\in\mA$,
from Observation~\ref{obs:AGI2}, $(x,y)\in E(I_2)$. If $x,y\in\mN$,
from Observation \ref{obs:NCliqueI2}, $(x,y)\in E(I_2)$. If $x\in\mN$
and $y\in\mA$, then by Table \ref{tab:nonAdj} (row 9) and Table
\ref{tab:uniqNeighbor} (row 1), $x\in\mN_e$ and $y$ is its only neighbor
in $\mA$. From (\ref{eqn:NAI2}), $r(x,I_2)=l(y,I_2)$ and therefore,
$(x,y)\in E(I_2)$.\qed
\end{proof}

\begin{lemma}\label{lem:ANnonAdj}
If $x\in\mN$ and $y\in\mA$ such that $(x,y)\notin E(G)$, then,
$(x,y)\notin E(I_1\cap I_2)$.
\end{lemma}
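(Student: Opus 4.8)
The plan is to exploit the complementary orderings used for $\mA$ in the two graphs. In $I_1$ the vertices of $\mA$ are laid out according to $\Pi_A$ starting near $2n$ (equations (\ref{eqn:AIsolatedI1})--(\ref{eqn:AEdgeyI1})), while in $I_2$ they are laid out according to the reverse ordering $\overline{\Pi_A}$ starting near $n$ (equations (\ref{eqn:AIsolatedI2})--(\ref{eqn:AEdgeI2})). The intuition is that in $I_1$ the interval of $x$ can reach only those $\mA$-vertices whose $\Pi_A$-value is at most that of its true neighbour, while in $I_2$ it can reach only those whose $\Pi_A$-value is at least that of its true neighbour; so in $I_1\cap I_2$ the only $\mA$-vertex that $x$ meets is its actual neighbour.

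Concretely, I would first handle the case where $x$ has no neighbour in $\mA$. Then (\ref{eqn:NnoneI2}) gives $f(x,I_2)=[0,n]$, hence $r(x,I_2)=n$, whereas every $y\in\mA$ has $l(y,I_2)>n$ because $\overline{\Pi_A}(y)\ge 1$; thus $(x,y)\notin E(I_2)$. In the remaining case $x$ is adjacent to some vertex of $\mA$. Table \ref{tab:nonAdj} (row 9) then rules out $x\in\mN_{int}$, so $x\in\mN_e$, and Table \ref{tab:uniqNeighbor} (row 1) makes its $\mA$-neighbour $a$ unique. Since $(x,y)\notin E(G)$ we have $y\ne a$, and as $\Pi_A$ is injective exactly one of the inequalities $\Pi_A(a)<\Pi_A(y)$ and $\Pi_A(a)>\Pi_A(y)$ holds.

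If $\Pi_A(a)<\Pi_A(y)$ I would separate $x$ and $y$ in $I_1$: by (\ref{eqn:B2PeNp1ptxI1}), $r(x,I_1)=l(a,I_1)\le 2n+\Pi_A(a)$, while (\ref{eqn:AIsolatedI1})--(\ref{eqn:AEdgeyI1}) give $l(y,I_1)\ge 2n+\Pi_A(y)-0.5\ge 2n+\Pi_A(a)+0.5$, so $r(x,I_1)<l(y,I_1)$ and $(x,y)\notin E(I_1)$. If instead $\Pi_A(a)>\Pi_A(y)$, then $\overline{\Pi_A}(a)<\overline{\Pi_A}(y)$ and I would separate them in $I_2$: by (\ref{eqn:NAI2}), $r(x,I_2)=l(a,I_2)\le n+\overline{\Pi_A}(a)\le n+\overline{\Pi_A}(y)-1<l(y,I_2)$, so $(x,y)\notin E(I_2)$. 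In either case $(x,y)\notin E(I_1)\cap E(I_2)=E(I_1\cap I_2)$.

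The only thing to watch is the bookkeeping of the half-integer left endpoints that the isolated edges of $\mA$ introduce; the key point is that $a\ne y$ forces $\Pi_A(a)$ and $\Pi_A(y)$ (hence $\overline{\Pi_A}(a)$ and $\overline{\Pi_A}(y)$) to differ by at least $1$, which dominates the $\pm 0.5$ perturbations and keeps every inequality strict. I do not expect a genuine obstacle here; the single idea that carries the proof is the use of $\overline{\Pi_A}$ in $I_2$, so that $I_1$ and $I_2$ cut $x$ off from the two complementary halves of $\mA$ and leave only the true neighbour $a$ in the intersection.
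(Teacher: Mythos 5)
Your proposal is correct and follows essentially the same route as the paper's own proof: the same initial split on whether $x$ has an $\mA$-neighbour (separating in $I_2$ via (\ref{eqn:NnoneI2}) when it does not), the same identification of the unique neighbour $a$ via Table \ref{tab:nonAdj} (row 9) and Table \ref{tab:uniqNeighbor} (row 1), and the same final dichotomy $\Pi_A(a)<\Pi_A(y)$ versus $\Pi_A(a)>\Pi_A(y)$, resolved in $I_1$ and $I_2$ respectively using $r(x,I_1)=l(a,I_1)$ and $r(x,I_2)=l(a,I_2)$. Your explicit endpoint inequalities (with the $\pm 0.5$ bookkeeping) just spell out what the paper leaves as "easy to infer."
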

\begin{proof}
Suppose $x$ is not adjacent to any vertex in $\mA$. In $I_2$, by
(\ref{eqn:NnoneI2}), $r(x,I_2)=n$ and by (\ref{eqn:AIsolatedI2}) and
(\ref{eqn:AEdgeI2}), $l(y,I_2)>n$ and therefore, $(x,y)\notin I_2$.
Let us assume that $x$ is adjacent to some vertex in $\mA$, say
$a$. From Table \ref{tab:nonAdj} (row 9), $x\in\mN_e$ and by Table
\ref{tab:uniqNeighbor} (row 1), $a$ is the only neighbor of $x$
in $\mA$. From the interval assignment in (\ref{eqn:B2PeNp1ptxI1}),
$r(x,I_1)=l(a,I_1)$ and from (\ref{eqn:NAI2}) $r(x,I_2)=l(a,I_2)$. If
$\Pi_A(a)<\Pi_A(y)$, then, $l(a,I_1)<l(y,I_1)$ (this is easy to
infer from (\ref{eqn:AIsolatedI1}--\ref{eqn:AEdgeyI1})) and therefore,
$(x,y)\notin I_1$. Otherwise, since $\Pi_A(a)>\Pi_A(y)$, it implies that
$\overline{\Pi_A}(a)<\overline{\Pi_A}(y)$ which in turn implies that
$l(a,I_2)<l(y,I_2)$ (see interval assignments in (\ref{eqn:AIsolatedI2})
and (\ref{eqn:AEdgeI2})). Therefore, $(x,y)\notin E(I_2)$. \qed
\end{proof}
}
\subsubsection{Vertices of ${\mC\cup\mP}$} 
We recall that $\mC\cup\mP$ induces a collection of special cycles and
special paths in $G$.

\begin{definition}\label{def:Pi2}
$\Pi_2$ is an ordering of $\mC\cup\mP$ such that the following properties
are satisfied:
\begin{enumerate}
\item Let $P$ be a path in $\mP$. For a natural ordering of $P$, say
$p_1p_2\ldots p_t$, we have $\Pi_2(p_{i})= \Pi_2(p_{i-1})+1$, $2\le
i\le t$.
\item Suppose $C$ is a cycle in $\mC$. For a natural ordering of $C$, say
$c_1c_2\ldots c_tc_1$, where $c_1$ is the special vertex (recall Definition
\ref{def:specialVertex}), we have $\Pi_2(c_{i})= \Pi_2(c_{i-1})+1$,
$2\le i\le t$.
\end{enumerate}
\end{definition}
\journ{
It is easy to see that such an ordering $\Pi_2$ exists. Also note that if
$S_1$ and $S_2$ are two different components of $G[\mC\cup\mP]$, then,
either $\Pi_2(S_1)<\Pi_2(S_2)$ or $\Pi_2(S_2)<\Pi_2(S_1)$. The interval
assignments are as follows:
}
\paragraph{\bf For the vertices of a path:} Suppose $P=p_1p_2\ldots p_t$
such that $\Pi_2(p_{i+1})= \Pi_2(p_i)+1$, $1\le i<t$.
\begin{equation}\label{eqn:PI2}
\begin{array}{l}
f(p_i,I_2)=\left[\Pi_2(p_i),\Pi_2(p_i)+1\right], 1\le i<t,\\
f(p_t,I_2)=\left[\Pi_2(p_t),\Pi_2(p_t)+0.5\right].
\end{array}
\end{equation}
\journ{
\begin{observation}\label{obs:PSuperI2}
Let $P=p_1p_2\ldots p_t$ be a special path from $\mP$ such that
$\Pi_2(p_{i+1})= \Pi_2(p_{i})+1$, $1\le i<t$. Then, for $1<i\le t$,
$l(p_i,I_2)=r(p_{i-1},I_2)$ and hence $I_2[P]=G[P]$.
\end{observation}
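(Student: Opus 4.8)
The plan is to read the interval endpoints directly off the assignment (\ref{eqn:PI2}) and then settle both claims by elementary arithmetic, since along the path the vertices receive essentially consecutive unit intervals. From (\ref{eqn:PI2}) we have $l(p_i,I_2)=\Pi_2(p_i)$ for every $1\le i\le t$, while $r(p_i,I_2)=\Pi_2(p_i)+1$ for $1\le i<t$ and $r(p_t,I_2)=\Pi_2(p_t)+0.5$. The half-length interval at the right end $p_t$ is the only mild irregularity, but it plays no role in the argument because $p_t$ is never the \emph{left} member of a consecutive pair.

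For the first assertion, I would fix $i$ with $1<i\le t$. Then $i-1$ satisfies $1\le i-1<t$, so $r(p_{i-1},I_2)=\Pi_2(p_{i-1})+1$. Invoking the defining property of the ordering from the hypothesis, namely $\Pi_2(p_i)=\Pi_2(p_{i-1})+1$, I would conclude $l(p_i,I_2)=\Pi_2(p_i)=\Pi_2(p_{i-1})+1=r(p_{i-1},I_2)$, which is exactly the stated touching condition.

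To obtain $I_2[P]=G[P]$ I would argue both inclusions. Since $P$ is an induced path in $G$, its edge set consists precisely of the consecutive pairs $(p_{i-1},p_i)$ for $1<i\le t$; the touching condition just established gives $f(p_{i-1},I_2)\cap f(p_i,I_2)\ne\varnothing$, so each such edge is present in $I_2$ as well, yielding $G[P]\subseteq I_2[P]$. For the reverse inclusion I would show that non-consecutive vertices receive disjoint intervals: if $i<j$ with $j\ge i+2$, then necessarily $i<t$, so $r(p_i,I_2)=\Pi_2(p_i)+1$, whereas $l(p_j,I_2)=\Pi_2(p_j)=\Pi_2(p_i)+(j-i)\ge\Pi_2(p_i)+2$, and hence the two intervals are separated. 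Thus no spurious edges arise and $I_2[P]=G[P]$.

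There is no genuine obstacle in this observation: it is a direct verification. The only point requiring a moment's care is to confirm that the shortened interval of $p_t$ disturbs neither the touching of the pair $(p_{t-1},p_t)$, where it is $p_{t-1}$ (not $p_t$) that supplies the relevant right endpoint, nor the separation estimate for non-consecutive pairs, where $p_t$ can only appear as the right member $p_j$ and contributes through its left endpoint $\Pi_2(p_t)$, which is unaffected by the shortening.
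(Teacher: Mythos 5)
Your proof is correct and follows exactly the route the paper intends: the paper states this observation without proof, treating it as an immediate consequence of the interval assignment (\ref{eqn:PI2}), and your verification (touching of consecutive intervals via $\Pi_2(p_i)=\Pi_2(p_{i-1})+1$, plus disjointness of non-consecutive intervals so no spurious edges arise) is precisely the direct check being implicitly invoked. Your explicit treatment of the shortened interval at $p_t$ and of the reverse inclusion $I_2[P]\subseteq G[P]$ is slightly more careful than the paper's terse ``hence,'' but it is the same argument.
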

}
\paragraph{\bf For the vertices of a cycle:} Suppose $C=c_1c_2\ldots
c_tc_1$ such that $\Pi_2(c_{i+1})= \Pi_2(c_{i})+1$, $1\le i<t$.
\begin{equation}
\begin{array}{l}\label{eqn:CI2}
f(c_1,I_2)= \left[\Pi_2(c_1),\Pi_2(c_t)\right],\\
f(c_i,I_2)= \left[\Pi_2(c_i),\Pi_2(c_i)+1\right],1<i<t,\\
f(c_t,I_2)= \left[\Pi_2(c_t),\Pi_2(c_t)+0.5\right].
\end{array}
\end{equation}
\journ{
\begin{observation}\label{obs:CI2}
Suppose $C=c_1c_2\ldots c_tc_1$ is a special cycle from $\mC$ such that
$\Pi_2(c_{i+1})=\Pi_2(c_i)+1$, $1\le i<t$. 
\begin{enumerate}
\item For $2\le i\le t$, $l(c_{i+1},I_2)=r(c_i,I_2)$ and therefore,
$I_2[C\setminus c_1]=G[C\setminus c_1]$.\label{CI2:touch}
\item $I_2[C]$ is a supergraph of $G[C]$. The proof is as follows: $c_1$
is adjacent to all the other vertices of $C$ in $I_2$ and from Point 1
in this observation, $I_2[C\setminus c_1]=G[C\setminus
c_1]$.\label{CI2:CSuperI2}
\end{enumerate}
\end{observation}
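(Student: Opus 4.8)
The plan is to read both claims straight off the interval formulas~(\ref{eqn:CI2}), after noting that these assignments have exactly the same shape as the Type~1 cycle assignments~(\ref{eqn:B2PeNType1c1I1})--(\ref{eqn:B2PeNType1ctI1}) used for $I_1$, with $\Pi_1$ replaced by $\Pi_2$. Thus the argument of Observation~\ref{obs:Type1I1} carries over almost verbatim, and only the short arithmetic needs redoing. First I would record well-definedness: a cycle has $t\ge3$, so $\Pi_2(c_t)=\Pi_2(c_1)+(t-1)>\Pi_2(c_1)$ and $f(c_1,I_2)=[\Pi_2(c_1),\Pi_2(c_t)]$ is a genuine interval.

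For Point~1, the vertices $c_2,c_3,\ldots,c_t$ induce a path in $G$ (induced, since $C$ is an induced special cycle), and I would verify that their intervals realise precisely this path. The touching relation is immediate: for each consecutive pair $c_i,c_{i+1}$ along $c_2\cdots c_t$ the left endpoints sit at consecutive integers, so
\begin{equation*}
r(c_i,I_2)=\Pi_2(c_i)+1=\Pi_2(c_{i+1})=l(c_{i+1},I_2),
\end{equation*}
using $\Pi_2(c_{i+1})=\Pi_2(c_i)+1$ (the endpoint $c_t$ carries a half-length interval, but its left end is still $\Pi_2(c_t)$, so the same identity holds at $i=t-1$). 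This already gives every path edge. The step the word ``therefore'' silently absorbs, and which I would make explicit, is the absence of extra edges: if $j\ge i+2$ then $l(c_j,I_2)=\Pi_2(c_j)=\Pi_2(c_i)+(j-i)\ge\Pi_2(c_i)+2>r(c_i,I_2)$, so non-consecutive intervals are disjoint. Hence $I_2[C\setminus c_1]=G[C\setminus c_1]$.

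For Point~2 I would show that $c_1$ acts as a dominating vertex of $C$ in $I_2$. For every $1\le i\le t$ we have $\Pi_2(c_1)\le\Pi_2(c_i)\le\Pi_2(c_t)$, so $l(c_i,I_2)=\Pi_2(c_i)\in f(c_1,I_2)=[\Pi_2(c_1),\Pi_2(c_t)]$; thus $c_1$ meets every other interval and is adjacent to each $c_i$. In particular the two cycle edges $(c_1,c_2)$ and $(c_t,c_1)$ are present, and together with the path edges of Point~1 this covers all of $E(G[C])$, giving the supergraph property. I would also remark that the inclusion is in general strict---$c_1$ now also meets $c_3,\ldots,c_{t-1}$---which is precisely why only a supergraph, and not equality, can be claimed for the whole cycle.

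There is no genuine obstacle here; the only care needed is in the two non-generic vertices $c_1$ (the long interval) and $c_t$ (the half interval), and in the separation estimate that keeps non-consecutive vertices of the path $c_2\cdots c_t$ apart. Since every step duplicates one already verified for the Type~1 cycles of $I_1$, the whole verification should be routine.
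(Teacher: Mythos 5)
Your proposal is correct and follows essentially the same route as the paper, which proves both points by reading them directly off the interval assignments~(\ref{eqn:CI2}): consecutive intervals touch at integer points, and $f(c_1,I_2)$ contains every left endpoint $\Pi_2(c_i)$. The only difference is that you make explicit two details the paper's ``therefore'' absorbs---the disjointness of non-consecutive intervals (so that $I_2[C\setminus c_1]$ has no extra edges) and the harmless indexing at $c_t$---which is a welcome but minor tightening, not a different argument.
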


\begin{observation}\label{obs:CP0nI2}
For every $x\in\mC\cup\mP$ and $y\in\mN$, $f(x,I_2)\subset f(y,I_2)$. The
proof is as follows: $G$ is a cubic graph whereas $G[\mC\cup\mP]$
has maximum degree $2$ and therefore, $\mC\cup\mP\ne V$. This implies that
$\forall z\in\mC\cup\mP$, $\Pi_2(z)<n$. Taking this into consideration,
from interval assignments (\ref{eqn:PI2}) and (\ref{eqn:CI2}) we can infer
that $0<l(x,I_2)<r(x,I_2)<n$ and therefore, $f(x,I_2)\subset[0,n]$. From
Observation \ref{obs:NCliqueI2}, $[0,n]\subseteq f(y,I_2)$. Hence proved.
\end{observation}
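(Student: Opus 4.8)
The plan is to prove the containment by sandwiching: I would show that every interval $f(x,I_2)$ with $x\in\mC\cup\mP$ lies \emph{strictly} inside $[0,n]$, while every interval $f(y,I_2)$ with $y\in\mN$ contains $[0,n]$. Chaining the two containments then gives $f(x,I_2)\subset[0,n]\subseteq f(y,I_2)$, which is exactly the assertion. The second half is already in hand: Observation~\ref{obs:NCliqueI2} records that $[0,n]\subseteq f(y,I_2)$ for every $y\in\mN$, so all of the work lies on the $\mC\cup\mP$ side.

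For that side I would establish $0<l(z,I_2)$ and $r(z,I_2)<n$ for every $z\in\mC\cup\mP$. The lower bound is immediate from the assignments (\ref{eqn:PI2}) and (\ref{eqn:CI2}): each left end point equals $\Pi_2(z)\ge 1>0$. The upper bound rests on the cardinality fact that $\Pi_2(z)<n$ for all $z\in\mC\cup\mP$. This follows because $G$ is cubic whereas $G[\mC\cup\mP]$ has maximum degree $2$, so $\mC\cup\mP\neq V$; hence $|\mC\cup\mP|<n$ and the ordering $\Pi_2$ takes values in $\{1,\dots,n-1\}$. Feeding $\Pi_2(z)<n$ into (\ref{eqn:PI2}) and (\ref{eqn:CI2}) is what drives every right end point below $n$.

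The one step I expect to require genuine care — and the only real bookkeeping — is handling the different flavours of right end point. Interior vertices of a path or cycle carry the $+1$ shift (so $r=\Pi_2(z)+1$), the last vertex of each path or cycle carries only the $+0.5$ shift, and the special vertex $c_1$ of a cycle has $r=\Pi_2(c_t)$. I would note that the maximal $\Pi_2$-value, namely $|\mC\cup\mP|\le n-1$, is attained only by the last vertex of the last component, which gets the $+0.5$ shift and thus $r\le (n-1)+0.5<n$; every vertex carrying the $+1$ shift has $\Pi_2$-value at most $n-2$, giving $r\le n-1<n$, and the cycle end point obeys $\Pi_2(c_t)\le n-1<n$ as well. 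So in every case $f(z,I_2)\subset(0,n)$.

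Combining the pieces, $f(x,I_2)\subset(0,n)\subset[0,n]\subseteq f(y,I_2)$ by Observation~\ref{obs:NCliqueI2}; since $0\notin f(x,I_2)$ but $0\in f(y,I_2)$, the containment is strict, yielding $f(x,I_2)\subset f(y,I_2)$ as claimed.
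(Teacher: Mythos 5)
Your proposal is correct and follows essentially the same route as the paper: derive $\Pi_2(z)<n$ from the fact that $\mC\cup\mP\ne V$ (since $G$ is cubic while $G[\mC\cup\mP]$ has maximum degree $2$), conclude $f(x,I_2)\subset[0,n]$ from the assignments (\ref{eqn:PI2}) and (\ref{eqn:CI2}), and combine with Observation~\ref{obs:NCliqueI2}. Your explicit case analysis of the right end points (noting that the maximal $\Pi_2$-value is carried by a last vertex with the $+0.5$ shift, so vertices with the $+1$ shift have $\Pi_2$-value at most $n-2$) is in fact slightly more careful than the paper's one-line inference of $r(x,I_2)<n$, but it is the same argument.
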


\begin{lemma}\label{lem:CPnonAdjI2}
If $x,y\in\mC\cup\mP$ belong to different components in $G[\mC\cup\mP]$,
then, $(x,y)\notin E(I_2)$.
\end{lemma}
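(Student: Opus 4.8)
The plan is to mimic the argument already used for Type~1 cycles in $I_1$ (Observation~\ref{obs:Type1I1}.\ref{Type1I1:nonAdjType1}): I would show that the intervals of the two components occupy disjoint stretches of the real line, so the two vertices cannot be adjacent in $I_2$. Let $S_x$ and $S_y$ be the distinct components of $G[\mC\cup\mP]$ containing $x$ and $y$ respectively. By the remark following Definition~\ref{def:Pi2}, the orderings assigned by $\Pi_2$ to any two components do not interleave, so without loss of generality I may assume $\Pi_2(S_x)<\Pi_2(S_y)$, that is, $\max_{a\in S_x}\Pi_2(a)<\min_{b\in S_y}\Pi_2(b)$. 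Since $\Pi_2$ takes integer values, this gap is at least $1$.

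Next I would bound the right endpoint of $x$'s interval and the left endpoint of $y$'s interval. Inspecting the assignments (\ref{eqn:PI2}) for a path and (\ref{eqn:CI2}) for a cycle, every vertex $v$ in a component has left endpoint $l(v,I_2)=\Pi_2(v)$, with the single exception of the special vertex $c_1$ of a cycle, whose interval $[\Pi_2(c_1),\Pi_2(c_t)]$ also starts at $\Pi_2(c_1)=\min_{a}\Pi_2(a)$. Hence in all cases $l(y,I_2)\ge\min_{b\in S_y}\Pi_2(b)$. Symmetrically, the largest right endpoint within any component is contributed by its last vertex ($p_t$ or $c_t$), whose interval is $[\,\cdot\,,\,\cdot+0.5]$; every other right endpoint is of the form $\Pi_2(\cdot)+1=\Pi_2(\text{next vertex})$, which does not exceed $\max_{a}\Pi_2(a)$. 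Thus $r(x,I_2)\le\max_{a\in S_x}\Pi_2(a)+0.5$.

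Combining these two bounds with the integer gap yields
$r(x,I_2)\le\max_{a\in S_x}\Pi_2(a)+0.5<\min_{b\in S_y}\Pi_2(b)\le l(y,I_2)$,
so $f(x,I_2)$ and $f(y,I_2)$ are disjoint and $(x,y)\notin E(I_2)$, as required.

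I do not anticipate any genuine obstacle here; the only point requiring a moment's care is the special vertex $c_1$ of a cycle, whose interval has the non-standard form $[\Pi_2(c_1),\Pi_2(c_t)]$. One simply checks that its left endpoint still equals $\min_{a}\Pi_2(a)$ and its right endpoint $\Pi_2(c_t)$ stays below $\max_{a}\Pi_2(a)+0.5$, so it respects both bounds and does not affect the argument.
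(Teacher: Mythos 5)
Your proof is correct and is essentially identical to the paper's own argument: both assume $\Pi_2(S_x)<\Pi_2(S_y)$ without loss of generality and derive the same chain $r(x,I_2)\le\max_{a\in S_x}\Pi_2(a)+0.5<\min_{b\in S_y}\Pi_2(b)\le l(y,I_2)$ from the assignments (\ref{eqn:PI2}) and (\ref{eqn:CI2}). Your explicit check of the special vertex $c_1$ is a detail the paper leaves implicit, but the approach is the same.
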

\begin{proof}
Let $x\in S_x$ and $y\in S_y$, where $S_x$ and $S_y$ are two different
components of $G[\mC\cup\mP]$. Without loss of generality we will assume
that $\Pi_2(S_x)<\Pi_2(S_y)$.  Irrespective of whether $S_x$ (or $S_y$)
induces a path or a cycle in $G[\mC\cup\mP]$, from (\ref{eqn:PI2})
and (\ref{eqn:CI2}), it follows that $\ds r(x,I_2)\le\max_{a\in
S_x}\Pi_2(a)+0.5<\min_{b\in S_y}\Pi_2(b)\le l(y,I_2)$. Therefore,
$(x,y)\notin E(I_2)$.  \qed
\end{proof}

\begin{lemma}\label{lem:CPI1I2}
The graph induced by $\mC\cup\mP$ in $G$ and $I_1\cap I_2$ are identical,
that is, $G[\mC\cup\mP]=(I_1\cap I_2)[\mC\cup\mP]$.
\end{lemma}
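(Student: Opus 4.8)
The plan is to prove the two edge-set inclusions separately. The inclusion $E(G[\mC\cup\mP])\subseteq E((I_1\cap I_2)[\mC\cup\mP])$ is the routine ``supergraph'' direction; the reverse inclusion is where the argument really lives, and it is the reason both $I_1$ and $I_2$ are invoked rather than a single interval graph.

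For the forward inclusion, let $x,y\in\mC\cup\mP$ with $(x,y)\in E(G)$. Lemma \ref{lem:superI1} already gives $(x,y)\in E(I_1)$, since $I_1$ is a supergraph of $G$. Because adjacent vertices of $G[\mC\cup\mP]$ necessarily lie in a single component, which is a special path or a special cycle, I would invoke Observation \ref{obs:PSuperI2} in the path case and Observation \ref{obs:CI2}.\ref{CI2:CSuperI2} in the cycle case to conclude $(x,y)\in E(I_2)$; together these give $(x,y)\in E(I_1\cap I_2)$.

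For the reverse inclusion, suppose $(x,y)\in E(I_1\cap I_2)$ with $x,y\in\mC\cup\mP$. First I would rule out $x$ and $y$ lying in distinct components of $G[\mC\cup\mP]$: Lemma \ref{lem:CPnonAdjI2} then forces $(x,y)\notin E(I_2)$, a contradiction. So $x$ and $y$ sit in a common component $S$. If $S$ is a path, Observation \ref{obs:PSuperI2} gives $I_2[S]=G[S]$ and the edge is already in $G$. If $S$ is a cycle $C=c_1c_2\cdots c_tc_1$ with special vertex $c_1$, I split on whether one endpoint equals $c_1$. When neither does, $x,y\in C\setminus\{c_1\}$ and Observation \ref{obs:CI2}.\ref{CI2:touch} (which states $I_2[C\setminus c_1]=G[C\setminus c_1]$) delivers the $G$-edge. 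When, say, $x=c_1$, I instead use $I_1$: by Observation \ref{obs:CI1}.\ref{CI1:special} the point interval of $c_1$ meets, among the vertices of $C$, only those of $c_2$ and $c_t$, so $(c_1,y)\in E(I_1)$ together with $y\in C$ forces $y\in\{c_2,c_t\}$, which are exactly the cycle-neighbors of $c_1$ in $G$.

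The delicate point---and the main obstacle---is the special vertex of each cycle. In $I_2$ its interval spans the whole cycle, so it is adjacent there to every other cycle vertex; thus $I_2$ alone cannot separate the genuine cycle edges at $c_1$ from spurious ones. Dually, in $I_1$ the set $C\setminus\{c_1\}$ is a clique, so $I_1$ alone over-counts the edges away from $c_1$. The step I must verify carefully is that both interval constructions, (\ref{eqn:CI1}) and (\ref{eqn:CI2}), designate the \emph{same} vertex as $c_1$; this holds because Definition \ref{def:Pi2} orders each cycle starting from the special vertex of Definition \ref{def:specialVertex}. Given this compatibility, the excess $I_2$-edges (all incident to $c_1$) are precisely those absent from $I_1$, and the excess $I_1$-edges (all inside the clique $C\setminus\{c_1\}$) are precisely those absent from $I_2$, so the intersection leaves exactly $G[C]$, completing the reverse inclusion.
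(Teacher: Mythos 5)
Your proof is correct and follows essentially the same route as the paper's: the forward direction via Lemma \ref{lem:superI1} together with Observations \ref{obs:PSuperI2} and \ref{obs:CI2}.\ref{CI2:CSuperI2}, and the reverse direction by splitting into different components (Lemma \ref{lem:CPnonAdjI2}), same path component, and same cycle component with the sub-cases handled by Observation \ref{obs:CI2}.\ref{CI2:touch} and Observation \ref{obs:CI1}.\ref{CI1:special}. Your added remark that the special vertex $c_1$ is the same in both constructions (guaranteed by Definition \ref{def:Pi2}, Point 2) is a compatibility check the paper leaves implicit, but it does not change the argument.
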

\begin{proof}
Let $x,y\in\mC\cup\mP$. First we will show that if $(x,y)\in E(G)$,
then $(x,y)\in E(I_1\cap I_2)$. Clearly, from Lemma \ref{lem:superI1},
$(x,y)\in E(I_1)$. Since $x$ and $y$ are adjacent in $G$, they belong to
the same path or cycle in $G[\mC\cup\mP]$, say $S$. From Observations
\ref{obs:PSuperI2} and \ref{obs:CI2}.\ref{CI2:CSuperI2}, it follows that
$I_2[S]$ is a supergraph of $G[S]$. Therefore, $(x,y)\in E(I_2)$. Hence,
$(x,y)\in E(I_1\cap I_2)$.

Now we will show that if $(x,y)\notin E(G)$, then, either $(x,y)\notin
E(I_1)$ or $(x,y)\notin E(I_2)$. There are two cases to consider:
(1) $x$ and $y$ belong to different components in $G[\mC\cup\mP]$
and (2) they belong to the same component. If it is Case (1), then by
Lemma \ref{lem:CPnonAdjI2}, $(x,y)\notin E(I_2)$. If it is Case (2),
then, let $x,y\in S$, where $S$ is a component of $G[\mC\cup\mP]$. If
$S$ is a special path, then by Observation \ref{obs:PSuperI2},
$I_2[S]=G[S]$ and therefore, $(x,y)\notin E(I_2)$.  Suppose $S$ is a
special cycle. Let $S=c_1c_2\ldots c_tc_1$, where $c_1$ is the special
vertex. If neither $x$ nor $y$ is $c_1$, then, they are not adjacent in
$I_2$. This is because, by Observation \ref{obs:CI2}.\ref{CI2:touch},
$I_2[C\setminus\{c_1\}]=G[C\setminus\{c_1\}]$. Suppose $x=c_1$, then
clearly, $y\ne c_2,c_t$. By Observation \ref{obs:CI1}.\ref{CI1:special},
in $I_1$, $c_1$ is adjacent to only $c_2$ and $c_t$. Thus, $x$ and $y$
are not adjacent in $I_1$. Hence, $(x,y)\notin E(I_1\cap I_2)$. \qed
\end{proof}

\begin{lemma}\label{lem:ANCPSuperI2}
$I_2[\mA\cup\mN\cup\mC\cup\mP]$ is a super graph of
$G[\mA\cup\mN\cup\mC\cup\mP]$.
\end{lemma}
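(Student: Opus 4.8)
The plan is to prove the supergraph claim edge by edge: I take an arbitrary edge $(x,y)$ of $G[\mA\cup\mN\cup\mC\cup\mP]$ and show that the intervals $f(x,I_2)$ and $f(y,I_2)$ intersect, splitting into three cases according to which of the two blocks $\mA\cup\mN$ and $\mC\cup\mP$ the two endpoints fall into. Since the interval assignments for all of $\mA$, $\mN$ and $\mC\cup\mP$ in $I_2$ are already fixed, each case should reduce to a lemma or observation proved earlier.

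First, if both $x$ and $y$ lie in $\mA\cup\mN$, the claim is exactly Lemma \ref{lem:ANSuperI2}, so nothing new is needed. Second, if both lie in $\mC\cup\mP$, then since $(x,y)\in E(G)$ they must belong to the same component $S$ of $G[\mC\cup\mP]$ (a special path or a special cycle); Observations \ref{obs:PSuperI2} and \ref{obs:CI2}.\ref{CI2:CSuperI2} assert that $I_2[S]$ is a supergraph of $G[S]$, hence $(x,y)\in E(I_2)$. This is precisely the forward direction already extracted inside the proof of Lemma \ref{lem:CPI1I2}, so I can either cite it directly or re-invoke the two observations.

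The only remaining case is the mixed one, with one endpoint in $\mA\cup\mN$ and the other in $\mC\cup\mP$. Here the key structural fact is that $\mA$ has no neighbor in $\mC\cup\mP$ in $G$ (Table \ref{tab:nonAdj}, row 1), so any such edge actually runs between $\mN$ and $\mC\cup\mP$. Say $x\in\mN$ and $y\in\mC\cup\mP$. Then I would simply invoke Observation \ref{obs:CP0nI2}, which gives $f(y,I_2)\subset f(x,I_2)$; in particular the two intervals meet and $(x,y)\in E(I_2)$. Combining the three cases shows that every edge of $G[\mA\cup\mN\cup\mC\cup\mP]$ survives in $I_2$, which is the claim.

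I do not anticipate a genuine obstacle here, since all the analytic content has been front-loaded into Lemmas \ref{lem:ANSuperI2} and \ref{lem:CPI1I2} and into Observation \ref{obs:CP0nI2} (every $\mC\cup\mP$-interval sits inside every $\mN$-interval in $I_2$). The only point demanding a little care is to notice the non-adjacency of $\mA$ and $\mC\cup\mP$, so that the mixed case collapses to $\mN$ versus $\mC\cup\mP$ and no edge between an $\mA$-vertex and a path/cycle vertex ever has to be accounted for; this is what makes Observation \ref{obs:CP0nI2} suffice rather than needing a finer comparison of endpoints.
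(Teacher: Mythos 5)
Your proposal is correct and follows essentially the same route as the paper's proof: the same three-case split, with Lemma \ref{lem:ANSuperI2} for edges inside $\mA\cup\mN$, the supergraph property of $I_2$ on components of $G[\mC\cup\mP]$ (which the paper cites via Lemma \ref{lem:CPI1I2}, whose forward direction rests on exactly the two observations you name) for edges inside $\mC\cup\mP$, and Table \ref{tab:nonAdj} (row 1) plus Observation \ref{obs:CP0nI2} to reduce the mixed case to $\mN$ versus $\mC\cup\mP$. There is no gap.
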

\begin{proof}
Let $x,y\in\mA\cup\mN\cup\mC\cup\mP$ be two adjacent vertices in $G$. If
$x,y\in\mA\cup\mN$, then by Lemma \ref{lem:ANSuperI2}, $(x,y)\in E(I_2)$
and if $x,y\in\mC\cup\mP$, then, from Lemma \ref{lem:CPI1I2} we can infer
that $(x,y)\in E(I_2)$. Therefore, we will assume that $x\in\mA\cup\mN$
and $y\in\mC\cup\mP$. By Table \ref{tab:nonAdj} (row 1), $x\notin\mA$
and hence, $x\in\mN$. From Observation \ref{obs:CP0nI2}, $f(y,I_2)\subset
f(x,I_2)$. Therefore, $(x,y)\in E(I_2)$.  \qed
\end{proof}
}
\subsubsection{Vertices of ${\mR\cup\mB}$} \label{sec:RBI2}
From Lemma \ref{lem:componentsY}, we recall that each component in
$\mR\cup\mB$ is isomorphic to one of the graphs shown in Figure
\ref{fig:componentsY}. Further, each component contains exactly
one vertex from $\mR$ and is uniquely identified by it; by the notation
introduced in Lemma \ref{lem:componentsY}, for every
$u\in\mR$, $\Gamma(u)$ denotes the component containing $u$ in
$G[\mR\cup\mB]$.
\begin{definition}{\bf\boldmath Notation $\beta(\cdot)$:}\label{def:baseVertex}
In the graph induced by $\mR\cup\mB$, consider each component $\Gamma(u)$,
$u\in\mR$. From Table \ref{tab:uniqNeighbor} (row 5), $u$ is adjacent
to a unique vertex in $\mP_{2i}$. We denote this vertex by $\beta(u)$.
\end{definition}

\paragraph{\bf Interval assignments for vertices of $\mR\cup\mB$:} Let us
consider a component of $G[\mR\cup\mB]$, say $\Gamma(u)$, $u\in\mR$. From
(\ref{eqn:PI2}), we note that $\beta(u)$ is assigned a unit interval
in $I_2$. The interval assignments for the vertices of $\Gamma(u)$
is illustrated in Figure \ref{fig:componentsYInterval}.

\journ{
\begin{remark}\label{rem:RBI2}
Let $u\in\mR$. Every vertex of $\Gamma(u)$ is assigned a strict
sub-interval of $f(\beta(u),I_2)$ and none of these intervals contains
any end point of $f(\beta(u),I_2)$.
\end{remark}
}

\begin{figure}[tb]
\begin{center}
\epsfig{file=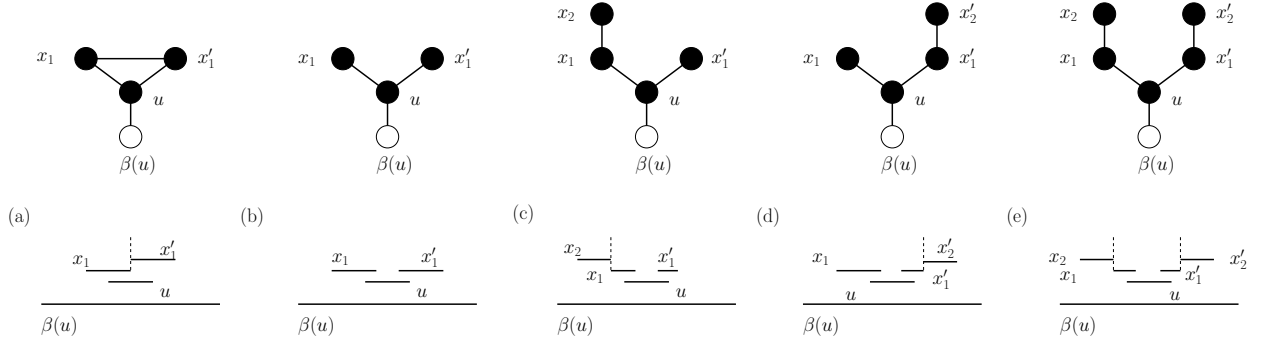,width=16.5cm}
\end{center}
\caption{The interval assignments for each component $\Gamma(u)$,
$u\in\mR$ induced by $\mR\cup\mB$ in the interval graph $I_2$. \journ{The
dotted vertical lines are used to indicate that the concerned
intervals intersect exactly at their end points, that is, in (a)
$r(x_1,I_2)=l(x_1',I_2)$, in (c) $r(x_2,I_2)=l(x_1,I_2)$, in (d)
$l(x_2',I_2)=r(x_1',I_2)$ and in (e) $r(x_2,I_2)=l(x_1,I_2)$ and
$l(x_2',I_2)=r(x_1',I_2)$}. \label{fig:componentsYInterval}}
\end{figure}
\journ{
\begin{lemma}\label{lem:RBI2}
Let $z\in\mR$ and $\Gamma(z)$ be a component of $G[\mR\cup\mB]$. 
\begin{enumerate}
\item Every vertex in $\Gamma(z)$ is adjacent to only one vertex in
$\mC\cup\mP$ and that is $\beta(z)$.
\item The graph induced by $\mR\cup\mB$ in $I_2$ and in $G$ are identical,
that is, $I_2[\mR\cup\mB]=G[\mR\cup\mB]$.
\end{enumerate}
\end{lemma}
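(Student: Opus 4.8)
The plan is to handle the two statements separately, deriving both from the component structure of $G[\mR\cup\mB]$ established in Lemma~\ref{lem:componentsY} together with the interval assignment depicted in Figure~\ref{fig:componentsYInterval}.

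For the first statement, recall that $\Gamma(z)$ consists of the vertex $z\in\mR$ together with the vertices of $X_1(z)\cup X_2(z)\subseteq\mB$. Each vertex of $\Gamma(z)$ lying in $\mB$ has no neighbor at all in $\mC\cup\mP$, by Observation~\ref{obs:finePart}.\ref{finePart:BDisjointS} (equivalently Table~\ref{tab:nonAdj}, row 6). On the other hand $z\in\mR$ has, by Observation~\ref{obs:finePart}.\ref{finePart:RDisjointNA}, exactly one neighbor in $\mS=\mC\cup\mP$, and by Lemma~\ref{lem:RP2i} this neighbor lies in $\mP_{2i}$; by Definition~\ref{def:baseVertex} it is precisely $\beta(z)$. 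Hence the only edge between $\Gamma(z)$ and $\mC\cup\mP$ is $z\beta(z)$, which is exactly the first statement.

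For the second statement I would verify $I_2[\mR\cup\mB]=G[\mR\cup\mB]$ by first treating a single component and then separating distinct components. Since, by Lemma~\ref{lem:componentsY}, $\mR\cup\mB=\biguplus_{u\in\mR}\Gamma(u)$ and each $\Gamma(u)$ is a connected component of $G[\mR\cup\mB]$, every edge of $G[\mR\cup\mB]$ lies inside some $\Gamma(u)$. Within one component I would appeal directly to Figure~\ref{fig:componentsYInterval}: for each of the five isomorphism types of Lemma~\ref{lem:componentsY}, inspection of the assigned intervals (and of the touching relations recorded in the figure caption) shows that two vertices of $\Gamma(u)$ have intersecting intervals in $I_2$ exactly when they are adjacent in $G$, so $I_2[\Gamma(u)]=G[\Gamma(u)]$.

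It remains to show that for distinct $u,v\in\mR$ no vertex of $\Gamma(u)$ is adjacent in $I_2$ to a vertex of $\Gamma(v)$; this is the step I expect to require the most care. By Remark~\ref{rem:RBI2}, every interval assigned to a vertex of $\Gamma(u)$ is a strict sub-interval of $f(\beta(u),I_2)$ that avoids both endpoints of $f(\beta(u),I_2)$, and similarly for $\Gamma(v)$. The key auxiliary fact is that $\beta$ is injective: by Observation~\ref{obs:RP2i}.\ref{RP2i:P2iRN} every vertex of $\mP_{2i}$ has exactly one neighbor in $\mR\cup\mN$, so a single vertex of $\mP_{2i}$ cannot equal $\beta(u)=\beta(v)$ for two distinct $u,v\in\mR$. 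Now $\beta(u),\beta(v)\in\mP_{2i}$ are interior path vertices, so by~(\ref{eqn:PI2}) each is assigned a unit interval determined by $\Pi_2$; two such intervals, being assigned to distinct path vertices, overlap in at most a single shared endpoint (they are disjoint unless the vertices are consecutive on a common path). Since the sub-intervals of $\Gamma(u)$ and of $\Gamma(v)$ avoid these endpoints, they lie in disjoint open regions, and hence no vertex of $\Gamma(u)$ meets any vertex of $\Gamma(v)$ in $I_2$. The genuinely delicate case is exactly this ``consecutive $\beta$'s'' situation, where the two base intervals touch; it is resolved precisely by the endpoint-avoidance clause of Remark~\ref{rem:RBI2}. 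Combining the within-component equality with the cross-component non-adjacency yields $I_2[\mR\cup\mB]=G[\mR\cup\mB]$.
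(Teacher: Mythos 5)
Your proof of Statement 2 is essentially the paper's own argument (within a component by inspection of Figure~\ref{fig:componentsYInterval}; across components via Remark~\ref{rem:RBI2}, injectivity of $\beta$, and disjointness of the open interiors of the base intervals), and it is correct. The problem is Statement 1. You have read it as a statement about adjacency in $G$, and what you prove is the (nearly trivial) fact that the only $G$-edge between $\Gamma(z)$ and $\mC\cup\mP$ is $(z,\beta(z))$. But the lemma, as proved and as used in the paper, asserts adjacency in $I_2$: every vertex $x\in\Gamma(z)$ --- including the vertices in $\mB$ --- is adjacent to $\beta(z)$ in $I_2$, and is adjacent to no other vertex of $\mC\cup\mP$ in $I_2$. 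This is exactly what is consumed downstream: Observation~\ref{obs:RBCP} (both parts) invokes Statement 1 to conclude $(x,y)\notin E(I_2)$ for $x\in\mR\cup\mB$ and various $y\in\mC\cup\mP$, and Lemma~\ref{lem:superI2} invokes it to conclude that $y\in\mR$ is adjacent to $\beta(y)$ \emph{in $I_2$}. Your $G$-level statement cannot support any of these uses. (Indeed, under your reading the positive clause ``and that is $\beta(z)$'' is false for the $\mB$-vertices of $\Gamma(z)$, which in $G$ have no neighbor in $\mC\cup\mP$ at all --- a hint that the intended adjacency is in $I_2$.)

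Concretely, two things are missing. The positive half --- every $x\in\Gamma(z)$ is $I_2$-adjacent to $\beta(z)$ --- follows immediately from Remark~\ref{rem:RBI2}, which you cite but never use for this purpose. The negative half requires the paper's Claim~\ref{clm:ppDashI2}: for \emph{every} $p\in\mC\cup\mP$ with $p\ne\beta(z)$, the interval $f(p,I_2)$ is disjoint from the open interior of $f(\beta(z),I_2)$; this needs Lemma~\ref{lem:CPnonAdjI2} when $p$ lies in a different component of $G[\mC\cup\mP]$ and the unit-interval structure of (\ref{eqn:PI2}) when $p$ lies on the same special path as $\beta(z)$. Your cross-component argument in Statement 2 contains this reasoning only for the special case $p=\beta(z')\in\mP_{2i}$; it says nothing about $p\in\mC$, $p\in\mP_e\cup\mP_{2e}$, or the path-neighbors of $\beta(z)$, so the full negative half of Statement 1 is never established.
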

\begin{proof}
Consider the vertex $\beta(z)$. Let $f_o(\beta(z),I_2)$ denote the open
interval $\left(l(\beta(z),I_2),r(\beta(z),I_2)\right)$. From Definition
\ref{def:baseVertex}, we recall that $\beta(z)\in\mP_{2i}$. We first
prove the following:
\begin{clm}\label{clm:ppDashI2}
Let $p\in\mC\cup\mP$ such that $p\ne\beta(z)$. Then, $f_o(\beta(z),I_2)\cap
f(p,I_2)=\varnothing$.
\end{clm}
\begin{proof}
If $\beta(z)$ and $p$ belong to different components in $G[\mC\cup\mP]$,
then by Lemma \ref{lem:CPnonAdjI2}, $(\beta(z),p)\notin E(I_2)$ and
therefore, their intervals do not intersect. Suppose $\beta(z)$ and
$p$ are in the same component. Since $\beta(z)\in\mP_{2i}$, $\beta(z)$
and $p$ belong to a special path. By (\ref{eqn:PI2}), it follows that
$f(\beta(z),I_2)$ and $f(p,I_2)$ can intersect only at $l(\beta(z),I_2)$
or $r(\beta(z),I_2)$. Hence proved. \bqed
\end{proof}
By Remark \ref{rem:RBI2}, for every $x\in\Gamma(z)$, $f(x,I_2)\subseteq
f_o(\beta(z),I_2)$. This implies that $x$ is adjacent to $\beta(z)$
in $I_2$ and by Claim \ref{clm:ppDashI2}, $x$ is not adjacent to any
other vertex from $\mC\cup\mP$. Thus, we have proved the first statement.

Suppose $x\in\Gamma(z)$ and $y\in\Gamma(z')$, where $z\ne z'$. We
first note that $\beta(z)\ne\beta(z')$. This is because, since
$\beta(z)\in\mP_{2i}$, in $G[\mC\cup\mP]$, it is the interior vertex
of a special path and therefore, two of its neighbors belong to
$\mP$. Since its remaining neighbor is $z$, it cannot be adjacent to
$z'$. By Remark \ref{rem:RBI2}, $f(x,I_2)\subseteq f_o(\beta(z),I_2)$ and
$f(y,I_2)\subseteq f_o(\beta(z'),I_2)$. From Claim \ref{clm:ppDashI2},
$f_o(\beta(z),I_2)\cap f_o(\beta(z'),I_2)=\varnothing$ and therefore,
$x$ is not adjacent to $y$. From Figure \ref{fig:componentsYInterval},
it is easy to see that $I_2[\Gamma(z)]=G[\Gamma(z)]$ $\forall
z\in\mR$. Therefore, $I_2[\mR\cup\mB]=G[\mR\cup\mB]$.  \bqed
\end{proof}

\begin{observation}\label{obs:RBCP}
Here are some immediate consequences of Lemma \ref{lem:RBI2}.
\begin{enumerate}
\item If $x\in\mR$ and $y\in\mP_{2i}$ such that $(x,y)\notin E(G)$, then,
$(x,y)\notin E(I_2)$. This follows from noting that $y\ne\beta(x)$
and subsequently applying Lemma \ref{lem:RBI2} (Statement
1).\label{RBCP:RP2iI2}
\item For any $x\in\mR\cup\mB$ and $y\in\mC\cup\mP_e\cup\mP_{2e}$,
$(x,y)\notin E(I_2)$. The proof is as follows: Let $x\in\Gamma(z)$,
$z\in\mR$. From Lemma \ref{lem:RBI2} (Statement 1), $x$ is
not adjacent to any vertex in $\mC\cup\mP$ other than $\beta(z)$ in
$I_2$. From Definition \ref{def:baseVertex}, $\beta(z)\in\mP_{2i}$ and
therefore, $y\ne\beta(z)$.\label{RBCP:RBCPeP2eI2}
\end{enumerate}
\end{observation}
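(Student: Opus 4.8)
The plan is to derive both statements directly from Lemma~\ref{lem:RBI2}, whose first part already does all the heavy lifting: for every $z\in\mR$, the only vertex of $\mC\cup\mP$ that any member of the component $\Gamma(z)$ of $G[\mR\cup\mB]$ touches in $I_2$ is $\beta(z)$, and by Definition~\ref{def:baseVertex} this $\beta(z)$ lies in $\mP_{2i}$. So in each case I would first identify the component $\Gamma(z)$ of $\mR\cup\mB$ containing the given vertex $x$ (this component exists and is unique by Lemma~\ref{lem:componentsY}), and then simply check that the other endpoint $y$ differs from $\beta(z)$; once $y\neq\beta(z)$, Lemma~\ref{lem:RBI2}(1) immediately yields $(x,y)\notin E(I_2)$.

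For the first statement I take $x\in\mR$, so the relevant component is $\Gamma(x)$ itself and $x\in\Gamma(x)$. Since $y\in\mP_{2i}\subseteq\mC\cup\mP$, were $(x,y)\in E(I_2)$ then Lemma~\ref{lem:RBI2}(1) would force $y=\beta(x)$. But $\beta(x)$ is, by Definition~\ref{def:baseVertex}, the unique neighbor of $x$ in $\mP_{2i}$ \emph{in $G$}, so $y=\beta(x)$ would give $(x,y)\in E(G)$, contradicting the hypothesis that $(x,y)\notin E(G)$. Hence $y\neq\beta(x)$ and $(x,y)\notin E(I_2)$.

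For the second statement I take $x\in\mR\cup\mB$ and let $\Gamma(z)$, $z\in\mR$, be the component containing it. Again Lemma~\ref{lem:RBI2}(1) says that the only possible $I_2$-neighbor of $x$ in $\mC\cup\mP$ is $\beta(z)\in\mP_{2i}$. Here I would invoke the summary partition $\mS=\mC\uplus\mP$ with $\mP=\mP_e\uplus\mP_{2e}\uplus\mP_{2i}$ (Section~\ref{sec:partS}), which shows that the target set $\mC\cup\mP_e\cup\mP_{2e}$ is disjoint from $\mP_{2i}$; consequently $y\neq\beta(z)$, and therefore $(x,y)\notin E(I_2)$.

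The entire difficulty is already absorbed into Lemma~\ref{lem:RBI2}, so both items here are pure bookkeeping on top of it. The only step that needs a moment's attention is confirming the partition-class disjointness used in the second item, namely that none of $\mC$, $\mP_e$, $\mP_{2e}$ meets $\mP_{2i}$, which is immediate from the way $\mS$ is split in Section~\ref{sec:partS}. I do not anticipate any genuine obstacle.
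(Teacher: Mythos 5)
Your proposal is correct and follows exactly the paper's own argument: both parts reduce to Lemma~\ref{lem:RBI2} (Statement 1), with part 1 using that $y\ne\beta(x)$ (since $\beta(x)$ is a $G$-neighbor of $x$ while $y$ is not) and part 2 using that $\beta(z)\in\mP_{2i}$ is disjoint from $\mC\cup\mP_e\cup\mP_{2e}$. Your write-up merely makes explicit the contradiction step that the paper leaves implicit; there is no substantive difference.
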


\begin{lemma} \label{lem:superI2}
$I_2$ is a supergraph of $G$.
\end{lemma}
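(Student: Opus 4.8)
The plan is to establish $E(G)\subseteq E(I_2)$ by partitioning $V$ into the two super-groups $W_1=\mA\cup\mN\cup\mC\cup\mP$ and $W_2=\mR\cup\mB$, which together cover $V$, and then classifying an arbitrary edge $(x,y)\in E(G)$ by how its endpoints distribute between $W_1$ and $W_2$. If both endpoints lie in $W_1$, Lemma \ref{lem:ANCPSuperI2} already yields $(x,y)\in E(I_2)$; if both lie in $W_2$, Lemma \ref{lem:RBI2} gives $I_2[\mR\cup\mB]=G[\mR\cup\mB]$, so again $(x,y)\in E(I_2)$. Hence the only remaining work is the \emph{cross edges}, those with one endpoint in $W_1$ and one in $W_2$.

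First I would use the non-adjacency table (Table \ref{tab:nonAdj}) to reduce the cross edges to just two families, enumerating by the $W_2$-endpoint. A vertex of $\mR$ has no neighbor in $\mA\cup\mN$ (row 2) nor in $\mC\cup\mP_e\cup\mP_{2e}$ (row 5), so its only possible $W_1$-neighbor lies in $\mP_{2i}$; by Table \ref{tab:uniqNeighbor} (row 5) and Definition \ref{def:baseVertex} this neighbor is precisely $\beta(u)$ for $u\in\mR$. A vertex of $\mB$ has no neighbor in $\mA$ (row 3) nor in $\mC\cup\mP$ (row 6), so its only $W_1$-neighbors lie in $\mN$ (Definition \ref{def:B1B2}). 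Since every cross edge has exactly one endpoint in $\mR\cup\mB$, this enumeration is exhaustive: every cross edge is either (a) $u$--$\beta(u)$ with $u\in\mR$ and $\beta(u)\in\mP_{2i}$, or (b) $b$--$v$ with $b\in\mB$ and $v\in\mN$.

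For family (a) I would simply invoke Lemma \ref{lem:RBI2} (Statement 1): every vertex of $\Gamma(u)$, and in particular $u$ itself (recall $u\in\Gamma(u)$), is adjacent in $I_2$ to $\beta(u)$, so $(u,\beta(u))\in E(I_2)$. For family (b) I would argue by nesting of intervals. Writing $\Gamma(w)$ for the component of $G[\mR\cup\mB]$ containing $b$, Remark \ref{rem:RBI2} says $f(b,I_2)$ is a strict sub-interval of $f(\beta(w),I_2)$; since $\beta(w)\in\mP_{2i}\subseteq\mC\cup\mP$, Observation \ref{obs:CP0nI2} gives $f(\beta(w),I_2)\subset[0,n]$, whence $f(b,I_2)\subset[0,n]$. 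On the other hand, Observation \ref{obs:NCliqueI2} gives $[0,n]\subseteq f(v,I_2)$ for every $v\in\mN$. Chaining these inclusions, $f(b,I_2)\subset[0,n]\subseteq f(v,I_2)$, so the intervals intersect and $(b,v)\in E(I_2)$; this covers both $\mB_1$ and $\mB_2$ uniformly, regardless of how many $\mN$-neighbors each has.

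I do not expect a genuine geometric obstacle here, since all the interval content has been front-loaded into the earlier lemmas and observations. The one step requiring care is the exhaustiveness of the cross-edge analysis: the argument hinges on reading off from rows 2, 3, 5 and 6 of Table \ref{tab:nonAdj} that \emph{every} $W_1$--$W_2$ edge other than families (a) and (b) is forbidden, so the real risk is overlooking a pair of sets—say an $\mR$--$\mN$ edge (ruled out by row 2) or a $\mB$--$\mP$ edge (ruled out by row 6)—that the table in fact excludes. Once the two families are isolated, each is closed by a single appeal to the already-proven adjacency and containment facts.
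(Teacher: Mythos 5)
Your proposal is correct and follows essentially the same route as the paper's own proof: the same split into $\mA\cup\mN\cup\mC\cup\mP$ and $\mR\cup\mB$ handled by Lemma \ref{lem:ANCPSuperI2} and Lemma \ref{lem:RBI2} (Statement 2), with cross edges reduced via Table \ref{tab:nonAdj} to $\mR$--$\mP_{2i}$ edges (closed by Lemma \ref{lem:RBI2}, Statement 1) and $\mB$--$\mN$ edges (closed by nesting $f(y,I_2)\subset f(\beta(z),I_2)\subset f(x,I_2)$ via Remark \ref{rem:RBI2} and Observation \ref{obs:CP0nI2}). The only cosmetic differences are that you enumerate cross edges by their $\mR\cup\mB$ endpoint rather than the other side, and you route the nesting through $[0,n]$ (which cites what is shown inside the proof of Observation \ref{obs:CP0nI2} rather than its statement, though its statement already suffices).
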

\begin{proof}
Let $x$ and $y$ be two adjacent vertices in $G$. If $x,y\in
\mA\cup\mN\cup\mC\cup\mP$, then, by Lemma \ref{lem:ANCPSuperI2},
$(x,y)\in E(I_2)$. If $x,y\in\mR\cup\mB$, then, by Lemma \ref{lem:RBI2}
(Statement 2), $(x,y)\in E(I_2)$. The only remaining case is when
$x\in\mA\cup\mN\cup\mC\cup\mP$ and $y\in\mR\cup\mB$. Let $\Gamma(z)$,
$z\in\mR$ be the component in $G[\mR\cup\mB]$ containing $y$.  By Table
\ref{tab:nonAdj} (rows 2, 3, 5 and 6), $x\notin\mA\cup\mC$ and therefore,
$x\in\mP\cup\mN$.

Suppose $x\in\mP$. By Table \ref{tab:nonAdj} (row 6), $y\notin\mB$. Since
by assumption $y\in\mR\cup\mB$, it implies that $y\in\mR$, which in turn
implies that $z=y$ as $z$ is the only vertex from $\mR$ in $\Gamma(z)$.
From Table \ref{tab:nonAdj} (row 5), we can infer that $x\in\mP_{2i}$. Now
by Definition \ref{def:baseVertex}, $x=\beta(y)$. By Lemma \ref{lem:RBI2}
(Statement 1), $y$ is adjacent to $x$ in $I_2$.  Finally, if $x\in\mN$,
then by Observation \ref{obs:CP0nI2}, $f(x,I_2)\supset f(\beta(z),I_2)$,
since $\beta(z)\in\mP_{2i}\subset\mP$. By Remark \ref{rem:RBI2},
$f(y,I_2)\subset f(\beta(z),I_2)$. Therefore, $f(y,I_2)\subset f(x,I_2)$
and $x$ is adjacent to $y$ in $I_2$. \qed
\end{proof}
}
\subsection{Construction of $I_3$}\label{sec:I3}
\subsubsection{Vertices of ${\mB_2\cup\mP_e\cup\mN}$}\label{sec:B2PeNI3}
We recall the notations and interval assignments developed for this set
in $I_1$, in particular the definition of $\Pi_1$ (Definition
\ref{def:Pi1}).

\paragraph{\bf For a vertex in a Type 1 cycle:} Let $S=c_1c_2\ldots
c_tc_1$ be a Type 1 cycle such that $\Pi_1(c_{i+1})=\Pi_1(c_i)+1$,
$1\le i<t$. We recall that $c_1\in\mN$ and $c_t\in\mB_2\cup\mP_e$. The
interval assignments are as follows: For $1\le i<t$,
\begin{eqnarray}
&&\textrm{if $c_i\in\mN$, then, }f(c_i,I_3)=\left[\Pi_1(c_i),\Pi_1(c_i)+1\right],\label{eqn:B2PeNType1ciNI3}\\
&&\textrm{if $c_i\in\mB_2$, then, }f(c_i,I_3)=\left[\Pi_1(c_i),n\right],\label{eqn:B2PeNType1ciB2I3}\\
&&\textrm{if $c_i\in\mP_e$, then, }f(c_i,I_3)=\left[\Pi_1(c_i),n+1\right].\label{eqn:B2PeNType1ciPeI3}
\end{eqnarray}
The interval assignment for $c_t$ is as follows:
\begin{eqnarray}
&&\textrm{if $c_t\in\mB_2$, then, }f(c_t,I_3)=\left[\Pi_1(c_1)+1,n\right],\label{eqn:B2PeNType1ctB2I3}\\
&&\textrm{if $c_t\in\mP_e$, then, }f(c_t,I_3)=\left[\Pi_1(c_1)+1,n+1\right],\label{eqn:B2PeNType1ctPeI3}
\end{eqnarray}
\journ{
\begin{observation}\label{obs:Type1I3}
Consider a Type 1 cycle $S=c_1c_2\ldots c_tc_1$, such that
$\Pi_1(c_{i+1})=\Pi_1(c_i)+1$, $1\le i<t$.  
\begin{enumerate}
\item $I_3[S]$ is a supergraph of $G[S]$. The proof is as follows:
From Observation \ref{obs:Pi1nMinus1}, $\Pi_1(c_i)<n-1$. Using this in
(\ref{eqn:B2PeNType1ciNI3}--\ref{eqn:B2PeNType1ciPeI3}), we note that for
$1\le i<t$, $l(c_i,I_3)=\Pi_1(c_i)$ and $\Pi_1(c_i)+1\in f(c_i,I_3)$,
and therefore, $c_i$ is adjacent to $c_{i+1}$ in $I_3$. Next, from
(\ref{eqn:B2PeNType1ctB2I3}) and (\ref{eqn:B2PeNType1ctPeI3}), it is
easy to infer that $f(c_t,I_3)$ contains $\Pi_1(c_i)+1$, $1\le i\le
t$. Therefore, $c_t$ is adjacent to all the other vertices of the
cycle. Hence proved.\label{Type1I3:Type1SuperI3}
\item $r(c_1,I_3)=l(c_2,I_3)=l(c_t,I_3)$.\label{Type1I3:touch}
\end{enumerate}
\end{observation}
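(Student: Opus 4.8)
The plan is to verify both parts of the observation by direct inspection of the interval assignments in (\ref{eqn:B2PeNType1ciNI3}--\ref{eqn:B2PeNType1ctPeI3}), exploiting that along the cycle the $\Pi_1$-values are the consecutive integers $\Pi_1(c_1),\Pi_1(c_1)+1,\ldots,\Pi_1(c_1)+(t-1)=\Pi_1(c_t)$. The one external fact I would lean on is Observation \ref{obs:Pi1nMinus1}, namely $\Pi_1(z)<n-1$ for every $z\in\mB_2\cup\mP_e\cup\mN$; this is what guarantees that the ``long'' right endpoints $n$ and $n+1$ assigned to the $\mB_2$- and $\mP_e$-vertices sit strictly to the right of every point of the form $\Pi_1(c_i)+1$, so those intervals really do reach as far as the construction intends. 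Since we only need $I_3[S]$ to be a \emph{supergraph} of $G[S]$, it suffices to exhibit, for each of the $t$ cycle edges, a common point of the two intervals; no non-edges need be ruled out here.

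For part (2) I would compute the three quantities outright. As $c_1\in\mN$, (\ref{eqn:B2PeNType1ciNI3}) gives $f(c_1,I_3)=[\Pi_1(c_1),\Pi_1(c_1)+1]$, so $r(c_1,I_3)=\Pi_1(c_1)+1$. The vertex $c_2$ is an interior chain vertex (here $2<t$), so whichever of (\ref{eqn:B2PeNType1ciNI3}--\ref{eqn:B2PeNType1ciPeI3}) applies, its left endpoint is $\Pi_1(c_2)=\Pi_1(c_1)+1$. Finally (\ref{eqn:B2PeNType1ctB2I3}) and (\ref{eqn:B2PeNType1ctPeI3}) both set $l(c_t,I_3)=\Pi_1(c_1)+1$ directly. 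Hence $r(c_1,I_3)=l(c_2,I_3)=l(c_t,I_3)=\Pi_1(c_1)+1$, which is exactly statement (2).

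For part (1) I would treat the chain edges and the edges incident to $c_t$ separately. For $1\le i<t$ each of (\ref{eqn:B2PeNType1ciNI3}--\ref{eqn:B2PeNType1ciPeI3}) has left endpoint exactly $\Pi_1(c_i)$ and, using $\Pi_1(c_i)<n-1$, contains the point $\Pi_1(c_i)+1$ (automatic for the $\mN$-interval, and following from $\Pi_1(c_i)+1<n\le n+1$ for the $\mB_2$- and $\mP_e$-intervals). Thus for $1\le i\le t-2$ the intervals of $c_i$ and $c_{i+1}$ share the point $\Pi_1(c_i)+1=\Pi_1(c_{i+1})=l(c_{i+1},I_3)$, giving all chain edges. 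It remains to connect $c_t$ to $c_{t-1}$ and to $c_1$; in fact I would show $c_t$ meets every $c_i$. Its interval is $[\Pi_1(c_1)+1,n]$ or $[\Pi_1(c_1)+1,n+1]$: for $c_1$ the shared point is $\Pi_1(c_1)+1$ (the touching already recorded in part (2)), and for $2\le i<t$ the left endpoint $\Pi_1(c_i)=\Pi_1(c_1)+(i-1)$ satisfies $\Pi_1(c_1)+1\le\Pi_1(c_i)<n$, hence lies in $f(c_t,I_3)\cap f(c_i,I_3)$. This closes the cycle and proves (1).

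The verification is essentially bookkeeping, so I do not expect a genuine obstacle; the only place demanding care is the asymmetric treatment of $c_t$, whose left endpoint is deliberately $\Pi_1(c_1)+1$ rather than $\Pi_1(c_t)$. The lone conceptual point is recognising why this is consistent: it is precisely this choice that lets a single long interval for $c_t$ cover every other vertex of the cycle while still merely touching $c_1$ at $\Pi_1(c_1)+1$, and confirming it hinges on invoking $\Pi_1(c_i)<n-1$ at the right moments.
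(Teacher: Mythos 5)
Your proposal is correct and follows essentially the same route as the paper: both verify adjacencies directly from the interval assignments, using $\Pi_1(c_i)<n-1$ (Observation \ref{obs:Pi1nMinus1}) to ensure each $f(c_i,I_3)$, $1\le i<t$, starts at $\Pi_1(c_i)$ and reaches $\Pi_1(c_i)+1$, and then noting that the long interval of $c_t$, starting at $\Pi_1(c_1)+1$, meets every other vertex's interval. The only cosmetic difference is the witness point you pick for the edges at $c_t$ (left endpoints $\Pi_1(c_i)$ rather than the paper's $\Pi_1(c_i)+1$), which changes nothing of substance.
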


\begin{lemma}\label{lem:Type1NI1I3}
Let $x\in\mN$ and $y\in\mB_2\cup\mP_e\cup\mN$ be such that $x\in S_x$
and $y\in S_y$ where both $S_x$ and $S_y$ induce Type 1 cycles. If
$(x,y)\notin E(G)$, then $(x,y)\notin E(I_1\cap I_3)$.
\end{lemma}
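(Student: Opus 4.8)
The plan is to exploit the complementary roles played by the two interval graphs on a Type~1 cycle. First I would dispose of the case $S_x\neq S_y$: since $x$ and $y$ then lie in distinct Type~1 cycles, Observation~\ref{obs:Type1I1}.\ref{Type1I1:nonAdjType1} already gives $(x,y)\notin E(I_1)$, and hence $(x,y)\notin E(I_1\cap I_3)$. So the real work is under the assumption $S_x=S_y=S$. I fix the ordering $S=c_1c_2\ldots c_tc_1$ with $\Pi_1(c_{i+1})=\Pi_1(c_i)+1$; by Definition~\ref{def:Pi1} we have $c_1\in\mN$, and this single vertex $c_1$ is the pivot of the whole argument. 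Note that, because both constructions order $S$ by the same $\Pi_1$, the vertex called $c_1$ is the same in $I_1$ and in $I_3$.

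The second step handles the subcase $c_1\notin\{x,y\}$, where $I_1$ alone suffices. Here both $x$ and $y$ lie in $S\setminus\{c_1\}$, and Observation~\ref{obs:Type1I1}.\ref{Type1I1:SMinusc1Touch} states that $I_1[S\setminus c_1]=G[S\setminus c_1]$. Since $(x,y)\notin E(G)$ by hypothesis, it follows immediately that $(x,y)\notin E(I_1)$, and we are done.

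The crux is the remaining subcase $c_1\in\{x,y\}$, where $I_1$ is useless: in $I_1$ the vertex $c_1$ is adjacent to \emph{every} other vertex of the cycle (Observation~\ref{obs:Type1I1}.\ref{Type1I1:Type1SuperI1}), so the spurious edge survives in $I_1$ and must be killed in $I_3$. The key computation is to pin down the $I_3$-neighbours of $c_1$ inside $S$. From (\ref{eqn:B2PeNType1ciNI3}) we have $f(c_1,I_3)=[\Pi_1(c_1),\Pi_1(c_1)+1]$, so $r(c_1,I_3)=\Pi_1(c_1)+1$. For $2\le k<t$ every vertex $c_k$ of $S$ has left endpoint $l(c_k,I_3)=\Pi_1(c_k)$, uniformly across the three cases (\ref{eqn:B2PeNType1ciNI3})--(\ref{eqn:B2PeNType1ciPeI3}), while $l(c_t,I_3)=\Pi_1(c_1)+1$ by (\ref{eqn:B2PeNType1ctB2I3}) and (\ref{eqn:B2PeNType1ctPeI3}). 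Thus $c_2$ and $c_t$ meet $c_1$ exactly at the point $\Pi_1(c_1)+1$, whereas for $2<k<t$ we have $l(c_k,I_3)=\Pi_1(c_1)+(k-1)\ge\Pi_1(c_1)+2>r(c_1,I_3)$, so $c_1$ misses all of them. Hence the $I_3$-neighbours of $c_1$ within $S$ are precisely $c_2$ and $c_t$, which are exactly its $G$-neighbours in $S$. Since $(x,y)\notin E(G)$ and the member of $\{x,y\}$ other than $c_1$ also lies in $S$, that member differs from both $c_2$ and $c_t$, and therefore $(x,y)\notin E(I_3)$.

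In every case $(x,y)$ is absent from at least one of $I_1$ and $I_3$, so $(x,y)\notin E(I_1\cap I_3)$. The conceptual heart of the proof---and the step I expect to demand the most care---is the \emph{complementarity} of the pivot $c_1$: it is the universal vertex of the cycle in $I_1$, but because its interval is the leftmost short one in the $I_3$ layout, in $I_3$ it is adjacent only to its genuine cycle-neighbours. Thus every false $I_1$-edge at $c_1$ is cancelled in $I_3$, while every false $I_1$-edge not touching $c_1$ simply never arises. I would take particular care to confirm that the left-endpoint identity $l(c_k,I_3)=\Pi_1(c_k)$ indeed holds across all of $\mN$, $\mB_2$, and $\mP_e$, since that uniformity is what makes the single inequality $l(c_k,I_3)>r(c_1,I_3)$ close the argument.
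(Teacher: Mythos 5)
Your proof is correct and follows essentially the same route as the paper's: dispose of $S_x\neq S_y$ via Observation~\ref{obs:Type1I1}.\ref{Type1I1:nonAdjType1}, handle $c_1\notin\{x,y\}$ via $I_1[S\setminus\{c_1\}]=G[S\setminus\{c_1\}]$, and kill the remaining case in $I_3$ by comparing $r(c_1,I_3)=\Pi_1(c_1)+1$ with the left endpoints $l(c_k,I_3)=\Pi_1(c_k)$ (and $l(c_t,I_3)=\Pi_1(c_1)+1$). Your version is marginally tidier in treating $c_1\in\{x,y\}$ symmetrically, whereas the paper writes out only $x=c_1$, but this is the same argument.
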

\begin{proof}
If $S_x\ne S_y$, by Observation
\ref{obs:Type1I1}.\ref{Type1I1:nonAdjType1}, $(x,y)\notin E(I_1)$.
Suppose $S_x=S_y=S$. Let $S=c_1c_2\ldots c_tc_1$ such that
$\Pi_1(c_{i+1})=\Pi_1(c_{i})+1$, $1\le i<t$. If neither $x$ nor $y$ is
$c_1$, then by Observation \ref{obs:Type1I1}.\ref{Type1I1:SMinusc1Touch},
they are not adjacent in $I_1$ since $I_1[S\setminus
\{c_1\}]=G[S\setminus\{c_1\}]$. If $x=c_1$, then, $y=c_j$
for some $j\ne 2,t$. In $I_3$, noting that $c_1\in\mN$, from
(\ref{eqn:B2PeNType1ciNI3}), $r(x,I_3)=r(c_1,I_3)=\Pi_1(c_1)+1$.
From (\ref{eqn:B2PeNType1ciNI3}--\ref{eqn:B2PeNType1ciPeI3})
$l(y,I_3)=\Pi_1(y)=\Pi_1(c_j)>\Pi_1(c_1)+1$. Hence, $(x,y)\notin
E(I_3)$.\qed
\end{proof}
}
\paragraph{\bf For a vertex in a Type 2 cycle:} Let $S=c_1c_2\ldots
c_tc_1$ be a Type 2 cycle such that $\Pi_1(c_{i+1})=\Pi_1(c_i)+1$,
$1\le i<t$. We recall from Definition \ref{def:Pi1} (Point 2) that
$c_1,c_t\in\mN$.  They are assigned intervals as follows:
\begin{eqnarray}
&&f(c_1,I_3)=\left[\Pi_1(c_1),\Pi_1(c_1)+1\right],\label{eqn:B2PeNType2c1I3}\\
&&f(c_t,I_3)=\left[\Pi_1(c_1)+1,\Pi_1(c_t)+0.5\right],\label{eqn:B2PeNType2ctI3}
\end{eqnarray}
For $c_i$, $1<i<t$,
\begin{eqnarray}
&&\textrm{if $c_i\in\mN$, then, } f(c_i,I_3)=\left[\Pi_1(c_i),\Pi_1(c_i)+1\right],\label{eqn:B2PeNType2NI3}\\
&&\textrm{if $c_i\in\mB_2$, then, } f(c_i,I_3)=\left[\Pi_1(c_i),n\right],\label{eqn:B2PeNType2B2I3}\\ 
&&\textrm{if $c_i\in\mP_e$, then, } f(c_i,I_3)=\left[\Pi_1(c_i),n+1\right].\label{eqn:B2PeNType2PeI3} 
\end{eqnarray}
\journ{
\begin{observation}\label{obs:Type2I3}
Let $S=c_1c_2\ldots c_tc_1$ be a Type 2 cycle such that
$\Pi_1(c_{i+1})=\Pi_1(c_i)+1$, $1\le i<t$.
\begin{enumerate}
\item $I_3[S]$ is a supergraph of $G[S]$. The proof is as follows:
Recalling from Observation \ref{obs:Pi1nMinus1} that $\Pi_1(z)<n-1$,
$\forall z\in\mB_2\cup\mP_e\cup\mN$, from (\ref{eqn:B2PeNType2c1I3})
and (\ref{eqn:B2PeNType2NI3}--\ref{eqn:B2PeNType2PeI3}),
for $1\le i<t$, $\Pi_1(c_{i}),\Pi_1(c_{i})+1\in
f(c_{i},I_3)$. Therefore, $c_i$ is adjacent to $c_{i+1}$, $1\le
i<t$. From (\ref{eqn:B2PeNType2ctI3}), for $1\le i<t$, $\Pi_1(c_i)+1\in
f(c_t,I_3)$. Therefore, $c_t$ is adjacent to all the other vertices
of $S$. Hence proved.\label{Type2I3:Type2SuperI3}
\item $r(c_1,I_3)=l(c_2,I_3)=l(c_t,I_3)$.\label{Type2I3:c1Touch}
\item Let $x,y\in S$ be two adjacent vertices in $G$ such that
neither $x$ nor $y$ is $c_t$. If $\Pi_1(x)<\Pi_1(y)$ and
$x\in\mN$, then, $r(x,I_3)=l(y,I_3)$. This follows by noting
that $\Pi_1(y)=\Pi_1(x)+1$ and subsequently applying it in
(\ref{eqn:B2PeNType2c1I3}) and (\ref{eqn:B2PeNType2NI3}--\ref{eqn:B2PeNType2PeI3}).\label{Type2I3:touch}
\end{enumerate}
\end{observation}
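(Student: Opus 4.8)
The plan is to verify all three parts by directly reading off the interval endpoints assigned in (\ref{eqn:B2PeNType2c1I3})--(\ref{eqn:B2PeNType2PeI3}), using only the index recurrence $\Pi_1(c_{i+1})=\Pi_1(c_i)+1$ together with the global bound $\Pi_1(z)<n-1$ from Observation \ref{obs:Pi1nMinus1}. For Part 1 I would first record the facts that hold uniformly for every $1\le i<t$: the left endpoint is $l(c_i,I_3)=\Pi_1(c_i)$ in all three cases ($c_i\in\mN$, $\mB_2$, or $\mP_e$), and both points $\Pi_1(c_i)$ and $\Pi_1(c_i)+1$ lie in $f(c_i,I_3)$. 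The latter is clear when $c_i\in\mN$, and when $c_i\in\mB_2$ or $c_i\in\mP_e$ the right endpoint $n$ or $n+1$ already exceeds $\Pi_1(c_i)+1$ because $\Pi_1(c_i)<n-1$. Hence for $1\le i<t-1$ the shared point $\Pi_1(c_i)+1=\Pi_1(c_{i+1})=l(c_{i+1},I_3)$ witnesses the edge $c_ic_{i+1}$, giving every edge of the path $c_1c_2\cdots c_{t-1}$. It then remains only to attach $c_t$, and the cleanest route is to show that $c_t$ meets every other vertex: since $f(c_t,I_3)=[\Pi_1(c_1)+1,\Pi_1(c_t)+0.5]$ and $\Pi_1(c_1)+1\le\Pi_1(c_i)+1\le\Pi_1(c_{t-1})+1=\Pi_1(c_t)$ for each $1\le i<t$, the point $\Pi_1(c_i)+1$ lies in $f(c_t,I_3)\cap f(c_i,I_3)$. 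This simultaneously realizes the closing edge $c_tc_1$ and the edge $c_{t-1}c_t$, so every edge of $G[S]$ is present in $I_3$.

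Part 2 is then immediate from the same endpoint bookkeeping: $r(c_1,I_3)=\Pi_1(c_1)+1$ by (\ref{eqn:B2PeNType2c1I3}), $l(c_2,I_3)=\Pi_1(c_2)=\Pi_1(c_1)+1$ (the left endpoint equals $\Pi_1(c_2)$ no matter which of $\mN$, $\mB_2$, $\mP_e$ contains $c_2$), and $l(c_t,I_3)=\Pi_1(c_1)+1$ by (\ref{eqn:B2PeNType2ctI3}), so the three values coincide. For Part 3 the first step is to identify the admissible pairs $(x,y)$. Adjacency in the cycle $G[S]$ means $x$ and $y$ are consecutive, so the candidate edges are $(c_i,c_{i+1})$ for $1\le i<t$ and the wrap-around $(c_t,c_1)$; excluding $c_t$ from both coordinates removes the wrap-around edge and $c_{t-1}c_t$, and the hypothesis $\Pi_1(x)<\Pi_1(y)$ then forces $x=c_i$, $y=c_{i+1}$ with $1\le i\le t-2$, so $\Pi_1(y)=\Pi_1(x)+1$. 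Since $x=c_i\in\mN$ with $i<t$, (\ref{eqn:B2PeNType2c1I3}) and (\ref{eqn:B2PeNType2NI3}) give $r(x,I_3)=\Pi_1(c_i)+1$, while $i+1\le t-1<t$ gives $l(y,I_3)=\Pi_1(c_{i+1})=\Pi_1(c_i)+1$, establishing $r(x,I_3)=l(y,I_3)$.

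The only genuine subtlety, and the one point I would double-check, is the asymmetric treatment of $c_t$: its left endpoint is $\Pi_1(c_1)+1$ rather than $\Pi_1(c_t)$, so the uniform pattern in which each vertex touches its successor at $\Pi_1(\cdot)+1$ breaks precisely at the last vertex. This is exactly why $c_t$ must be handled separately in Part 1 and why Part 3 explicitly bars $c_t$ from both coordinates. Beyond this, the only inequalities to confirm are $\Pi_1(c_i)+1\le n$ (from $\Pi_1(c_i)<n-1$) and $\Pi_1(c_1)+1\le\Pi_1(c_t)$, the latter holding since $\Pi_1(c_t)=\Pi_1(c_1)+(t-1)$ and a cycle has $t\ge3$; getting these boundary indices right is the main bookkeeping task.
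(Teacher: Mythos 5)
Your proposal is correct and takes essentially the same approach as the paper: direct endpoint bookkeeping from (\ref{eqn:B2PeNType2c1I3})--(\ref{eqn:B2PeNType2PeI3}), using $\Pi_1(c_{i+1})=\Pi_1(c_i)+1$ together with $\Pi_1(z)<n-1$ to show consecutive intervals share the point $\Pi_1(c_i)+1$, and then handling $c_t$ separately via its interval $\left[\Pi_1(c_1)+1,\Pi_1(c_t)+0.5\right]$. Your explicit checks at the boundary (the edges $c_{t-1}c_t$ and $c_tc_1$, and the fact that $t\ge3$ makes $f(c_t,I_3)$ well-defined) simply spell out what the paper's proof leaves implicit.
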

}
\paragraph{\bf For a vertex in a path:} Let $S=p_1p_2\ldots p_t$ be
a path such that $\Pi_1(p_{i+1})=\Pi_1(p_i)+1$, $1\le i<t$. We recall
from Observation \ref{obs:B2PeN}.\ref{B2PeN:NEndIsolated} and Definition
\ref{def:NeNint} that $p_1,p_t\in\mN_e$. The interval assignment for $p_t$
is as follows:
\begin{eqnarray}
f(p_t,I_3)=\left[\Pi_1(p_t),\Pi_1(p_t)+0.5\right].\label{eqn:B2PeNPathptI3}
\end{eqnarray}
The interval assignment for $p_i$, $i<t$ is as follows:
\begin{eqnarray}
&&\textrm{if $p_i\in\mN$, then,
}f(p_i,I_3)=\left[\Pi_1(p_i),\Pi_1(p_i)+1\right], \label{eqn:B2PeNPathpiNI3}\\
&&\textrm{if $p_i\in\mB_2$, then, }f(p_i,I_3)=\left[\Pi_1(p_i),n\right],\label{eqn:B2PeNPathpiB2I3}\\ 
&&\textrm{if $p_i\in\mP_e$, then, }f(p_i,I_3)=\left[\Pi_1(p_i),n+1\right].\label{eqn:B2PeNPathpiPeI3}
\end{eqnarray}
\journ{
\begin{observation}\label{obs:PathI3}
Let $S=p_1p_2\ldots p_t$ be a path such that
$\Pi_1(p_{i+1})=\Pi_1(p_i)+1$, $1\le i<t$.
\begin{enumerate}
\item $I_3[S]$ is a supergraph of $G[S]$.The proof is as follows:
Since by Observation \ref{obs:Pi1nMinus1}, $\Pi_1(p_i)<n-1$, we can infer from the interval assignments
in (\ref{eqn:B2PeNPathpiNI3}--\ref{eqn:B2PeNPathpiPeI3}) that,
for $i<t$, the points $\Pi_1(p_i)$ and $\Pi_1(p_i)+1$ belong to
$f(p_i,I_3)$. From (\ref{eqn:B2PeNPathptI3}), the point $\Pi_1(p_t)$
belongs to $f(p_t,I_3)$. This implies that for $1\le i<t$, $p_i$ is
adjacent to $p_{i+1}$. Hence, proved.\label{PathI3:PathSuperI3}
\item Let $x,y\in S$ be two adjacent vertices in $G$. If
$\Pi_1(x)<\Pi_1(y)$, $x\in\mN$ and $y\in\mB_2\cup\mP_e\cup\mN$,
then, $r(x,I_3)=l(y,I_3)$. This follows by noting that
$\Pi_1(y)=\Pi_1(x)+1$ and subsequently applying it in
(\ref{eqn:B2PeNPathptI3}--\ref{eqn:B2PeNPathpiPeI3}).\label{PathI3:touch}
\end{enumerate}
\end{observation}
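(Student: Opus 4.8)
The plan is to treat both parts of Observation~\ref{obs:PathI3} as routine interval-arithmetic checks on the single path component $S=p_1p_2\ldots p_t$, reading the relevant endpoints directly off the assignment equations (\ref{eqn:B2PeNPathptI3}--\ref{eqn:B2PeNPathpiPeI3}) and relying on exactly two structural facts: along $S$ consecutive $\Pi_1$-values differ by one, i.e.\ $\Pi_1(p_{i+1})=\Pi_1(p_i)+1$, and every vertex $z\in\mB_2\cup\mP_e\cup\mN$ satisfies $\Pi_1(z)<n-1$ (Observation~\ref{obs:Pi1nMinus1}). Since $G[S]$ is a path, its edges are precisely the consecutive pairs $\{p_i,p_{i+1}\}$, so for Part~1 it suffices to show each such pair is adjacent in $I_3$, and Part~2 is a sharpening of one such adjacency to a boundary-touching equality.

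For Part~1 I would fix $i$ with $1\le i<t$ and exhibit a common point of $f(p_i,I_3)$ and $f(p_{i+1},I_3)$, namely $\Pi_1(p_i)+1=\Pi_1(p_{i+1})$. That this point is the left endpoint of $f(p_{i+1},I_3)$ is immediate from every branch of the assignment, since in (\ref{eqn:B2PeNPathptI3}--\ref{eqn:B2PeNPathpiPeI3}) the left endpoint of the interval of any vertex $p_j$ is $\Pi_1(p_j)$. That the same point also lies in $f(p_i,I_3)$ I would check by cases on the membership of $p_i$: if $p_i\in\mN$ then $f(p_i,I_3)=[\Pi_1(p_i),\Pi_1(p_i)+1]$ contains it on the nose; if $p_i\in\mB_2$ or $p_i\in\mP_e$ the interval extends rightward to $n$ or $n+1$, and Observation~\ref{obs:Pi1nMinus1} gives $\Pi_1(p_i)+1<n$, so the point is interior. (Note that a vertex $p_i$ with $i<t$ is never assigned the short endpoint interval (\ref{eqn:B2PeNPathptI3}), which is reserved for $p_t$.) Hence $p_i$ and $p_{i+1}$ are adjacent in $I_3$ for every $i<t$, so $I_3[S]$ is a supergraph of $G[S]$.

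For Part~2, adjacency of $x,y$ in the path together with $\Pi_1(x)<\Pi_1(y)$ forces them to be consecutive with $\Pi_1(y)=\Pi_1(x)+1$; writing $x=p_i$, $y=p_{i+1}$ we have $i<t$. Since $x\in\mN$ and $i<t$, equation (\ref{eqn:B2PeNPathpiNI3}) gives $r(x,I_3)=\Pi_1(x)+1=\Pi_1(y)$. For the left endpoint of $y$ I would again use that across every branch of (\ref{eqn:B2PeNPathptI3}--\ref{eqn:B2PeNPathpiPeI3}) the left endpoint of $f(p_j,I_3)$ equals $\Pi_1(p_j)$, so $l(y,I_3)=\Pi_1(y)$, and therefore $r(x,I_3)=\Pi_1(y)=l(y,I_3)$.

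There is no genuine obstacle here; the work is a bounded case check. The only two spots requiring care are precisely the ones the two structural facts are designed to handle. First, the intervals of $\mB_2$- and $\mP_e$-vertices are truncated at the \emph{global} values $n$ and $n+1$ rather than at $\Pi_1(\cdot)+1$, so the containment of $\Pi_1(p_i)+1$ would fail without the bound $\Pi_1(p_i)<n-1$. Second, in Part~2 one must check that the hypothesis $y\in\mB_2\cup\mP_e$ is consistent with $y$ possibly being an endpoint of $S$; but since $p_1,p_t\in\mN_e\subseteq\mN$, a vertex of $\mB_2\cup\mP_e$ is automatically interior, and even in the endpoint case $y=p_t\in\mN$ the short interval (\ref{eqn:B2PeNPathptI3}) still has left endpoint $\Pi_1(y)$, so the equality $l(y,I_3)=\Pi_1(y)$ holds uniformly.
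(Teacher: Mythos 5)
Your proof is correct and follows essentially the same route as the paper's: both establish adjacency of consecutive vertices $p_i,p_{i+1}$ by exhibiting the common point $\Pi_1(p_{i+1})=\Pi_1(p_i)+1$, using $\Pi_1(p_i)<n-1$ (Observation \ref{obs:Pi1nMinus1}) to handle the intervals of $\mB_2$- and $\mP_e$-vertices that are truncated at $n$ or $n+1$, and both obtain Part 2 by reading $r(x,I_3)=\Pi_1(x)+1$ and $l(y,I_3)=\Pi_1(y)$ directly off the assignments (\ref{eqn:B2PeNPathptI3}--\ref{eqn:B2PeNPathpiPeI3}). Your extra remarks (that only $p_t$ receives the short interval, and that the case $y=p_t\in\mN$ is covered uniformly since every branch has left endpoint $\Pi_1(\cdot)$) merely make explicit what the paper leaves implicit.
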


\begin{observation}\label{obs:BunchI3}
Here are some observations regarding the intervals assigned to vertices
of $\mB_2\cup\mP_e\cup\mN$. We recall from Observation \ref{obs:Pi1nMinus1}
that for any $z\in\mB_2\cup\mP_e\cup\mN$, $\Pi_1(z)<n-1$.
\begin{enumerate}
\item If $z\in\mN$, $\Pi_1(z)+0.5\le r(z,I_3)\le \Pi_1(z)+1<n$. This
follows from the interval assignments in (\ref{eqn:B2PeNType1ciNI3}),
(\ref{eqn:B2PeNType2c1I3}--\ref{eqn:B2PeNType2NI3}) and
(\ref{eqn:B2PeNPathptI3}--\ref{eqn:B2PeNPathpiNI3}).
\label{BunchI3:NLess2n}
\item If $z\in\mB_2\cup\mP_e$ belongs to a Type 2 cycle or a path, then,
$l(z,I_3)=\Pi_1(z)<n-1$. This follows from
(\ref{eqn:B2PeNType2B2I3}--\ref{eqn:B2PeNType2PeI3}) for Type 2 cycles and
(\ref{eqn:B2PeNPathpiB2I3}--\ref{eqn:B2PeNPathpiPeI3}) for paths
respectively.\label{BunchI3:B2PeLess2n}
\item If $z\in\mB_2$, from (\ref{eqn:B2PeNType1ciB2I3}),
(\ref{eqn:B2PeNType1ctB2I3}), (\ref{eqn:B2PeNType2B2I3}) and
(\ref{eqn:B2PeNPathpiB2I3}), $r(x,I_3)=n$.\label{BunchI3:B2I3}
\item If $z\in\mP_e$, from (\ref{eqn:B2PeNType1ciPeI3}),
(\ref{eqn:B2PeNType1ctPeI3}), (\ref{eqn:B2PeNType2PeI3}) and
(\ref{eqn:B2PeNPathpiPeI3}), $r(x,I_3)=n+1$.\label{BunchI3:PeI3}
\end{enumerate}
\end{observation}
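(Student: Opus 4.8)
The plan is to treat Observation \ref{obs:BunchI3} as a direct case analysis over the interval assignments just defined for $I_3$, organised by the two data that fix a vertex's interval: the type of component of $G[\mB_2\cup\mP_e\cup\mN]$ it lies in (a Type~1 cycle, a Type~2 cycle, or a path, these being the only possibilities by Observation \ref{obs:B2PeN}.\ref{B2PeN:B2PeNDelta2}) and which block of the partition it belongs to, namely $\mN$, $\mB_2$, or $\mP_e$. The single external fact I would lean on throughout is $\Pi_1(z)<n-1$ for every $z\in\mB_2\cup\mP_e\cup\mN$ (Observation \ref{obs:Pi1nMinus1}); everything else is just reading endpoints off the defining equations.

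For Point \ref{BunchI3:NLess2n} (the case $z\in\mN$) I would enumerate the positions an $\mN$-vertex can occupy. In a Type~1 cycle the $\mN$-vertices are exactly those at positions $1\le i<t$ (since $c_t\in\mB_2\cup\mP_e$ by the parity of the alternation), so each is assigned by (\ref{eqn:B2PeNType1ciNI3}) and has right endpoint $\Pi_1(z)+1$. In a Type~2 cycle the relevant rules are (\ref{eqn:B2PeNType2c1I3}) for $c_1$, (\ref{eqn:B2PeNType2NI3}) for interior $\mN$-vertices, and (\ref{eqn:B2PeNType2ctI3}) for $c_t$, giving right endpoint $\Pi_1(z)+1$ except for $c_t$, whose right endpoint is $\Pi_1(z)+0.5$. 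In a path, (\ref{eqn:B2PeNPathpiNI3}) and (\ref{eqn:B2PeNPathptI3}) give $\Pi_1(z)+1$ for interior $\mN$-vertices and $\Pi_1(z)+0.5$ for $p_t$. Hence in every case $r(z,I_3)\in\{\Pi_1(z)+0.5,\Pi_1(z)+1\}$, which establishes $\Pi_1(z)+0.5\le r(z,I_3)\le\Pi_1(z)+1$, and then $\Pi_1(z)<n-1$ yields $r(z,I_3)\le\Pi_1(z)+1<n$.

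Points \ref{BunchI3:B2PeLess2n}, \ref{BunchI3:B2I3}, and \ref{BunchI3:PeI3} are even shorter. A $\mB_2$- or $\mP_e$-vertex inside a Type~2 cycle or a path is necessarily an interior vertex (the extreme vertices of both structures lie in $\mN$), so its left endpoint is literally $\Pi_1(z)$ by (\ref{eqn:B2PeNType2B2I3}--\ref{eqn:B2PeNType2PeI3}) and (\ref{eqn:B2PeNPathpiB2I3}--\ref{eqn:B2PeNPathpiPeI3}), and $\Pi_1(z)<n-1$ finishes Point \ref{BunchI3:B2PeLess2n}. For Points \ref{BunchI3:B2I3} and \ref{BunchI3:PeI3} I would simply observe that \emph{every} rule producing the interval of a $\mB_2$-vertex has right endpoint $n$ (equations (\ref{eqn:B2PeNType1ciB2I3}), (\ref{eqn:B2PeNType1ctB2I3}), (\ref{eqn:B2PeNType2B2I3}), (\ref{eqn:B2PeNPathpiB2I3})) and every rule for a $\mP_e$-vertex has right endpoint $n+1$ (equations (\ref{eqn:B2PeNType1ciPeI3}), (\ref{eqn:B2PeNType1ctPeI3}), (\ref{eqn:B2PeNType2PeI3}), (\ref{eqn:B2PeNPathpiPeI3})), independently of component type and position.

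There is no genuine obstacle here; the work is purely bookkeeping. The one spot needing care is the two-sided bound in Point \ref{BunchI3:NLess2n}: I must remember that an $\mN$-vertex can sit at the distinguished last position ($c_t$ of a Type~2 cycle or $p_t$ of a path), where the right endpoint drops to $\Pi_1(z)+0.5$ rather than $\Pi_1(z)+1$, so the lower bound $\Pi_1(z)+0.5\le r(z,I_3)$ is not vacuous and the case split must be fine enough to expose these endpoints. Beyond that, the only thing to verify is that the case split is complete, i.e.\ that every vertex of $\mB_2\cup\mP_e\cup\mN$ is covered by exactly one assignment rule, which is immediate from the partition into the three component types.
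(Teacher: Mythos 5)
Your proposal is correct and follows essentially the same route as the paper: a direct case analysis over component type (Type~1 cycle, Type~2 cycle, path) and block ($\mN$, $\mB_2$, $\mP_e$), reading the endpoints off equations (\ref{eqn:B2PeNType1ciNI3})--(\ref{eqn:B2PeNPathpiPeI3}) and invoking Observation \ref{obs:Pi1nMinus1} for the bound $\Pi_1(z)+1<n$. The only slip is terminological: for paths, equation (\ref{eqn:B2PeNPathpiNI3}) covers \emph{all} $p_i$ with $i<t$, including the end point $p_1\in\mN_e$, not just interior $\mN$-vertices, but since $p_1$ then also gets right endpoint $\Pi_1(p_1)+1$, your stated bounds are unaffected.
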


\begin{observation}\label{obs:B2PeNSuperI3}
$I_3[\mB_2\cup\mP_e\cup\mN]$ is a supergraph of
$G[\mB_2\cup\mP_e\cup\mN]$. The proof is as follows:
Let $S$ be a component of $G[\mB_2\cup\mP_e\cup\mN]$. It
is either a Type 1 cycle, Type 2 cycle or path. From
Observations \ref{obs:Type1I3}.\ref{Type1I3:Type1SuperI3},
\ref{obs:Type2I3}.\ref{Type2I3:Type2SuperI3} and
\ref{obs:PathI3}.\ref{PathI3:PathSuperI3},
it follows that $I_3[S]$ is a supergraph of $G[S]$.
\end{observation}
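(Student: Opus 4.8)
The plan is to reduce the claim to the three supergraph results already established for the individual component types and then invoke the structural decomposition of $G[\mB_2\cup\mP_e\cup\mN]$. First I would recall from Observation \ref{obs:B2PeN}.\ref{B2PeN:B2PeNDelta2} that the graph induced by $\mB_2\cup\mP_e\cup\mN$ has maximum degree $2$, so it is a disjoint union of paths and cycles, and from Definition \ref{def:type12} that each such cycle is either a Type 1 or a Type 2 cycle. Consequently every component $S$ of $G[\mB_2\cup\mP_e\cup\mN]$ falls into exactly one of three mutually exclusive and exhaustive classes: Type 1 cycle, Type 2 cycle, or path.

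The key observation is that any edge $(x,y)$ of $G[\mB_2\cup\mP_e\cup\mN]$ has both endpoints in a single component $S$, since distinct components are vertex-disjoint and there are no edges of $G$ between them. Therefore it suffices to check, component by component, that $I_3[S]$ contains $G[S]$. This is exactly what the earlier per-type observations assert: Observation \ref{obs:Type1I3}.\ref{Type1I3:Type1SuperI3} for Type 1 cycles, Observation \ref{obs:Type2I3}.\ref{Type2I3:Type2SuperI3} for Type 2 cycles, and Observation \ref{obs:PathI3}.\ref{PathI3:PathSuperI3} for paths. Each of these was derived directly from the explicit interval assignments in equations (\ref{eqn:B2PeNType1ciNI3})--(\ref{eqn:B2PeNPathpiPeI3}), using only the bound $\Pi_1(z)<n-1$ from Observation \ref{obs:Pi1nMinus1} to guarantee that the intervals of consecutive vertices overlap.

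Assembling these, I would conclude that for every component $S$ the inclusion $E(G[S])\subseteq E(I_3[S])$ holds, and hence $E(G[\mB_2\cup\mP_e\cup\mN])=\bigcup_S E(G[S])\subseteq E(I_3[\mB_2\cup\mP_e\cup\mN])$, which is precisely the desired supergraph statement. There is no genuine obstacle here: the entire content was front-loaded into the three type-specific observations, and the present statement is merely their union over the component decomposition. The only point to confirm carefully is that the three cases are exhaustive, but this is immediate from the degree-$2$ structure together with the Type 1/Type 2 dichotomy, so no missing-edge (non-adjacency) analysis is required at this stage; that verification is deferred to later, when one checks that $I_3$ does not introduce spurious edges across distinct components.
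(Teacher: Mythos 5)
Your proof is correct and takes essentially the same route as the paper: decompose $G[\mB_2\cup\mP_e\cup\mN]$ into its components, note that each component is a Type~1 cycle, a Type~2 cycle, or a path, and invoke Observations \ref{obs:Type1I3}.\ref{Type1I3:Type1SuperI3}, \ref{obs:Type2I3}.\ref{Type2I3:Type2SuperI3} and \ref{obs:PathI3}.\ref{PathI3:PathSuperI3} component-wise. The only difference is that you make explicit two points the paper leaves implicit---that the three cases are exhaustive (via the maximum-degree-$2$ structure from Observation \ref{obs:B2PeN}.\ref{B2PeN:B2PeNDelta2} and Definition \ref{def:type12}) and that every edge of the induced subgraph lies within a single component---both of which are correct and harmless additions.
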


\begin{lemma}\label{lem:Type2PathNI1I3}
Let $x\in\mN$ and $y\in\mB_2\cup\mP_e\cup\mN$ be such that $x\in S_x$
and $y\in S_y$ where $S_x$ and $S_y$ induce a Type 2 cycle or a path. If
$(x,y)\notin E(G)$, then $(x,y)\notin E(I_1\cap I_3)$.
\end{lemma}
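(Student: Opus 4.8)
The plan is to mirror the proof of Lemma~\ref{lem:Type1NI1I3}, splitting into the case $S_x\ne S_y$ and the case $S_x=S_y=:S$, and in each showing the missing edge survives in whichever of $I_1$ or $I_3$ localizes the relevant vertices. The guiding principle is that $I_1$ and $I_3$ play complementary roles on a Type~2 cycle or path: in $I_1$ a $\mB_2\cup\mP_e$ vertex is extended leftward to $n$ while an $\mN$ vertex sits at $[n+\Pi_1(\cdot),\cdot]$, whereas in $I_3$ the picture is reflected, a $\mB_2\cup\mP_e$ vertex being extended rightward (Observation~\ref{obs:BunchI3}.\ref{BunchI3:B2PeLess2n} gives $l(\cdot,I_3)=\Pi_1(\cdot)$) and an $\mN$ vertex being localized with right endpoint at most $\Pi_1(\cdot)+1$ (Observation~\ref{obs:BunchI3}.\ref{BunchI3:NLess2n}). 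Thus for two $\Pi_1$-separated interior vertices, one of the two graphs bounds adjacency from each side, pinning it to $\Pi_1$-consecutiveness. The structural fact I rely on throughout is that the $\Pi_1$-extreme vertices of every Type~2 cycle and every path lie in $\mN$: for cycles $c_1,c_t\in\mN$ by Definition~\ref{def:Pi1}(2), and for paths $p_1,p_t\in\mN_e$ by Observation~\ref{obs:B2PeN}.\ref{B2PeN:NEndIsolated} with Definition~\ref{def:NeNint}. Hence every $\mB_2\cup\mP_e$ vertex is strictly interior, and the maximum-$\Pi_1$ vertex of each component always carries a half-length ($+0.5$) interval, which leaves a gap of $0.5$ at each component boundary.

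For $S_x=S$, I treat the two extreme vertices by a complementary-universal argument and the interior pairs by the pincer. If $c_1$ (resp.\ $p_1$) is one of $x,y$, then since it is universal in $I_1$, I work in $I_3$: its counterpart $w$ is a non-neighbour with $\Pi_1(w)\ge\Pi_1(c_1)+2$, so the unit interval of $c_1$ in $I_3$ does not reach $l(w,I_3)=\Pi_1(w)$; this branch also disposes of the path non-edge $\{p_1,p_t\}$. Symmetrically, if $c_t$ (cycle) or $p_t$ (path) is one of $x,y$, its half-length interval in $I_1$ lies strictly to the right of $r(w,I_1)$ for every non-neighbour $w$, so the edge is absent in $I_1$. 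For interior pairs with $x\in\mN$ and $|\Pi_1(x)-\Pi_1(y)|\ge2$: if $y\in\mN$ the two unit intervals in $I_1$ miss; if $y\in\mB_2\cup\mP_e$ I use $I_3$ when $\Pi_1(x)<\Pi_1(y)$ (the localized $x$ stops at $\Pi_1(x)+1<\Pi_1(y)=l(y,I_3)$) and $I_1$ when $\Pi_1(x)>\Pi_1(y)$ (the leftward-extended $y$ stops at $n+\Pi_1(y)+1<n+\Pi_1(x)=l(x,I_1)$).

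For $S_x\ne S_y$, let $S_1,S_2$ be the two components in $\Pi_1$ order (the block structure of Definition~\ref{def:Pi1}(3)). When both $x,y\in\mN$, or when the $\mN$ vertex is in the earlier component and the $\mB_2\cup\mP_e$ vertex in the later one, I separate in $I_3$: the max-$\Pi_1$ vertex of $S_1$ has right endpoint $\Pi_1(\cdot)+0.5$ while the min-$\Pi_1$ vertex of $S_2$ (an $\mN$ vertex) has left endpoint one larger, so the $0.5$ boundary gap prevents intersection. When instead the $\mB_2\cup\mP_e$ vertex lies in $S_1$ and the $\mN$ vertex in $S_2$, its $I_3$ interval extends rightward across the gap, so I switch to $I_1$; here I crucially use that a $\mB_2\cup\mP_e$ vertex is never $\Pi_1$-extremal, whence $\Pi_1(x)\ge\Pi_1(y)+2$ and $r(y,I_1)=n+\Pi_1(y)+1<n+\Pi_1(x)=l(x,I_1)$.

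I expect the main obstacle to be the bookkeeping: correctly matching each configuration of $(x,y)$ — interior versus extreme, $\mN$ versus $\mB_2\cup\mP_e$, same versus different component, and the relative $\Pi_1$ order — to the interval graph that localizes it. The delicate point is at every component boundary, where I must verify that the extremal vertex is an $\mN$ vertex carrying the half-length interval, so that the $0.5$ gap is genuinely present and no $\mB_2\cup\mP_e$ vertex bridges it in both $I_1$ and $I_3$ at once; this is precisely why the fact that $\mB_2\cup\mP_e$ vertices are strictly interior must be invoked at each step.
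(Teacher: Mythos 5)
Your proof is correct and takes essentially the same route as the paper's: the same endpoint computations (unit intervals for $\mN$ vertices, one-sided extensions for $\mB_2\cup\mP_e$ vertices), the same key fact that $\mB_2\cup\mP_e$ vertices are never $\Pi_1$-extremal in a Type 2 cycle or path (which is exactly the paper's claim that $|\Pi_1(x)-\Pi_1(y)|>1$), and the same complementary pincer using $I_3$ when $\Pi_1(x)<\Pi_1(y)$ and $I_1$ when $\Pi_1(x)>\Pi_1(y)$. The differences are purely organizational (you split by same/different component and extremal/interior position, the paper splits on $y\in\mN$ versus $y\in\mB_2\cup\mP_e$ first), plus one cosmetic slip: $f(p_t,I_1)$ is not a half-length interval, though only its left endpoint matters in that step, and your branch ordering correctly removes the $w=p_1$ case before invoking it.
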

\begin{proof}
We will consider the following two cases separately: (1) $y\in\mN$ and
(2) $y\in\mB_2\cup\mP_e$.
\paragraph{\boldmath $y\in\mN$:} If $S_x\ne S_y$, then,
$(x,y)\notin E(I_3)$. The proof is as follows: Without loss
of generality let $\Pi_1(S_x)<\Pi_1(S_y)$. From the interval
assignments for a vertex of $\mN$ in a Type 2 cycle (see
(\ref{eqn:B2PeNType2c1I3}--\ref{eqn:B2PeNType2NI3})) and a path (see
(\ref{eqn:B2PeNPathptI3}--\ref{eqn:B2PeNPathpiNI3})), $\ds r(x,I_3)\le
\max_{a\in S_x}\Pi_1(a)+0.5<\min_{b\in S_y}\Pi_1(b)\le l(y,I_3)$ and
therefore, $(x,y)\notin E(I_3)$.

Now we consider the case $S_x=S_y=S$. Let $S=s_1s_2\ldots s_t$,
such that $\Pi_1(s_{i+1})=\Pi_1(s_i)+1$, for $1\le i\le t$. Since
$x$ and $y$ are not adjacent in $S$, by the definition of $\Pi_1$
(Definition \ref{def:Pi1}, Points 1 and 2 for paths and Type 2 cycles
respectively), $|\Pi_1(x)-\Pi_1(y)|>1$. Suppose $S$ is a Type 2 cycle. If
neither $x$ nor $y$ is $s_t$, then, by (\ref{eqn:B2PeNType2c1I3})
and (\ref{eqn:B2PeNType2NI3}), $f(x,I_3)=[\Pi_1(x),\Pi_1(x)+1]$ and
$f(y,I_3)=[\Pi_1(y),\Pi_1(y)+1]$. Since $|\Pi_1(x)-\Pi_1(y)|>1$,
$f(x,I_3)\cap f(y,I_3)=\varnothing$.  If $x=s_t$, then, in $I_1$,
from (\ref{eqn:B2PeNType2ctI1}), $l(x,I_1)=n+\Pi_1(s_t)$ while,
since $y\ne s_{t-1},s_1$ from (\ref{eqn:B2PeNType2othersNI1}),
$r(y,I_1)=n+\Pi_1(y)+1<n+\Pi_1(s_t)=l(x,I_1)$. Therefore, $(x,y)\notin
E(I_1)$. If $S$ is a path, then, assuming without loss of generality
that $\Pi_1(x)<\Pi_1(y)$, from (\ref{eqn:B2PeNPathptI3})
and (\ref{eqn:B2PeNPathpiNI3}), $r(x,I_3)=\Pi_1(x)+1$ and
$l(y,I_3)=\Pi_1(y)$. Since $|\Pi_1(x)-\Pi_1(y)|>1$, $r(x,I_3)<l(y,I_3)$.
Therefore, $(x,y)\notin E(I_3)$.

\paragraph{\boldmath $y\in\mB_2\cup\mP_e$:} First we will show the
following:
\begin{clm}\label{clm:xyg1}
$|\Pi_1(x)-\Pi_1(y)|>1$.
\end{clm}
\begin{proof}
Suppose $S_x=S_y$, that is, both $x$ and $y$ belong to the same component.
Since $x$ is not adjacent to $y$, from the definition of $\Pi_1$
(Definition \ref{def:Pi1}), $|\Pi_1(x)-\Pi_1(y)|>1$.  Suppose $S_x\ne
S_y$. Let $S_y=s_1s_2\ldots s_t$ such that $\Pi_1(s_{i+1})=\Pi_1(s_i)+1$,
$1\le i<t$. If $|\Pi_1(x)-\Pi_1(y)|=1$, then, $y$ must be either $s_1$
or $s_t$. But, $s_1,s_t\in\mN$ since $S_y$ is either a Type 2 cycle or
a path (by Definition \ref{def:Pi1}). This contradicts the assumption
that $y\in\mB_2\cup\mP_e$.\bqed
\end{proof}

Suppose $\Pi_1(x)<\Pi_1(y)$. From Observation
\ref{obs:BunchI3}.\ref{BunchI3:NLess2n}, $r(x,I_3)\le \Pi_1(x)+1$
and from Observation \ref{obs:BunchI3}.\ref{BunchI3:B2PeLess2n},
$l(y,I_3)=\Pi_1(y)$. We have from Claim \ref{clm:xyg1},
$\Pi_1(x)+1<\Pi_1(y)$ and hence $(x,y)\notin
E(I_3)$. Suppose $\Pi_1(x)>\Pi_1(y)$. From Observation
\ref{obs:BunchI1}.\ref{BunchI1:B2PeType2Path}, $r(y,I_1)=n+\Pi_1(y)+1$
and from Observation \ref{obs:BunchI1}.\ref{BunchI1:NType2Path}, $\ds
l(x,I_1)=n+\Pi_1(x)$. From Claim \ref{clm:xyg1}, $\Pi_1(y)+1<\Pi_1(x)$
and hence $(x,y)\notin E(I_1)$. \qed
\end{proof}

\begin{observation}\label{obs:NnonAdjB2PeNI1I3}
If $x\in\mN$ and $y\in\mB_2\cup\mP_e\cup\mN$ such that
$(x,y)\notin E(G)$, then, $(x,y)\notin E(I_1\cap I_3)$. The
proof is as follows: If one of $x$ and $y$ belongs to a Type
1 cycle and the other belongs to a Type 2 cycle or path, from
Observation \ref{obs:BunchI1}.\ref{BunchI1:Type1SepType2PathI1},
$(x,y)\notin E(I_1)$. If both $x$ and $y$ belong to Type 1 cycles,
then, by Lemma \ref{lem:Type1NI1I3}, $(x,y)\notin E(I_1\cap I_3)$.
If both $x$ and $y$ belong to Type 2 cycles and paths, then, by Lemma
\ref{lem:Type2PathNI1I3}, $(x,y)\notin E(I_1\cap I_3)$.
\end{observation}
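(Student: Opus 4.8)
The plan is to case-split on where $x$ and $y$ sit within the induced subgraph $G[\mB_2\cup\mP_e\cup\mN]$ and then invoke the three results assembled earlier for exactly this purpose. By Observation \ref{obs:B2PeN}.\ref{B2PeN:B2PeNDelta2}, this subgraph has maximum degree $2$, so it is a disjoint union of paths and cycles, and each cycle is either Type 1 or Type 2 (Definition \ref{def:type12}). Since $x\in\mN\subseteq\mB_2\cup\mP_e\cup\mN$ and $y\in\mB_2\cup\mP_e\cup\mN$, both vertices lie in this subgraph, and each belongs to a unique such component; call them $S_x$ and $S_y$.

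I would first dispose of the mixed case. If one of $S_x,S_y$ is a Type 1 cycle while the other is a Type 2 cycle or a path, then Observation \ref{obs:BunchI1}.\ref{BunchI1:Type1SepType2PathI1} shows that the $I_1$-intervals of $x$ and $y$ are separated, so $(x,y)\notin E(I_1)$. Since $E(I_1\cap I_3)=E(I_1)\cap E(I_3)\subseteq E(I_1)$, this already yields $(x,y)\notin E(I_1\cap I_3)$, with no appeal to $I_3$ needed.

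The two remaining cases are precisely the hypotheses of the lemmas established earlier. If both $S_x$ and $S_y$ are Type 1 cycles, Lemma \ref{lem:Type1NI1I3} gives $(x,y)\notin E(I_1\cap I_3)$; if both are Type 2 cycles or paths, Lemma \ref{lem:Type2PathNI1I3} gives the same. Because every component is a Type 1 cycle, a Type 2 cycle, or a path, these three cases --- both Type 1, both non-Type-1, or exactly one Type 1 --- are exhaustive, completing the argument.

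I do not expect a genuine obstacle here: all of the endpoint arithmetic in $I_1$ and $I_3$ has already been absorbed into Lemmas \ref{lem:Type1NI1I3} and \ref{lem:Type2PathNI1I3}, and the hypotheses $x\in\mN$ and $(x,y)\notin E(G)$ match those lemmas verbatim. The only point requiring care is verifying that the case split is complete and that the ``Type 1 versus everything else'' dichotomy correctly routes each unordered pair $\{S_x,S_y\}$ into precisely one of the three branches.
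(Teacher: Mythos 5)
Your proposal is correct and follows essentially the same route as the paper: the same three-way case split (mixed Type~1 versus Type~2/path, both Type~1, both Type~2/path) resolved by Observation~\ref{obs:BunchI1}.\ref{BunchI1:Type1SepType2PathI1}, Lemma~\ref{lem:Type1NI1I3}, and Lemma~\ref{lem:Type2PathNI1I3} respectively. Your added remarks on exhaustiveness of the case split and on the symmetry of the mixed case are points the paper leaves implicit, but they do not change the argument.
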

}
\subsubsection{Vertices of $\mB_1\cup\mC\cup\mP_{2e}$} 
Let $v\in\mB_1\cup\mC\cup\mP_{2e}$. By Table \ref{tab:uniqNeighbor}
(rows 2 and 3), it follows that $v$ is adjacent to exactly one vertex
in $\mN$; let $v'$ be this vertex.
\begin{eqnarray}
&&\textrm{If $v\in\mC\cup\mP_{2e}$, then, } f(v,I_3)=\left[r(v',I_3),n+1\right],\label{eqn:CP2eI3}\\
&&\textrm{If $v\in\mB_1$, then, } f(v,I_3)=\left[r(v',I_3),n\right],\label{eqn:B1I3}
\end{eqnarray}
Note that $v'$ is already assigned an interval in Section \ref{sec:B2PeNI3}.

\subsubsection{Vertices of ${\mP_{2i}}$} 
Let $v\in\mP_{2i}$. By Table \ref{tab:uniqNeighbor} (row 4), $v$ has a
unique neighbor in $\mR\cup\mN$; let $v'$ be this vertex.
\begin{eqnarray}
&&\textrm{if $v'\in\mR$, then, } f(v,I_3)=\left[n+1,n+1\right], \label{eqn:P2iRI3}\\
&&\textrm{if $v'\in\mN$, then, } f(v,I_3)=\left[r(v',I_3),n+1\right].\label{eqn:P2iNI3}
\end{eqnarray}
\journ{
\begin{lemma}\label{lem:BP2iI3}
Let $x\in\mP_{2i}$ and $y\in\mB$. $(x,y)\notin E(I_2\cap I_3)$.
\end{lemma}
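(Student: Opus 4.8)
The plan is to prove the stronger statement that the non-edge $(x,y)$ is already missing in $I_2$ or already missing in $I_3$, with the choice dictated by the position of $x$ relative to $y$'s component. Since $x\in\mP_{2i}\subseteq\mP$ and $y\in\mB$, Table~\ref{tab:nonAdj} (row 6) gives $(x,y)\notin E(G)$, so there is an honest non-edge to kill. Let $\Gamma(z)$, $z\in\mR$, be the unique component of $G[\mR\cup\mB]$ containing $y$ (it exists by Lemma~\ref{lem:componentsY}), and let $\beta(z)\in\mP_{2i}$ be its base vertex (Definition~\ref{def:baseVertex}). I would split on whether $x=\beta(z)$.

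If $x\neq\beta(z)$, I would work in $I_2$. By Lemma~\ref{lem:RBI2} (Statement~1), the only vertex of $\mC\cup\mP$ adjacent to $y$ in $I_2$ is $\beta(z)$. Since $x\in\mP\subseteq\mC\cup\mP$ and $x\neq\beta(z)$, it follows that $(x,y)\notin E(I_2)$, and hence $(x,y)\notin E(I_2\cap I_3)$.

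If $x=\beta(z)$, I would instead work in $I_3$. Here $x$ is adjacent to $z$ in $G$, so by Table~\ref{tab:uniqNeighbor} (row 4) the unique neighbor of $x$ in $\mR\cup\mN$ is that very $z\in\mR$; consequently (\ref{eqn:P2iRI3}) assigns $x$ the point interval $f(x,I_3)=[n+1,n+1]$. On the other side, every vertex of $\mB$ has right endpoint exactly $n$ in $I_3$: for $y\in\mB_2$ this is Observation~\ref{obs:BunchI3}.\ref{BunchI3:B2I3}, and for $y\in\mB_1$ it is immediate from (\ref{eqn:B1I3}). Since $n<n+1$, the intervals $f(x,I_3)$ and $f(y,I_3)$ are disjoint, so $(x,y)\notin E(I_3)$ and again $(x,y)\notin E(I_2\cap I_3)$.

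The one step carrying real content is the second case: I expect the crux to be recognizing that when $x=\beta(z)$ its unique $\mR\cup\mN$-neighbor actually lies in $\mR$ (it is $z$ itself), which is exactly what forces $f(x,I_3)$ out to the isolated point $n+1$, beyond the reach of any $\mB$-interval. Once that is pinned down, both cases reduce to reading off already-recorded endpoints, so no new estimates are needed.
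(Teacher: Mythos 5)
Your proof is correct and follows essentially the same route as the paper's: the same case split on whether $x=\beta(z)$, using Lemma~\ref{lem:RBI2} (Statement 1) to kill the edge in $I_2$ when $x\neq\beta(z)$, and the endpoint comparison $r(y,I_3)=n<n+1=l(x,I_3)$ (via Table~\ref{tab:uniqNeighbor} row 4, (\ref{eqn:P2iRI3}), (\ref{eqn:B1I3}) and Observation~\ref{obs:BunchI3}.\ref{BunchI3:B2I3}) when $x=\beta(z)$.
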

\begin{proof}
Recall the notation $\beta(\cdot)$ introduced in Definition
\ref{def:baseVertex}. Let $y\in\Gamma(z)$ for some $z\in\mR$. If
$x\ne\beta(z)$, then by Lemma \ref{lem:RBI2} (Point 1), $(x,y)\notin
E(I_2)$. If $x=\beta(z)$, then, it implies that $x$ is adjacent to $z$ and
by Table \ref{tab:uniqNeighbor} (row 4), $z$ is its only neighbor 
in $\mR\cup\mN$. Since $z\in\mR$, the interval assigned to $x$ is given
in (\ref{eqn:P2iRI3}), from which $l(x,I_3)=n+1$. If $y\in\mB_1$, then
by (\ref{eqn:B1I3}), $r(y,I_3)=n$. If $y\in\mB_2$, from Observation
\ref{obs:BunchI3}.\ref{BunchI3:B2I3}, $r(y,I_3)=n$. Therefore,
$(x,y)\notin E(I_3)$. \qed
\end{proof}
}
\subsubsection{Vertices of ${\mR}$} 
\conf{$\forall v\in\mR, f(v,I_3)=[n,n+1]$.}
\journ{
Every vertex is assigned the following interval:
\begin{equation}\label{eqn:RI3}
\forall v\in\mR, f(v,I_3)=[n,n+1].
\end{equation}
}
\journ{
\begin{lemma}\label{lem:B1CP2eRP2iSuperI3}
$I_3[\mB_1\cup\mC\cup\mP_{2e}\cup\mP_{2i}\cup\mR]$ is a supergraph of
$G[\mB_1\cup\mC\cup\mP_{2e}\cup\mP_{2i}\cup\mR]$.
\end{lemma}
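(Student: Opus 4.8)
The plan is to reduce the whole induced subgraph to two coincidence points on the line and argue that the relevant vertex sets collapse into cliques of $I_3$ there. Concretely, I would first establish two ``common point'' facts: (i) every vertex of $\mC\cup\mP_{2e}\cup\mP_{2i}\cup\mR$ is assigned an $I_3$-interval containing the point $n+1$, and (ii) every vertex of $\mR\cup\mB_1$ is assigned an $I_3$-interval containing the point $n$. Granting these, $\mC\cup\mP_{2e}\cup\mP_{2i}\cup\mR$ and $\mR\cup\mB_1$ each induce a clique in $I_3$, and $\mR$ (with interval $[n,n+1]$ by (\ref{eqn:RI3})) is the bridge lying in both, after which the lemma falls out of a two-line case split.

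For fact (i): a vertex $v\in\mC\cup\mP_{2e}$ has a unique neighbour $v'\in\mN$ (Table~\ref{tab:uniqNeighbor}, row~3), and (\ref{eqn:CP2eI3}) gives $f(v,I_3)=[r(v',I_3),n+1]\ni n+1$. A vertex $v\in\mP_{2i}$ has a unique neighbour $v'\in\mR\cup\mN$ (Table~\ref{tab:uniqNeighbor}, row~4); if $v'\in\mN$ then (\ref{eqn:P2iNI3}) again gives $[r(v',I_3),n+1]\ni n+1$, and if $v'\in\mR$ then (\ref{eqn:P2iRI3}) gives the point interval $[n+1,n+1]\ni n+1$. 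Finally (\ref{eqn:RI3}) gives $[n,n+1]\ni n+1$ for $v\in\mR$. For fact (ii): (\ref{eqn:RI3}) gives $[n,n+1]\ni n$ for $v\in\mR$, and (\ref{eqn:B1I3}) gives $f(v,I_3)=[r(v',I_3),n]\ni n$ for $v\in\mB_1$, where $v'\in\mN$ is its unique $\mN$-neighbour (Table~\ref{tab:uniqNeighbor}, row~2). In both facts I would cite Observation~\ref{obs:BunchI3}.\ref{BunchI3:NLess2n}, which gives $r(v',I_3)<n$, to confirm that every interval involved is well defined and reaches up to (and, for $\mB_1$, stops exactly at) the asserted point.

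With the two cliques established, take any edge $(x,y)\in E(G)$ with $x,y\in\mB_1\cup\mC\cup\mP_{2e}\cup\mP_{2i}\cup\mR$. If neither endpoint lies in $\mB_1$, then both lie in $\mC\cup\mP_{2e}\cup\mP_{2i}\cup\mR$, so by fact~(i) their intervals share $n+1$ and $(x,y)\in E(I_3)$. Otherwise some endpoint, say $x$, lies in $\mB_1\subseteq\mB$; since Table~\ref{tab:nonAdj} (row~6) forbids edges between $\mB$ and $\mC\cup\mP$, the other endpoint $y$ cannot lie in $\mC\cup\mP_{2e}\cup\mP_{2i}$, so $y\in\mB_1\cup\mR$. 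Then both endpoints lie in $\mR\cup\mB_1$, and by fact~(ii) their intervals share $n$, again giving $(x,y)\in E(I_3)$. This exhausts all cases, so $I_3$ restricted to the set is a supergraph of $G$ restricted to it.

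The calculations in facts~(i)–(ii) are routine endpoint bookkeeping; the one spot needing genuine care is the degenerate $\mP_{2i}$-vertex whose $\mR\cup\mN$-neighbour lies in $\mR$, which receives the single-point interval $[n+1,n+1]$. I must check that this point still meets $\mR$'s interval $[n,n+1]$ (it does, at $n+1$), so that exactly the $\beta(\cdot)$ adjacencies recorded in row~5 of Table~\ref{tab:uniqNeighbor} survive in $I_3$. The only conceptual step is recognizing that the induced subgraph collapses onto the two points $n$ and $n+1$; once that is seen, routing every $\mB_1$-incident edge through the non-adjacency of Table~\ref{tab:nonAdj} (row~6) makes the case analysis close, and the fact that $\mB_1$-intervals terminate at $n$ (rather than $n+1$) causes no difficulty here, since this lemma only asks for a supergraph.
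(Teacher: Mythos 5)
Your proof is correct and follows essentially the same route as the paper's: the paper likewise establishes that every vertex of $\mC\cup\mP_{2e}\cup\mR$ has an interval containing $[n,n+1]$ (Claim~\ref{clm:nnp1}), deduces that $\mC\cup\mP_{2e}\cup\mP_{2i}\cup\mR$ forms a clique at the point $n+1$ and $\mC\cup\mP_{2e}\cup\mB_1\cup\mR$ a clique at the point $n$, and dismisses the only remaining pair ($\mP_{2i}$ versus $\mB_1$) via Table~\ref{tab:nonAdj} (row~6). The only cosmetic difference is that you take the second clique to be $\mR\cup\mB_1$ rather than the paper's larger set, which changes nothing in the case analysis.
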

\begin{proof}
First we prove the following:
\begin{clm}\label{clm:nnp1}
For any $x\in\mC\cup\mP_{2e}\cup\mR$, $[n,n+1]\subseteq f(x,I_3)$.
\end{clm}
\begin{proof}
For any $z\in\mN$, from Observation
\ref{obs:BunchI3}.\ref{BunchI3:NLess2n}, $r(z,I_3)\le \Pi_1(z)+1<n$. From
(\ref{eqn:CP2eI3}), for every $x\in\mC\cup\mP_{2e}$, $l(x,I_3)=r(z,I_3)$
for some $z\in\mN$. Since $r(x,I_3)=n+1$, it implies that $[n,n+1]\subset
f(x,I_3)$. If $x\in\mR$, by (\ref{eqn:RI3}), $[n,n+1]=f(x,I_3)$. Hence
proved. \bqed
\end{proof}

To prove the lemma, we need to show that if
$(x,y)\in E(G)$, then, $(x,y)\in E(I_3)$, where
$x,y\in\mB_1\cup\mC\cup\mP_{2e}\cup\mP_{2i}\cup\mR$. For any
$x\in\mP_{2i}$, from (\ref{eqn:P2iRI3}--\ref{eqn:P2iNI3}),
$r(x,I_3)=n+1$ and using Claim \ref{clm:nnp1}, we infer that
$I_3[\mC\cup\mP_{2e}\cup\mP_{2i}\cup\mR]$ is a clique. For any
$x\in\mB_1$, from (\ref{eqn:B1I3}), $r(x,I_3)=n$ and again by using Claim
\ref{clm:nnp1}, we observe that $I_3[\mC\cup\mP_{2e}\cup\mB_1\cup\mR]$ is
a clique. Therefore, the only case we have to consider is $x\in\mP_{2i}$
and $y\in\mB_1$. However, from Table \ref{tab:nonAdj} (row 6), this case
is not possible. Hence proved.  \qed
\end{proof}

\begin{lemma}\label{lem:VminusASuperI3}
$I_3[V\setminus\mA]$ is a supergraph of $G[V\setminus\mA]$.
\end{lemma}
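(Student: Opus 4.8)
The plan is to reduce the statement to the cross-edges between the two blocks for which a supergraph result is already available. Write $V\setminus\mA = A\uplus B$ with $A=\mB_2\cup\mP_e\cup\mN$ and $B=\mB_1\cup\mC\cup\mP_{2e}\cup\mP_{2i}\cup\mR$. If $x,y$ are adjacent in $G[V\setminus\mA]$ and both lie in $A$, they are adjacent in $I_3$ by Observation~\ref{obs:B2PeNSuperI3}; if both lie in $B$, they are adjacent by Lemma~\ref{lem:B1CP2eRP2iSuperI3}. So the only thing left to show is that every $G$-edge with one endpoint $x\in A$ and the other $y\in B$ is present in $I_3$. I would organize this by the part of the partition containing $x$, using Table~\ref{tab:nonAdj} to prune the sets in which $y$ can possibly lie.

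First, suppose $x\in\mN$. Since $\mN$ is non-adjacent to $\mR$ (Table~\ref{tab:nonAdj}, row~2), the vertex $y$ must lie in $\mB_1\cup\mC\cup\mP_{2e}\cup\mP_{2i}$. In each of these sets the interval of $y$ was defined to have left endpoint $r(v',I_3)$, where $v'$ is the unique neighbor of $y$ in $\mN$ (for $\mB_1,\mC,\mP_{2e}$, via Table~\ref{tab:uniqNeighbor} rows~2--3, the $\mC$-neighbor being $\eta(y)$) or the unique neighbor of $y$ in $\mR\cup\mN$ (for $\mP_{2i}$, row~4). Because $x\in\mN$ is adjacent to $y$, uniqueness forces $v'=x$; hence $l(y,I_3)=r(x,I_3)\in f(x,I_3)\cap f(y,I_3)$ by (\ref{eqn:B1I3}), (\ref{eqn:CP2eI3}) and (\ref{eqn:P2iNI3}), so $x$ and $y$ touch and are adjacent in $I_3$. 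For $\mP_{2i}$ this simultaneously confirms that we are in the case $v'\in\mN$ of (\ref{eqn:P2iNI3}) and not the point-interval case (\ref{eqn:P2iRI3}).

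Next, suppose $x\in\mB_2$. Since $\mB_2\subseteq\mB$ and $\mC\cup\mP$ is non-adjacent to $\mB$ (Table~\ref{tab:nonAdj}, row~6), $y$ cannot lie in $\mC\cup\mP_{2e}\cup\mP_{2i}$, leaving $y\in\mB_1\cup\mR$. Here I would simply note that the intervals of $\mB_2$, $\mB_1$ and $\mR$ all contain the point $n$: the right endpoint equals $n$ for $\mB_2$ by Observation~\ref{obs:BunchI3}.\ref{BunchI3:B2I3} and for $\mB_1$ by (\ref{eqn:B1I3}), each with left endpoint below $n$ (all $\Pi_1$-values are below $n-1$ by Observation~\ref{obs:Pi1nMinus1}), while $f(\cdot,I_3)=[n,n+1]$ for $\mR$ by (\ref{eqn:RI3}); hence $x$ and $y$ intersect. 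Finally, suppose $x\in\mP_e$. Using rows~4,~5,~6 and~8 of Table~\ref{tab:nonAdj} to rule out $\mC$, $\mR$, $\mB_1$ and $\mP_{2i}$ respectively, the only possibility is $y\in\mP_{2e}$; and both $\mP_e$ and $\mP_{2e}$ have intervals containing the point $n+1$ (Observation~\ref{obs:BunchI3}.\ref{BunchI3:PeI3} and (\ref{eqn:CP2eI3})), so they intersect as well. Combining the three cases with the two intra-block results gives the lemma.

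The arithmetic verifying that the stated endpoints truly belong to the two intervals is routine: it follows from Observation~\ref{obs:BunchI3} (parts~\ref{BunchI3:NLess2n} and~\ref{BunchI3:B2PeLess2n}) together with $\Pi_1(z)<n-1$, which keeps every left endpoint of an $\mN$-, $\mB_2$- or $\mP_e$-interval strictly below $n$. The step requiring genuine care is the pruning: one must invoke exactly the right non-adjacency rows so that each $A$-part is paired only with the few $B$-parts it can actually meet in $G$, and, in the $\mN$-case, one must use the uniqueness of the neighbor to identify $v'$ with $x$ so that the touching equality $l(y,I_3)=r(x,I_3)$ genuinely holds rather than merely an inclusion.
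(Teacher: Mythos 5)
Your proposal is correct and follows essentially the same route as the paper's own proof: the same split of $V\setminus\mA$ into $\mB_2\cup\mP_e\cup\mN$ and $\mB_1\cup\mC\cup\mP_{2e}\cup\mP_{2i}\cup\mR$, the same appeals to Observation~\ref{obs:B2PeNSuperI3} and Lemma~\ref{lem:B1CP2eRP2iSuperI3} for intra-block edges, and the same three cross-edge cases ($x\in\mB_2$, $x\in\mP_e$, $x\in\mN$) resolved with the identical table rows and interval assignments. Your extra remark that adjacency to $x\in\mN$ forces the $\mP_{2i}$ vertex into the case of (\ref{eqn:P2iNI3}) rather than (\ref{eqn:P2iRI3}) is a point the paper leaves implicit, but it is the same argument.
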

\begin{proof}
Let $x,y\in V\setminus\mA$ be two adjacent vertices in
$G$. Note that $V\setminus\mA=(\mB_2\cup\mP_e\cup\mN)\cup
(\mB_1\cup\mC\cup\mP_{2e}\cup\mP_{2i}\cup\mR)$.
If $x,y\in\mB_2\cup\mP_e\cup\mN$, by Observation
\ref{obs:B2PeNSuperI3}, $x$ and $y$ are adjacent in $I_3$. If
$x,y\in\mB_1\cup\mC\cup\mP_{2e}\cup\mP_{2i}\cup\mR$, then by
Lemma \ref{lem:B1CP2eRP2iSuperI3}, $x$ and $y$ are adjacent in
$I_3$. The remaining case is $x\in\mB_2\cup\mP_e\cup\mN$ and
$y\in\mB_1\cup\mC\cup\mP_{2e}\cup\mP_{2i}\cup\mR$.

Let $x\in\mB_2$. From Table \ref{tab:nonAdj} (row 6) we note that
$y\notin\mC\cup\mP_{2e}\cup\mP_{2i}$. Therefore,
$y\in\mB_1\cup\mR$. If $y\in\mB_1$, by (\ref{eqn:B1I3}), $r(y,I_3)=n$ and
if $y\in\mR$, by (\ref{eqn:RI3}), $l(y,I_3)=n$. By Observation
\ref{obs:BunchI3}.\ref{BunchI3:B2I3}, $r(x,I_3)=n$ and therefore, $x$ is
adjacent to $y$ in $I_3$.

Let $x\in\mP_e$. From Table \ref{tab:nonAdj} (rows 6, 4, 8 and 5
respectively) $y\notin\mB_1\cup\mC\cup\mP_{2i}\cup\mR$. Therefore,
$y\in\mP_{2e}$. By (\ref{eqn:CP2eI3}), $r(y,I_3)=n+1$ and by Observation
\ref{obs:BunchI3}.\ref{BunchI3:PeI3}, $r(x,I_3)=n+1$. Hence, $x$
is adjacent to $y$ in $I_3$.

Let $x\in\mN$.  From Table \ref{tab:nonAdj} (row 2), $y\notin\mR$. This
implies that $y\in\mB_1\cup\mC\cup\mP_{2e}\cup\mP_{2i}$. From Table
\ref{tab:uniqNeighbor} (rows 2, 3 and 4), $x$ is the unique neighbor of $y$
in $\mN$. From (\ref{eqn:CP2eI3}), (\ref{eqn:B1I3}) and (\ref{eqn:P2iNI3}),
$l(y,I_3)=r(x,I_3)$. Hence, $x$ is adjacent to $y$ in $I_3$.  \qed
\end{proof}

\begin{lemma}\label{lem:Nresolved}
If $x\in\mN$ and $y\in V\setminus\mA$ such that $(x,y)\notin E(G)$, then,
$(x,y)\notin E(I_1\cap I_3)$.
\end{lemma}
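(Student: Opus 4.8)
The plan is to dispose of three ``free'' families of $y$ first and then reduce the real work to a single clash of inequalities about the ordering $\Pi_1$. Write $V\setminus\mA=(\mB_2\cup\mP_e\cup\mN)\uplus\mB_1\uplus\mC\uplus\mP_{2e}\uplus\mP_{2i}\uplus\mR$. If $y\in\mB_2\cup\mP_e\cup\mN$ there is nothing to prove, since this is exactly Observation~\ref{obs:NnonAdjB2PeNI1I3}. If $y\in\mR$ then $f(y,I_3)=[n,n+1]$ by (\ref{eqn:RI3}), and if $y\in\mP_{2i}$ has its unique $\mR\cup\mN$-neighbour in $\mR$ then $f(y,I_3)=[n+1,n+1]$ by (\ref{eqn:P2iRI3}); in both cases $l(y,I_3)\ge n$, whereas $r(x,I_3)\le\Pi_1(x)+1<n$ for every $x\in\mN$ by Observation~\ref{obs:BunchI3}.\ref{BunchI3:NLess2n}, so $(x,y)\notin E(I_3)$. (This second family also absorbs the $\mP_{2i}$-vertices with no $\mN$-neighbour, for which $(x,y)\notin E(G)$ is automatic.)

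This leaves $y\in\mB_1\cup\mC\cup\mP_{2e}$, and $y\in\mP_{2i}$ whose $\mR\cup\mN$-neighbour $y'$ lies in $\mN$. Rows 2--4 of Table~\ref{tab:uniqNeighbor} hand us, in each case, a \emph{unique} neighbour $y'\in\mN$ of $y$ (for $\mC$ this is $y'=\eta(y)$). Because $(x,y)\notin E(G)$ while $y'$ is adjacent to $y$, we have $x\ne y'$, hence $\Pi_1(x)\ne\Pi_1(y')$ by injectivity of $\Pi_1$. I would argue by contradiction: assuming $(x,y)\in E(I_1)\cap E(I_3)$, I will extract $\Pi_1(y')\le\Pi_1(x)$ from $I_3$ and $\Pi_1(x)\le\Pi_1(y')$ from $I_1$, forcing $\Pi_1(x)=\Pi_1(y')$, i.e. $x=y'$, a contradiction.

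The $I_3$ direction is routine. In each of these cases $f(y,I_3)=[\,r(y',I_3),\rho\,]$ with $\rho\in\{n,n+1\}$ (equations (\ref{eqn:B1I3}), (\ref{eqn:CP2eI3}), (\ref{eqn:P2iNI3})); since $l(x,I_3)<n\le\rho$ and $r(x,I_3)<n\le\rho$, an intersection forces $r(y',I_3)\le r(x,I_3)$, and then $\Pi_1(y')+0.5\le r(y',I_3)\le r(x,I_3)\le\Pi_1(x)+1$ (Observation~\ref{obs:BunchI3}.\ref{BunchI3:NLess2n}) gives $\Pi_1(y')\le\Pi_1(x)$. For the $I_1$ direction the first key step is a bound that holds uniformly across all four families: from (\ref{eqn:B1I1}), (\ref{eqn:P2eI1}), (\ref{eqn:P2iNI1}) and (\ref{eqn:CI1}) one reads off $r(y,I_1)\le l(y',I_1)+0.5$, so $(x,y)\in E(I_1)$ yields $l(x,I_1)\le l(y',I_1)+0.5$. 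Now $x$ and $y'$ both lie in $\mN$, so each of $l(x,I_1),l(y',I_1)$ equals either $\Pi_1(\cdot)$ (on a Type~1 cycle, Observation~\ref{obs:BunchI1}.\ref{BunchI1:Type1}) or $n+\Pi_1(\cdot)$ (on a Type~2 cycle or path, Observation~\ref{obs:BunchI1}.\ref{BunchI1:NType2Path}). If both are on the same level the inequality collapses to $\Pi_1(x)\le\Pi_1(y')+0.5$, i.e. $\Pi_1(x)\le\Pi_1(y')$; and the configuration ``$x$ on a Type~2 cycle/path, $y'$ on a Type~1 cycle'' cannot occur, since it would demand $n+\Pi_1(x)\le\Pi_1(y')+0.5$, impossible as the left side exceeds $n$ while the right side is below $n$.

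The one genuinely delicate configuration---and the place I expect the real obstacle---is ``$x$ on a Type~1 cycle, $y'$ on a Type~2 cycle or path,'' because there the $n$-offset makes $l(x,I_1)\le l(y',I_1)+0.5$ vacuously true and the $I_1$ endpoints give no information. This is resolved not by interval arithmetic but by the block structure built into $\Pi_1$: the third property in Definition~\ref{def:Pi1} orders all Type~1 indices strictly below all Type~2 and path indices, so here $\Pi_1(x)<\Pi_1(y')$ automatically. Thus in every surviving configuration $\Pi_1(x)\le\Pi_1(y')$, which together with the $I_3$ inequality $\Pi_1(y')\le\Pi_1(x)$ and the injectivity of $\Pi_1$ yields $x=y'$, contradicting $x\ne y'$. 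Hence $(x,y)\notin E(I_1\cap I_3)$ in every case. Notably, this route never needs to determine the cycle/path type of $y'$, so Lemmas~\ref{lem:B1P2eI1} and~\ref{lem:NAdjB2PeDisjointCP2e} can be avoided.
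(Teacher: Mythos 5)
Your proposal is correct and takes essentially the same route as the paper: the easy cases ($y\in\mB_2\cup\mP_e\cup\mN$ via Observation~\ref{obs:NnonAdjB2PeNI1I3}, and $y\in\mR$ or $\mR$-attached $\mP_{2i}$ via $l(y,I_3)\ge n>r(x,I_3)$) are dispatched identically, and your core contradiction---$(x,y)\in E(I_3)$ forces $\Pi_1(y')\le\Pi_1(x)$ while $(x,y)\in E(I_1)$ forces $\Pi_1(x)\le\Pi_1(y')$---is exactly the contrapositive of the paper's trichotomy, which shows $\Pi_1(x)<\Pi_1(z)$ kills the $I_3$-edge and $\Pi_1(x)>\Pi_1(z)$ kills the $I_1$-edge. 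Your inline four-configuration analysis (Type~1 versus Type~2/path, with Definition~\ref{def:Pi1}(3) resolving the mixed case) merely unfolds the content of Observation~\ref{obs:BunchI1}.\ref{BunchI1:xLessy}, which the paper cites instead of re-deriving.
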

\begin{proof}
If $y\in\mB_2\cup\mP_e\cup\mN$, then by Observation
\ref{obs:NnonAdjB2PeNI1I3}, $(x,y)\notin E(I_1\cap I_3)$.  If $y\in\mR$,
then by (\ref{eqn:RI3}), $l(y,I_3)=n$. If $y\in\mP_{2i}$ and is
not adjacent to any vertex in $\mN$, then by (\ref{eqn:P2iRI3}),
$l(y,I_3)=n+1$. By Observation \ref{obs:BunchI3}.\ref{BunchI3:NLess2n},
$r(x,I_3)<n$ and therefore, in both the cases, $(x,y)\notin E(I_3)$.
Now we consider the remaining cases (1) $y\in\mP_{2i}$ such that $y$ has
a neighbor in $\mN$ and (2) $y\in\mB_1\cup\mC\cup\mP_{2e}$. From Table
\ref{tab:uniqNeighbor} (rows 4, 2 and 3 respectively), in each case,
$y$ has exactly one neighbor in $\mN$ and let this vertex be $z$. Since
$x\ne z$, either $\Pi_1(x)<\Pi_1(z)$ or $\Pi_1(z)<\Pi_1(x)$.

Suppose $\Pi_1(x)<\Pi_1(z)$. In $I_3$, from the interval assignments
for vertices of $\mC\cup\mP_{2e}$, $\mB_1$ and $\mP_{2i}$ in
(\ref{eqn:CP2eI3}), (\ref{eqn:B1I3}) and (\ref{eqn:P2iNI3})
respectively, $l(y,I_3)=r(z,I_3)$. From Observation
\ref{obs:BunchI3}.\ref{BunchI3:NLess2n}, $r(x,I_3)\le
\Pi_1(x)+1<\Pi_1(z)+0.5\le r(z,I_3)$. Hence, $(x,y)\notin E(I_3)$.

Suppose $\Pi_1(x)>\Pi_1(z)$. In $I_1$, from the interval
assignments for vertices of $\mB_1$, $\mP_{2e}$ and $\mP_{2i}$ in
(\ref{eqn:B1I1}), (\ref{eqn:P2eI1}) and (\ref{eqn:P2iNI1}) respectively,
$r(y,I_1)=l(z,I_1)$. For $\mC$ in (\ref{eqn:CI1}), $r(y,I_1)\le
l(z,I_1)+0.5$. From Observation \ref{obs:BunchI1}.\ref{BunchI1:xLessy},
$l(x,I_1)\ge l(z,I_1)+1$ and therefore, $(x,y)\notin E(I_1)$. Hence
proved.\qed
\end{proof}

\begin{lemma}\label{lem:NTouch}
Let $x\in\mN$ and $y\in V\setminus\mA$ such that $(x,y)\in E(G)$. Then,
for some $I\in\{I_1,I_3\}$, either $l(x,I)=r(y,I)$ or $l(y,I)=r(x,I)$.
\end{lemma}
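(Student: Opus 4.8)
The plan is a case analysis on the block of the partition containing $y$, exhibiting in each case one of $I_1,I_3$ in which the intervals of $x$ and $y$ abut at a single point. By Table \ref{tab:nonAdj} (row 2) there is no edge between $\mN$ and $\mR$, so $y\in\mC\cup\mP_e\cup\mP_{2e}\cup\mP_{2i}\cup\mN\cup\mB_1\cup\mB_2$. Three of these cases are immediate in $I_3$. If $y\in\mC\cup\mP_{2e}$ or $y\in\mB_1$, then by Table \ref{tab:uniqNeighbor} (rows 3 and 2) $x$ is the unique neighbour of $y$ in $\mN$, and the definitions (\ref{eqn:CP2eI3}), (\ref{eqn:B1I3}) set $l(y,I_3)=r(x,I_3)$. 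If $y\in\mP_{2i}$, then by Table \ref{tab:uniqNeighbor} (row 4) $x$ is the unique neighbour of $y$ in $\mR\cup\mN$; since $x\in\mN$ the interval of $y$ is given by (\ref{eqn:P2iNI3}), again with $l(y,I_3)=r(x,I_3)$. In all three cases the touching occurs in $I_3$.

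This leaves the principal case $y\in\mB_2\cup\mP_e\cup\mN$, where $x$ and $y$ lie in a common component $S$ of $G[\mB_2\cup\mP_e\cup\mN]$; by Observation \ref{obs:B2PeN} and Definition \ref{def:type12}, $S$ is a Type 1 cycle, a Type 2 cycle, or a path. The uniform idea is to orient the edge by its $\Pi_1$-values and apply the appropriate touching observation to the $\mN$-endpoint, using $I_1$ when that endpoint carries the larger $\Pi_1$-value and $I_3$ when it carries the smaller. For a path, write $u,w$ for the endpoints of the edge with $\Pi_1(u)<\Pi_1(w)$: if $u\in\mN$ then Observation \ref{obs:PathI3}.\ref{PathI3:touch} gives $r(u,I_3)=l(w,I_3)$, and otherwise $x=w\in\mN$ with $u\in\mB_2\cup\mP_e$, so Observation \ref{obs:PathI1}.\ref{PathI1:touch} gives $l(w,I_1)=r(u,I_1)$. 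For a Type 1 cycle $c_1c_2\cdots c_tc_1$ with $c_1\in\mN$, the interior edges $(c_i,c_{i+1})$, $2\le i\le t-1$, abut in $I_1$ by Observation \ref{obs:Type1I1}.\ref{Type1I1:SMinusc1Touch}, while the two edges incident to $c_1$ abut in $I_3$ by Observation \ref{obs:Type1I3}.\ref{Type1I3:touch}, which asserts $r(c_1,I_3)=l(c_2,I_3)=l(c_t,I_3)$; together these exhaust all $t$ edges.

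The Type 2 cycle $c_1c_2\cdots c_tc_1$ with $c_1,c_t\in\mN$ is where the argument is most delicate, and I expect it to be the main obstacle, since the generic observations \ref{obs:Type2I1}.\ref{Type2I1:touch} and \ref{obs:Type2I3}.\ref{Type2I3:touch} carry the exclusions ``neither endpoint is $c_1$'' and ``neither endpoint is $c_t$'' respectively. For an interior edge $(c_i,c_{i+1})$ with $2\le i\le t-2$ neither exclusion bites, so I use $I_1$ (Observation \ref{obs:Type2I1}.\ref{Type2I1:touch}) when $x$ is the larger-$\Pi_1$ endpoint and $I_3$ (Observation \ref{obs:Type2I3}.\ref{Type2I3:touch}) when it is the smaller. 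The two edges at $c_1$, namely $(c_1,c_2)$ and the wrap-around edge $(c_t,c_1)$, abut in $I_3$ directly via Observation \ref{obs:Type2I3}.\ref{Type2I3:c1Touch} ($r(c_1,I_3)=l(c_2,I_3)=l(c_t,I_3)$), and the single remaining edge $(c_{t-1},c_t)$ abuts in $I_1$ by invoking Observation \ref{obs:Type2I1}.\ref{Type2I1:touch} with $c_t\in\mN$ in the role of the larger-$\Pi_1$ endpoint (legitimate since neither $c_{t-1}$ nor $c_t$ equals $c_1$), giving $l(c_t,I_1)=r(c_{t-1},I_1)$. In every sub-case the resulting equality has the form $l(x,I)=r(y,I)$ or $l(y,I)=r(x,I)$ for some $I\in\{I_1,I_3\}$, so the lemma follows; the remaining care is purely bookkeeping --- tracking which of the two adjacent vertices is the prescribed $x\in\mN$ and confirming that the exclusion hypotheses hold at the boundary vertices $c_1$ and $c_t$.
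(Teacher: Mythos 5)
Your proposal is correct and follows essentially the same route as the paper's proof: the non-$\mB_2\cup\mP_e\cup\mN$ cases are dispatched in $I_3$ via the unique-neighbor rows of Table \ref{tab:uniqNeighbor} and (\ref{eqn:CP2eI3}), (\ref{eqn:B1I3}), (\ref{eqn:P2iNI3}), and the component case is split by Type 1 cycle / Type 2 cycle / path using exactly the same touching observations (\ref{obs:Type1I1}.\ref{Type1I1:SMinusc1Touch}, \ref{obs:Type1I3}.\ref{Type1I3:touch}, \ref{obs:Type2I1}.\ref{Type2I1:touch}, \ref{obs:Type2I3}.\ref{Type2I3:c1Touch}, \ref{obs:Type2I3}.\ref{Type2I3:touch}, \ref{obs:PathI1}.\ref{PathI1:touch}, \ref{obs:PathI3}.\ref{PathI3:touch}). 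Your handling of the delicate Type 2 boundary edge $(c_{t-1},c_t)$ via Observation \ref{obs:Type2I1}.\ref{Type2I1:touch} with $c_t\in\mN$ as the larger-$\Pi_1$ endpoint is precisely the paper's own resolution of that case.
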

\begin{proof}
Note that $V\setminus\mA=(\mB_2\cup\mP_e\cup\mN)\cup
(\mB_1\cup\mC\cup\mP_{2e}\cup\mP_{2i}\cup\mR)$. Suppose
$y\in\mB_2\cup\mP_e\cup\mN$. Since $x$ is adjacent to $y$, they
belong to the same component in $G[\mB_2\cup\mP_e\cup\mN]$, which
is either a Type 1 cycle, Type 2 cycle or a path. Let this component
be $S=s_1s_2\ldots s_t$, where $\Pi_1(s_{i+1})=\Pi_1(s_i)+1$, $1\le
i<t$. Suppose $S$ is a Type 1 cycle.  If neither $x$ nor $y$ is $c_1$,
then by Observation \ref{obs:Type1I1}.\ref{Type1I1:SMinusc1Touch},
this condition is satisfied in $I_1$. If $x$ or $y$ is $c_1$, then by
Observation \ref{obs:Type1I3}.\ref{Type1I3:touch}, this is satisfied
in $I_3$. Suppose $S$ is a Type 2 cycle. If $x$ or $y$ is $c_1$, say
$x=c_1$, then by Observation \ref{obs:Type2I3}.\ref{Type2I3:c1Touch},
$r(x,I_3)=l(y,I_3)$. Now we will assume that neither
$x$ nor $y$ is $c_1$. Suppose $\Pi_1(x)>\Pi_1(y)$.
By Observation \ref{obs:Type2I1}.\ref{Type2I1:touch},
$l(x,I_1)=r(y,I_1)$. Moreover, since $c_t\in\mN$, this holds for
the case $x=c_t$ and $y=c_{t-1}$ too. Therefore, we can assume that
$x,y\notin\{c_1,c_t\}$. If $\Pi_1(x)<\Pi_1(y)$, then, by Observation
\ref{obs:Type2I3}.\ref{Type2I3:touch}, $l(y,I_3)=r(x,I_3)$. Finally,
suppose $S$ is a path. If $y\in\mN$, then, without loss of generality
we can assume that $\Pi_1(x)<\Pi_1(y)$ and therefore, by Observation
\ref{obs:PathI3}.\ref{PathI3:touch}, the condition is satisfied in
$I_3$.  If $y\in\mB_2\cup\mP_e$ and $\Pi_1(x)<\Pi_1(y)$, then, again by
Observation \ref{obs:PathI3}.\ref{PathI3:touch}, $r(x,I_3)=l(y,I_3)$ and
if $\Pi_1(x)>\Pi_1(y)$, by Observation \ref{obs:PathI1}.\ref{PathI1:touch}
$r(y,I_3)=l(x,I_3)$.

Suppose $y\in\mB_1\cup\mC\cup\mP_{2e}\cup\mP_{2i}$. By Table
\ref{tab:uniqNeighbor} (rows 2, 3 and 4), $x$ is the unique neighbor
of $y$ in $\mN$. From (\ref{eqn:CP2eI3}), (\ref{eqn:B1I3}) and
(\ref{eqn:P2iNI3}), it follows that $l(y,I_3)=r(x,I_3)$. By Table
\ref{tab:nonAdj} (row 2), $x$ is not adjacent to any vertex in $\mR$.
Thus, we have covered all possible cases for $y\in V\setminus\mA$. Hence,
proved.\qed
\end{proof}

\begin{observation}\label{obs:RBTouch}
If $x\in\mR$ and $y\in\mB$ are adjacent in $G$, then,
$l(x,I_3)=r(y,I_3)=n$. This follows from the fact that
$\mB=\mB_1\cup\mB_2$, (\ref{eqn:B1I3}), (\ref{eqn:RI3}) and Observation
\ref{obs:BunchI3}.\ref{BunchI3:B2I3}.
\end{observation}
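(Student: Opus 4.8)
The plan is to verify the two endpoint equalities separately, since each endpoint is dictated solely by the set-membership of the vertex in the partition of $V$, independent of the adjacency hypothesis. First I would handle $l(x,I_3)$: since $x\in\mR$, the interval assignment (\ref{eqn:RI3}) gives $f(x,I_3)=[n,n+1]$ outright, so $l(x,I_3)=n$ with no further work.

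The remaining task is to show $r(y,I_3)=n$ for $y\in\mB$. Here I would split on $\mB=\mB_1\uplus\mB_2$. If $y\in\mB_1$, then by Table \ref{tab:uniqNeighbor} (row 2) $y$ has a unique neighbor $y'\in\mN$, and the assignment (\ref{eqn:B1I3}) sets $f(y,I_3)=[r(y',I_3),n]$, whence $r(y,I_3)=n$ directly. If instead $y\in\mB_2$, then Observation \ref{obs:BunchI3}.\ref{BunchI3:B2I3} states $r(z,I_3)=n$ for every $z\in\mB_2$, so again $r(y,I_3)=n$. Combining the two cases with the first paragraph yields $l(x,I_3)=r(y,I_3)=n$.

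There is essentially no genuine obstacle: the statement is a direct bookkeeping consequence of the interval definitions in Section \ref{sec:I3}. The only points worth checking are (a) that in (\ref{eqn:B1I3}) the right endpoint is pinned to exactly $n$ regardless of which neighbor $y'$ is chosen, which holds because $n$ is a hard-coded constant and $r(y',I_3)<n$ by Observation \ref{obs:BunchI3}.\ref{BunchI3:NLess2n} (this also guarantees the interval is non-degenerate and well-formed), and (b) that the $\mB_2$ case is entirely absorbed by the already-established observation rather than requiring a fresh computation. I would also remark that the adjacency of $x$ and $y$ in $G$ is not actually invoked in computing the endpoints; it is the contextual reason the equality matters, since $l(x,I_3)=r(y,I_3)=n$ is precisely the condition certifying that the corresponding boxes meet only along their boundary, as demanded by the touching constraint of Theorem \ref{thm:mainTheorem}.
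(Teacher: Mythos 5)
Your proof is correct and follows exactly the paper's intended argument: $l(x,I_3)=n$ from (\ref{eqn:RI3}), and $r(y,I_3)=n$ by splitting $\mB=\mB_1\uplus\mB_2$ and invoking (\ref{eqn:B1I3}) and Observation \ref{obs:BunchI3}.\ref{BunchI3:B2I3} respectively. Your added well-formedness check via Observation \ref{obs:BunchI3}.\ref{BunchI3:NLess2n} and the remark that adjacency is only contextual are sound but not needed beyond what the paper states.
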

}
\subsubsection{Vertices of $\mA$}
\conf{$\forall v\in\mA, f(v,I_3)=[1,n+1]$.}
\journ{
Every vertex is assigned the following interval:
\begin{equation}\label{eqn:AI3}
\forall v\in\mA, f(v,I_3)=[1,n+1].
\end{equation}
}
\journ{
\begin{lemma}\label{lem:superI3}
$I_3$ is a supergraph of $G$.
\end{lemma}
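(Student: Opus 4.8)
The plan is to reduce everything to work that is already done. Lemma \ref{lem:VminusASuperI3} shows that $I_3[V\setminus\mA]$ is a supergraph of $G[V\setminus\mA]$, so every edge of $G$ both of whose endpoints lie outside $\mA$ is automatically an edge of $I_3$. Hence the only thing left to verify is that every edge of $G$ incident to at least one vertex of $\mA$ is also present in $I_3$.

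First I would pin down which classes an $\mA$-vertex can have neighbors in. By the non-adjacency table (Table \ref{tab:nonAdj}, rows 1--3), a vertex of $\mA$ has no neighbor in $\mC\cup\mP$, none in $\mR$, and none in $\mB=\mB_1\cup\mB_2$. Since $V=\mC\uplus\mP_e\uplus\mP_{2e}\uplus\mP_{2i}\uplus\mN\uplus\mR\uplus\mB_1\uplus\mB_2\uplus\mA$, this leaves $\mA\cup\mN$ as the only classes that can contain a neighbor of an $\mA$-vertex. So it suffices to handle edges inside $\mA$ and edges between $\mA$ and $\mN$.

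The two remaining cases are both short. Every vertex of $\mA$ receives the interval $[1,n+1]$ by (\ref{eqn:AI3}); thus any two $\mA$-vertices have identical (hence overlapping) intervals and are adjacent in $I_3$, disposing of the $\mA$--$\mA$ edges. For an edge $(x,y)$ with $x\in\mA$ and $y\in\mN$, I would invoke Observation \ref{obs:BunchI3}.\ref{BunchI3:NLess2n}, which gives $r(y,I_3)\le\Pi_1(y)+1<n$ together with $r(y,I_3)\ge\Pi_1(y)+0.5>1$; hence the point $r(y,I_3)$ lies strictly inside $[1,n+1]=f(x,I_3)$, so $f(x,I_3)\cap f(y,I_3)\neq\varnothing$ and $(x,y)\in E(I_3)$.

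There is essentially no hard step here: the substance of the lemma has already been absorbed into Lemma \ref{lem:VminusASuperI3} and into the design choice that places every $\mN$-interval of $I_3$ strictly below the level $n$. The only point requiring care is the claim that an $\mA$-vertex can only see $\mN$ and $\mA$, which rests entirely on correctly reading off rows 1--3 of the non-adjacency table; once that is in place, the generous interval $[1,n+1]$ assigned to $\mA$ swallows every $\mN$-interval and the verification is immediate.
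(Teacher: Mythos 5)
Your proposal is correct and follows the paper's own route: reduce to Lemma \ref{lem:VminusASuperI3} for edges inside $V\setminus\mA$, then use the interval $[1,n+1]$ assigned to $\mA$ by (\ref{eqn:AI3}) to absorb the remaining edges. The only (harmless) difference is that the paper skips the neighbor analysis entirely, observing that $f(x,I_3)\subset[1,n+1]$ for \emph{every} $x\in V\setminus\mA$, so each $\mA$-vertex is adjacent in $I_3$ to all other vertices; your restriction to $\mA\cup\mN$ via Table \ref{tab:nonAdj} (rows 1--3) is valid but is extra work that the fat interval makes unnecessary.
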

\begin{proof}
By Lemma \ref{lem:VminusASuperI3}, $I_3[V\setminus\mA]$ is a supergraph of
$G[V\setminus\mA]$. From the interval assignments in $I_3$ for vertices in
$V\setminus\mA$, it is easy to infer that if $x\in V\setminus\mA$, then,
$f(x,I_3)\subset [1,n+1]$. Since by (\ref{eqn:AI3}), for any $y\in\mA$,
$f(y,I_3)=[1,n+1]$, it follows that $y$ is adjacent to $x$. Clearly,
$I_3[\mA]$ is a clique. Therefore, $I_3$ is a supergraph of $G$.
\qed
\end{proof}

\subsection{Proof of $E(G)=E(I_1)\cap E(I_2)\cap E(I_3)$}\label{sec:verify}
We will prove Theorem \ref{thm:mainTheorem} by showing that
$E(G)=E(I_1)\cap E(I_2)\cap E(I_3)$. We have already proved that $I_1$,
$I_2$ and $I_3$ are supergraphs of $G$ (in Lemmas \ref{lem:superI1},
\ref{lem:superI2} and \ref{lem:superI3}, respectively). In this
section, we will show the following: If two vertices $s$ and $t$
are not adjacent in $G$, then there exists at least one interval
graph $I\in\{I_1,I_2,I_3\}$, such that $(s,t)\notin I$.  Recall the
partitioning of $V$ illustrated in Figure \ref{fig:cubicPartTree}:
$V=\mA\cup\mN\cup\mC\cup\mR\cup\mB\cup\mP$. Now we will consider one
by one all possible cases and in each case show that $s$ and $t$
are not adjacent in at least one of the interval graphs.

\paragraph{\boldmath $s\in\mA$, $t\in V$:}
If $t\in\mA$, then by Lemma \ref{lem:AGI1}, $(s,t)\notin E(I_1)$. If
$t\in\mN$, then by Lemma \ref{lem:ANnonAdj}, $(s,t)\notin
E(I_1\cap I_2)$ and if $t\in V\setminus(\mA\cup\mN)$, then by Lemma
\ref{lem:AANnonAdj} $(s,t)\notin E(I_1)$.

\paragraph{\boldmath $s\in\mN$, $t\in V\setminus\mA$:}
By Lemma \ref{lem:Nresolved}, $(s,t)\notin E(I_1\cap I_3)$.

\paragraph{\boldmath $s\in\mC$, $t\in
V\setminus(\mA\cup\mN)=\mC\cup\mP\cup\mR\cup\mB$:} If $t\in
\mC\cup\mP$, by Lemma \ref{lem:CPI1I2}, $(s,t)\notin E(I_1\cap I_2)$. If
$t\in\mR\cup\mB$, by Observation \ref{obs:RBCP}.\ref{RBCP:RBCPeP2eI2},
$(s,t)\notin E(I_2)$.

\paragraph{\boldmath $s\in\mP$, $t\in
V\setminus(\mA\cup\mN\cup\mC)=\mP\cup\mR\cup\mB$:} If $t\in\mP$,
by Lemma \ref{lem:CPI1I2}, $s$ and $t$ are not adjacent in either
$I_1$ or $I_2$. Let $t\in\mR\cup\mB$. If $s\in\mP_e\cup\mP_{2e}$,
then by Observation \ref{obs:RBCP}.\ref{RBCP:RBCPeP2eI2}, $(s,t)\notin
E(I_2)$.  Finally, let $s\in\mP_{2i}$. If $t\in\mR$, then by Observation
\ref{obs:RBCP}.\ref{RBCP:RP2iI2} $(s,t)\notin E(I_2)$ and if $t\in\mB$,
by Lemma \ref{lem:BP2iI3}, $(s,t)\notin E(I_2\cap I_3)$.

\paragraph{\boldmath $s,t\in\mR\cup\mB$:} By Lemma \ref{lem:RBI2}
(Statement 2), $(s,t)\notin E(I_2)$.

\subsection{Proof of Theorem \ref{thm:mainTheorem}}\label{sec:boundary}
We have proved that $G$ has a $3$-box representation. Now we will
show that in this $3$-box representation, any two intersecting boxes
intersect only at their boundaries and hence complete the proof of Theorem
\ref{thm:mainTheorem}. For this to happen, the following condition needs
to be satisfied:
\begin{condition}\label{con:touch}
Let $s$ and $t$ be adjacent in $G$. For some $I\in\{I_1,I_2,I_3\}$,
either $l(s,I)=r(t,I)$ or $l(t,I)=r(s,I)$.
\end{condition}
As in the previous section, we will consider one by one all the possible
cases:

\paragraph{\boldmath $s\in\mA$, $t\in V$:} If $t\in\mA$, then by 
Observation \ref{obs:AATouch} and if $t\in\mN$, by Observation
\ref{obs:BunchI1}.\ref{BunchI1:ANTouch}, Condition \ref{con:touch} is
satisfied. By Table \ref{tab:nonAdj} (rows 1--3), $s$ is not adjacent
to any vertex in $V\setminus(\mA\cup\mN)$.

\paragraph{\boldmath $s\in\mN$, $t\in V\setminus\mA$:} By Lemma
\ref{lem:NTouch}, Condition \ref{con:touch} is satisfied.

\paragraph{\boldmath $s\in\mC$, $t\in
V\setminus(\mA\cup\mN)=\mC\cup\mP\cup\mR\cup\mB$:} By Table
\ref{tab:nonAdj} (rows 4--6), $t\notin\mP\cup\mR\cup\mB$. Therefore, the
only case to be considered is $t\in\mC$. Let $s,t\in C$, where
$C\subseteq\mC$ is a special cycle. Let $C=c_1c_2\ldots c_t$
where $c_1$ is the special vertex. If $x=c_1$, then by Observation
\ref{obs:CI1}.\ref{CI1:special}, Condition \ref{con:touch} is satisfied. If
neither $x$ nor $y$ is $c_1$, then by Observation
\ref{obs:CI2}.\ref{CI2:touch}, Condition \ref{con:touch} is satisfied in
$I_2$.

\paragraph{\boldmath $s\in\mP$, $t\in
V\setminus(\mA\cup\mN\cup\mC)=\mP\cup\mR\cup\mB$:} If $t\in\mP$,
then by Observation \ref{obs:PSuperI2}, Condition \ref{con:touch}
is met in $I_2$. If $t\in\mR$, then from Table \ref{tab:nonAdj} (row
5) we infer that $s\in\mP_{2i}$. By Observation \ref{obs:P2iRTouch},
Condition \ref{con:touch} is met in $I_1$. By Table \ref{tab:nonAdj}
(row 6), $t\notin\mB$.

\paragraph{\boldmath $s\in\mR$, $t\in
V\setminus(\mA\cup\mN\cup\mC\cup\mP)=\mR\cup\mB$:} By Table
\ref{tab:nonAdj} (row 10) $t\notin\mR$. By Observation \ref{obs:RBTouch},
Condition \ref{con:touch} is satisfied in $I_3$.

\paragraph{\boldmath $s,t\in\mB$:} Condition \ref{con:touch} is satisfied
in $I_2$ (See Figure \ref{fig:componentsYInterval}).

Thus, we have completed the proof for cubic graphs. Now, applying
Lemma \ref{lem:cubicEnough}, it follows that any graph with maximum
degree $3$ has a $3$-box representation. Thus, we have proved Theorem
\ref{thm:mainTheorem}.
}
\confExt{
\section{Algorithmic aspects}
Now we briefly explain how our construction of the $3$-box representation
can be realized in $O(n)$ time, where $n$ is the number of vertices
in the graph. Firstly, we note that the process can be split into
three stages: (1) Partitioning the vertex set as illustrated in
Figure \ref{fig:cubicPartTree}, (2) ordering the vertices of $\mA$,
$\mB_2\cup\mP_e\cup\mN$ and $\mC\cup\mP$ according to Definitions
\ref{def:PiA}, \ref{def:Pi1} and \ref{def:Pi2} and finally, (3)
assigning intervals to all the vertices.  In Stage (1) the first
step is to extract special cycles and special paths as described in
Algorithm \ref{alg:primPart}. This is the only non-trivial part of the
construction and we analyze its complexity in the appendix. We will show
that a special cycle or path can be extracted from a graph with maximum
degree $3$ in time linear to the number of vertices in it. This will
imply that Algorithm \ref{alg:primPart} can be implemented in $O(n)$
time. Algorithm \ref{alg:finePart} takes $O(n)$ time since in every
iteration we need to only check if a vertex in $\mN_1$ has two neighbors
in $\mA_1$ in that iteration and accordingly move or retain the vertex
and its neighbors. It is easy to see that the finer partitioning of
$\mP$, $\mN$ and $\mB$ can be accomplished in linear time.  Stage (2)
involves ordering the vertices of sets $\mA$, $\mB_2\cup\mP_e\cup\mN$
and $\mC\cup\mP$ component wise. Since each of these sets induce a graph
of maximum degree $2$, they can be ordered in linear time. Stage (3)
only involves assignment of intervals to the vertices and can be achieved
in linear time.
}

\section{Conclusion}
We showed that every graph of maximum degree $3$ has a $3$-box
representation and therefore, its boxicity is at most $3$. One interesting
question is whether we can characterize cubic graphs which have a $2$-box
representation. Answering this will also determine if the boxicity of a
cubic graph can be computed in polynomial time. One could also try to
extend the proof techniques used in this paper to graphs with maximum
degree $4$ and $5$ in order to improve the bounds on their boxicity.

\bibliography{docsdb}

\appendix
\section{Appendix}
Let $G$ be a graph with maximum degree $3$.

\subsection{Extending a non-special induced cycle or path}\label{sec:extend} 
Suppose $C$ is a non-special induced cycle in $G$. Then, by Definition
\ref{def:specialCycle}, it follows that there exists a vertex $x\in
C$, such that $C\setminus\{x\}$ belongs to a cycle or path of length
$|C|+1$. We call such a vertex a {\bf removable vertex}. Extending
a non-special induced cycle $C$ corresponds to removing a removable
vertex $x$ and adding two new vertices $a$ and $b$ such that
$(C\setminus\{x\})\cup\{a,b\}$ is an induced cycle or a path. There
are only two possible ways in which a non-special induced cycle can
be extended and these are illustrated in Figure \ref{fig:cycle2Path}.
The vertices which are added to or removed from $C$ in an extension
operation are called {\bf participating vertices}. In the figure, $x$,
$a$ and $b$ are the participating vertices.

\begin{figure}[bh]
\begin{center}
\epsfig{file=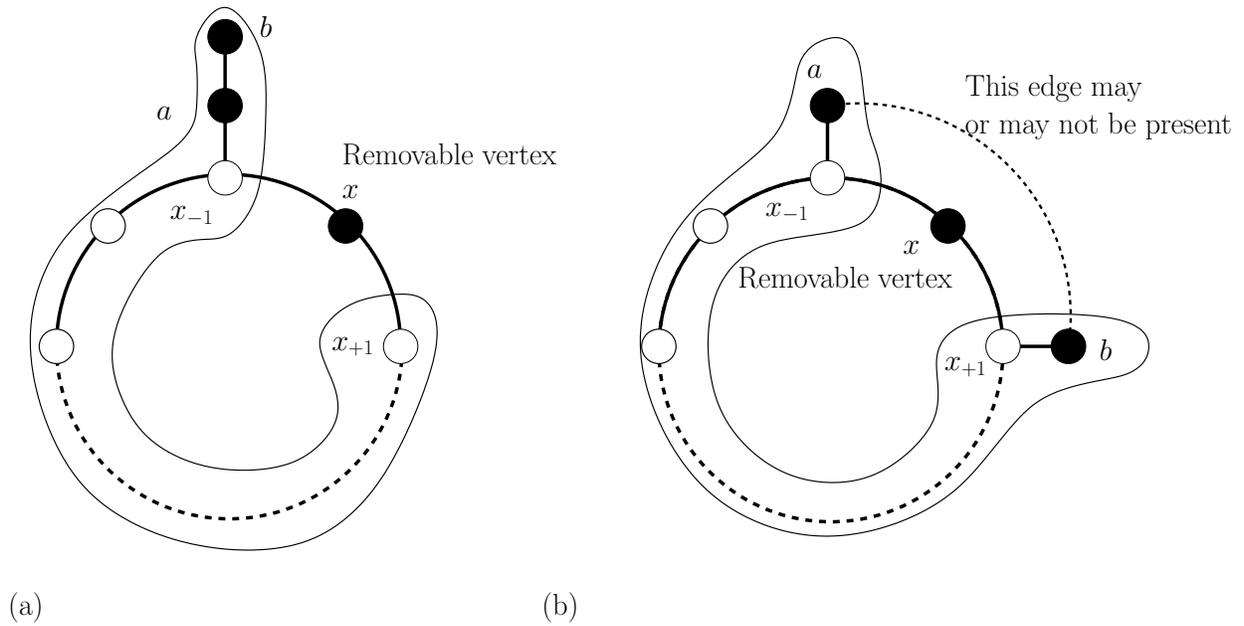,width=16.5cm}
\end{center}
\caption{The possible ways in which a non-special induced cycle can be
extended. The vertices marked black are the participating vertices.
\label{fig:cycle2Path}}
\end{figure}

\begin{lemma}\label{lem:cycleExt}
Suppose $C$ is a non-special induced cycle and $x\in C$. It takes
constant time to verify whether $x$ is a removable vertex or not. If
$x$ is a removable vertex, then, the extension of $C$ by removing $x$
can again be achieved in constant time.
\end{lemma}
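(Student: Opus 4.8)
The plan is to use the maximum-degree-$3$ hypothesis twice: once to bound the number of candidate extensions by a constant, and once to reduce the test ``is the enlarged object an induced cycle or path'' to inspecting only a constant number of adjacencies. I assume $C$ is kept as a circular doubly-linked list of its vertices, that each vertex of $G$ carries a flag telling whether it currently lies on $C$, and that the adjacency of two named vertices is decidable in $O(1)$ time by scanning one of their adjacency lists (each of length at most $3$).

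First I would, in $O(1)$ time, read off the two cycle-neighbours $u$ and $v$ of $x$ as its predecessor and successor in the list; deleting $x$ turns $C$ into the induced path $P=C\setminus\{x\}$ with end points $u,v$. The point is that an extension (Figure~\ref{fig:cycle2Path}) introduces two fresh vertices $a,b\notin C$, and since $P$ is induced, every interior vertex of $P$ already has both of its path-neighbours inside $P$ and can acquire no new neighbour; hence $a$ and $b$ must attach at the ends $u,v$. Each of $u,v$ already has $x$ together with one path-neighbour among its $\le 3$ neighbours, so it has at most one further neighbour outside $C$. Consequently the participating vertices of either configuration in Figure~\ref{fig:cycle2Path} lie in the bounded neighbourhoods of $u$ and $v$, and there are only $O(1)$ candidate pairs $(a,b)$, each located in $O(1)$ time by scanning the length-$\le 3$ lists of $u,v$ and reading the ``on-$C$'' flags.

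For each candidate I would check that it realizes one of the two configurations of Figure~\ref{fig:cycle2Path}. The decisive observation is that, $P$ being already induced, the only edges that could destroy inducedness are those incident to $a$ or to $b$. Since $\deg(a),\deg(b)\le 3$, I can enumerate all neighbours of $a$ and of $b$ and verify in $O(1)$ time, using the membership flags, that $a$ and $b$ are adjacent to exactly the vertices required by the configuration and to no other vertex of $P$, and that $a\neq b$. If some candidate passes, $x$ is removable; otherwise it is not. With $O(1)$ candidates and $O(1)$ tests each, verification is $O(1)$.

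Performing the extension, once $x$ is known to be removable, alters the status of only the three participating vertices: I would unlink $x$, splice $a,b$ onto the ends of $P$ (closing it into a cycle in the cyclic configuration), clear the flag of $x$, set those of $a,b$, and refresh the end-point pointers when the result is a path. This touches $O(1)$ list nodes and flags, so it also runs in $O(1)$ time. The one place demanding care, and the crux of the whole argument, is the inducedness test: a naive comparison of $a,b$ against all of $P$ would cost $\Theta(|C|)$, and it is exactly the bounded-degree hypothesis, together with the fact that $P=C\setminus\{x\}$ is already induced, that lets me replace it by an inspection of the $O(1)$ neighbourhoods of $a$ and $b$.
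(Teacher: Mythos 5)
Your proposal is correct and follows essentially the same route as the paper: both arguments reduce the question to checking, for the two configurations of Figure~\ref{fig:cycle2Path}, whether suitable vertices $a$ and $b$ exist among the $O(1)$ candidates forced by the degree bound $\Delta(G)\le 3$. Your write-up merely makes explicit what the paper leaves implicit (the linked-list/flag data structures, the fact that $a,b$ can only attach at the former neighbours of $x$ because $C\setminus\{x\}$ is already induced, and the $O(1)$ inducedness test), so it is a more detailed rendering of the same proof.
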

\begin{proof}
Consider the possible ways in which $C$ can be extended as shown in
Figure \ref{fig:cycle2Path}. To verify if $x$ is a removable vertex,
we need to only check if the vertices $a$ and $b$ exist. Similarly,
given that $x$ is a removable vertex, we need to only find $a$
and $b$ to extend $C$ by removing $x$. Recalling that $\Delta(G)\le3$,
it is easy to see that this can be done in constant time.\qed
\end{proof}

Let $P$ be a non-special induced path in $G$. By Definition
\ref{def:specialPath}, it implies that either:
\begin{enumerate}
\item it is not maximal in the sense that it is part of an induced
cycle or a longer induced path, or
\item for some end point of $P$, say $x$, $P\setminus\{x\}$ belongs to an
induced cycle of size $\ge|P|$ or an induced path of length $\ge|P|+1$. We
call $x$ a {\bf removable end point} of $P$.
\end{enumerate}
Extending a non-special path $P$ corresponds to the following operations:
\begin{enumerate}
\item If $P$ is not maximal, then, we add a new vertex $y$ to $P$ such
that $P\cup\{y\}$ is an induced path or cycle. Clearly, $y$ must be
a neighbor of an end point of $P$ such that it is not adjacent to any
interior vertex of $P$.
\item If $P$ is maximal, then, we remove a removable end point $x$ and
either 
\begin{enumerate}[(a)]
\item add a single new vertex $a$ such that $(P\setminus\{x\})\cup\{a\}$
is an induced cycle (note that in this case $a$ has to be adjacent to the
neighbor of $x$ in $P$) OR
\item add two new vertices $a$ and $b$ such that
$(P\setminus\{x\})\cup\{a,b\}$ is an induced cycle or a path (in this
case, $a$ and $b$ are adjacent and $a$ is adjacent to the neighbor
of $x$ in $P$).
\end{enumerate}
This is illustrated in Figure \ref{fig:path2Cycle}.
\end{enumerate}
As in the case of extending a cycle, the vertices which are added to or
removed from $P$ are called {\bf participating vertices}. In case 1,
$y$ is the participating vertex. In case 2(a), $x$ and $a$ are
participating vertices and in case 2(b), $x$, $a$ and $b$ are participating
vertices.

\begin{figure}[th]
\begin{center}
\epsfig{file=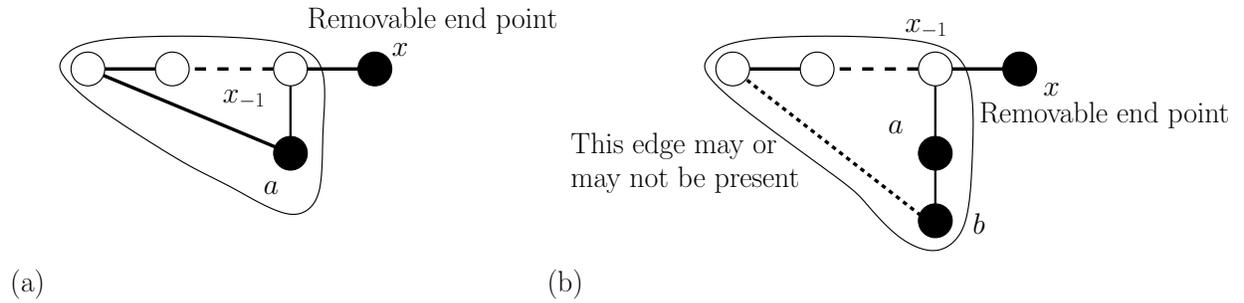,width=16.5cm}
\end{center}
\caption{The ways in which a non-special induced path can be extended. The
vertices marked black are the participating vertices.
\label{fig:path2Cycle}}
\end{figure}

\begin{lemma}\label{lem:pathExt}
If $P$ is an induced path, then, in constant time, it can be verified
whether it is a special path or not. If not, then, in constant time it
can be extended.
\end{lemma}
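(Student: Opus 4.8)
The plan is to reduce the two global conditions in Definition~\ref{def:specialPath} to purely local inspections in a constant-size neighbourhood of the two end points of $P$, exactly in the spirit of the argument used for cycles in Lemma~\ref{lem:cycleExt}. The enabling assumption is that $P$ is stored together with a constant-time \emph{role query}: each vertex carries a label recording whether it lies in $P$ and, if so, whether it is an end point or an interior vertex, and we keep pointers to the two end points and to their in-path neighbours (the ``second'' vertices). All of these labels and pointers are updated in $O(1)$ time whenever a participating vertex is added to or removed from $P$, so it suffices to bound the number of vertices we must look at and the number of role queries we must make.

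First I would test maximality (the negation of Condition~1 of Definition~\ref{def:specialPath}). Since $\Delta(G)\le 3$ and $P$ is induced, each end point $u$ has at most two neighbours outside $P$; call each such neighbour a candidate $y$. For a fixed $y$ I examine its at most three neighbours and, using the role query, decide whether any of them is an interior vertex of $P$. If some candidate $y$ is adjacent to no interior vertex of $P$, then $y\,u\cdots$ extends $P$ to a longer induced path (or, if $y$ is also adjacent to the far end point, closes it into an induced cycle), so $P$ is non-maximal and I extend it by adding $y$; otherwise no such $y$ exists and $P$ is maximal. Correctness rests on the observation that induced-ness forces a \emph{local} witness: any strictly larger induced path or cycle containing $P$ must use, as the vertex immediately beyond an end point, a vertex adjacent to that end point but to no interior vertex of $P$. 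As there are two end points and $O(1)$ candidates each, this whole test and the ensuing extension touch only $O(\Delta)=O(1)$ vertices.

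If $P$ is maximal I would then test, for each end point $x$, whether $x$ is removable (the negation of Condition~2). Deleting $x$ leaves a path $P'$ whose new end point $x'$ is the former second vertex at that end. Here the degree bound is decisive: $x'$ has at most three neighbours, two of which ($x$ and the third path vertex) are already accounted for, so $P'$ admits \emph{at most one} external candidate $a\in N(x')\setminus P$. I check the two admissible extensions of Figure~\ref{fig:path2Cycle}: case~2(a), where $a$ is adjacent to the far end point and to no interior vertex, closing $P'$ into an induced cycle of the required size; and case~2(b), where a neighbour $b\in N(a)$ (again $O(1)$ many to inspect) together with $a$ yields an induced cycle or a sufficiently long induced path. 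Each verification is a constant number of role queries over the $O(1)$ neighbours of $x'$, $a$ and $b$; if it succeeds, $x$ is removable and the corresponding extension is carried out, updating labels and end-point pointers in $O(1)$ time.

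The main obstacle is precisely the passage from the global to the local formulation: Definition~\ref{def:specialPath} quantifies over \emph{all} induced cycles and paths of a given size, which is not obviously a bounded search. The heart of the argument is therefore the structural claim that, because $P$ (and $P'$) is induced and $\Delta(G)\le 3$, every qualifying enlargement is witnessed by at most two vertices sitting on an end point (respectively on the new end point $x'$ after a single deletion), and that the degree bound caps the number of such candidate vertices by a constant. Once this local-witness reduction is established --- it is exactly what Figure~\ref{fig:path2Cycle} and the case analysis preceding the lemma make explicit --- the time bound is immediate, since every step reduces to scanning $O(\Delta)$ neighbours and performing $O(1)$ constant-time role queries.
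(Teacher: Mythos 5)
Your proposal is correct and follows essentially the same route as the paper's proof: first test maximality by local inspection at the end points, then test each end point for removability by searching for the configurations of Figure~\ref{fig:path2Cycle} (the vertices $x$, $a$, $b$), with the degree bound $\Delta(G)\le 3$ capping every search at a constant number of vertices. Your only additions are the explicit role-query data structure and the spelled-out local-witness reduction, both of which the paper treats implicitly (via its linked-list implementation remark and the case analysis of Section~\ref{sec:extend}).
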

\begin{proof}
First we need to check if $P$ is maximal or not, that is, whether it
is part of a larger induced cycle or a longer induced path. This can
be done in constant time. If $P$ is maximal, then, we need to check if
there is a removable end point and then extend $P$ by removing it. For
this, as shown in Figure \ref{fig:path2Cycle}, we need to check if the
vertices $x$, $a$ and $b$ exist. Since $\Delta(G)\le3$, it is easy to
see that this can be done in constant time. It is also trivial to verify
that the extension can be achieved in constant time. Hence proved. \qed
\end{proof}

\subsection{An algorithm to find a special cycle or path}
We now give an iterative algorithm to obtain a special cycle or path. The
outline of the algorithm is as follows: Let $S$ be the set which holds
the vertices of the special cycle or path at the termination of the
algorithm. We start with $S$ containing an arbitrary vertex. In each
iteration, we extend it as described in Section \ref{sec:extend}. The
algorithm terminates when $S$ induces a special cycle or path.
In Algorithm \ref{alg:special}, we present an outline of this procedure.

\begin{algorithm}[h]
\caption{\label{alg:special}}
\SetKwInOut{Input}{input}
\SetKwInOut{Output}{output}
\SetKw{Sf}{specialFlag}
\Input{Graph $G$ with maximum degree $3$}
\Output{Set $S\subseteq V(G)$ which induces a special cycle or path in $G$}
Let $S=\{x\}$ where $x$ is an arbitrary vertex\; 
Let $R=\varnothing$, the set of potential removable vertices of $S$\;
Let \Sf $=0$; \tcp{If set to $1$, it implies that $S$ is a special
cycle or path}
\While{\Sf$=0$}{
   \uIf{$S$ induces a cycle}{
      \uIf{$R=\varnothing$}{
         \Sf$=1$; \tcp{No removable vertices and therefore, $S$
         is a special cycle (see Observation \ref{obs:pot})}
      }
      \Else{
         Choose any vertex $x$ from $R$\;\nllabel{lin:xR}
         Extend $S$ by removing $x$ as described in Section \ref{sec:extend}\;\nllabel{lin:extendCycle}
      }
   }
   \ElseIf{$S$ induces a path}{
      \uIf{$S$ is a special path}{
         \Sf$=1$\;
      }
      \Else{
         Extend $S$ as described in Section \ref{sec:extend}\;\nllabel{lin:extendPath}
      }
   }
   Update $R$ as described in Section \ref{sec:update}\;\nllabel{lin:updateR}
}
\end{algorithm}

\subsubsection{Potential removable vertices}\label{sec:update}
From Lemma \ref{lem:pathExt}, we note that in constant time we can
recognize a non-special induced path and extend it. However, in view of
Lemma \ref{lem:cycleExt}, to recognize and extend a non-special induced
cycle in constant time, we first need a strategy to find a removable
vertex. For efficiently finding removable vertices, we maintain a list
of potential removable vertices which is updated in each iteration.

\begin{definition}{\bf Potential removable vertices:}\label{def:potRemove}
A vertex $x\in S$ is a potential removable vertex if it has two neighbors
in $S$, say $x_{-1}$ and $x_{+1}$ and satisfies at least one of the following
conditions: There are two vertices $a$ and $b$ such that
\begin{enumerate}
\item $a$ is adjacent to only $x_{-1}$ (or $x_{+1}$) in $S\setminus\{x\}$
and $b$ is adjacent to $a$ and not any vertex in $S\setminus\{x\}$. This
corresponds to case (a) in Figure \ref{fig:cycle2Path}; OR
\item $a$ is adjacent to $x_{-1}$ in $S\setminus\{x\}$ and $b$ is adjacent
to only $x_{+1}$ in $S\setminus\{x\}$. This corresponds to case (b)
in Figure \ref{fig:cycle2Path}.
\end{enumerate}
\end{definition}

From the definition of removable vertices at the beginning of Section
\ref{sec:extend}, we infer the following:
\begin{observation}\label{obs:pot}
If $S$ induces a cycle, then, all potential removable vertices in $S$
will correspond to removable vertices. If there are no potential removable
vertices, then, it implies that $S$ is a special cycle.
\end{observation}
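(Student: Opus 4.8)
The plan is to prove the two assertions by showing that, when $S$ induces a cycle $C$, the purely local condition of being a \emph{potential removable vertex} (Definition \ref{def:potRemove}) coincides exactly with being a \emph{removable vertex} in the sense of the beginning of Section \ref{sec:extend}, i.e.\ a vertex $x$ for which $C\setminus\{x\}$ is contained in an induced cycle or path of size $\ge|C|+1$. Recall that $C\setminus\{x\}$ is itself an induced path whose two endpoints are the neighbours $x_{-1},x_{+1}$ of $x$ in $C$, and that since $\Delta(G)\le 3$ each of these endpoints has at most one neighbour outside $C$. This degree bound is precisely what forces any extension to take one of the two shapes of Figure \ref{fig:cycle2Path}, and the two conditions of Definition \ref{def:potRemove} are exactly the local descriptions of those two shapes.

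For the first assertion I would take a potential removable vertex $x$ and show it is removable. If condition (1) holds, its witnesses $a,b$ produce the sequence $x_{+1}\cdots x_{-1}\,a\,b$; I would verify this is an \emph{induced} path on $|C|+1$ vertices, the only non-immediate point being $a,b\notin C$ and the absence of chords. These follow from the stated non-adjacencies together with $|C|\ge 3$ and $\Delta(G)\le 3$: for instance $b$ has no neighbour in $C\setminus\{x\}$, so $b\neq x$ and $b\notin C\setminus\{x\}$, hence $b\notin C$, and a symmetric argument handles $a$ (the only degenerate case $|C|=3$ being checked separately). If condition (2) holds, the witnesses $a,b$ extend $C\setminus\{x\}$ at both endpoints, giving an induced path (or a cycle, should $a,b$ coincide or be adjacent) on $|C|+1$ vertices. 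In either case $C\setminus\{x\}$ lies inside an induced cycle or path of size $|C|+1$, so $x$ is removable.

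For the second assertion I would argue the contrapositive: if $C$ is not a special cycle then, by Definition \ref{def:specialCycle}, some $x\in C$ is removable. Invoking the structural fact (established just before Lemma \ref{lem:cycleExt}) that a non-special induced cycle can be extended only in the two ways of Figure \ref{fig:cycle2Path}, the removal of $x$ realises one of those two extensions; reading off the adjacencies of the participating vertices $a,b$ yields precisely condition (1) or (2) of Definition \ref{def:potRemove}, so $x$ is a potential removable vertex. Hence the absence of potential removable vertices forces $C$ to be special.

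The main obstacle I anticipate is the bookkeeping needed to confirm that the local adjacency conditions of Definition \ref{def:potRemove} genuinely yield an \emph{induced} extension of the correct size, and conversely that every genuine extension is captured locally. The clean route is to lean on $\Delta(G)\le 3$: it guarantees that each endpoint $x_{-1},x_{+1}$ has a unique candidate external neighbour, which both restricts the possible extensions to the two figure cases and rules out the degenerate identifications ($a$ or $b$ lying on $C$, or $a=b$) except in the easily handled case of a triangle.
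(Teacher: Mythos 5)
Your proof is correct and takes essentially the same route as the paper, which gives no argument for this observation beyond remarking that it follows from the definition of removable vertices: your write-up is exactly that definitional comparison carried out in detail (the witnesses $a,b$ of Definition~\ref{def:potRemove} lie outside $S$ and give an induced extension on $|S|+1$ vertices, and conversely any extension witnessing that a cycle is non-special yields vertices satisfying condition (1) or (2)). One immaterial slip: under condition (2) the witnesses can never coincide, since a single vertex cannot be adjacent to \emph{only} $x_{-1}$ and to \emph{only} $x_{+1}$ simultaneously, so the size-$|S|$ cycle your parenthetical allows for does not actually arise.
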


\begin{lemma}\label{lem:checkRemovable}
In constant time, we can check if a particular vertex in $S$ is a potential
removable vertex.
\end{lemma}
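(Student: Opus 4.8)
The plan is to exploit the degree bound $\Delta(G)\le 3$ together with an $O(1)$-time membership test for $S$. Throughout Algorithm \ref{alg:special} I would maintain a Boolean array, indexed by $V(G)$, recording for each vertex whether it currently lies in $S$; this array is updated whenever a participating vertex is added to or removed from $S$ during the extension operations of Section \ref{sec:extend}, at no extra asymptotic cost. With this array in hand, testing whether an arbitrary vertex belongs to $S$, or to $S\setminus\{x\}$, takes constant time.

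First I would locate the two neighbors $x_{-1}$ and $x_{+1}$ of $x$ inside $S$. Since $x$ has at most three neighbors, I scan its adjacency list and consult the membership array to determine which of them lie in $S$; this settles in $O(1)$ time whether $x$ has two neighbors in $S$ and, if so, names $x_{-1}$ and $x_{+1}$. If $x$ has fewer than two neighbors in $S$, it is not a potential removable vertex and the check terminates.

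Next I would enumerate the bounded list of candidates for the roles of $a$ and $b$ in Definition \ref{def:potRemove}. The crucial point is that every candidate must be a neighbor of $x_{-1}$, of $x_{+1}$, or of another candidate, and each of these vertices has at most three neighbors; hence only a constant number of vertices can ever serve as $a$ or $b$. For Condition~1 the candidate $a$ ranges over the (at most two) neighbors of $x_{-1}$ and $x_{+1}$ lying outside $S$; for each such $a$ I verify, by scanning its at most three neighbors and consulting the membership array, that its unique neighbor in $S\setminus\{x\}$ is $x_{-1}$ (respectively $x_{+1}$), and then I search among the at most three neighbors of $a$ for a vertex $b$ having no neighbor in $S\setminus\{x\}$. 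Condition~2 is handled symmetrically: I take $a$ to be the neighbor of $x_{-1}$ outside $S$ and $b$ to be the neighbor of $x_{+1}$ outside $S$, and I check in $O(1)$ time that $a$ and $b$ are adjacent and that $b$ meets the ``only $x_{+1}$'' requirement.

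Because every branch examines only a constant number of vertices, each with a neighborhood of size at most three, and every membership query costs $O(1)$, the entire test runs in constant time. The only real obstacle is bookkeeping: one must exclude $x$ itself when counting neighbors in $S\setminus\{x\}$, and one must keep the membership array consistent under the extension operations. Once these details are in place, the degree bound $\Delta(G)\le 3$ does all the work.
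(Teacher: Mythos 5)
Your proposal is correct and follows essentially the same route as the paper: the paper's proof (by reference to Lemma \ref{lem:cycleExt}) also reduces the check to testing existence of the vertices $a$ and $b$ from Definition \ref{def:potRemove}, which is constant time because $\Delta(G)\le 3$ bounds the number of candidates. Your explicit membership array is just a spelled-out version of the membership flags the paper itself introduces when discussing the implementation of $S$ and $R$.
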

The proof is similar to that of Lemma \ref{lem:cycleExt}.

In the algorithm, we maintain a set of all potential removable vertices,
which we denote as $R$. From Observation \ref{obs:pot}, it follows that if $S$
induces a cycle, we can decide whether it is special or not by just
checking if $R$ is empty or not. Therefore, given an induced cycle and the
corresponding $R$, we can recognize in constant time whether it is a special
cycle or not. Now, we show that after each extension of $S$, $R$ can be
updated in constant time.
\begin{lemma}\label{lem:updateR}
In each iteration of Algorithm \ref{alg:special}, the set of potential
removable vertices, $R$ can be updated in constant time.
\end{lemma}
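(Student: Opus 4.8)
The plan is to combine two observations: that each iteration alters $S$ by only a constant number of vertices, and that membership in the set of potential removable vertices is a \emph{local} predicate, decidable from a bounded neighbourhood in $G$. Together these imply that only a constant number of vertices can have their status in $R$ affected by one iteration, so re-testing each such vertex via Lemma~\ref{lem:checkRemovable} updates $R$ in constant time.

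First I would record precisely how an extension changes $S$. By the description in Section~\ref{sec:extend} and Figures~\ref{fig:cycle2Path} and~\ref{fig:path2Cycle}, every extension step---growing a path, closing a path into a cycle, or shortening a cycle (or turning it into a path) by deleting a removable vertex---adds to or removes from $S$ only the participating vertices, of which there are at most three. Hence at most three vertices change their membership in $S$ per iteration.

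Next I would show that whether a vertex $v\in S$ is a potential removable vertex (Definition~\ref{def:potRemove}) depends only on the $S$-membership pattern of a ball of bounded radius (at most four edges) about $v$. Indeed, the condition consults only the two $S$-neighbours $x_{-1},x_{+1}$ of $v$ (distance $1$), a vertex $a$ adjacent to one of them (distance $\le 2$), a vertex $b$ adjacent to $a$ or to the other $S$-neighbour (distance $\le 3$), and the neighbours of $a$ and $b$, which are required to avoid $S\setminus\{v\}$ (distance $\le 4$). All data used is adjacency together with current $S$-membership over this constant-radius ball, so $v$'s status is a function of that ball alone.

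Combining these, the status of $v$ can change during an iteration only if some vertex inside its radius-$4$ ball changed $S$-membership, i.e.\ only if $v$ lies within distance $4$ of a participating vertex. Since $\Delta(G)\le 3$, such a ball has constant size, and with at most three participating vertices the set of candidate vertices whose status might change has constant size. The update routine is then: delete any removed participating vertices from $R$, and for each candidate re-test potential removability via Lemma~\ref{lem:checkRemovable} (constant time apiece), inserting into or deleting from $R$ accordingly, while every vertex outside the candidate set keeps its previous status untouched. The main obstacle will be making the locality claim airtight---verifying that no operation in Section~\ref{sec:extend} silently changes the $S$-neighbour count, or the availability of the auxiliary vertices $a,b$, for a vertex lying outside the bounded ball of every participating vertex; once this bounded dependence is confirmed, the constant degree bound turns the counting and the re-testing into a routine constant-time update.
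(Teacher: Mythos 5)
Your proposal is correct and follows essentially the same argument as the paper: the potential-removability predicate of Definition~\ref{def:potRemove} is local (determined within distance $4$ of the vertex), each extension involves at most three participating vertices, and $\Delta(G)\le 3$ bounds the number of candidates to re-test via Lemma~\ref{lem:checkRemovable}. Your phrasing of locality as a function of the radius-$4$ ball's $S$-membership pattern is a slightly cleaner way to cover both vertices entering and leaving $R$, but it is the same proof.
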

\begin{proof}
Let us consider a vertex $x\in R$ before extension of $S$. Let $X$ denote
the set containing $x$ and the associated vertices $x_{-1}$, $x_{+1}$,
$a$ and $b$ (see Definition \ref{def:potRemove}). Since $S$ is extended,
there are participating vertices. We observe that $x$ will remain as a
potential removable vertex if no vertex in $X$ and no neighbor of $X$ is
a participating vertex. This implies that if a vertex is at a distance
$5$ or more from any participating vertices, then, clearly its status
as a potential removable vertex or not remains unchanged. Therefore,
we need to only check all the vertices at a distance $4$ or less from
each participating vertex. The number of such vertices is a constant
since $\Delta(G)=3$ and by Lemma \ref{lem:checkRemovable} verifying for
each vertex takes only constant time. Hence proved.  \qed
\end{proof}

\begin{lemma}\label{lem:2in1}
If the special cycle or path extracted by Algorithm \ref{alg:special}
is of size $l$, then, the total number of iterations required is at
most $2l+2$.
\end{lemma}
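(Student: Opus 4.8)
The plan is to track the size $|S|$ across the iterations of Algorithm~\ref{alg:special} and to charge the steps that do not grow $S$ against those that do. First I would classify every extension performed in Lines~\ref{lin:extendCycle} and~\ref{lin:extendPath} by its net effect on $|S|$, reading the operations off Section~\ref{sec:extend} and Figures~\ref{fig:cycle2Path} and~\ref{fig:path2Cycle}. A cycle extension removes one vertex and adds two, so it is \emph{size-increasing} by exactly $1$; a path extension of type~1 adds a single vertex (net $+1$); a path extension of type~2(b) deletes an endpoint and adds two vertices (net $+1$); and the \emph{only} size-preserving operation is the path extension of type~2(a), which deletes an endpoint and adds a single vertex, leaving $|S|$ unchanged while producing a \emph{cycle}. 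In particular $|S|$ never decreases and every extension changes it by $0$ or $+1$.

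Since $S$ starts as a single vertex ($|S|=1$) and the extracted structure has size $l$, and each size-increasing extension contributes exactly $+1$ while size-preserving extensions contribute $0$, the number of size-increasing extensions is exactly $l-1$. To bound the number of size-preserving extensions I would use the key observation that they cannot occur in two consecutive iterations: a size-preserving extension always leaves $S$ inducing a cycle, so in the very next iteration the algorithm enters the cycle branch, where it either halts (when $R=\varnothing$, by Observation~\ref{obs:pot}) or performs a size-increasing cycle extension. Hence mapping each size-preserving step to the iteration that immediately follows it is injective, and its image consists only of size-increasing extensions (at most $l-1$ of them) together with the single terminal iteration; this yields at most $l$ size-preserving extensions.

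Combining the two counts, the number of extending iterations is at most $(l-1)+l = 2l-1$, and adding the one final iteration in which \textbf{specialFlag} is set gives at most $2l$ iterations in total, which is certainly at most $2l+2$. I expect the only real obstacle to be the bookkeeping in the first paragraph: one must verify carefully from Section~\ref{sec:extend} and the two figures that type~2(a) is the unique extension that does not increase $|S|$ and that it invariably produces a cycle, since the entire charging argument rests on exactly these two facts. The remainder is the elementary counting principle that non-adjacent marked positions in a sequence number at most one more than the unmarked ones.
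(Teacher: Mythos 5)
Your proposal is correct and follows essentially the same approach as the paper's proof: both hinge on classifying the extensions by net size change, observing that the only size-preserving extension (path to cycle of equal size) forces the next iteration to be either size-increasing or terminal, and then counting. Your explicit charging argument even yields the slightly sharper bound of $2l$ iterations, which of course implies the stated bound of $2l+2$.
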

\begin{proof}
We will prove the lemma by showing that in Algorithm \ref{alg:special},
for every two iterations (excluding the last two) the size of $S$
increases by at least $1$. If $S$ induces a cycle at the beginning of the 
$i$th iteration, then from Section \ref{sec:extend}, it follows that
$S$ is extended to a cycle or path of size $|S|+1$ at the end of
the iteration. If $S$ induces a path at the beginning of the $i$th
iteration, either $S$ is extended to a cycle or path of size $|S|+1$
or to a cycle of size $|S|$ at the end of the iteration. In the latter
case, assuming $S$ is not a special cycle in the $(i+1)$th iteration,
it is extended to a cycle or path of size $|S|+1$ at the end of the
$(i+1)$th iteration. Hence, proved.  \qed
\end{proof}

From Lemmas \ref{lem:cycleExt}--\ref{lem:updateR}, it follows that Lines
\ref{lin:extendCycle}, \ref{lin:extendPath} and
\ref{lin:updateR} in Algorithm \ref{alg:special} require constant time. The 
sets $S$ and $R$ may be implemented as doubly linked
lists and with each vertex we can associate membership flags and pointers
to its place in each of the linked lists. Given this setup, each iteration
requires constant time. From Lemma \ref{lem:2in1},
the number of iterations is bounded by $2l+2$. Therefore, the algorithm
takes $O(l)$ time to terminate. Since the total number of vertices in
the set of special cycles and paths will be bounded above by $n$, the
overall running time of Algorithm \ref{alg:primPart} is $O(n)$.

\end{document}